\documentclass[11pt]{amsart}
\usepackage{graphicx}
\usepackage{marginnote}
\usepackage[pagebackref=true]{hyperref}
\usepackage{comment}
\usepackage{soul}
\usepackage[normalem]{ulem}

\usepackage[dvipsnames]{xcolor}
\usepackage{pinlabel,enumitem,comment,amssymb}
\usepackage[margin=1.25in]{geometry}
\usepackage{thm-restate}
\usepackage{outlines}
\usepackage[textsize=small]{todonotes}
\usepackage{thm-restate}

\makeatletter
\@mparswitchfalse
\makeatother
\normalmarginpar
    \definecolor{purple}{rgb}{.5, 0, 0.5}
    \definecolor{mybrown}{RGB}{144, 85, 1}
    \definecolor{mygreen}{RGB}{0, 101, 46}
    \definecolor{myblue}{RGB}{0, 64, 113}

\newtheorem{theorem}{Theorem}[section]
\newtheorem{corollary}[theorem]{Corollary}
\newtheorem{lemma}[theorem]{Lemma}
\newtheorem{proposition}[theorem]{Proposition}

\theoremstyle{definition}
\newtheorem{definition}[theorem]{Definition}
\newtheorem{remark}[theorem]{Remark}
\newtheorem{observation}[theorem]{Observation}

\theoremstyle{definition}
\newtheorem{question}[theorem]{Question}
\newtheorem*{question*}{Question}

\newtheorem{notation}[theorem]{Notation}
\newtheorem{example}[theorem]{Example}

\newtheorem{procedure}[theorem]{Procedure}
\newtheorem{construction}[theorem]{Construction}

\newcommand\ttimes{\mathbin{%
    \stackrel{\sim}{\smash{\times}\rule{0pt}{0.7ex}}%
    }}
\newcommand\cut{\setminus\!\!\setminus}

\makeatother

\author[Karimi]{Homayun Karimi}
\address{McMaster University\\Hamilton, ON L8S 4L8}
\email{homayun.karimi@gmail.com}

\author[Kim]{Seungwon Kim}
\address{Department of Mathematics\\Sungkyunkwan University\\Suwon, Gyeonggi, 16419 Republic of Korea}
\email{seungwon.kim@skku.edu}

\author[Kindred]{Thomas Kindred}
\address{Department of Mathematical Sciences \\ Smith College \\ Northampton, MA 01063 USA}
\email{tkindred@smith.edu}

\author[Miller]{Maggie Miller}
\address{Department of Mathematics\\The University of Texas at Austin\\Austin, TX 78712 USA}
\email{maggie.miller.math@gmail.com}

\author[Naylor]{Patrick Naylor}
\address{Department of Mathematics and Statistics\\McMaster University\\Hamilton, ON L8S 4L8}
\email{patrick.naylor@mcmaster.ca}

\title{Spanning solids and Murasugi sum in dimension four}

\begin{document}

\begin{abstract}
    We study various constructions of spanning solids for knotted surfaces in the 4-sphere. In particular, we consider a 4-dimensional analogue of Murasugi sum, which we use to define a notion of arborescent {knotted surface}s. We give a variety of examples and applications to broken surface diagrams, such as deciding which resolutions of the crossings in these diagrams yield spanning solids.
\end{abstract}

\maketitle

\section{Introduction}

In this paper, we study various constructions of {\emph{spanning solids}}, i.e., compact 3-manifolds with nonempty boundary that are smoothly embedded in $S^4$. In general, we do not require spanning solids to be orientable -- although if a spanning solid is oriented, we may call it a {\emph{Seifert solid}}. The boundary of a spanning solid, whether connected or disconnected, is a {\it knotted surface} in $S^4$. 
Our principal aims are (1) to adapt several theorems and constructions from the classical setting of spanning surfaces in $S^3$ 
to the 4-dimensional setting; (2) to highlight many of the questions that arise naturally from these adaptations; and (3) to produce interesting families of (spanning solids for) knotted surfaces in $S^4$. 

In particular, many classical constructions and results can be understood in terms of an operation called {\it Murasugi sum}, which is a certain way of gluing two spanning surfaces along a disk, and we adapt this operation and several of its applications to the 4-dimensional setting.  For example, we use Murasugi sum of spanning solids to define and examine notions of arborescent {knotted surface}s, which have spanning solids constructed from simple pieces via Murasugi sum in the pattern of a tree; state solids from projections of knotted surfaces to $S^3$, which are analogous to state surfaces from link diagrams on $S^2$; and $\pi_1$-essentiality of spanning solids. 

In Section \ref{sec:murasugi}, we introduce an analogue of Murasugi sum in dimension four (Definition \ref{def:murasugi_sum}). In the classical setting, a Murasugi sum of two spanning surfaces $S_1, S_2$ in $S^3$ is performed along $2n$-gons $P_i\in S_i$. The resulting surface is contained in $S^3=S^3\# S^3$, with $S_1,S_2$ in the corresponding summand, each intersecting the connected sum sphere in precisely $P_i$. In our setting, Murasugi sum will similarly glue 
two spanning solids along an embedded 3-ball whose boundary is subdivided in a manner analogous to that of the $2n$-gon.

Such an operation was previously studied by Ozbagci and Popescu-Pampu \cite{ozbagci_popescupampu}, who gave a version for solids in arbitrary 4-manifolds and also allowed the gluing region to be a general submanifold rather than specifically a 3-ball (in fact, they worked with Murasugi sum in arbitrary dimension). We focus on Murasugi sums in the restricted setting of $S^4$.

In Section \ref{sec:arborescent}, we define a notion of an {\emph{arborescent spanning solid}}, and define a {knotted surface} to be arborescent when it bounds such a solid. A reader familiar with {knotted surface}s might guess that these are simply the spins of classical arborescent links, but in fact, spinning an arborescent spanning surface generally does {\emph{not}} yield an arborescent spanning solid. (While we have not yet provided a definition, one should note that when spanning surfaces are plumbed along disks, spinning will transform this disk into a solid torus rather than a ball unless the spinning axis is carefully chosen.)

\newtheorem*{arborescent_determined}{Theorem \ref{thm:arborescent_determined}}
\begin{arborescent_determined}
    Our notion of arborescent {knotted surface} is well-defined, with each link determined by a weighted graph associated to a spanning solid. The class of arborescent surfaces includes spins of 2-bridge links (Proposition \ref{prop:2stripespin}) as well as non-spun surfaces such as the genus-2 link in Example \ref{ex:interesting_example}.
\end{arborescent_determined}

A classical link diagram gives rise, via Kauffman states, to state surfaces, each of which is a Murasugi sum of checkerboard surfaces.  We adapt this construction to broken surface diagrams (projections with crossing information) of knotted surfaces $K\subset S^4$. Namely, we resolve the crossings of the diagram in $S^3$ to get a system of closed surfaces called a {\emph{state}}, cap off each surface in the state with a solid, and connect these solids with standard pieces near the crossings to get a spanning solid for $K$. We call this solid a {\emph{state solid}}.

One difficulty in working with broken surface diagrams is that, in contrast to classical link diagrams(whose self-intersections are all double points), the self-intersections of these diagrams may include not just double points, but also triple points, as well as branch points or cusps where the surface is locally a cone on a small loop with one self-intersection. 
It turns out that, whereas all Kauffman states of a classical link diagram yield state surfaces, not all crossing resolutions in a broken surface diagram yield state solids: the difficulties arise at cusps and triple points. We give a classification of exactly which states of a broken surface diagram yield state solids. 

\newtheorem*{state_solids}{Theorem \ref{thm:state_solid}}
\begin{state_solids}
We provide a complete classification of which resolutions of the crossings in broken surface diagrams give rise to state solids.
\end{state_solids}

We are not able to give the exact statement of Theorem \ref{thm:state_solid} here without introducing much terminology, so we delay the precise statement to Section \ref{sec:states}. The full version includes distinct classifications for several subclasses of state solids.

Because of the extra difficulties presented by triple points and cusps, we also devote specific attention to knotted surfaces with broken surface diagrams whose crossings are only double points. 
These diagrams and the surfaces that they represent are called {\emph{pseudo-ribbon}}. 

Every classical knot or link has a spanning surface that is a Murasugi sum of trivial M\"obius bands (spanning the unknot).  We adapt this fact as follows.

\newtheorem*{ribbonstate}{Theorem \ref{thm:ribbonstate}}
\begin{ribbonstate}
    Every ribbon 2-knot has a spanning solid that is a Murasugi sum of spun trivial M\"obius bands.
\end{ribbonstate}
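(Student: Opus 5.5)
We reduce to the classical fact, using the standard structure of ribbon 2-knots together with the state solid machinery. A ribbon 2-knot $K$ admits a broken surface diagram whose only crossings are double curves forming \emph{ribbon singularities}. Concretely, $K$ is obtained from a trivial 2-link $S_0\sqcup\cdots\sqcup S_m$ of spheres by attaching $m$ tubes (1-handles), and each tube may pass through the spheres. Equivalently, $K$ is the spin of a ribbon presentation: it is represented by a \emph{ribbon disk-band presentation} in a half-space $\mathbb{R}^3_+$. This presentation consists of a trivial collection of disks $D_i$ joined by bands $b_j$, with ribbon intersections of bands through disks, and $K$ is the spin of this immersed disk about $\partial\mathbb{R}^3_+$. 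The plan is to choose a classical ``state surface'' in the half-space picture that is a Murasugi sum of trivial M\"obius bands and whose pieces meet the axis appropriately. We then check that spinning turns each classical Murasugi sum into a 4-dimensional Murasugi sum in the sense of Definition \ref{def:murasugi_sum}. The spinning must be arranged so that the summing disks become 3-balls rather than solid tori, as warned in the introduction.

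\textbf{Step 1.} Put $K$ in a pseudo-ribbon broken surface diagram coming from the ribbon presentation, and take the state that resolves every double curve. Each resolution of a ribbon double curve (a disk of band $b_j$ passing through sphere $S_i$) can be done in the ``ribbon'' direction. This yields the union of spheres and tubes, now disjoint, and since there are no triple points or cusps every state yields a state solid by Theorem \ref{thm:state_solid}. \textbf{Step 2.} Alternatively and more concretely, work in the half-space. Draw the arc-and-band picture of the ribbon disk as a classical diagram in a half-plane with endpoints on the axis. Every classical link, or here tangle, diagram has a state surface obtained by choosing, at each crossing, a resolution producing a single half-twisted band. Isotope the diagram so that all Seifert-type state circles are small unknotted circles adjacent to the axis; this is the classical fact that every diagram is a Murasugi sum of Hopf-type or trivial M\"obius bands, obtained by successively deplumbing. \textbf{Step 3.} Show that a trivial M\"obius band positioned as a half-disk-with-twisted-band meeting the axis in an arc spins to the spun trivial M\"obius band. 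Also show that each classical Murasugi sum along a $2n$-gon meeting the axis in one edge spins to a Murasugi sum along a 3-ball, namely the spin of a half-$2n$-gon, with the boundary subdivision matching Definition \ref{def:murasugi_sum}. Then induct on the number of summands.

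The main obstacle is Step 3 together with the positioning in Step 2. Every summing polygon must meet the spinning axis in an arc, since otherwise spinning yields a solid torus rather than a ball. We must also verify that the induced subdivision of the boundary of the spun ball has the alternating structure required by Definition \ref{def:murasugi_sum}. To arrange this, we will first isotope the ribbon presentation so that every disk $D_i$ and every crossing region touches the axis. This is possible by sliding the ribbon disks so that each one is a half-disk on the axis and pushing each band crossing near the axis by finger moves along the disks. We will then use the half-disk plumbing pattern, in which each twisted band is a half-twisted strip adjacent to the boundary plane.
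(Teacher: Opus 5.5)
\textbf{There is a genuine gap, and it sits in the reduction itself.} Steps 2--3 assume that $K$ comes from spinning a picture in a half-space. But anything you spin there is at most a surface in $\mathbb{R}^3_+$. Its spin is a solid whose boundary is the spin of a classical knot or tangle, so Steps 2--3 can only ever treat spun classical knots.

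Ribbon 2-knots are not all spins. For example, $\langle x,t\mid txt^{-1}=x^2\rangle$ has the deficiency-one Wirtinger presentation $\langle x,y\mid y=(xy^{-1})x(xy^{-1})^{-1}\rangle$ (take $y=tx$... more precisely $x=t$, $y=at$ in $\langle a,t\mid tat^{-1}=a^2\rangle$). It is therefore the group of a ribbon 2-knot: two trivial spheres joined by one tube. Yet it is solvable, so it is not the group of any nontrivial classical knot, while a spun knot has the group of the knot it is spun from. Even for spun knots, the obstacle you flag yourself is only asserted, not resolved: every plumbing polygon must meet the single spinning arc rather than spinning to a solid torus.

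Step 1 is the part that could handle a general ribbon 2-knot, and two things go wrong there.
\begin{itemize}
\item It misdescribes a state. Smoothing a double circle reconnects the four half-sheets, fusing pieces of the sphere with pieces of the tube; it does not leave spheres and tubes disjoint.
\item More seriously, Theorem \ref{thm:state_solid} only tells you that some state solid exists. An arbitrary state solid has no evident decomposition into copies of $Y_1$.
\end{itemize}

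\textbf{The missing idea is to choose a \emph{connected} state.} Smoothing preserves Euler characteristic, so a connected state of a 2-knot diagram is a single 2-sphere $X$. Such a state exists by induction on crossing circles. For a crossing circle $\gamma$, the preimage $f^{-1}(\gamma)$ cuts $S^2$ into two disks and an annulus. Take the smoothing that does \emph{not} glue the two disks together; this leaves a pseudo-ribbon diagram of a 2-knot with fewer crossing circles.

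\textbf{The decomposition then follows from the paper's propositions.} The state solid is a 3-ball $U$ capping $X$, plus crossing bands. Pushing the bands on the two sides of $X$ into opposite sides of $S^3$ exhibits it as a Murasugi sum along $U$ (Proposition \ref{prop:connected_state_2_knot}). Each summand then splits off copies of $Y_1$ one innermost crossing band at a time, attached using $d$ (Proposition \ref{prop:2_knot_one_side}, Observation \ref{obs:Y1_1_stripe}). This requires first isotoping in $S^4$ so that each solid torus $Z\cup B_A$ is unknotted. No spinning of a classical picture is needed, which is exactly why this route covers all ribbon 2-knots.
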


(The adjective {\emph{ribbon}} is another common term (not defined here) which for general surfaces is not equivalent to pseudo-ribbon, but for 2-knots ``ribbon" and ``pseudo-ribbon" agree \cite{yajima1964}.)

See the statement of Theorem \ref{thm:ribbonstate} in \textsection\ref{sec:ribbon} for a specific description of how these Murasugi sums are performed.

We also define and characterize {\emph{ribbon-alternating}} broken surface diagrams, a special case of pseudo-ribbon diagrams. Such a diagram has intersection set with no triple or branch points, and its double points cut the surface into disks and annuli, such that each annulus is incident to an overpass and an underpass. That is, in each component of the preimage of the immersion, the intersection set lifts to parallel circles that 
alternate between under and overpasses. A surface $K$ is {\emph{ribbon-alternating}} if it admits a ribbon-alternating diagram. 
We characterize ribbon-alternating knotted surfaces as follows.

\newtheorem*{thm:alternating}{Theorem \ref{thm:alternating}}

\begin{thm:alternating}
    Let $K\subset S^4$ be a knotted surface admitting a connected, ribbon-alternating diagram $E$. 
    \begin{enumerate}[label=(\alph*)]
        \item If some component of $K$ is a 2-sphere, then $E$ is the result of spinning an alternating classical tangle diagram (without deformation), and in particular $K$ is a union of spheres and tori. 
        \item If $K$ is orientable and contains no 2-spheres, then it is a union of tori, and 
        there is an alternating link diagram $D\subset D^2$ such that $E$ is obtained by spinning $D$ while turning $D$ about some point $x\in D^2\setminus D$ of rotational symmetry  
        (thus, preserving orientations). 
        \item If $K$ is nonorientable, then $K$ has exactly one Klein bottle component, the other components of $K$ are tori, and there is an alternating link diagram $D$ such that $E$ is obtained by spinning $D$ while rotating $\pm 180^\circ$ about a crossing of $D$, where the rotation fixes $D$ setwise.
    \end{enumerate}
\end{thm:alternating}

We also prove the following intuitive fact, although the related Question \ref{q:non_alternating} and Example \ref{ex:35_nugatory} are less straightforward and perhaps counterintuitive.

\newtheorem*{thm:non_alternating}{Theorem \ref{thm:non_alternating}}

\begin{thm:non_alternating}
If $K$ is the spin of a 
classical nonalternating knot $L$, then $K$ is not ribbon-alternating. 
\end{thm:non_alternating}

We define {\emph{flypes}} of broken surface diagrams and adapt two of the three classical Tait conjectures to ribbon-alternating knotted surfaces---see Definition \ref{def:flype_T2} and Questions \ref{Q:Tait_1} and \ref{q:flypes}.

Murasugi sum of spanning surfaces interacts well with the notion of {\emph{$\pi_1$-essentiality}}, and we show that the same is true for spanning solids. Let $M$ be a spanning solid for a surface $K\subset S^4$, and let $X$ denote the exterior of $M$ in $S^4$. The boundary of $X$ contains a copy of $K$, with the rest of $\partial X$ consisting of a twofold cover of $M$ (the copy of $K$ in $\partial X$ will separate $\partial X$ if and only if $M$ is orientable). The solid $M$ is said to be $\pi_1$-essential if every loop representing an element of $\ker(\pi_1(\partial X)\to X)$ either intersects $K$ at least twice or represents a trivial element of $\pi_1(M)$. In other words, each loop in $\partial X$ that is contractible in $X$ either (1) intersects $K$ at least twice or (2) lies in, and is contractible in, $M$. In particular, a Seifert solid is $\pi_1$-essential if and only if the only loops in $M$ that are contractible in $X$ are those that are contractible in $M$.

\newtheorem*{essentialthm}{Theorem \ref{thm:essential}}
\begin{essentialthm}
   Any Murasugi sum $M=M_0*M_1$ of $\pi_1$-essential spanning solids is $\pi_1$-essential.
\end{essentialthm}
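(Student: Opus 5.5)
The plan is to follow Gabai's classical argument for Murasugi sums of surfaces, adapted to the $\pi_1$ setting through van Kampen and a cut-and-paste argument on singular disks. Set up the Murasugi sum as in Definition \ref{def:murasugi_sum}. We have $S^4 = S^4\# S^4$ with separating 3-sphere $\Sigma$, and each $M_i$ lies in the ball $B_i$ and meets $\Sigma$ exactly in the gluing 3-ball $P$. The exterior $X$ of $M=M_0*M_1$ then splits along $\Sigma\setminus \nu(P)$ into pieces $X_0'\subset B_0$ and $X_1'\subset B_1$. Here $X_i'$ is homeomorphic to the exterior $X_i$ of $M_i$ in $S^4$ with a collar on the complementary part of $\Sigma$ pushed in. The key structural fact is that $\Sigma\setminus\nu(P)$ is itself a 3-ball $Q$, and its boundary is split by $\partial P$ in the analogue of the $2n$-gon decomposition. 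Consequently $\partial X_i'$ is obtained from $\partial X_i$ by replacing part of the twofold cover of $M_i$ by a copy of $Q$.

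Suppose $\gamma\subset\partial X$ is a loop meeting $K=\partial M$ at most once and bounding a map $f\colon D^2\to X$. We want to show $\gamma$ is trivial in $\pi_1(M)$, or more precisely in the appropriate side of the twofold cover. Make $f$ transverse to $Q$, so that $f^{-1}(Q)$ is a collection of arcs and circles in $D^2$. Circles can be removed because $Q$ is a ball and hence simply connected: an innermost circle bounds a subdisk, we redefine $f$ there, and push it off $Q$. Next choose an outermost arc $\alpha$ of $f^{-1}(Q)$. It cuts off a subdisk $D'$ mapping into some $X_i'$, whose boundary is a subarc $\beta$ of $\gamma$ together with $\alpha$. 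Since $f(\alpha)\subset Q$ and $\partial\alpha\subset \partial Q\cap\partial X$, we can slide the endpoints along $\partial Q$ into $\partial P$. This produces a loop $\beta\cup\alpha'$ in $\partial X_i$ that bounds $f(D')$ in $X_i$. The count of intersections with $\partial M_i$ is controlled because $\partial Q$ meets $K$ only along the pattern prescribed by the Murasugi sum. Essentiality of $M_i$ then says that either this loop meets $\partial M_i$ at least twice, or it is trivial in $\pi_1(M_i)$. In the second case we homotope $\beta$ across $P$ into the other side, which reduces the number of arcs. Inducting on the number of arcs of $f^{-1}(Q)$ eventually leaves $\gamma$ inside a single $X_i'$, where essentiality of $M_i$ applies directly. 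Finally, $\pi_1(M)=\pi_1(M_0)*\pi_1(M_1)$ by van Kampen, because $P$ is a ball, so triviality in $\pi_1(M_i)$ implies triviality in $\pi_1(M)$.

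The main obstacle is the intersection bookkeeping in the outermost-arc step. We must show that closing up $\beta$ with an arc in $\partial Q$ (or $\partial P$) never creates a loop that meets $\partial M_i$ fewer than twice unless it is genuinely trivial, and that the "at least twice" alternative cannot occur when $\gamma$ itself meets $K$ at most once. I would handle this as in Gabai's proof. Since $\gamma$ meets $K$ at most once, at most one outermost region contains the intersection point. Every other outermost arc has both endpoints on the same side of $\partial P\subset\partial Q$, because $\partial Q\setminus\partial P$ decomposes into pieces lying alternately in the two sides according to the subdivision of $\partial P$. We choose $\alpha'$ in the piece of $\partial P$ that avoids $\partial M_i$. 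The resulting loop then meets $\partial M_i$ at most once, so essentiality forces it to be trivial, and the reduction proceeds. Verifying this alternating decomposition of $\partial Q$ from Definition \ref{def:murasugi_sum} is the technical heart of the argument, and I expect it to be the step requiring the most care.
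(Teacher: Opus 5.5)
There is a genuine gap, and it sits in the bookkeeping you deferred.

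Take an outermost region $D'\subset X_0'$ with $\partial D'=\alpha\cup\beta$. The only place where $\alpha$ is known to be homotopic rel endpoints into $P$ is the ball $B_1$. So $\alpha'$ must lie on the face of $P$ facing $B_1$. Now look at an endpoint of $\alpha$ on the part of $\partial P$ contained in $\partial M_0$. There $\gamma$ passes from the face of $P$ facing $B_0$ onto a face of $M_1$, so $\beta$ arrives along the $B_0$-face of $P$. In $\partial\nu M_0$ the two faces of $P$ meet along $K_0$ at that point, so $\beta\cup\alpha'$ crosses $K_0$ there, whatever you do with $\alpha'$.

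Hence $|(\beta\cup\alpha')\cap K_0|$ equals the number of ``markers'' on $\partial D'$: the endpoints at which $\gamma$ runs along $P$ on the $D'$ side, plus the point of $\gamma\cap K$ if present. Even granting that both endpoints of $\alpha$ lie in the same piece of $\partial P$, they can both lie in the piece contained in $\partial M_0$. Then $D'$ carries two markers, and $\pi_1$-essentiality of $M_0$ says nothing. Nothing in your argument rules out every outermost region being of this type. For example, with two parallel arcs, each outer region can carry both markers of its arc while the middle region carries none. So your assertion that, away from the region containing $\gamma\cap K$, the closed-up loop meets $\partial M_i$ at most once is false.

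The missing idea is the Gabai--Ozawa marker count used in the paper. With $\ell$ arcs there are $2\ell$ or $2\ell+1$ markers among $\ell+1$ regions, so some region $D_0$, in general not outermost, carries at most one. Closing up all of its arcs through $B_1$ gives a loop $\gamma_0\subset\partial\nu M_0$ meeting $K_0$ at most once, hence disjoint from $K_0$ and trivial by essentiality.

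Your reduction step also needs replacing. First, $D_0$ need not be outermost, so there is no single $\beta$ to move. Second, even for an outermost region, the homotopy of $\alpha$ onto $P$ runs through $B_1$, which in the exterior of $M$ contains $M_1$. So ``homotoping $\beta$ across $P$'' does not by itself give a disk in the exterior of $M$ with fewer arcs.

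The paper instead proceeds as follows. Take a singular disk $F\subset M_0$ bounded by $\gamma_0$ with $|F\pitchfork\partial P|$ minimal, and take an outermost arc of $F\cap\partial P$. Minimality of $|\partial D\pitchfork\partial P|$ forces the cut-off subdisk into $P$. This yields a disk in $Q$ cobounded by an arc of $D\cap Q$ and an arc of $\partial Q$. Boundary-compressing $D$ along it gives two disks, each with fewer arcs of intersection with $Q$. Minimality of $(|D\pitchfork Q|,|\partial D\pitchfork\partial P|)$ then gives the contradiction.
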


In Section \ref{sec:essential}, we also prove a generalization of Theorem \ref{thm:essential} in which we allow a more general version of Murasugi sum. Specifically, we allow $M_0, M_1$ to be plumbed along a 3-manifold with boundary that is not necessarily a 3-ball. See Section \ref{sec:essential} for details.

\subsection*{Organization} The paper is organized as follows. 
\begin{itemize}[leftmargin=1in]
    \item[\bf{Section
\ref{sec:murasugi}:}] We define Murasugi sum of spanning solids in $S^4$.
\item[\bf{Section \ref{sec:arborescent}:}] We use Murasugi sum to define a notion of arborescent {knotted surface}s, which exhibit behavior analogous to that of arborescent links in $S^3$.
\item[\bf{Section \ref{sec:states}:}] We discuss state solids for broken surface diagrams of knotted surfaces, determining exactly which states yield embedded spanning solids. 
\item[\bf{Section \ref{sec:ribbon}:}] We investigate pseudo-ribbon diagrams and their state solids. 
\item[\bf{Section \ref{sec:alternating}:}]We understand ribbon-alternating surface diagrams, a particular class of pseudo-ribbon diagrams. 
\item[\bf{Section 
\ref{sec:essential}:}] We define what it means for a spanning solid to be $\pi_1$-essential, and show that this property is preserved under Murasugi sum.
\item[\bf{Section \ref{sec:questions}:}] We collect questions that we have asked throughout the paper.
\end{itemize}

\subsection*{Acknowledgments} This collaboration began at the 2024 Trisectors Workshop, which was held at the University of Nebraska-Lincoln in June 2024. Some work also took place in a Fields Opportunities for Collaborations US (FOCUS) visit in April 2026. MM was supported by a Clay Research Fellowship, NSF grant DMS-2404810, Simons Foundation Gift MPS-TSM-00007679, a Sloan Research Fellowship and a Packard Fellowship for Science and Engineering. PN was supported by an NSERC Discovery Grant and a CRM-Simons Scholar Grant.

\section{Murasugi sum}\label{sec:murasugi}

\subsection{Preliminaries}\label{subsec:preliminaries} 
We work in the smooth category throughout. Unless otherwise specified, all manifolds considered in this paper are compact; they may be orientable or non-orientable. If $M$ is a manifold, then $\mathring{M}$ will denote its interior. If $M$ is a submanifold of some ambient manifold, we write $\nu M$ for a closed regular neighborhood of $M$ and $\mathring\nu M$ for its interior. If $M$ is a 3-manifold then $M^\circ$ will denote $M$ with an open 3-ball deleted from its interior, and if $F$ is a surface with boundary then $F^\circ$ will denote $F$ with an open half-disk deleted near its boundary (if $\partial F$ has corners, this half disk will be disjoint from them).

We will call (the image of) an embedding of a surface $\Sigma\hookrightarrow S^4$ a \emph{knotted surface}, and reserve the term \emph{2-knot} for the case that $\Sigma$ is a 2-sphere. Knotted surfaces need not be connected nor orientable. We will refer to any 3-manifold smoothly embedded in $S^4$ with boundary $\Sigma$ as a {\emph{spanning solid}} for $\Sigma$. We regard two knotted surfaces $K_1,K_2\subset S^4$ as (smoothly) equivalent if there is a pairwise diffeomorphism $(S^4,K_1)\to(S^4,K_2)$ that preserves the orientations of $S^4$, and we regard two spanning solids $M_1,M_2\subset S^4$ as equivalent if there is a pairwise diffeomorphism $(S^4,M_1)\to(S^4,M_2)$ that preserves the orientation of $S^4$.

For convenience, we will use the following notation, adapted from the classical setting. 

\begin{definition}\label{def:cut}
    Suppose that $M\subset S^4$ is a spanning solid. We write $S^4\cut M$ to denote $S^4$ \emph{cut along} $M$. As a topological space, this is given by $S^4\setminus \mathring{\nu} M$; its boundary contains a copy of $\partial M$, which, in the case that $M$ is orientable, cuts $\partial(S^4\cut M)$ into two copies of $M$.\footnote{One could define $S^4\cut M$ precisely as the metric closure of $S^4\setminus M$, where points are equivalence classes of Cauchy sequences that are identified if their interpolation converges.} We will also use this notation in other similar contexts.
\end{definition}

\subsection{A 4-dimensional Murasugi sum} In this section, we define the notion of {\emph{Murasugi sum}} for spanning solids in $S^4$. We first remind the reader of the classical definition introduced by Murasugi \cite{murasugi} (although explored in more similar notation to ours by Gabai \cite{gabai_murasugisumnatural}).

\begin{definition}\label{def:classicalmurasugisum}
    Let $M_1, M_2$ each denote a copy of $S^3$ and let $S_1, S_2$ be spanning surfaces for some knots or links in $M_1, M_2$, respectively. For some $n\in \mathbb{N}$, let $P$ be an oriented $2n$-gon, where the boundary of $P$ consists of the ordered edges $e_1,e_2,\ldots, e_{2n}$. Choose embeddings $f_i:P\to S_i$ such that $f_1$ sends $e_1,e_3,\ldots, e_{2n-1}$ to $\partial S_1$ but the other edges and interior of $P$ to the interior of $S_1$, while $f_2$ sends $e_2,e_4,\ldots, e_{2n}$ to $\partial S_2$ but the other edges and interior of $P$ to the interior of $S_2$. 

    Delete an open ball from each $M_1\setminus S_1, M_2\setminus S_2$ so that $S_1, S_2$ meet the boundary of $M_1'=M_1\setminus \mathring{B}^3$ and $M_2'=M_2\setminus\mathring{B}^3$ in exactly $f_1(P)$ and $f_2(P)$, respectively. In $M_1$, obtain this open ball by thickening $f_1(P)$ according to the positive normal (using the induced orientation from $f_1$). In $M_2$, obtain this open ball by thickening $f_2(P)$ according to the negative normal. 
    The positive normal to $f_1(P)$ points out of the ball $M_1'$, whereas the positive normal to $f_2(P)$ points into the ball $M_2'$. 
    
    Now, write $S^3$ as $M_1'\cup_g M_2'$, where the gluing map $g$ is an orientation-reversing diffeomorphism $g:\partial M_1'\to\partial M_2'$, chosen so that $g\circ f_1=f_2$. The spanning surface $S_1\cup S_2$ in $S^3$ is said to be obtained from $S_1, S_2$ by {\emph{Murasugi sum.}} See Figure \ref{fig:murasugi}.
\end{definition}

Note that the Murasugi sum of two spanning surfaces depends on the gluing polygons in each surface and the identification between them.

\begin{figure}
\labellist
\pinlabel{$S_1$} at 25 90
\pinlabel{$S_2$} at 245 90
\endlabellist
    \includegraphics[width=90mm]{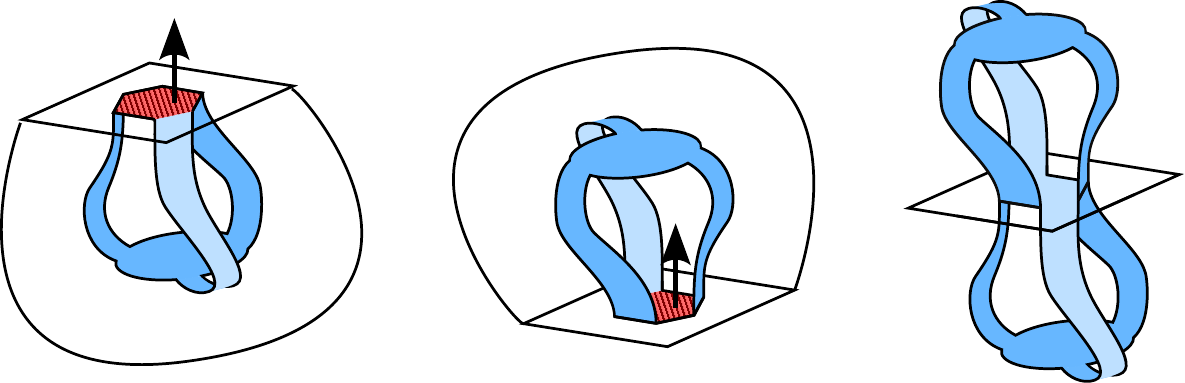}
    \caption{The leftmost two images show two surfaces $S_1, S_2$ embedded in 3-balls. Each surface meets the boundary of the 3-ball in a shaded $2n$-gon (here, $n=3$). We glue the surfaces 
    $S_1, S_2$ (simultaneously gluing the two 3-balls) to form a new surface in $S^3$ obtained by Murasugi sum. This new surface is illustrated in the rightmost image.}\label{fig:murasugi}
\end{figure}

We will now define Murasugi sum for spanning solids in 4-dimensional space. The construction will be analogous to the condensed classical definition above.

\begin{definition}\label{def:colored_ball}
    A {\emph{colored 3-ball}} $\mathcal{B}$ is a triple $\mathcal{B}=(B,X,Y)$, where $B$ is a 3-ball, and $X,Y\subset \partial B$ are surfaces with the property that $\partial B=X\cup Y$ and $X\cap Y=\partial X=\partial Y$.
\end{definition}

One might consider a more general version of the preceding definition, in which $X\cap Y$ is allowed to have singular points. (That is, $X\cap Y$ includes components that are graphs rather than closed loops.) We postpone the discussion of why the given definition avoids any loss of generality to Remark \ref{prop:singular}.

\begin{example}\label{ex:colored_balls}
    Figure \ref{fig:colored_3ball} illustrates some examples of colored 3-balls $(B,X,Y)$; in each case, $X$ (lighter; orange) and $Y$ (darker; blue) are subsurfaces of $\partial B$ whose union is $\partial B$, and which meet along a collection of closed curves. 

    \begin{figure}[!ht]
        \centering
        \hspace{0.1\textwidth}
        \includegraphics[width=.2\textwidth]{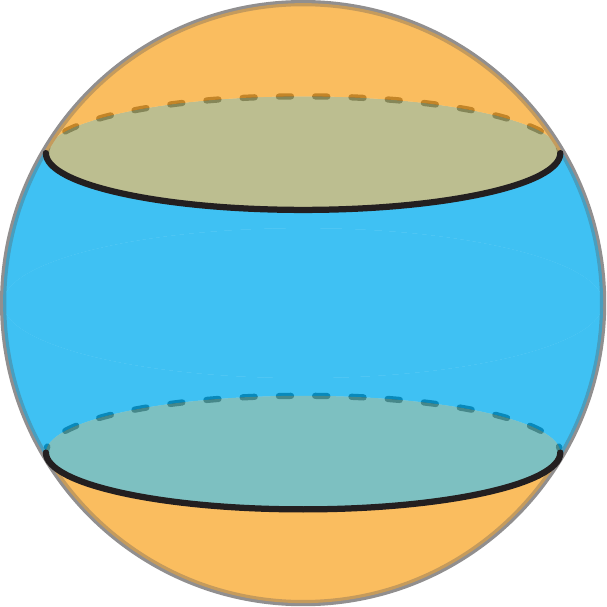}
        \hfill
        \includegraphics[width=.2\textwidth]{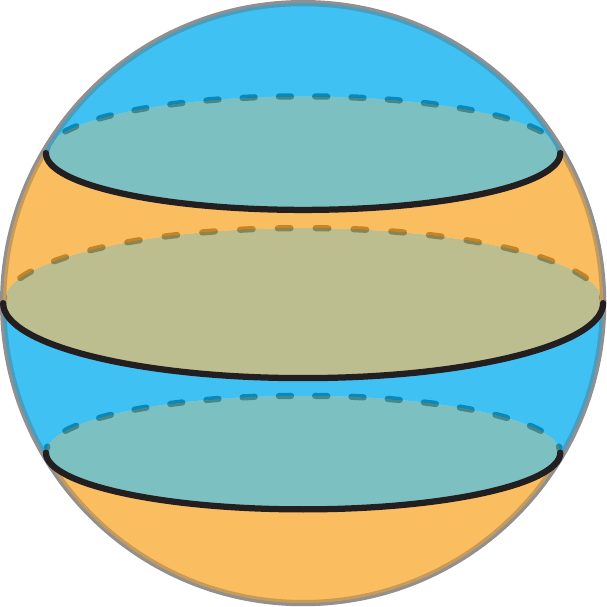}
        \hfill
        \includegraphics[width=.2\textwidth]{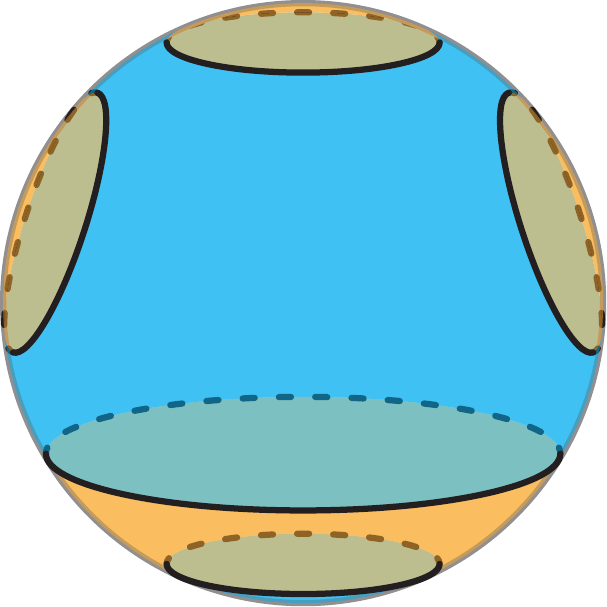}
        \hspace{0.1\textwidth}
       \caption{Examples of 3-balls with a decomposition of their boundary into two pieces $X$ (in lighter orange) and $Y$ (in darker blue). The left two images depict 1-striped and 2-striped balls.}
        \label{fig:colored_3ball}
    \end{figure}
\end{example}

\begin{definition}\label{def:x_embedding}
    Given a colored 3-ball $\mathcal{B}=(B,X,Y)$ and a smooth embedding $f:B\to M$ of $B$ in some 3-manifold $M$, we say $f$ is an {\emph{$X$-embedding}} if $f(B)\cap\partial M=f(X)$, i.e., if $f$ maps $X$ to $\partial M$ and maps the interior of $Y$ to the interior of $M$. In this case we call $f(B)$ a \emph{Murasugi $X$-ball}. Similarly, we say $f$ is a {\emph{$Y$-embedding}}, and $f(B)$ is a \emph{Murasugi $Y$-ball}, if $f(B)\cap\partial M=f(Y)$, i.e., if $f$ maps $Y$ to $\partial M$ and the interior of $X$ to the interior of $M$. 
\end{definition}

Figure \ref{fig:xembedding} illustrates simple examples of both kinds of embeddings. 

\begin{figure}[!ht]
    \centering
    \labellist \small
    \pinlabel{$(B,X,Y)$} at 150 -30
    \pinlabel{$X$-embedding} at 670 -30
    \pinlabel{$Y$-embedding} at 1300 -30
    \endlabellist
    \hspace{0.1\textwidth}
        \raisebox{.035\textwidth}{\includegraphics[width=.15\textwidth]{figures/1_Stripe}}
        \hfill
        \raisebox{.045\textwidth}{\includegraphics[width=.25\textwidth]{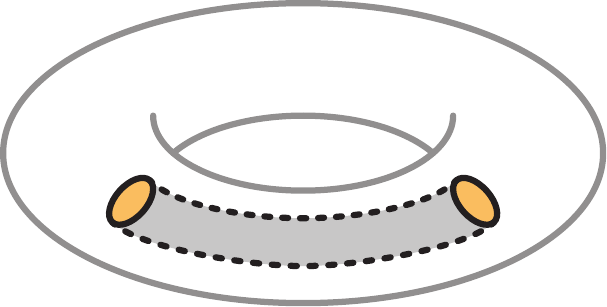}}
        \hfill
        \raisebox{.045\textwidth}{\includegraphics[width=.25\textwidth]{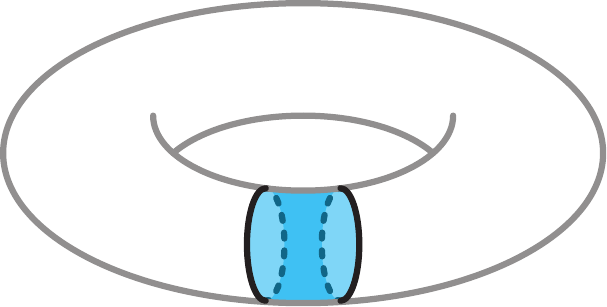}}
        \hspace{0.1\textwidth}
    
    \caption{An illustration of $X$-embedding and a $Y$-embedding of a colored 3-ball $(B,X,Y)$ in a 3-manifold $M$. The corresponding colored 3-ball is illustrated on the left, with $X$ given by a disjoint union of two disks, and $Y$ given by an annulus. Examples of $X$- and $Y$-embeddings are shown in the center and right images, respectively.} 
    \label{fig:xembedding}
\end{figure}

\begin{definition}\label{def:murasugi_x_ball}
    Let $M$ be a 3-manifold smoothly embedded in a closed 4-manifold $W$, fix a colored 3-ball $\mathcal{B}=(B,X,Y)$, and let $f:B\to M$ be an $X$-embedding (respectively, $Y$-embedding). Pushing $f(B)$ off of $M$ in either normal direction in $W$ sweeps out a 4-ball $V$ with $M\cap\mathring{V}=\varnothing$ and $M\cap\partial V=f(B)$. In this case, we say $V$ {\emph{realizes}} $\mathcal{B}$ as an $X$-ball (resp. $Y$-ball) in $M$.
\end{definition}

\begin{definition}[4D Murasugi Sum]\label{def:murasugi_sum}
    Let $M_1$ and $M_2$ be 3-manifolds smoothly embedded in closed 4-manifolds $W_1, W_2$, respectively. Suppose that for some colored 3-ball $\mathcal{B}=(B,X,Y)$, there are 4-balls $V_1, V_2$ embedded in $W_1, W_2$, respectively, realizing $\mathcal{B}$ as an $X$-ball in $M_1$ and as a $Y$-ball in $M_2$. Let $f_1:B\to M_1$ and $f_2:B\to M_2$ be the associated $X$- and $Y$-embeddings.

    Choose an orientation-reversing diffeomorphism $g:\partial V_1\to\partial V_2$ such that $g(f_1(B))=f_2(B)$, $g(f_1(X))=f_2(X)$, and $g(f_1(Y))=f_2(Y)$ (all as oriented submanifolds, with orientation induced from $B$). Obtain the connected sum of $W_1$ and $W_2$ as $(W_1\setminus\mathring{V}_1)\cup_g(W_2\setminus\mathring{V}_2)$. We call the submanifold $N=M_1\cup_g M_2\subset W_1\# W_2$ the {\emph{Murasugi sum}} of $M_1$ and $M_2$ along $f_1,f_2$. 

    Typically, we will refer to the Murasugi sum as instead taking place along the images $f_1(B)$ and $f_2(B)$, with the identification between these balls implicit; we may also write this as $M_1\ast_\mathcal{B} M_2$ or simply $M_1\ast M_2$ when the identification is clear from context. 
\end{definition}

While the spanning solid $M$ constructed in Definition \ref{def:murasugi_sum} above generally depends on the choice of local orientations on $M_i$ near $f_i(B)$, the {knotted surface} $L=\partial M$ is independent of these choices; see Proposition \ref{prop:replumb}.

\begin{definition}\label{def:spine}
    Let $\mathcal{B}=(B,X,Y)$ be a colored 3-ball. An \emph{$X$-spine} of $\mathcal{B}$ is a properly embedded 2-complex $S_X\subset B$ such that $B$ deformation retracts to $S_X$ and $X$ deformation retracts to $\partial B\cap S_X$. A \emph{$Y$-spine} is defined similarly. Furthermore, if $f_1$ and $f_2$ are $X$- and $Y$-embeddings of $(B,X,Y)$ in spanning solids $M_1$ and $M_2$, respectively, then we call $f_1(S_X)$ a {\it spine} of the Murasugi $X$-ball $f_1(B)\subset M_1$, and we call $f_2(S_Y)$ a {\it spine} of the Murasugi $Y$-ball $f_1(B)\subset M_2$.
\end{definition}

\begin{definition}\label{def:pattern}
    If $\mathcal{B}=(B,X,Y)$ is a colored 3-ball and  $X\cap Y$ cuts $\partial B$ into two disks and $n$ annuli, we call $\mathcal{B}$ an \emph{$n$-striped ball}. If we perform a Murasugi sum using an $n$-striped ball, we will describe it as ``performing a Murasugi sum according to an $n$-striped pattern.''
\end{definition}

For example, the leftmost image of Figure \ref{fig:colored_3ball} is a 1-striped ball. We will be particularly interested in the cases where $n\in\{1,2\}$, which are most analogous to the simplest case of the classical Murasugi sum (i.e., along a square embedded in a spanning surface for a knot or link). Note that Murasugi sum along a 0-striped ball is boundary sum.

To be explicit, when $n=1$, we have $X=S^0\times D^2$ and $Y=S^1\times D^1$ (or vice versa). In this case, we can take the spine $S_X$ to be an arc in $B$ with one endpoint in the interior of each component of $X$, and $S_Y$ to be a properly embedded disk in $B$ whose boundary is a core circle of the annulus $Y$. See Figure \ref{fig:12stripe}, left.

When $n=2$, $X$ and $Y$ each consists of a disjoint union of a disk and an annulus. Each of $S_X, S_Y$ is a copy of $D^2\vee I$, where the wedge product takes place along a point in the boundary of $I$ and the interior of $D^2$. For $A\in\{X,Y\}$, the other endpoint of $I$ in $S_A$ lies in the interior of the disk component of $A$, while the boundary of the $D^2$ is a core circle in the annulus component of $A$. See the center image of Figure \ref{fig:12stripe}.

\begin{figure}[!ht]
    \centering
    \hspace{0.1\textwidth}
    \includegraphics[width=.2\textwidth]{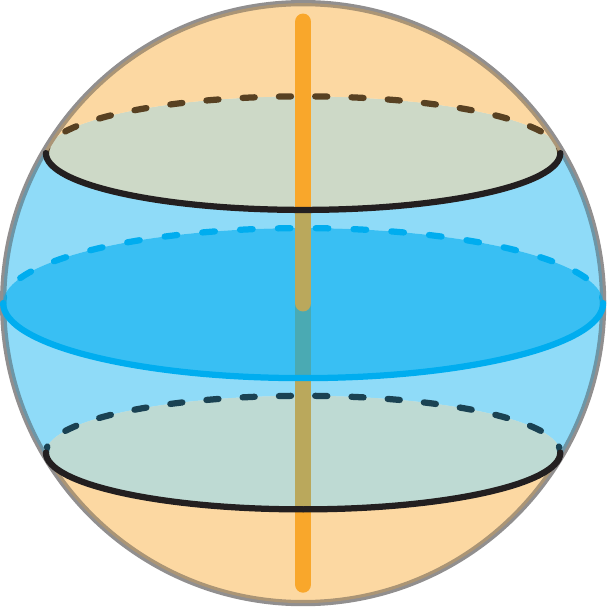}
    \hfill
    \includegraphics[width=.2\textwidth]{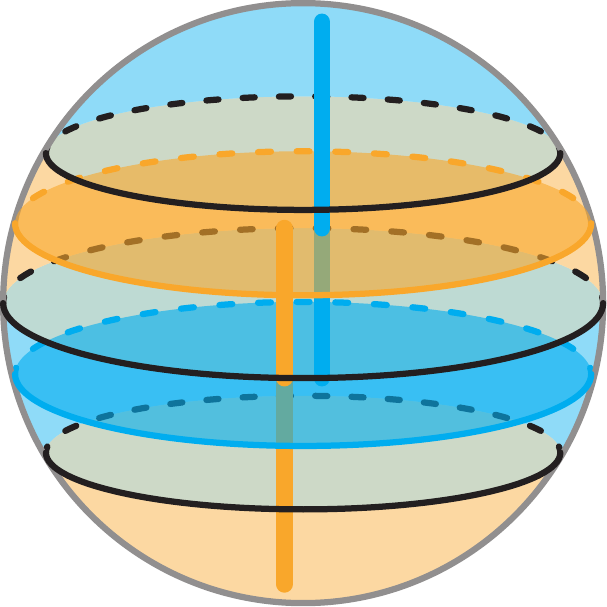}
    \hfill
    \includegraphics[width=.2\textwidth]{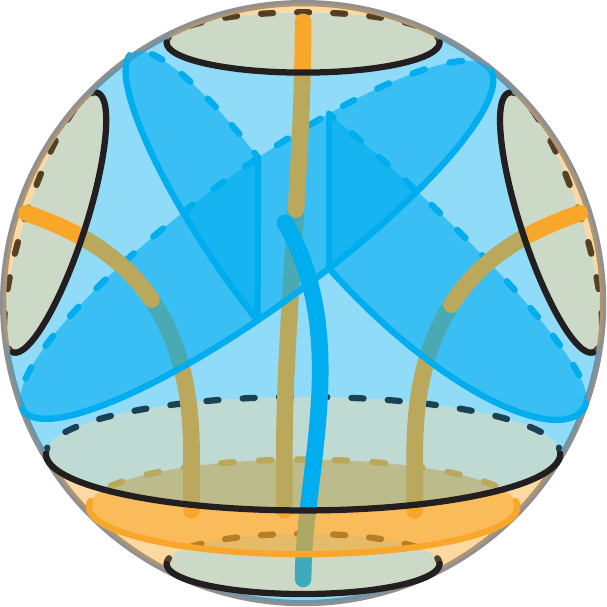}
    \hspace{0.1\textwidth}
    \caption{Spines of the colored balls from Figure \ref{fig:colored_3ball}. Left: in a 1-striped ball $(B,X,Y)$, $S_X$ is an arc and $S_Y$ is a disk. Center: in a 2-striped ball $(B,X,Y)$, $S_X$ and $S_Y$ are both $D^2\vee I$, where the wedge takes place at an endpoint of $I$ and an interior point in $D^2$. 
    Right: a more complicated example. This colored ball is not an $n$-striped ball for any $n$, as there is a region on the boundary that is neither a disk nor an annulus. }
    \label{fig:12stripe}
\end{figure}

\subsection{Examples} We now give some concrete examples of Murasugi sums of knotted surfaces, some of which will be obtained by spinning spanning surfaces for links in $S^3$. 

\begin{definition}\label{def:spun_seifert_surface}
    Let $L\subset S^3$ be a link with a specified component $L_0$, and let $F$ be a spanning surface for $L$. View $S^4=(B^3\times S^1)\cup( S^2\times D^2)$. The \emph{spun spanning surface} $S(F;L_0)\subset S^4$ of $F$ about $L_0$ is the 3-manifold
    \[(S^4,S(F;L_0))=(B^3\times S^1,F^\circ\times S^1)\cup(S^2\times D^2, (F^\circ\cap \partial B^3)\times D^2),\]
    where the gluing map $\partial (B^3\times S^1)\to \partial (S^2\times D^2)$ is the canonical one, and $(B^3,F^\circ)$ is the result of removing a small open ball centered at a point in $L_0$ from $(S^3,F)$. If $L=L_0$ is connected, we will denote the solid $S(F;L)$ simply by $S(F)$. The boundary of this solid is called the \emph{spin} of $L$ with respect to $K$, and is denoted $S(L;K)$ (or simply $S(L)$ if $L$ is connected). 
\end{definition}

Other solids may be obtained by spinning a surface with respect to multiple components, but the surface must be specified more precisely in this case.

\begin{example}
    As a warm-up example, let $L$ be an oriented Hopf link, let $F$ be its annular Seifert surface, and let $M\subset S^4$ be the spin of $F$ with respect to one of the two components. Note that $M$ is diffeomorphic to $(S^1\times D^2)^\circ$.
    Spinning a Murasugi square embedded in $F$ in the manner shown in Figure \ref{fig:spun_example}(a) produces a 3-ball $B$ which naturally lies in $M$ as a $2$-striped ball. 
    
    By performing a Murasugi sum of two copies of $M=S(F; L_0)$, with a copy of $B$ in each copy of $M$, as illustrated in Figure \ref{fig:spun_example}(b), we obtain the spin of the genus one Seifert surface for the trefoil knot. This resulting manifold is diffeomorphic to $(\#^2S^1\times S^2)^\circ$.
    
    Alternatively, performing the Murasugi sum of two copies of $M=S(F;L_0)$ along a (different) 1-stripe pattern as illustrated in Figure \ref{fig:spun_example}(c) yields a spun Seifert surface $S(F';K')$ for a 3-component link $L'$. Namely, $F'$ is a boundary sum of two positive Hopf bands, and $L'$ is a connect sum of two Hopf links. The manifold $S(F';K')$ is diffeomorphic to $(\#^2 S^1\times D^2)^\circ$.

     \begin{figure}[ht]
        \includegraphics[width=0.75\textwidth]{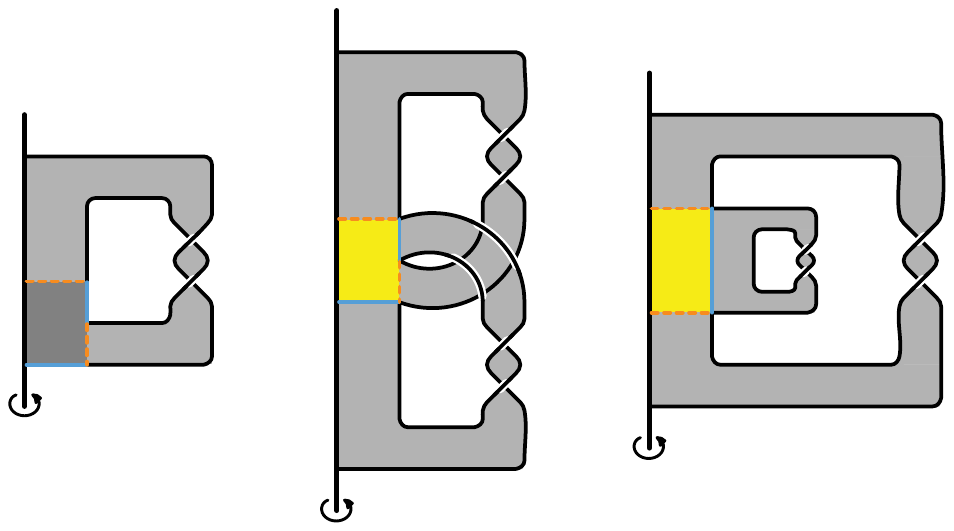}
        \put(-300,-5){(a)}
        \put(-308,73){$B$}
        \put(-180,-5){(b)}
        \put(-60,-5){(c)}
        \caption{In (a), a schematic illustration of the spin of an annulus $F$ bounded by the Hopf link, together with an embedded 2-striped 3-ball $B$ obtaining by spinning a square in $F$. In (b), the Murasugi sum of two spun Hopf annuli yielding the spin of the genus one Seifert surface for the trefoil. In (c), a Murasugi sum of two spun Hopf annuli along a 1-striped pattern. In (b) and (c), a slice of the identified 3-ball is highlighted (yellow).}
        \label{fig:spun_example}
    \end{figure}
\end{example}

The previous example indicates that Murasugi sum is compatible with spinning in a natural way. 

\begin{proposition}\label{prop:spun_murasugi}
    Suppose that $K_1$ and $K_2$ are classical knots, with spanning surfaces $F_1$ and $F_2$, respectively. Then the spin of any (classical) Murasugi sum of $F_1$ and $F_2$ is given by a Murasugi sum of the spins $S(F_1)$ and $S(F_2)$.
\end{proposition}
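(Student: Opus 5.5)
We would prove this by spinning the entire classical Murasugi sum picture, ball by ball, and checking that the pieces match Definition \ref{def:murasugi_sum}. Let $F=F_1\ast F_2\subset S^3=M_1'\cup_g M_2'$ be a classical Murasugi sum along a $2n$-gon $P$, as in Definition \ref{def:classicalmurasugisum}, and let $K=\partial F$. The first step is to choose the spinning point. We pick a point $p$ of $K$ that lies away from $P$, say on an arc of $\partial F_1\setminus f_1(P)$, which lies in the interior of $M_1'$. The small ball removed in the spinning construction is then disjoint from $M_2'$. Up to isotopy, the spin of $F$ depends only on which component is spun about, so this choice is harmless. It also shows that $(S^3\setminus \mathring{B}_p, F^\circ)$ decomposes as $(M_1'\setminus\mathring B_p, F_1^\circ)\cup_g (M_2',F_2)$.

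The second step is to spin each piece. Spinning $(M_1'\setminus\mathring B_p)$ about $\partial B_p$ produces $W_1\setminus \mathring V_1$, where $W_1=S^4$ is the ambient space of $S(F_1)$. Here $V_1$ is the 4-ball obtained as the "spin" of the ball $S^3\setminus M_1'$, so $V_1 = (S^3\setminus \mathring M_1')\times S^1$ glued to $\partial$-data. This is not yet right, and it is where the care lies. The cleaner route is to view the spin of $(S^3,F)$ as the boundary of $(B^3,F^\circ)\times D^2$ with corners smoothed, i.e. $S^4=\partial(B^3\times D^2)$ with $S(F)=\partial$-part of $F^\circ\times D^2$ meeting appropriately. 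The sphere $\partial M_2'$ cuts $B^3$ into $B^3\setminus \mathring M_2'$ (containing the removed ball's trace) and $M_2'$. Then $M_2'\times D^2$ is a 4-ball in $B^3\times D^2$ meeting the boundary in $M_2'\times S^1\cup \partial M_2'\times D^2$. Accordingly, we take instead the 3-ball $\Sigma=\partial M_2'\times D^2\cup_{\ldots}$: the boundary $\partial (M_2'\times D^2)$ restricted to the interior of $B^3\times D^2$ is $\partial M_2'\times D^2\cong B^4$... Rather than this, we pass to one dimension higher and exploit the $D^2$-factor symmetrically. Spinning about $p$ on the $M_1'$ side makes the decomposing sphere $\partial M_2'\subset S^3$ become $\partial M_2'\times D^2$ glued to nothing, so in $S^4$ we get the 3-sphere $\Sigma=\partial(M_2'\times D^2)$ after smoothing. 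It bounds $V_2=M_2'\times D^2$ and its complement $V_1$. The intersection $S(F)\cap \Sigma$ is $f_2(P)\times D^2\cup (\partial$-part$)$, which is a 3-ball $\mathcal P$. Its boundary is colored by $f(e_{\mathrm{odd}})\times D^2\cup P\times S^1$ versus $f(e_{\mathrm{even}})\times D^2$, after smoothing.

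The third step is to identify $S(F)\cap(S^4\setminus\mathring V_2)$ with $S(F_1)$ minus a pushed-off 4-ball, and $S(F)\cap V_2$ with $S(F_2)$ minus one. For the $F_2$ side, note that $(M_2'\times D^2, F_2\cap M_2'\times D^2)$ capped by $(S^4\setminus M_2'\times D^2)$ is exactly the spin of $F_2$ about a point near the gluing region. Here we use that the complement of $M_2'$ in $S^3$ is a ball meeting $F_2$ only in $P$, so it can be shrunk toward the spinning point. For each side we then check that $\mathcal P$ meets the solid in exactly an $X$- resp. $Y$-ball (Definition \ref{def:x_embedding}), which follows from the fact that $f_1$ sends odd edges to $\partial F_1$ and $f_2$ sends even edges to $\partial F_2$. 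The gluing map is $g\times\mathrm{id}$, which reverses orientation, so Definition \ref{def:murasugi_sum} applies directly.

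The main obstacle is this second step: making the smoothing of corners and the identification of the 4-balls $V_i$ precise and consistent with Definition \ref{def:murasugi_x_ball}, namely that $V_i$ is swept out by pushing the colored ball off the solid. We would handle it by working in the model $S^4=\partial(B^3\times D^2)$ and checking that $M_2'\times D^2$ with rounded corners is a collar-type pushoff of $\mathcal P$.
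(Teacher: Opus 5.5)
There is a genuine gap, and it comes from your choice of spinning point. Puncturing at $p\in\partial F_1\setminus f_1(P)$ in the interior of $M_1'$ puts the decomposing sphere $\partial M_2'$ and the polygon $P$ in the interior of $B^3=S^3\setminus\mathring B_p$. In $S^4=(B^3\times S^1)\cup(S^2\times D^2)$ they then spin to $\partial M_2'\times S^1\cong S^2\times S^1$ and to the solid torus $f_2(P)\times S^1$, and $M_2'$ spins to $M_2'\times S^1\cong B^3\times S^1$. So you get neither a 3-sphere cutting $S^4$ into two 4-balls nor a 3-ball gluing region, and the 4D Murasugi sum needs both. The introduction warns about exactly this: spinning turns the plumbing disk into a solid torus unless the axis is chosen carefully.

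Your repair in the model $S^4=\partial(B^3\times D^2)$ does not fix this, because it conflates the 5-ball with its boundary. Since $M_2'\cap\partial B^3=\varnothing$, the set $M_2'\times D^2$ is a 5-ball inside $B^3\times D^2$ that meets $\partial(B^3\times D^2)$ only in $M_2'\times S^1$. Its boundary $\partial(M_2'\times D^2)$ is a 4-sphere that is not contained in $S^4$. Likewise $f_2(P)\times D^2$ is 4-dimensional, so it cannot be a 3-ball in $S(F)$, and your proposed coloring by the sets $e_j\times D^2$ has the same dimension problem. Step three depends on step two, so it has nothing to stand on.

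The missing idea is to put the puncture on the decomposing sphere itself, at a vertex $v$ of $P$. Vertices lie in $\partial F_1\cap\partial F_2\subset K$. Because $K$, $K_1$ and $K_2$ are knots, your correct remark that the spun solid does not depend on the puncture point makes this choice legitimate for $F$, $F_1$ and $F_2$ simultaneously. With this choice:
\begin{itemize}
\item The decomposing sphere minus $\mathring B_v$ is a properly embedded disk $\Delta\subset B^3$. Its spin $(\Delta\times S^1)\cup(\partial\Delta\times D^2)$ is a 3-sphere, and it cuts $S^4$ into the spins of the two half-balls. Each of these is a 4-ball, since each half-ball meets $\partial B^3$ in a disk.
\item The polygon $P^\circ$ meets $\partial B^3$ in an arc $a$, so its spin $(P^\circ\times S^1)\cup(a\times D^2)$ is a 3-ball.
\item The boundary of this 3-ball is the spin of the arc $\partial P^\circ\setminus\mathring a$. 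It consists of two disks, coming from the two shortened edges at $v$ (one odd, one even), and $2n-2$ annuli, coming from the remaining edges.
\item Coloring the spins of the odd edges $X$ and of the even edges $Y$ gives a $(2n-2)$-striped ball. It is $X$-embedded in $S(F_1)$ and $Y$-embedded in $S(F_2)$, where $S(F_i)$ is obtained by spinning $F_i$ about the same vertex.
\item The spun thickening of $P$ on each side is the required normal push-off 4-ball, and the gluing map is the spin of $g$.
\end{itemize}
This is the paper's short proof.
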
  

\begin{proof}
    The idea of the proof is essentially contained in the previous example. Suppose that the Murasugi sum $F$ of $F_1$ and $F_2$ takes place along identifications of a $2n$-sided polygon $P$. Build the spin of $F$ by puncturing $S^3$ at one of the vertices of $P$ in $\partial F$. Then $P^\circ\subset F^\circ\subset B^3$ is a polygon whose spin naturally lies in $S(F)$ as a $(2n-2)$-striped 3-ball. In particular, $S(F)$ decomposes as a Murasugi sum of $S(F_1)$ and $S(F_2)$. 
\end{proof}

\begin{remark}
    The preceding proof relies on the assumption that $K_1$ and $K_2$ are knots rather than links of multiple components. A statement analogous to Proposition \ref{prop:spun_murasugi} is true for spins of spanning surfaces for links if one replaces ``any'' with ``some''---the condition would be that the $2n$-gon in each $F_i$ abuts the specified boundary boundary component for spinning--- or if one changes the 4D Murasugi sum in the statement to {\it generalized Murasugi sum}, as in Definition \ref{def:generalized_murasugi_sum}.
\end{remark}

\begin{remark}\label{rem:non-additivity}
    In \cite{thompson1994}, Thompson observed that knot genus is neither sub- nor super-additive under the Murasugi sum operation for non-minimal genus spanning surfaces. In particular, she gives an example of a Murasugi sum of (Seifert surfaces for) two trivial knots yielding a trefoil, and an example of a Murasugi sum of two figure eight knots yielding the trivial knot. In both examples, the gluing region is a square.

    Thus, by spinning her examples and using Proposition \ref{prop:spun_murasugi}, we see that the minimal first Betti number among all Seifert solids for an oriented knotted surface is generally neither sub- nor super-additive under Murasugi sums, even along 2-stripe patterns. 
    \end{remark}

\subsection{Properties of Murasugi sum}\label{subsec:properties} We now record some of the important properties of the Murasugi sum in $S^4$.

\begin{proposition}\label{prop:replumb}
    While the smooth equivalence class of the spanning solid $M$ constructed in Definition \ref{def:murasugi_sum} generally depends on the choice of local orientations on $M_i$ near $f_i(B)$, the knotted surface $L=\partial M$ is independent of these choices (up to smooth equivalence). In other words, $L$ depends only on the diffeomorphism $g$ identifying the $X$-ball $f_1(B)$ in $M_1$ with the $Y$-ball $f_2(B)$ in $M_2$, not the orientation choices.
\end{proposition}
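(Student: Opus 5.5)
We first pin down what the ``choice of local orientations'' is. In Definition~\ref{def:murasugi_sum}, the 4-ball $V_i$ is swept out by pushing $f_i(B)$ off $M_i$ in one of the two normal directions to $M_i$ in $W_i$ near $f_i(B)$. Since $f_i(B)$ is a ball, this normal bundle is trivial, so there are exactly two choices, $V_i^+$ and $V_i^-$. Changing the choice for $M_1$, the choice for $M_2$, or both, changes the Murasugi sum. It therefore suffices to show that replacing $V_1^+$ by $V_1^-$ (with $V_2$ fixed and $g$ adjusted compatibly) does not change $\partial M$ up to smooth equivalence. The case of changing $V_2$ is symmetric.

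The plan is to compare the two sums directly. Write $M^\pm = M_1\cup_{g^\pm} M_2$, built using $V_1^\pm$.
\begin{itemize}
\item[(1)] Observe that $V_1^+\cup V_1^-$ is a 4-ball $\nu f_1(B)$, namely a neighborhood of $f_1(B)$ meeting $M_1$ in a collar of $f_1(B)$. Hence $W_1\setminus \mathring V_1^+$ and $W_1\setminus\mathring V_1^-$ are related by an isotopy that sweeps across $\nu f_1(B)$, pushing through $f_1(B)$.
\item[(2)] This isotopy cannot fix $M_1$, because $M_1$ contains $f_1(B)$ and extends beyond it along $f_1(Y)$ into the interior of $M_1$. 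It can, however, fix $\partial M_1$ away from $f_1(X)$. The idea is to modify $M_1$ only inside $\nu f_1(B)$, replacing the half of $M_1\cap\nu f_1(B)$ on one side by the other. Concretely, $M_1\cap \nu f_1(B)$ is $f_1(B)$ together with a collar $f_1(Y)\times[-1,1]$ sitting on $\partial\nu$ across $Y$, and the two sums differ by ``flipping'' which side of $\partial V_1$ this collar is attached from.
\item[(3)] Reduce to a local model. Inside the summing 4-ball, the two sums differ only in a neighborhood $U\cong B^4$ of the glued ball $f_1(B)=f_2(B)$. In $U$, $M^\pm\cap U$ is $B$ with collars of $X$ attached on one side (coming from $M_2$) and collars of $Y$ attached on the other side (coming from $M_1$). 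In the $+$ version both collars leave toward opposite sides; in the $-$ version the collars of $Y$ leave toward the same side as those of $X$.
\end{itemize}

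The key step is to show that in $U$ the boundaries $\partial M^+\cap U$ and $\partial M^-\cap U$ agree rel $\partial U$ up to isotopy, even though the solids may not. Near $X\cap Y$ both boundaries look like the standard corner-smoothing. Away from $X\cap Y$, $\partial M^\pm\cap U$ consists of pushoffs of $X$ and of $Y$ into opposite normal directions relative to $B$. Flipping the side of the $Y$-collar moves the pushoff of $Y$ from one side of $B$ to the other. In dimension four this can be done by an isotopy rotating through the extra normal direction of $B$ in $U$, which has codimension one in $\partial U$ but is available because $\partial M\cap U$ is a surface in a 4-ball, where a surface can pass around a 3-ball. The concrete approach is to write $U=B\times[-1,1]$ and observe that $\partial M^\pm\cap U$ is isotopic rel boundary to $(\partial B\times\{0\})$ capped appropriately. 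The solid $B$ is then discarded, since once the interior of $B$ is no longer required to be part of the solid, it can be pushed to either side.

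The main obstacle is making this last isotopy precise and checking that it is the identity on $\partial U$, where $\partial M$ meets the rest of $M_1\cup M_2$. I would build it explicitly in coordinates $B\times[-1,1]$: first straighten the collars of $X$ and $Y$ to be vertical over the $[-1,1]$ factor, then rotate the $Y$-part in the plane spanned by the $[-1,1]$ direction and the inward normal of $Y$ in $B$. This rotation sweeps through the interior of $B$, which is allowed because $\partial M$ no longer contains $B$. I expect the fiddly part to be the corners along $X\cap Y$, where one must check that the rotation extends continuously by a twist fixed on the curves $X\cap Y$.
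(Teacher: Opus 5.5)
\textbf{The gap.} Your key step is built on an incorrect picture of $M$ and $\partial M$ near the glued ball, so the isotopy you plan to construct does not address the actual problem.

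In every Murasugi sum, $M_1$ lies in $B^4_1=W_1\setminus\mathring V_1$ and $M_2$ lies in $B^4_2=W_2\setminus\mathring V_2$. So the collar of $Y$ (from $M_1$) and the collar of $X$ (from $M_2$) always leave the glued ball $N=f_1(B)=f_2(B)$ on \emph{opposite} sides of the splitting 3-sphere $Q$. The ``$-$ version'' of step (3), in which both collars leave on the same side, is not a Murasugi sum at all.

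Moreover, $f_1(X)\subset\partial M_1$ is glued to $f_2(X)$, whose interior lies in $\mathring M_2$. Likewise $f_2(Y)\subset\partial M_2$ is glued to $f_1(Y)$, whose interior lies in $\mathring M_1$. So the interiors of $X$ and $Y$ both become interior points of $M$. Near $N$, $\partial M$ is just the annuli $(X\cap Y)\times[-\epsilon,\epsilon]$ crossing $Q$ transversely; it contains no pushoffs of $X$ or $Y$ to rotate. Everything therefore hinges on your assertion that the two sums can be made to agree outside a small 4-ball $U$ around $N$. Nothing in steps (1)--(3) establishes this.

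Your reduction to flipping one side at a time is also inconsistent with Definition~\ref{def:murasugi_sum}. Because $g$ must reverse orientation and carry $f_1(B)$ to $f_2(B)$ as oriented balls, the side chosen for $V_1$ forces the side for $V_2$. Flipping only $V_1$ requires precomposing the identification of the balls with an orientation-reversing symmetry of $(B,X,Y)$. Your reduced claim then compares sums with \emph{different} gluing maps, which is a different and not obviously true statement. The comparison the proposition asks for is between the original sum and the sum with both sides reversed.

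\textbf{The missing idea.} The comparison should be global, using the complementary ball of the splitting sphere. Say $M$ was built using $V_1^+$, so that $Q=\partial V_1^+=N\cup N'$ with $N'=Q\setminus\mathring N$.

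Since $V_1^+\cup V_1^-$ is a product neighborhood of $f_1(B)$ meeting $M_1$ only in $f_1(B)$, the pair $\bigl(W_1\setminus\mathring V_1^{+},(M_1\setminus f_1(B))\cup N'\bigr)$ is equivalent to $\bigl(W_1\setminus\mathring V_1^{-},M_1\bigr)$. Under this equivalence $N'$ plays the role of $f_1(B)$ with the reversed boundary orientation. The same holds on the $M_2$ side, with the same identification along $\partial N$. Hence the reversed sum is, up to equivalence, $M'=(M\setminus N)\cup N'$.

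Because $\partial N'=\partial N$ and the collars of $X$ and $Y$ are untouched, $\partial M'=\partial M$ exactly, with no isotopy of surfaces needed. Note that $N'$ is the whole rest of $Q$, not a perturbation of $N$ inside a small neighborhood of it. This is why an argument confined to a 4-ball around the glued ball is the wrong frame. Your closing remark that $B$ ``can be pushed to either side'' points in the right direction, but you need to identify the replacement ball and check that it yields the other Murasugi sum.
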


Note that if one reverses the local orientation on only one of the $M_i$ near $f_i(B)$, then $g$ no longer identifies $f_1(B)$ and $f_2(B)$ as oriented manifolds as required in Definition \ref{def:murasugi_sum}. Thus in Proposition \ref{prop:replumb}, the alternative orientation choice we are considering is that of reversing {\emph{both}} the local orientations of $M_1, M_2$ near the plumbing regions.

\begin{proof}
    Let $M$ be the Murasugi sum resulting from one choice of local orientations on the plumbing regions. Recall $M$ meets the splitting $S^3$ in a 3-ball $N$, which is the identified $f_1(B),f_2(B)$. Write $N'=S^3\setminus\mathring N$. Let $M'$ be the Murasugi sum obtained after reversing both orientations (using the same identification $g$). Through isotopy into a bounded $B^4$ (rel.\ boundary), $N'$ is isotopic to $N$ with reversed orientation. Therefore, we may arrange (up to smooth isotopy) $M'=(M\setminus N)\cup N'.$  While it is not clear whether $M',M$ are equivalent, in this description they have identical boundary and hence the {knotted surface} $L$ depends only on $g$, not the choice of local orientations on $M_i$ near the plumbing regions. 
\end{proof}

\begin{remark}\label{prop:singular}
    In Definition \ref{def:colored_ball}, we could have allowed the surfaces $X$ and $Y$ that form the boundary of a colored 3-ball to include wedge singularities, i.e.\ have $X\cap Y$ be a graph instead of disjoint circles. We use the (seemingly) simpler definition because allowing singularities yields no potential new outcomes. In other words, if $M$ is the Murasugi sum of spanning solids $M_1$ and $M_2$ along a colored 3-ball where the intersection $X\cap Y$ is allowed to have wedge singularities, then $M$ can also be obtained as a Murasugi sum of $M_1$ and $M_2$ along a colored 3-ball as in Definition \ref{def:colored_ball} (which has no singularities).
\end{remark}

\begin{proof}[Proof of claim in Remark \ref{prop:singular}]
    Let $\mathcal{B}=(B,X,Y)$ denote the singular colored 3-ball used in the Murasugi sum of $M_1, M_2$. In Figure \ref{fig:singular} we see a local model of the result of Murasugi sum near a wedge point $p$ in $X\cap Y$. Near $p$, we perturb the connected sum sphere to intersect the spanning solid transversely. This choice of perturbation is not unique, as illustrated in the figure. Up to diffeomorphism, we obtain the same spanning solid achieved as a Murasugi sum of $M_1, M_2$ along a colored 3-ball that agrees with $\mathcal{B}$ away from $p$, but near $p$ has its coloring replaced by a resolution of the singularity; the choice of resolution depends on the perturbation of the 3-sphere. Repeating this process for each wedge point in $X\cap Y$, we thus obtain the same spanning solid as a Murasugi sum of $M_1$ and $M_2$ along a nonsingular colored 3-ball.
\end{proof}

\begin{figure}
    \includegraphics[width=110mm]{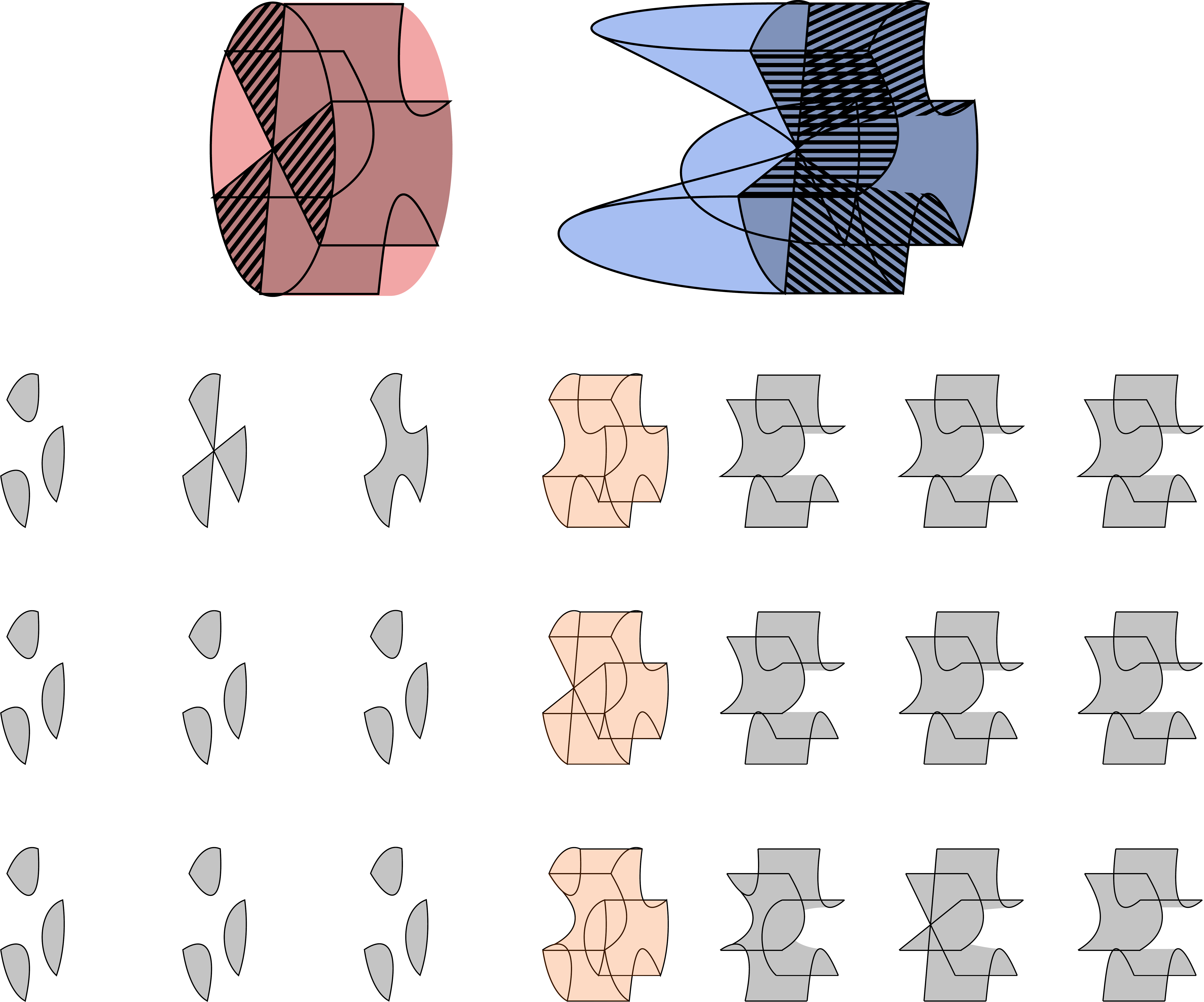}
    \caption{Top two images: Spanning solids $M_1, M_2$ near a singular point of $X\cap Y$, where $(B,X, Y)$ is a colored 3-ball $X$-embedded in $M_1$ and $Y$-embedded in $M_2$. We use diagonal lines to shade the portions of $\partial B$ in $\partial M_1$ and $\partial M_2$ -- note these regions are complementary. Bottom: In the middle row, we Murasugi sum $M_1, M_2$ along $(B,X,Y)$. We draw cross-sections of the resulting spanning solid, with $B$ appearing as one cross-section in the center. In the top and bottom rows, we perturb the embedding to find the same spanning solid obtained as a Murasugi sum of $M_1, M_2$ along a colored 3-ball with one fewer point of singularity.}
    \label{fig:singular}
\end{figure}

One of the most important features of the classical Murasugi sum operation, first proved by Stallings \cite{stallings1978}, is that it preserves the property of being fibered. In other words, if $F_1$ and $F_2$ are fiber surfaces in $S^3$, then any Murasugi sum of $F_1$ and $F_2$ is also a fiber surface in $S^3$. Later, Gabai \cite{gabai_murasugisumnatural} proved the converse (if a sum is fibered, so are both summands) and coined the name \textit{Murasugi sum} for the generalization to $2n$-gons as in Definition \ref{def:classicalmurasugisum}. He showed that this operation behaves naturally with respect to a number of useful features of fibrations of link complements, e.g.\ the monodromy. 

We note that Lines (\cite{lines1985}, \cite{lines1986}, \cite{lines1987}) also considered versions of plumbings of {codimension-2} submanifolds in ambient dimension strictly greater than four. For some history of the various notions of plumbing and the corresponding fibration theorems, see \cite[\S2]{ozbagci_popescupampu}.

Ozbagci and Popescu-Pampu \cite{ozbagci_popescupampu} prove the following very general result for summing pages of an open book along an embedded submanifold with no restriction on dimension. 

\begin{theorem}[{\cite[Theorem 9.3]{ozbagci_popescupampu}}]\label{thm:ozbagcic_pompescupampu}
   Let $(W_i, M_i, P)_{i = 1,2}$ be two summable patched Seifert hypersurfaces
   which are pages of open books on the closed manifolds $W_i$.
   Then the Seifert hypersurface associated to the sum
   $(W_1, M_1) \biguplus^P  (W_2, M_2)$
   is again a page of an open book. Moreover, the geometric monodromy
   of the resulting open book is the composition $\phi_1 \circ \phi_2$ of the monodromies
   of the initial open books. Here $\phi_i : M_i \to M_i$
   is extended to $M_1 \biguplus^P M_2$ by the identity on
   $(M_1 \biguplus^P M_2)\setminus M_i$.
\end{theorem}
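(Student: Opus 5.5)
The proof I propose follows the sutured-manifold strategy of Stallings and Gabai in the classical case: characterize ``page of an open book'' by a product structure on the complement cut along the hypersurface, and show that product structures glue across a Murasugi sum. First I reduce the theorem to a statement about cut-open complements. A Seifert hypersurface $M\subset W$ with binding $\partial M$ is a page of an open book with monodromy $\phi$ exactly when $W\cut M$ (in the sense of Definition \ref{def:cut}), with the copy of $\partial M$ treated as sutures, is a product $M\times[0,1]$. In this product, $M\times\{0\}$ and $M\times\{1\}$ are the two copies $M_\pm$ of $M$ in the boundary, and the vertical flow from $M_+$ back to $M_-$, followed by the regluing, is $\phi$. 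So I must show two things: $W\cut M$ is a product for $M=M_1\biguplus^P M_2$, and the induced return map is $\phi_1\circ\phi_2$.

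Next I decompose $W\cut M$. In the sum, the connected-sum sphere $\Sigma=\partial V_1\cong\partial V_2$ meets $M$ in the patch $P$. In $W_1$, the ball $V_1$ is a one-sided collar of $P$ lying on, say, the positive side of $M_1$. In $W_2$, the ball $V_2$ lies on the negative side of $M_2$. It follows that
\[
W\cut M=(W_1\cut M_1\setminus \mathring V_1)\cup(W_2\cut M_2\setminus\mathring V_2).
\]
The gluing takes place along $\Sigma\setminus P$. It also identifies the part of $P$ in $M_{1,+}$ with the part in $M_{2,-}$. Here I use the coloring: the $X$-part of $P$ lies in $\partial M_1$ and the $Y$-part lies in $\partial M_2$. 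Consequently the two copies of $P$ sit transversely to the respective sutures, each in the pattern the other one needs.

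The main step, and the one I expect to be the real obstacle, is to isotope the product structure on each $W_i\cut M_i$ so that it is adapted to $V_i$. For $W_1$, I want $V_1$ to correspond to $P\times[1-\varepsilon,1]$, a vertical collar of the top copy of $P$; this is possible because $V_1$ is swept out by pushing $P$ off $M_1$ normally. For $W_2$, I want $V_2$ to be a collar $P\times[0,\varepsilon]$ of the bottom copy of $P$. Then $W_1\cut M_1\setminus\mathring V_1$ is a product over $M_1$ minus the part of $P$ away from $\partial M_1$, with $P\times\{1-\varepsilon\}$ now a horizontal face. The same holds for $W_2$ with $P\times\{\varepsilon\}$. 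Gluing along $P$ stacks the second product on top of the first over $P$, and elsewhere each product continues as itself. Care is needed at the corners along $X\cap Y$, where the sutures switch between $\partial M_1$ and $\partial M_2$. There I would use the local models at $X\cap Y$, in the spirit of Figure \ref{fig:singular}, to round corners consistently.

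Finally I read off the monodromy. A vertical flow line starting in $M_-$ away from $P$ sees only one factor. A flow line over $P$ first traverses the $W_2$-product, which applies $\phi_2$ (with $\phi_2$ extended by the identity off $M_2$), and then the $W_1$-product, which applies $\phi_1$. The composite return map is therefore $\phi_1\circ\phi_2$. The ordering depends on the positive/negative side convention from Definition \ref{def:murasugi_sum}, so I will fix that convention at the start. In arbitrary dimension the same argument applies verbatim, since it uses only collars and product structures.
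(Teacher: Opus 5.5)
The paper does not prove this statement. It is quoted from \cite[Theorem 9.3]{ozbagci_popescupampu} and used only to obtain Corollary \ref{cor:fibered_murasugi}, so your proposal has to stand on its own. Your overall strategy is the right one: characterize a page by its cut-open complement being a product, write the complement of the sum as the union of the two complements minus the balls $V_i$, glue the product structures, and read off the return map. This is the classical Stallings--Gabai route. However, the step that carries all the content is only asserted, and as you have set it up it is false.

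The two pieces are glued along $Q=\Sigma\setminus\mathring P$, not along $P$, and $\partial Q=\partial P=X\cup Y$ sits differently in the two summands.
\begin{itemize}
\item In $W_1\cut M_1$, $Y\subset\mathring M_1$ lies on $M_{1,+}$ and $X$ lies on the binding. With your choice $V_1=P\times[1-\varepsilon,1]$, the ball $Q$ is the face $P\times\{1-\varepsilon\}$ together with a vertical wall $Y\times[1-\varepsilon,1]$.
\item In $W_2\cut M_2$, $Q$ is $P\times\{\varepsilon\}$ together with a wall $X\times[0,\varepsilon]$ descending to $M_{2,-}$.
\end{itemize}
So the faces are not glued to each other over $P$. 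The $W_1$-wall over $Y$ is glued to a collar of $Y$ in the $W_2$-face, and the $W_2$-wall over $X$ is glued to a collar of $X$ in the $W_1$-face. Consequently the union of the two vertical line fields is not an interval fibration from $M_-$ to $M_+$. The $W_2$-lines over a collar of $Y$ in $P$ begin on the $W_1$-wall, which lies in the interior of $W\cut M$ and runs parallel to the $W_1$-lines over $M_1\setminus P$. Symmetrically, the $W_1$-lines over a collar of $X$ end on the $W_2$-wall. This failure occurs along all of $X$ and $Y$, not only at the curves $X\cap Y$ where you planned to round corners.

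The missing idea is exactly where the $X$/$Y$ hypothesis is used. Choose $V_1$ so that $Q$ is a graph $t=\tau(p)$ over the top copy of $P$, with $\tau<1$ on $P\setminus Y$ and $\tau=1$ on $Y$. Choose $V_2$ so that $Q$ is a graph $t=\sigma(p)$ over the bottom copy, with $\sigma>0$ on $P\setminus X$ and $\sigma=0$ on $X$. Then glue by projecting both copies to $P$; in this dimension that gluing is isotopic rel boundary to $g|_Q$ by Cerf's theorem.

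Now every line over $P$ crosses $Q$ exactly once, and the concatenated lines vary continuously:
\begin{itemize}
\item As $p\to Y$, the $W_2$-portion collapses into the binding of the second open book, because $Y\subset\partial M_2$. The line converges to the full $W_1$-line and matches the lines over $M_1\setminus P$.
\item As $p\to X$, the $W_1$-portion collapses, because $X\subset\partial M_1$.
\end{itemize}
If $\partial P$ were interior to both summands, this continuity would fail; indeed $M$ would not even be a manifold there. This continuity argument is the real content of the proof. So is the remaining local model at $\partial X=\partial Y$, where both portions degenerate and you must check that $\partial M$ acquires a standard binding neighborhood. The step you flagged as the main obstacle, adapting the product structures to the $V_i$, is just uniqueness of collars.

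Finally, your monodromy bookkeeping needs a small correction. A line that starts off $P$ can still pass through both products, namely whenever the first monodromy it meets carries it into $P$. The formula $\phi_1\circ\phi_2$ nonetheless survives, because each $\phi_i$ is the identity off $M_i$.
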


Here, the pair $(W_i, M_i, P)_{i = 1,2}$ are called \emph{summable patched Seifert hypersurfaces} if $M_i$ are Seifert hypersurfaces whose co-orientations extend that of the \emph{patch} $P$ (an identified subset of each $M_i$ along with the gluing data). We refer the reader to \cite[\S9]{ozbagci_popescupampu} for more details. As a consequence, we obtain the following corollary: the Murasugi sum of (fiber solids for) two fibered knotted surfaces is fibered. 

\begin{corollary}[{Corollary of \cite{ozbagci_popescupampu}, Theorem 9.3}]
    \label{cor:fibered_murasugi}
    Suppose that $S_1$ and $S_2\subset S^4$ are fibered knotted surfaces embedded in $S^4$ with fibers $M_1$ and $M_2$, respectively. Then the boundary $S_1\ast S_2$ of any Murasugi sum of $M_1$ and $M_2$ is fibered, with fiber $M_1\ast M_2$. Moreover, the monodromy of $M_1\ast M_2$ is given by $\phi_1\circ \phi_2$, where $\phi_1$ and $\phi_2$ are the monodromies of $M_1$ and $M_2$, respectively. Here, $\phi_i$ is extended by the identity to $M_1\ast M_2$. 
\end{corollary}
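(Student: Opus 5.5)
The plan is to reduce Corollary \ref{cor:fibered_murasugi} to Theorem \ref{thm:ozbagcic_pompescupampu} by showing that a 4D Murasugi sum in the sense of Definition \ref{def:murasugi_sum} is a special case of the Ozbagci--Popescu-Pampu sum $\biguplus^P$.

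First, I will translate the fibration hypotheses into open books. A fibration of $S^4\setminus \mathring\nu S_i$ with fiber $M_i$ gives an open book on $W_i=S^4$ with binding $S_i$ and page $M_i$. Since $M_i$ is a fiber, it is orientable and co-oriented by the fibration, so $M_i$ is a Seifert hypersurface in their sense. Let $\phi_i$ denote the monodromy.

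Next, I will set up the patch. Take $P$ to be the colored 3-ball $\mathcal{B}=(B,X,Y)$, together with the embeddings $f_1$ (an $X$-embedding in $M_1$) and $f_2$ (a $Y$-embedding in $M_2$). The required patch data should then be checked against \cite[\S9]{ozbagci_popescupampu}:
\begin{enumerate}[label=(\roman*)]
\item $P$ sits in $M_1$ with the part of its boundary complementary to $\partial M_1$ being $Y$, and in $M_2$ with complementary part $X$. This is exactly the condition that the two halves of $\partial P$ alternate between the two summands.
\item The 4-balls $V_1, V_2$ obtained by pushing $f_i(B)$ off in a normal direction (Definition \ref{def:murasugi_x_ball}) are the half-neighbourhoods they use. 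Their condition that one ball lies on the positive side of $M_1$ and the other on the negative side of $M_2$ is arranged by choosing the push-off directions using the co-orientations.
\item The gluing $g$ respects $B$, $X$ and $Y$ as oriented submanifolds. This is the same as the co-orientations of $M_1, M_2$ extending a common co-orientation of $P$, which is the summability condition.
\end{enumerate}
With these checks, $(W_1\setminus\mathring V_1)\cup_g(W_2\setminus\mathring V_2)\cong S^4$, and $M_1\cup_g M_2$ is their $M_1\biguplus^P M_2$.

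Then Theorem \ref{thm:ozbagcic_pompescupampu} gives that $M_1\ast M_2$ is a page of an open book on $S^4$ with monodromy $\phi_1\circ\phi_2$, each $\phi_i$ extended by the identity. The binding of this open book is $\partial(M_1\ast M_2)=S_1\ast S_2$, so $S_1\ast S_2$ is fibered with fiber $M_1\ast M_2$.

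Two points need care. One is the orientation choice: Definition \ref{def:murasugi_sum} permits local orientations that need not come from the fibrations. I would handle this by using the freedom in Proposition \ref{prop:replumb} to reverse both local orientations simultaneously, so that they agree with the fibration co-orientations. If only one of them disagrees globally, I would replace $M_2$ by its mirror co-orientation, which is the same fibration traversed backwards.

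The main obstacle is the matching of conventions in step (ii), namely which side each push-off lies on, since their ``patched'' definition is phrased abstractly. I would verify it directly in a collar model $P\times[-1,1]$, in the same way the classical Definition \ref{def:classicalmurasugisum} specifies the positive and negative normals.
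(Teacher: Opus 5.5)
Your reduction to Theorem \ref{thm:ozbagcic_pompescupampu} is the route the paper takes. The paper gives no argument beyond citing \cite[Theorem 9.3]{ozbagci_popescupampu}, where summability means that the co-orientations of the $M_i$ extend a common co-orientation of the patch. The step that fails as written is your handling of orientations.

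The push-off directions defining $V_1,V_2$ are part of the data of the given Murasugi sum, so you may not ``arrange'' them in (ii). Reversing both local orientations via Proposition \ref{prop:replumb} replaces $M=M_1\ast M_2$ by the solid $M'=(M\setminus N)\cup N'$ from that proof. That proposition explicitly says $M'$ may be inequivalent to $M$; only $\partial M$ is preserved. So in that case you would show that $S_1\ast S_2$ is fibered with fiber $M'$, not with fiber the given $M_1\ast M_2$. Relatedly, (iii) is not literally summability. The map $g$ matches the orientations induced from $B$ through $f_1,f_2$, and these need not be the fibration orientations of $M_1,M_2$.

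The fix is to keep the sum as given and change the fibration directions instead.
\begin{itemize}
\item Since $M$ is glued from the orientable, connected solids $M_1,M_2$ along a ball, it is orientable. Co-orient it in $S^4=(W_1\setminus\mathring V_1)\cup_g(W_2\setminus\mathring V_2)$ and restrict this co-orientation to $M_1$ and $M_2$.
\item Each restriction is either the fibration co-orientation or its reverse. In the latter case, use the same fibration traversed backwards, which has the same fibers and monodromy $\phi_i^{-1}$.
\item The two co-orientations now agree on the patch automatically, being restrictions of one co-orientation of $M$.
\item If the positive normal of $M$ at the patch points into the $W_2$-half, then $V_1$ lies on the positive side of $M_1$ and $V_2$ on the negative side of $M_2$. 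This is exactly the summability hypothesis. Otherwise the same holds with the summands exchanged.
\end{itemize}
Theorem \ref{thm:ozbagcic_pompescupampu} then applies to $M$ itself, giving monodromy $\phi_1\circ\phi_2$ or $\phi_2\circ\phi_1=\phi_1^{-1}(\phi_1\circ\phi_2)\phi_1$. These are conjugate, so they give equivalent open books. No appeal to Proposition \ref{prop:replumb} is needed.
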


\begin{remark}\label{rem:general_murasugi_sums}
    While have only discussed performing Murasugi sums along a 3-ball, we later generalize Definitions \ref{def:colored_ball} and \ref{def:murasugi_sum} to other convenient 3-dimensional solids in Definition \ref{def:generalized_murasugi_sum}. We note here, however, that if we perform such a ``generalized Murasugi sum'' of solids $M_1$ and $M_2$ with boundaries $\partial M_i=S_i$ along a 3-manifold $Y$ to obtain a solid $M$ with boundary $S=\partial M$, then we have
    \begin{align*}
    \chi(S) &= \chi(S_1\setminus\partial Y) + \chi(S_2\setminus \partial Y) \\
    &= \chi(S_1)+\chi(S_2) - (\chi(S_1\cap Y)+\chi(S_2\cap Y)) \\
    &= \chi(S_1)+\chi(S_2)-\chi(\partial Y).
    \end{align*}

    Thus, if $\partial Y$ is (connected but) not a 2-sphere, the resulting knotted surface $S$ cannot be fibered in $S^4$, since any knotted surface in $S^4$ which is fibered must have Euler characteristic equal to 2. In particular, this will mean that no strictly generalized Murasugi sum will preserve the property of being fibered.
\end{remark}

The next two lemmas allow one to simplify certain Murasugi sum descriptions. (We suspect that a stronger version of Lemma \ref{lem:knotted_arc} is true, but we have no occasion to use it---see Question \ref{Q:knotted_arc}.) They imply, for example, that if $M_i$, $i=1,2$, are boundary-irreducible spanning solids (meaning that every properly embedded disk in $M_i$ is boundary-parallel), then the only Murasugi sums $M_1*M_2$ are boundary sums $M_1\natural M_2$.

\begin{lemma}\label{lem:knotted_arc}
     Let $M=M_1*M_2$ be a Murasugi sum of spanning solids along a colored 3-ball $\mathcal{B}=(B,X,Y)$, let $f_1:B\to M_1$ and $f_2:B\to M_2$ be the associated $X$- and $Y$-embeddings, and let $S_X\subset M_1,S_Y\subset M_2$ be the $X$- and $Y$-spines of the plumbing. Let $\alpha$ be the union of the arcs in $S_X$, so that $S_X\cut \alpha$ is comprised of the closures of the 2-cells of $S_X$.  Now suppose $Z\subset M_1$ is a 3-ball containing $X$, and $\alpha'\subset Z$ is another union of arcs 
     that is obtained from $\alpha$ by a homotopy rel.\ boundary in $Z\cut(S_X\cut\alpha)$---in other words, $\alpha'$ is obtained from $\alpha$ by locally (in $Z$) changing crossings (including self-crossings) among the arcs of $\alpha$. Let $S_{X'}=(S_X\cut\alpha)\cup \alpha'$, take $X'=\nu S_{X'}$ such that $X'\cap\partial M_1=X\cap\partial M_1$, and choose an embedding $f_1':B\to M_1$ whose restriction to $f_1^{-1}(\partial M_1)$ matches that of $f_1$. Then $M$ is equivalent to the Murasugi sum of $M_1$ and $M_2$ along the colored 3-ball $\mathcal{B}'=(B,X',Y)$. 
\end{lemma}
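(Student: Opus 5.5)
Our approach is to realize each crossing change of $\alpha$ inside $Z$ by an isotopy of the connected-sum 3-sphere in $S^4=W_1\# W_2$. The Murasugi sum picture itself stays fixed; only the 3-sphere decomposing it moves. Concretely, $M=M_1\cup_g M_2$ sits in $S^4=(S^4\setminus\mathring V_1)\cup_g(S^4\setminus\mathring V_2)$, and the splitting sphere $\Sigma=\partial V_1$ meets $M$ exactly in $N=f_1(B)=f_2(B)$. Up to isotopy, the 4-ball $V_2$ side near $N$ is a collar $N\times[0,\epsilon]$ pushed off $M_1$ to one side. Thus the part of $M_2$ near $\Sigma$ is $f_2(B)$ thickened, and $\nu S_Y$ is the part of $N$ lying in the interior of $M_2$.

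First, we reduce to a single crossing change. A homotopy rel boundary of $\alpha$ in $Z\cut(S_X\cut\alpha)$ is, after a generic perturbation, a finite sequence of isotopies and crossing changes between arcs of $\alpha$ (or one arc with itself). Isotopies of $\alpha$ in $Z$ rel the 2-cells give isotopies of $\nu S_{X}$ within $M_1$ fixing $\partial M_1$. By uniqueness of regular neighborhoods, these produce the same Murasugi sum up to equivalence. So it suffices to handle one crossing change.

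Second, we handle a single crossing change, which happens near a small ball $D\subset \mathring Z$ meeting $\alpha$ in two unknotted arcs $a,b$. The key point is codimension: in the 4-dimensional sum, the arc $a\subset N\subset\Sigma$ is 1-dimensional, and $N$ is pushed to one side of $M_1$ in the fourth direction. We modify $\Sigma$ near $D$ by an isotopy supported in a small 4-ball around $D$. Push a small disk of $\Sigma$ around $a$ slightly off $M_1$ into the $V_2$ side, slide it past $b$ in the normal direction of $M_1$, and return it. Equivalently, the new splitting 3-sphere $\Sigma'$ meets $M$ in $N'$, whose $X$-part has spine $(S_X\cut\alpha)\cup\alpha'$. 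This is the standard fact that in the 4-dimensional collar $M_1\times[-\epsilon,\epsilon]$, arcs in $M_1\times\{0\}$ can pass through each other using the extra coordinate. The part of $\Sigma'$ in $M_2$ is unchanged, since the support is inside $Z\subset M_1$ away from $\mathring Y$'s image in $M_2$. So $\Sigma'$ still meets $M_2$ in the same $Y$-ball $f_2(B)$, and $\Sigma'$ still bounds balls $V_1',V_2'$ realizing $(B,X',Y)$. Then $M$, presented via $\Sigma'$, is by definition the Murasugi sum along $\mathcal B'$, with the embedding $f_1'$ agreeing with $f_1$ on $\partial M_1$ as stated.

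The main obstacle is making the second step rigorous: we must check that $\Sigma'$ meets $M$ transversely exactly in a 3-ball $N'$ whose coloring is $(X',Y)$. This means checking that $N'\cap\partial M_1=X\cap\partial M_1$ and that $N'$ is still a ball, not just that its spine is $S_{X'}$. Our plan is to build $N'$ explicitly as $\nu S_{X'}$ union a collar in $M_2$. It is a ball because $\nu S_{X'}$ is homeomorphic to $\nu S_X$ abstractly: the homotopy changes only the embedding of the 1-handles in $Z$. The coloring is preserved because the arc endpoints and 2-cells are fixed. Then it remains to verify that the intermediate ball sweeping gives an ambient isotopy of the splitting sphere, using the fourth coordinate to separate $a$ from $b$ at the crossing moment.
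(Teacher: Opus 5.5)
Your reduction to single crossing changes is fine, and so is your framing (hold $M$ fixed, move the splitting sphere). But Step~2 does not prove what the lemma needs, and the claim it rests on is false.

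The ball $N=f_1(B)=f_2(B)$ lies in both summands, and $Z\supset N$. So a move supported near $Z$ is not ``away from $M_2$.'' Write $M_2^-=\overline{M_2\setminus f_2(B)}$; it meets the splitting sphere only along $f_2(X)\subset\partial Z$. Once $\Sigma'\cap M=N'\neq N$, the part of $M$ on the $M_2$-side of $\Sigma'$ is $M_2^-\cup N'$, not $M_2^-\cup N=M_2$. Indeed, $N\setminus N'$ now lies strictly on the $M_1$-side, while $N'\setminus N$, formerly strictly on the $M_1$-side, now lies in $\Sigma'$. So ``$\Sigma'$ still meets $M_2$ in the same $Y$-ball $f_2(B)$'' is wrong.

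What you skip is the whole content of the lemma. You must show that the pair $(B_2',M_2^-\cup N')$ is equivalent to $(B_2,M_2^-\cup N)$, by a map that fixes $M_2^-$ and carries $N'$ to $N$ compatibly with $f_1\circ(f_1')^{-1}$.

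Your mechanism is also misplaced. With $M$ fixed, the arcs $a,b\subset M$ do not travel with the sphere, so sliding a piece of $\Sigma$ ``past $b$ in the normal direction'' effects no crossing change. Moving the sphere is in fact trivial and needs no crossing-change trick. Lower it over $Z$ (or over a small ball around the crossing) until it contains that region of $M$, then lift it off $Z\setminus N'$.

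The extra dimension is needed on the $M_2$-side instead. Put the sphere in the position where it contains $Z$. Then a thin collar over $Z$ on the $M_2$-side meets $M$ only in the collar of $M_2^-$ over $f_2(X)$, and $N$, $N'$ can be taken to agree near $f_2(X)$. So push both into this collar rel a neighborhood of $f_2(X)$ and isotope one to the other there. At each crossing change, lift one arc into the extra coordinate; the 2-cells stay fixed, and the homotopy already avoids them. Then apply isotopy extension.

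This is how the paper's proof is organized. It embeds $M_1$ with $M_1\cap\partial B^4_1=Z$ and glues $M_2$ using only $g|_{f(X)}$, on which $f_1$ and $f_1'$ agree. So a single intermediate solid $M'$ serves both colorings. Pushing $Z\cut X$ into $B^4_1$ gives $M_1*_{\mathcal B}M_2$. Pushing $Z\cut X'$ into $B^4_1$ gives a solid whose $B^4_2$-part is $(M_2\setminus f_2(B))\cup f_1'(B)$. Identifying that last piece with a copy of $M_2$ is exactly the step your proposal mislocates.

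By comparison, your stated ``main obstacle'' (that $N'$ is a ball colored by $(X',Y)$) is the easy part.
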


\begin{proof}
First, note that the Murasugi sum of $M_1$ and $M_2$ along $\mathcal{B}$ identifies $X$ and $Y$ by the diffeomorphism $f_2\circ f_1^{-1}$, the Murasugi sum along $\mathcal{B'}$ identifies $X$ and $Y$ by $f_2\circ (f_1')^{-1}$, and these two diffeomorphisms have identical restrictions to $X\cap\partial M_1=X'\cap\partial M_1$. With this in mind, we will glue together $M_1$ and $M_2$ in a way that yields a solid $M'$ that, we will see, is equivalent to both $M_1*_\mathcal{B}M_2$ and $M_1*_{\mathcal{B}'}M_2$.
    
    Embed $M_1$ in $B^4_1$ such that $M_1\cap \partial B^4_1=Z$. Choose an orientation-reversing diffeomorphism $g:\partial B^4_1\to\partial B^4_2$ such that $g$ matches $f_2\circ f_1^{-1}$ and $f_2\circ (f_1')^{-1}$ when restricted to $X\cap\partial M_1\to \partial Y$. Construct a solid $M'=M_1\cup_g M_2$ in $S^4=B^4_1\cup_g B^4_2$.  We will show that $M'$ is isotopic to the Murasugi sums $M_1*_{\mathcal{B}} M_2$ and $M_1*_{\mathcal{B'}} M_2$.

Observe that pushing $Z\cut X$ slightly into the interior of $B^4_1$, while fixing the rest of $M'$, transforms $M'$ into a 3-manifold intersecting
\begin{itemize}
    \item $B^4_1$ in a copy of $M_1$,
    \item $B^4_2$ in a copy of $M_2$,
    \item $\partial B^4_i$ in a 3-ball identified with $f_1(B)\subset M_1$ and $f_2(B)\subset M_2$.
\end{itemize}
Thus, pushing $Z\cut X$ slightly into the interior of $B^4_1$ yields an  an isotopy between $M'$ and $M_1*_\mathcal{B}M_2$.

On the other hand, pushing $Z\cut X'$ into the interior of $B^4_1$ similarly transforms $M'$ into a Murasugi sum $M''$ intersecting $B^4_1$ in a copy of $M_1$ and $B^4_2$ in $(M_2\setminus f_2(B))\cup_{\overline{Y\cap{\mathring{M}_2}}} B^3$, where since $f_1^{-1}\circ f_2$ and $(f_1')^{-1}\circ f_2$ agree on $\overline{Y\cap{\mathring{M}_2}}$, we again have a copy of $M_2$ in $B^4_2$. The intersection of $M''$ with $\partial B^4_i$ is a 3-ball identified with $f_1'(B)$ in $M_1$ and $f_2(B)$ in $Y$, so $M''$ is the Murasugi sum $M_1*_{\mathcal{B'}} M_2$.

To summarize, pushing $Z\cut X\subset M'$ into the interior of $B^4_1$ yields an isotopy from $M'$ to $M_1*_{\mathcal{B}} M_2$, while pushing $Z\cut X'\subset M'$ into the interior of $B^4_1$ yields an isotopy from $M'$ to $M_1*_{\mathcal{B'}} M_2$. We conclude the Murasugi sums $M_1*_{\mathcal{B}} M_2$ and $M_1*_{\mathcal{B'}} M_2$ are equivalent.
\end{proof}

\begin{question}\label{Q:knotted_arc}
    Does Lemma \ref{lem:knotted_arc} hold without the assumption that the homotopy from $\alpha$ to $\alpha'$ is {\it local}, i.e., contained within a 3-ball $Z$?
\end{question}

\begin{lemma}\label{lem:boundary_parallel_connect_sum}
  Let $M=M_1*M_2$ be a Murasugi sum of spanning solids along a colored 3-ball $(B,X,Y)$, and let $S_X\subset M_1,S_Y\subset M_2$ be the $X$- and $Y$-spines of the plumbing. Suppose there exists one of the following.
  \begin{enumerate}[label=(\arabic*)]
      \item\label{case:ball} A properly embedded 3-ball $V\subset M_1\cut S_X$ whose boundary consists of a disk $D\subset S_X$ and a disk $D'\subset\partial M_1$,
      \item\label{case:disk} A properly embedded disk $D\subset M_1\cut S_X$ whose boundary consists of an arc $\alpha\subset S_X$ and an arc $\beta\subset\partial M_1$ whose endpoints lie on distinct components of $\partial S_X$ (Figure \ref{fig:Monkey_Reduce_Solid}).
    \end{enumerate}
     Then $M$ can be constructed as a simpler plumbing of $M_1$ and $M_2$. Namely, one can construct $M=M_1*M_2$ along a colored 3-ball $(B',X',Y')$ with $|X'\cap Y'|<|X\cap Y|$.
\end{lemma}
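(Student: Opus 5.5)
The plan is to modify the Murasugi $X$-ball $f_1(B)\subset M_1$ by an isotopy of the splitting sphere in the style of the proof of Lemma \ref{lem:knotted_arc}. We produce a new colored ball $(B',X',Y')$ whose $X'$-embedding in $M_1$ and $Y'$-embedding in $M_2$ realize the same solid $M$, but with fewer curves in $X'\cap Y'$. The model is the classical observation that a $2n$-gon with a boundary-parallel "corner" can be shrunk to a $2(n-1)$-gon. Throughout we work with a neighborhood of the spine, since $f_1(B)=\nu S_X$ relative to $\partial M_1$. We realize $M$ in $S^4=B^4_1\cup_g B^4_2$ with $M\cap\partial B^4_1=f_1(B)$, and we allow ourselves to push pieces of $M_1$ across $\partial B^4_1$, exactly as in the proof of Lemma \ref{lem:knotted_arc}.

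\emph{Case \ref{case:ball}.} The disk $D\subset S_X$ is a 2-cell of the spine, or a subdisk of one, whose boundary lies in $\partial M_1$ and which cobounds the ball $V$ with $D'\subset\partial M_1$. Enlarge the Murasugi ball to $f_1(B)\cup \nu V$. Equivalently, push the splitting $S^3$ across the 4-ball swept out by $V$ in the normal direction, so that $M\cap S^3$ changes by replacing the part of $f_1(B)$ near $D$ with a neighborhood of $D'\subset \partial M_1$. In the new colored ball, the annulus of $Y$ around $\partial D$ is merged with the adjacent region of $X$. This merger deletes the circle $\partial D$, together with the parallel circle on the other side of $D$, from the intersection set. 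On the $M_2$ side nothing changes up to isotopy: the new region of $Y'$ still sits in $\partial M_2$ after reidentifying via $g$, because the isotopy is supported in $B^4_1$. We will check that the result is still a colored ball in the sense of Definition \ref{def:colored_ball}, meaning that the new $X',Y'$ still meet along circles. If a singular point arises, we invoke Remark \ref{prop:singular}.

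\emph{Case \ref{case:disk}.} Thicken $D$ to $\nu D\cong D\times I$ and replace $f_1(B)$ by $B'=f_1(B)\cup\nu D$. This is again a 3-ball, since we are attaching a 1-handle-like half-ball along the disk $\nu\alpha$ in $\partial f_1(B)$. In $B'$, the part $\nu\beta\subset\partial M_1$ joins the two distinct components of $X$ containing the endpoints of $\beta$. Here $X'=X\cup\nu\beta$ is a band sum of two components of $X$, and $Y'$ is $Y$ with a band removed. The components of $\partial S_X$ at the endpoints of $\beta$ are distinct, so the band joins two different boundary circles of $X\cap Y$ into one, and $|X'\cap Y'|=|X\cap Y|-1$. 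Next we verify that $M_1*_{(B',X',Y')}M_2\cong M$. Inside $B^4_1$, push $\nu D$ across the splitting sphere. This exhibits the original sum as the sum along $B'$, where the $Y'$-embedding into $M_2$ is $f_2$ extended over the new half-ball by a collar of $\partial M_2$. This is legitimate because the added piece lies in $Y$-part $\cap\,\partial M_2$ after the push.

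The main obstacle is this last identification: checking that the extended map is genuinely a $Y'$-embedding, so that the band lands in $\partial M_2$ and the interior of $X'$ in $\mathring M_2$. We must also check that the gluing map still respects orientations as in Definition \ref{def:murasugi_sum}. I would handle this with local models drawn as cross-sections, similar to Figure \ref{fig:singular}, near $\nu D$.
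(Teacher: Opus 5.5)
Your construction is the paper's: enlarge the Murasugi ball inside $M_1$ to $B\cup V$ (resp.\ $B\cup\nu D$), then count circles using $|X\cap Y|=|X|+|Y|-1$. Your Case 2 count matches the paper's.

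\textbf{Case 1 count.} Your bookkeeping here is wrong. Since $\partial D\subset\partial S_X$, the circle $\partial D$ is the core of an annular component $A$ of $X$, so it is neither in $Y$ nor a circle of $X\cap Y$. Because $V$ misses the rest of $S_X$, the face of $\nu D$ on the $V$ side is a disk component $Y_+$ of $Y$. Adding $V$ swallows $Y_+$ and caps $A$ off to a disk. Hence $|X'|=|X|$ and $|Y'|=|Y|-1$, and exactly one circle, $\partial Y_+$, disappears. Deleting two circles would leave none in the 1-striped case with $X$ an annulus, whereas the correct outcome there is the boundary-sum ball.

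\textbf{Identifying the new sum with $M$.} This is the more serious gap, and it fails in both cases. The added pieces $D'\setminus X$ and $\nu\beta$ of $X'$ lie in $\partial M_1\setminus X\subset\partial M$. But the interior of the $X'$-part of a Murasugi ball must lie in $\mathring{M}$. So $B\cup V$ or $B\cup\nu D$ is never the intersection of $M$ with a pushed splitting sphere. Likewise, extending $f_2$ by a collar sends $\nu\beta\subset X'$ into $\partial M_2$, which a $Y'$-embedding forbids; your phrase ``the band lands in $\partial M_2$'' names exactly this failure.

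The fix is to enlarge on the $M_1$ side but \emph{shrink} on the $M_2$ side. In Case 2:
\begin{enumerate}
\item Let $\alpha^*=D\cap Y$, an arc joining two distinct circles of $X\cap Y$.
\item The ball $\nu D\setminus\mathring{B}$ meets $\partial M_1$ in a disk (the part of $\nu\beta$ outside $B$) and has disk frontier in $M_1$. Deleting it is therefore an isotopy of $M$ supported in $B^4_1$.
\item After this isotopy, the band $\nu\alpha^*\subset Y$ lies in $\partial M_1\cap\partial M_2\subset\partial M$.
\item Now push a half-ball neighborhood of this band in $B$ slightly into $\mathring{B}^4_2$. The splitting sphere then meets $M$ in $B\setminus\nu\alpha^*$.
\item This ball is $Y'$-embedded in $M_2$, with $Y'$ equal to $Y$ cut along $\alpha^*$. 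It is $X'$-embedded in the retracted copy of $M_1$, where it corresponds to your $B\cup\nu D$.
\end{enumerate}
Case 1 is identical, with $V\setminus\mathring{B}$ (which is attached to $B$ exactly along $Y_+$) and $Y_+$ replacing $\nu D\setminus\mathring{B}$ and $\nu\alpha^*$. Done this way, no singular colorings arise.

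The paper's proof records only the count and leaves this realization implicit. If you supply it, it must go as above, not via the collar extension.
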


\begin{proof}
   Note first that $|X\cap Y|=|X|+|Y|-1$. In Case \ref{case:ball}, let $B'=B\cup V$; in Case \ref{case:disk}, let $B'=B\cup\nu(D)$. Then let $X'=\partial B'\cap\partial M_1$ and $Y'=\partial M_2$. In Case \ref{case:ball}, $X'$ is the result of attaching the disk $D'$ to $X$, so $|\partial X'|=|\partial X|$ and $|\partial Y'|<|\partial Y|$. In Case \ref{case:disk}, $X'$ is the result of attaching a band to $X$ along two distinct boundary components, so $|\partial X'|<|\partial X|$; meanwhile, $|\partial Y'|=|\partial Y|$. In either case, we conclude that $|X'\cap Y'|<|X\cap Y|$. 
\end{proof}

Figure \ref{fig:Monkey_Reduce_Solid} shows the idea behind the proof of Lemma \ref{lem:boundary_parallel_connect_sum}, and Figure \ref{fig:Monkey_Reduce} shows the analogous idea for Murasugi sums of spanning surfaces in $S^3$.

\begin{figure}[!ht]
    \centering
    \includegraphics[width=.85\textwidth]{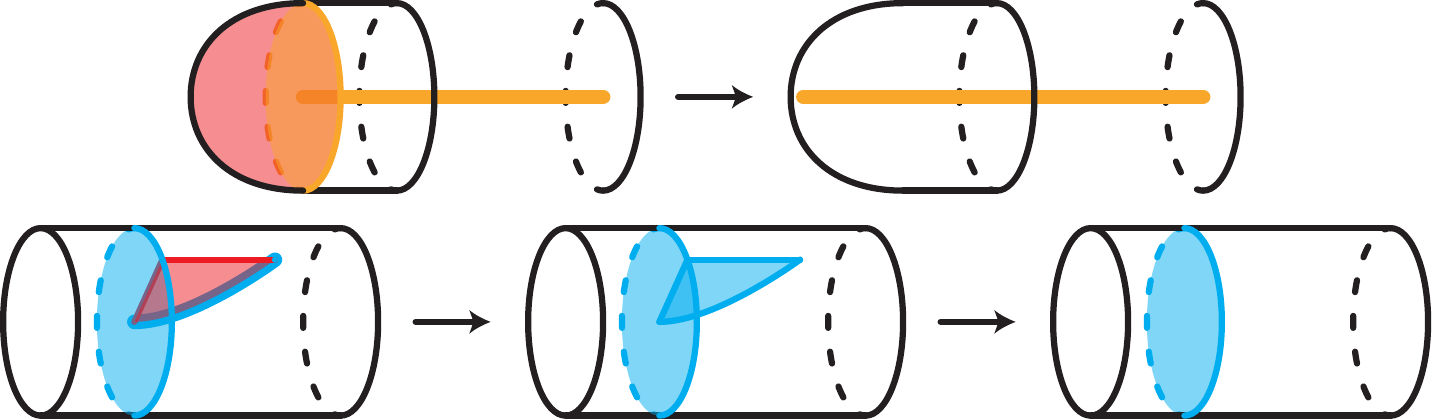}
    \caption{A schematic which gives the main idea of Lemma \ref{lem:boundary_parallel_connect_sum}---a kind of compressing disk or ball for the spine can be used to reduce the number of components in the boundary of a colored ball used for the Murasugi sum.}
    \label{fig:Monkey_Reduce_Solid}
\end{figure}

\begin{figure}[!ht]
    \centering
    \includegraphics[width=0.95\textwidth]{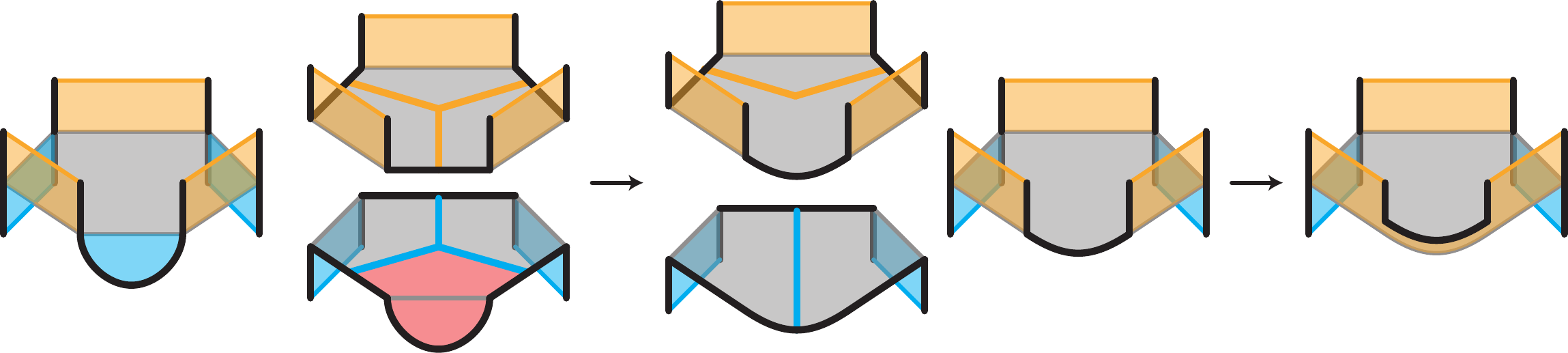}
    \caption{The idea of Lemma \ref{lem:boundary_parallel_connect_sum}, illustrated one dimension lower. The first image shows a surface built as a Murasugi sum, along with the corresponding ``spines.'' The red/shaded disk in the lower surface can be used to simplify the gluing region from a 6-sided polygon to a 4-sided polygon, as illustrated on the right. }
    \label{fig:Monkey_Reduce}
\end{figure}

\begin{restatable}{proposition}{prop:irreducible_sum}
\label{prop:irreducible_sum}
    If $M_1$ and $M_2$ are boundary-irreducible spanning solids (e.g., irreducible with possibly disconnected 2-sphere boundaries), then any Murasugi sum $M$ of $M_1$ and $M_2$ is a boundary sum. Moreover, any boundary-irreducible spanning solid that is a Murasugi sum is a boundary sum. 
\end{restatable}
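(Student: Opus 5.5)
The plan is to argue by induction on $|X\cap Y|$, the number of circles along which $X$ and $Y$ meet, using Lemma \ref{lem:boundary_parallel_connect_sum} to lower this count. Fix a Murasugi sum $M=M_1*M_2$ along a colored 3-ball $(B,X,Y)$, with $X$-embedding $f_1$ into $M_1$, $Y$-embedding $f_2$ into $M_2$, and spines $S_X\subset M_1$, $S_Y\subset M_2$. If $|X\cap Y|=1$, then $X$ and $Y$ are both disks. Then $f_1(B)$ is a half-ball meeting $\partial M_1$ in a disk, and likewise in $M_2$. Hence the sum is the boundary sum $M_1\natural M_2$, as noted after Definition \ref{def:pattern}. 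So we may assume $|X\cap Y|\ge 2$, which means at least one of $X,Y$ is disconnected or is not a disk. Since the construction is symmetric in the two summands (up to exchanging the roles of $X$ and $Y$), we may work on whichever side is convenient.

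\textbf{Case 1: $X$ has a component $X_0$ that is not a disk.} Take an essential simple closed curve $c\subset X_0$. The spine $S_X$ contains a 2-cell $E$ meeting $\partial M_1$ along a curve isotopic to $c$. A parallel copy of $E$, pushed slightly off $S_X$, is a properly embedded disk in $M_1$. By boundary-irreducibility this disk is boundary-parallel: it cuts off a 3-ball $V$ bounded by a disk $D\subset S_X$ (essentially $E$) and a disk $D'\subset\partial M_1$. I would then check that $V$ can be chosen in $M_1\cut S_X$. This should follow by choosing the side of $E$ away from the rest of $S_X$, using an innermost or outermost argument on the remaining arcs and cells of $S_X$, and possibly invoking Lemma \ref{lem:knotted_arc} to move arcs of $S_X$ out of $V$. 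Hypothesis \ref{case:ball} of Lemma \ref{lem:boundary_parallel_connect_sum} then applies.

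\textbf{Case 2: $X$ is a disjoint union of at least two disks.} Then $S_X$ contains an arc $a$ joining two distinct components of $X$. Take a small disk transverse to $a$, that is, a meridional disk of $\nu a$. Its boundary runs along $\partial\nu a$ and then $\partial M_1$. Completing this gives a disk whose boundary is an arc on $S_X$ together with an arc on $\partial M_1$ joining the two components. Boundary-irreducibility is used to show there is no obstruction: any essential disk met along the way must be boundary-parallel. This gives hypothesis \ref{case:disk}. Alternatively, one can use the corresponding structure on the $Y$ side in $M_2$, where $Y$ is then connected and not a disk, which reduces to Case 1 applied to $M_2$.

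In either case, Lemma \ref{lem:boundary_parallel_connect_sum} produces a new presentation of $M=M_1*M_2$ with strictly smaller $|X'\cap Y'|$. The new presentation is a Murasugi sum of the same solids $M_1,M_2$, which are still boundary-irreducible, so induction finishes the proof. For the second sentence of the proposition, suppose $M$ is boundary-irreducible and $M=M_1*M_2$. Any properly embedded disk in $M_i$ that is essential would be carried by the gluing into $M$. I would show that such a disk stays essential there, so each $M_i$ is boundary-irreducible, and the first part applies. The alternative is to run the same reduction directly in $M$, using disks in $M$ built from the spine cells.

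The main obstacle is producing the ball or disk inside $M_1\cut S_X$ rather than merely inside $M_1$. Boundary-irreducibility gives a ball bounded by a cell of the spine, but other parts of $S_X$ may pass through that ball. I would first try an outermost-cell argument combined with Lemma \ref{lem:knotted_arc} to clear such intersections, accepting that this step may need extra care.
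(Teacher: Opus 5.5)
Your overall strategy is the paper's. Boundary-irreducibility makes each 2-cell of a spine boundary-parallel, and Lemma \ref{lem:knotted_arc} clears the rest of the spine out of the resulting ball; the paper is no more detailed than you on this step. Case \ref{case:ball} of Lemma \ref{lem:boundary_parallel_connect_sum} is then applied inductively on both sides until both spines are 1-complexes. At that point $X$ and $Y$ are both unions of disks, and duality forces $X\cong Y\cong D^2$, which is your base case $|X\cap Y|=1$. However, two of your sub-arguments do not work as written. In both places you need the alternative you mention, which is exactly what the paper does.

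\textbf{Case 2.} The meridian-disk construction does not produce what hypothesis \ref{case:disk} requires. A meridian disk of $\nu a$ has boundary a circle on $\partial\nu a$, not an arc of $S_X$ together with an arc of $\partial M_1$. Hypothesis \ref{case:disk} amounts to the arc $a$ being parallel into $\partial M_1$. Boundary-irreducibility is a statement about disks and gives no such control over arcs. For example, $a$ can join two different components of $\partial M_1$ (take $M_1=S^2\times I$ and $a=\{\mathrm{pt}\}\times I$), in which case no $\beta$ exists at all. Or $a$ can be nontrivial in $\pi_1(M_1,\partial M_1)$, which the local crossing changes of Lemma \ref{lem:knotted_arc} cannot undo. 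So in Case 2 you must pass to the $Y$ side: there $Y$ is a connected planar surface with at least two boundary circles, and you run Case 1 in $M_2$.

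\textbf{Second sentence of the proposition.} Your plan to show that each $M_i$ is boundary-irreducible is not justified. An essential disk of $M_1$ can have boundary crossing $f_1(X)$, which lies in the interior of $M$, so it need not even be properly embedded in $M$. Even when it is, a boundary-parallelism in $M$ could pass through $M_2$. The paper instead works directly in $M$. Take a non-disk component $X_0$ of $X$ and a circle $\gamma\subset\partial X_0\subset\partial M$. Some 2-cell of $S_X$ can be isotoped through $(B,X_0)$ to a disk with boundary $\gamma$. That disk is properly embedded in $M$, hence boundary-parallel there, and the same reduction applies.

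\textbf{A minor point in Case 1.} You do not get a 2-cell whose boundary is isotopic to an arbitrary essential curve $c$. You only get some 2-cell with essential boundary, which is all you need.
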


\begin{proof}
    Let $S_X\subset M_1, S_Y\subset M_2$ be the $X$- and $Y$-spines of the plumbing. Then, since each $M_i$ is irreducible every 2-cell in $S_X,S_Y$ whose boundary is in $M_i$ is boundary parallel in $M_i$, albeit through a 3-ball whose interior might contain other arcs (or disks) in $S_X,S_Y$. Such arcs might be knotted or tangled, but Lemma \ref{lem:knotted_arc} allows us to unknot and untangle all these arcs without changing the resulting solid $M_1*M_2$ up to equivalence.

    Using Lemma \ref{lem:boundary_parallel_connect_sum} inductively, we then find that $M$ is obtained from a plumbing of $M_1, M_2$ using a colored ball whose $X$- and $Y$-spines are each 1-complexes; since they are dual we find 
    that the plumbing is along a colored ball $(B,X,Y)$ with $X\cong Y\cong D^2$. In other words, $M$ is equivalent to $M_1\natural M_2$.

    The last claim follows by similar reasoning. The key point is that for any component $X_0$ of $X$ that is not a disk and any circle $\gamma\subset \partial X_0$, there is a disk $Z\subset S_X$ and an isotopy of $(Z,\partial Z)$ through $(B,X_0)$ after which $\partial Z=\gamma$.
\end{proof}

\begin{remark} \label{prop:irreducible_fiber}
    There are many fibered knotted 2-spheres whose fiber is irreducible (e.g., the Cappell-Shaneson spheres constructed in \cite{cappell_shaneson76}); the the only way to take a Murasugi sum of such fiber solids is via boundary sum.
    Nevertheless, many spanning solids, such as the arborescent ones constructed in the next section, contain interesting disks; these solids may be plumbed onto boundary-irreducible solids to produce a variety of interesting spanning solids for knotted surfaces.   
\end{remark}  

\section{Arborescent {knotted surface}s}\label{sec:arborescent}

Conway \cite{conway1970} introduced arborescent spanning surfaces as a natural class of surfaces obtained by successively plumbing essential unknotted annuli and M\"obius bands along disjoint disks.
Each such surface is described by a planar tree with integer labels between consecutive edges at each vertex. These labels indicate half-twists in bands; see Figure \ref{fig:classical_arborescent_example}.

An arborescent surface constructed from a tree with at most two leaves is a spanning surface for a 2-bridge link. In fact, Hatcher--Thurston \cite{hatcher_thurston1985} prove that every essential spanning surface $F$ for a 2-bridge link $L$ is an arborescent surface (from a tree with at most two leaves). Spinning such a pair $F,L\subset S^3$ yields a spanning solid $M$ for a knotted surface $K\subset S^4$, where $K$ is diffeomorphic to either $S^2$ or $S^2\sqcup T^2$; the solid $M$ is a Murasugi sum of the spins of the bands that are plumbed together to form $F$. 

\subsection{Arborescent links} First, we remind the reader of the formal definition in the classical dimension. Recall that a \emph{Conway sphere} is a 2-sphere meeting a knot or link in four points.

\begin{definition}\label{def:arborescent_classical}
    A link $L$ is called \emph{arborescent} if there is a disjoint family of Conway spheres $\{Q_1,\dots,Q_n\}$ such that if $N$ is the closure of any component of $S^3\setminus(\bigcup_{i=1}^nQ_i)$, then up to isotopy in $S^3$, the pair $(N,N\cap K)$ takes the form of Figure \ref{fig:classical_arborescent_conway_sphere} below.
\end{definition}

\begin{figure}[h]
    \includegraphics[width=0.5\textwidth]{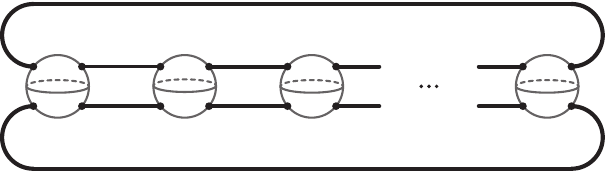}
    \caption{An ``atomic'' part of an arborescent link, obtained by cutting along a family of Conway spheres.}
    \label{fig:classical_arborescent_conway_sphere}
\end{figure}

\begin{definition}\label{def:weighted_tree_classical}
    A \emph{weighted tree} is a finite tree embedded in the plane along with, for each vertex $v$, an assignment of an integer $w(e_1,e_2)$ to each pair of of edges $e_1$ and $e_2$ adjacent at $v$. We call $w(e_1,e_2)$ a ``weight." In diagrams, we write this weight between $e_1$ and $e_2$ near the vertex $v$, and neglect to write a weight which is equal to zero. Note that for a vertex with valence $k$, we assign $k$ weights; any leaf is assigned one weight. 
\end{definition}

Starting from a weighted tree, the following construction produces an arborescent link (see \cite{montesinos1973}, \cite{gabai1986genera}); these links always bound a surface constructed in such a way. 

\begin{construction}\label{cons:arborescent_classical}
    ${ }$
    
    \begin{enumerate}[label=(\arabic*)]
        \item For a vertex $v$ adjoining $k$ edges $\{e_1,\dots,e_k\}$, we associate a band $b_v$ with $k$ labeled ``available'' plumbing squares. Between the patches corresponding to $e_i$ and $e_{i+1}$, we introduce $w(e_i,e_{i+1})$ half twists.
        \item If two vertices $v$ and $w$ share an edge $e$, we plumb the bands $b_v$ and $b_w$ along the plumbing squares corresponding to $e$ in each. 
    \end{enumerate}
\end{construction}

An example of a simple weighted tree and its corresponding arborescent link are illustrated in Figure \ref{fig:classical_arborescent_example}. 
Note that although the equivalence class of spanning surfaces produced by this construction may depend on the choice of plumbing direction \cite{hatcher_thurston1985}, the boundary knot does not; this follows from the classical version of Proposition \ref{prop:replumb}. For more details on arborescent links, we refer the reader to \cite{bonahon_siebenmann2010}. 

\begin{figure}[ht]
    \includegraphics[width=0.85\textwidth]{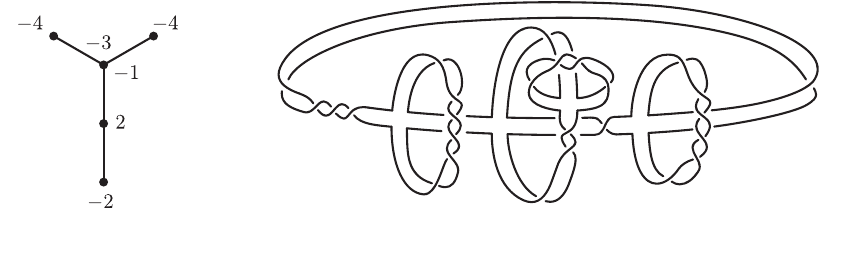}
    \put(-329,0){(a)}
    \put(-134,0){(b)}
    \caption{An example of an arborescent link. In (a), a weighted graph, and in (b), the corresponding plumbed surface.}
    \label{fig:classical_arborescent_example}
\end{figure}

\subsection{Arborescent knotted surfaces} We will now define arborescence for knotted surfaces, by performing repeated Murasugi sum of simple atomic pieces. We will use spun Seifert surfaces for the $T(2,n)$ torus links as the basic building blocks. 

\begin{definition}\label{def:standard_objects}
    For $n\geq0$ even, let $Y_n$ be the orientable solid obtained by spinning the annulus spanning the $(2,n)$-torus link about one boundary component. For $n>0$ odd, let $Y_n$ be the non-orientable solid obtained by spinning the M\"obius band spanning the $(2,n)$-torus knot. Both cases are illustrated in Figure \ref{fig:buildingblock}. Note that $Y_n$ is diffeomorphic to $(S^1\times D^2)^\circ$ when $n$ is even, and $(S^1\ttimes S^2)^\circ$ when $n$ is odd. The solid $Y_i$ is called a \emph{standard spun annulus} or \emph{standard spun M\"obius band}.
\end{definition}

\begin{remark}
   The spin of a surface $F$ is equivalent to the spin of its mirror $\overline{F}$. For this reason, we only consider the case $n\ge 0$ in Definition \ref{def:standard_objects}. 
\end{remark}

\begin{figure}[ht]
    \centering
    \includegraphics[width=.125\textwidth]{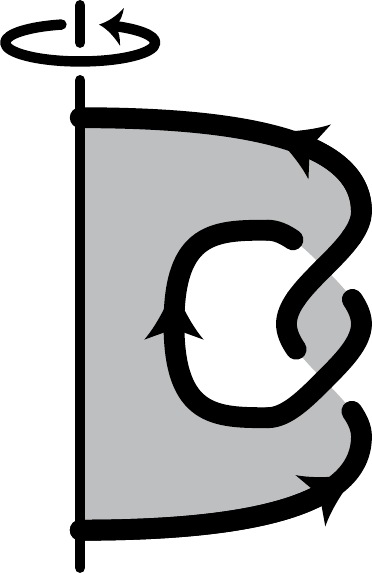}\hspace{.15\textwidth}
    \includegraphics[width=.125\textwidth]{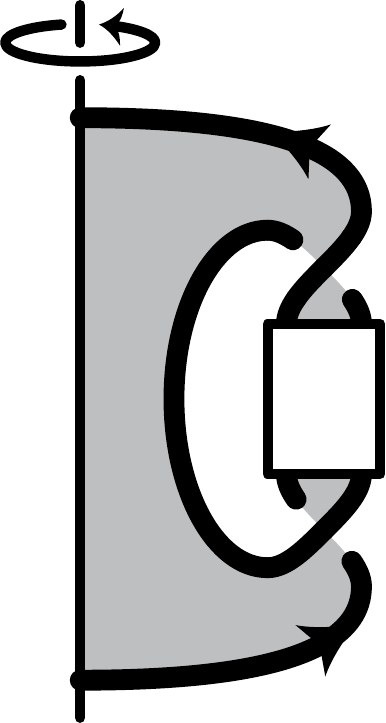}\hspace{.15\textwidth}
    \includegraphics[width=.125\textwidth]{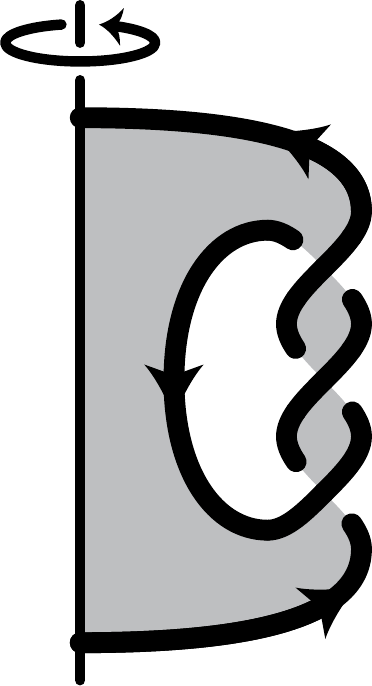}
    \put(-134,43){$n$}
    \caption{Illustrations of the standard spun annulus or M\"obius band $Y_n$. Left: the spin $Y_2$ of the annulus spanning the negative Hopf link. Center: A schematic illustration of $Y_n$. Right: the spin $Y_3$ of the M\"obius band spanning the right-handed trefoil.}
    \label{fig:buildingblock}
\end{figure}

In Figure \ref{fig:buildingblock}, we also indicate orientations of the boundaries of the spun annuli and M\"obius bands. For $n$ even, $Y_n$ inherits an orientation induced by the orientation of the annulus. For $n$ odd, a collar neighborhood of $\partial Y_n$ in $Y_n$ inherits an orientation from the specified orientation of the boundary of the M\"obius band. 

\begin{definition}\label{def:standard_subsets}
    Let $n>0$, and let $Y_n$ be the corresponding standard spun annulus or M\"obius band. 
    
    \begin{enumerate}[label=(\arabic*)]
        \item If $n$ is even, the \emph{standard disk} $D_n\subset Y_n$ is a disk fiber $\{\ast\}\times D^2$ of $(S^1\times D^2)^\circ$. If $n$ is odd, the standard disk is a punctured sphere fiber $(\{\ast\}\times S^2)^\circ$ of $(S^1\ttimes S^2)^\circ$. In either case, $D_n$ is the unique properly embedded nonseparating disk in $Y_n$ up to isotopy.
        \item If $n$ is even, the \emph{standard arc} $A_n$ is the unique (up to isotopy, using the light bulb trick) properly embedded arc whose endpoints lie on opposite components of $\partial Y_n$. If $n$ is odd, the \emph{standard arc} $A_n$ is an arc of the form $(S^1\ttimes \{\ast\})^\circ$, i.e., the unique (up to isotopy) unknotted properly embedded arc representing a generator of $H_1(Y_n,\partial Y_n)$.
         \item For any $n$, a \emph{standard disk-arc pair} $P_n^\pm$ is the union of $D_n$ with an unknotted, properly embedded, non-boundary-parallel arc $a_n$ in $Y_n\setminus D_n$ with one endpoint on $D_n$ and one on $\partial Y_n$. There are two such configurations, depending on the the local behavior of $a_n$ near $D_n$. We define $P_n^+$ (respectively, $P_n^-$) to be the configuration in which $a_n$ leaves $D_n$ in the same direction as a positively (respectively, negatively) orientated generator for $H_1(Y_n;\mathbb{Z})$.
    \end{enumerate}
    See Figures \ref{fig:standard_subsets_even} and \ref{fig:standard_subsets_odd} for illustrations of these subspaces.
\end{definition}

\begin{figure}[ht]
    \centering
    \includegraphics[width=0.2\textwidth]{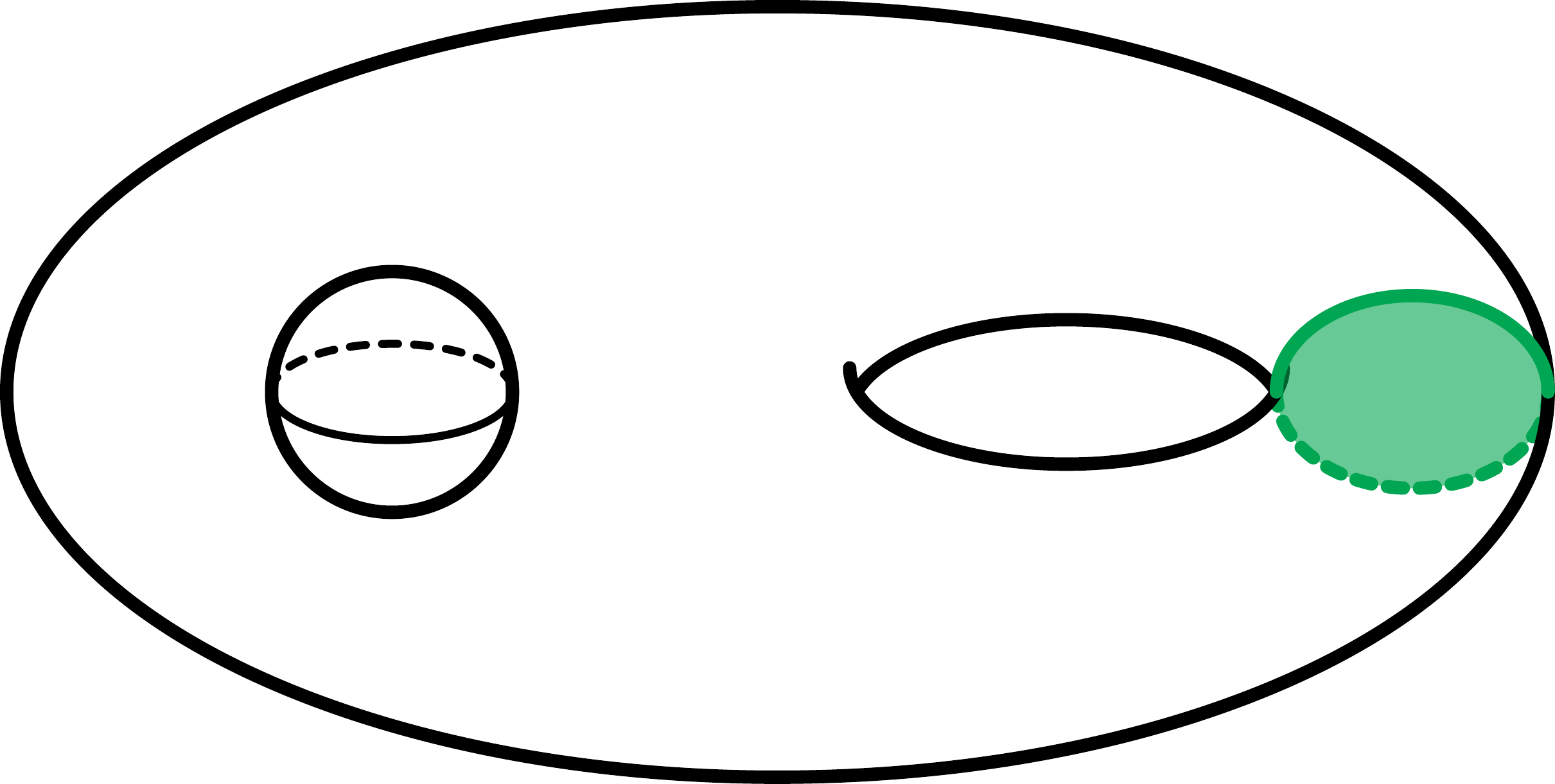}\hspace{.1\textwidth}
    \includegraphics[width=0.2\textwidth]{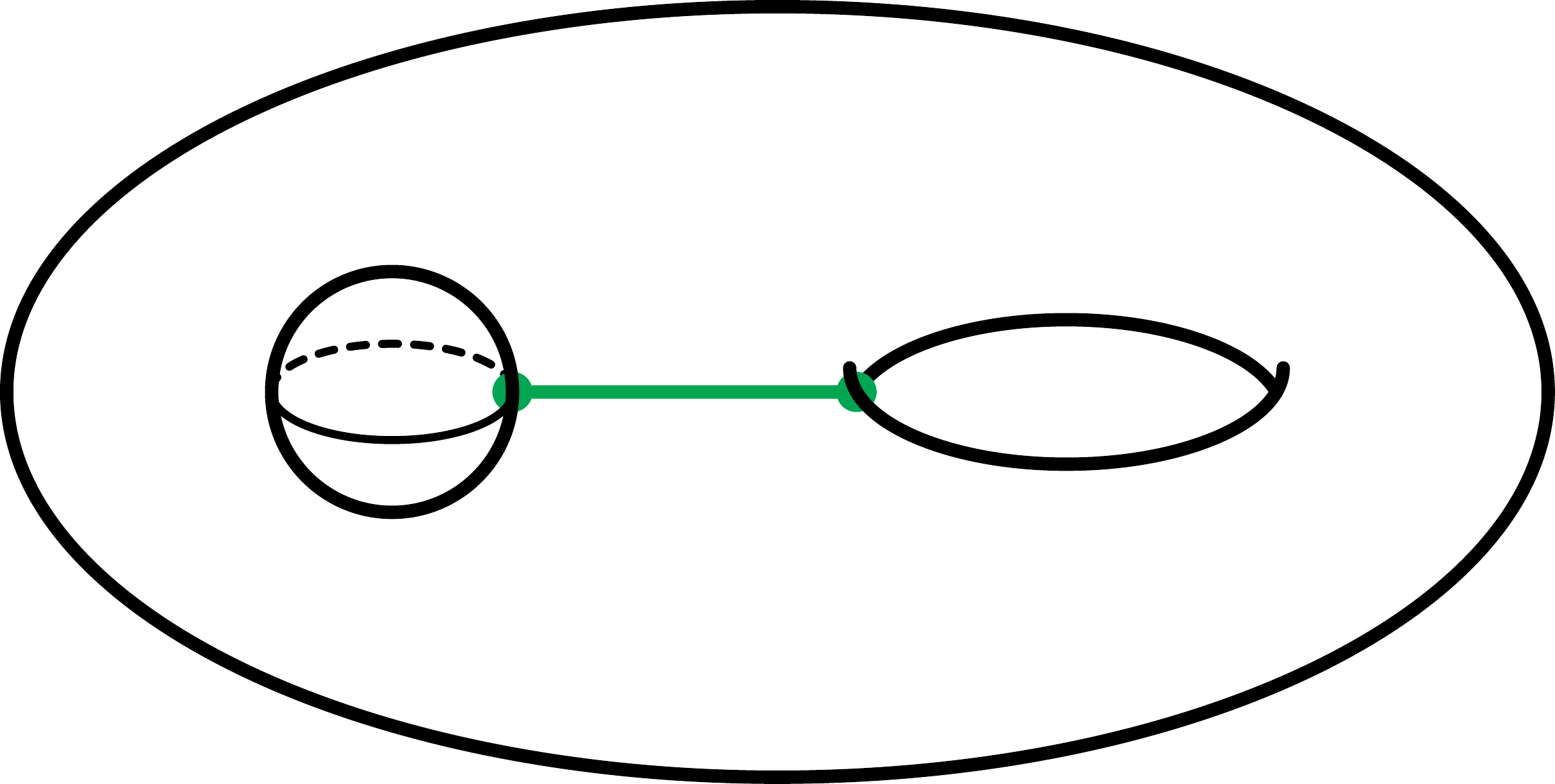}\hspace{.1\textwidth}
    \includegraphics[width=0.2\textwidth]{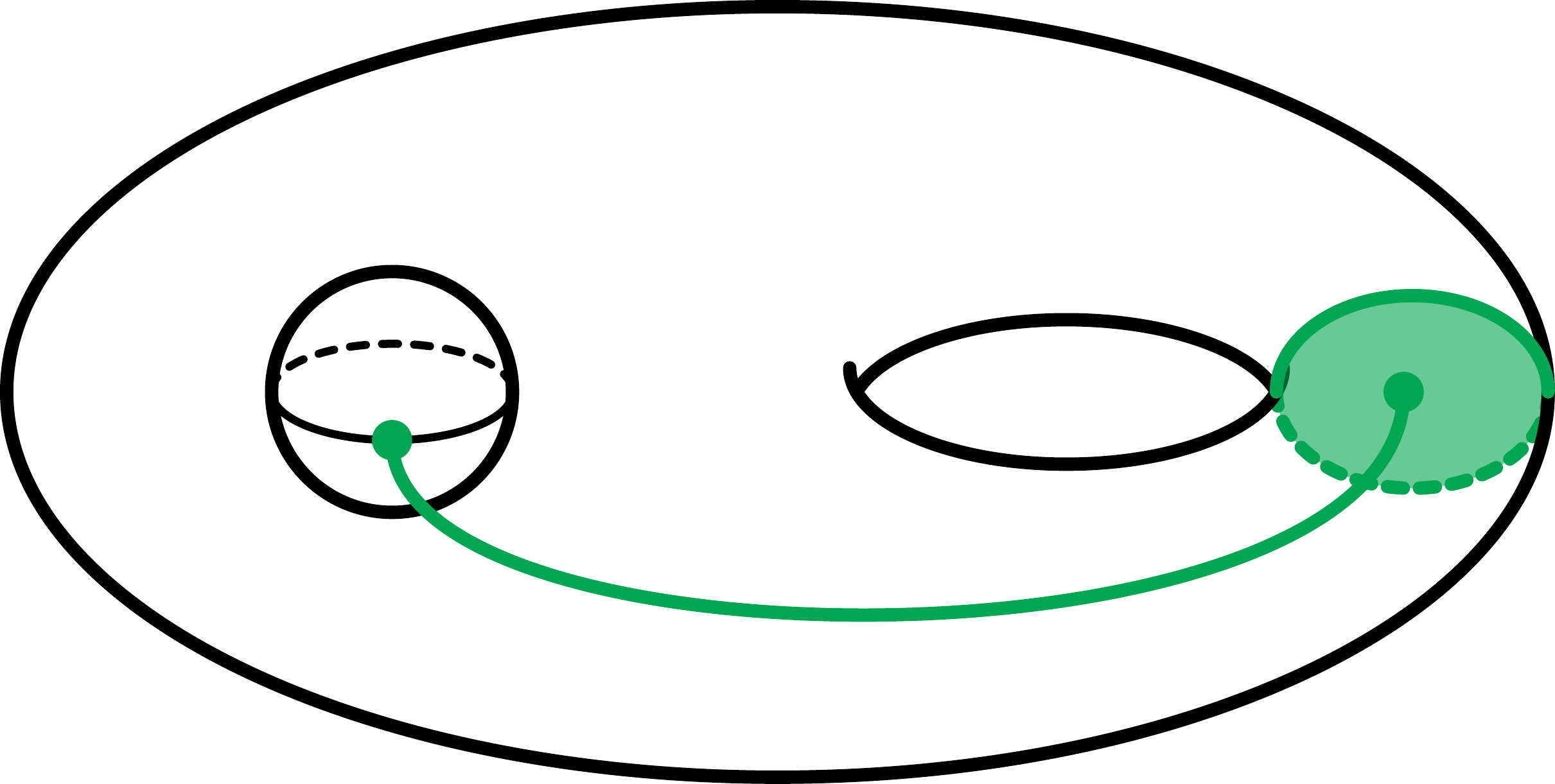}
    \caption{An illustration of the standard disk $D_n$ (left), arc $A_n$(center), and one of the disk-arc pairs $P_n^\pm$ (right) in $Y_n\cong (S^1\times D^2)^\circ$ in the case that $n$ is even. Here, $Y_n$ is drawn as a punctured solid torus.}
    \label{fig:standard_subsets_even}
\end{figure}

\begin{figure}[ht]
    \centering
    \includegraphics[width=0.2\textwidth]{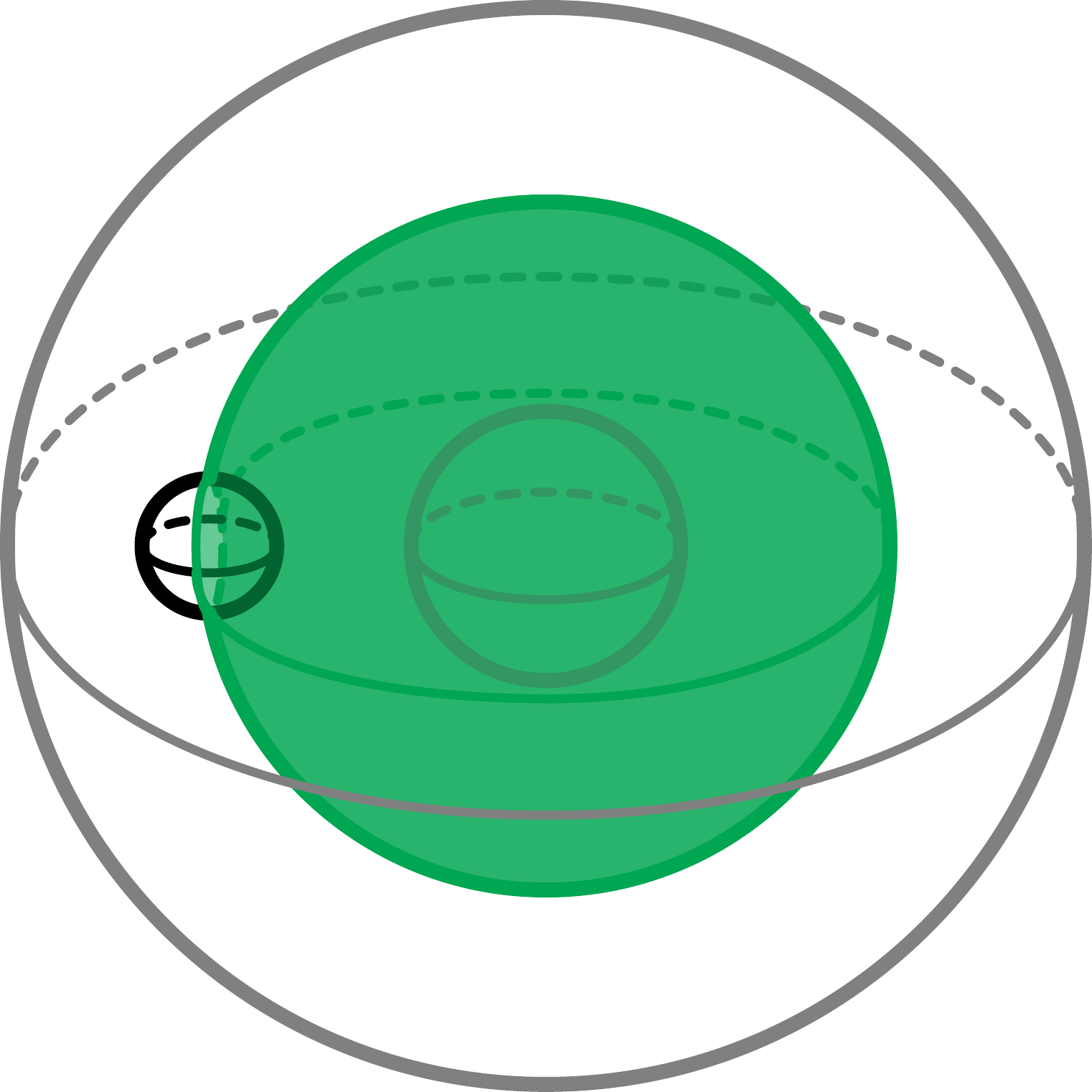}\hspace{.1\textwidth}
    \includegraphics[width=0.2\textwidth]{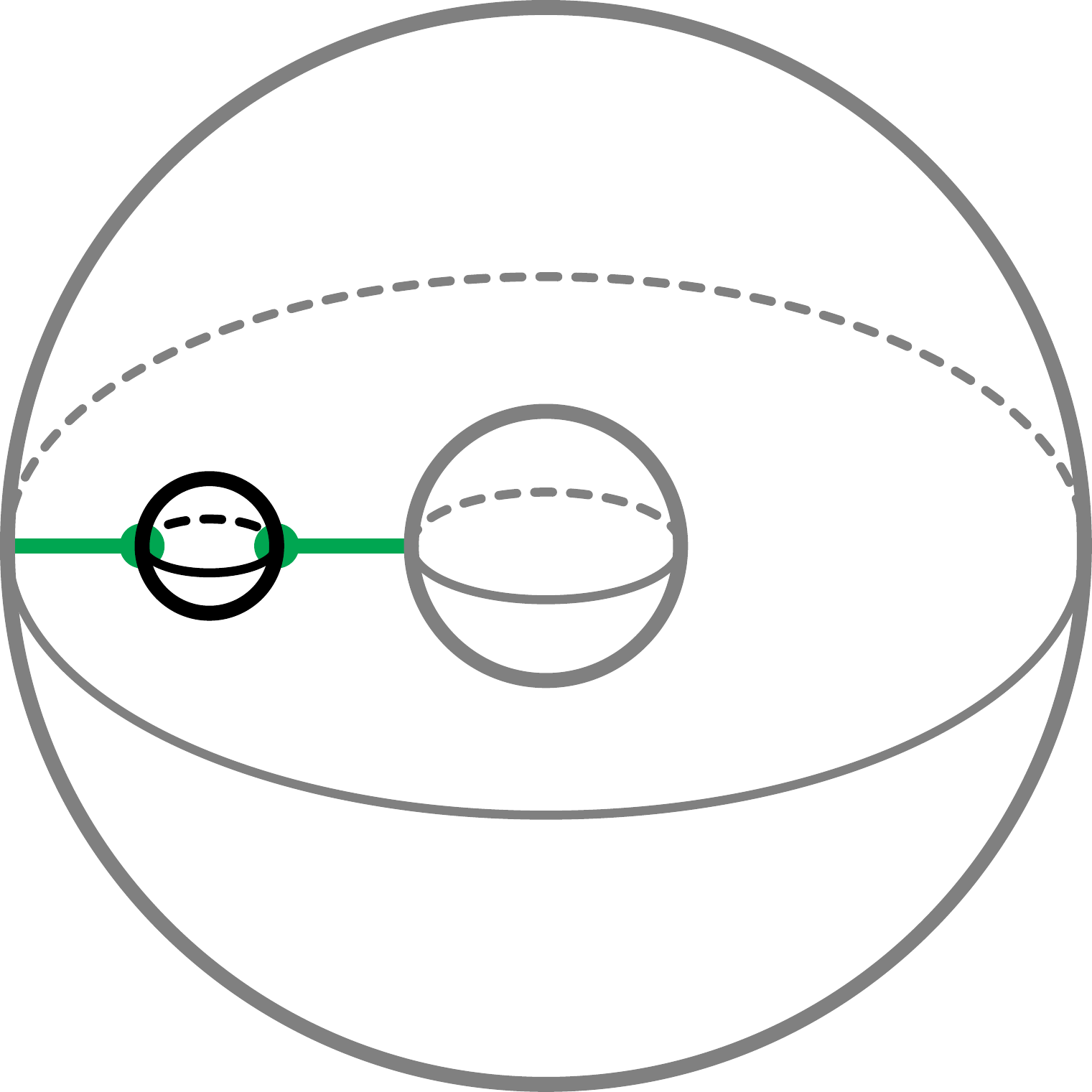}\hspace{.1\textwidth}
    \includegraphics[width=0.2\textwidth]{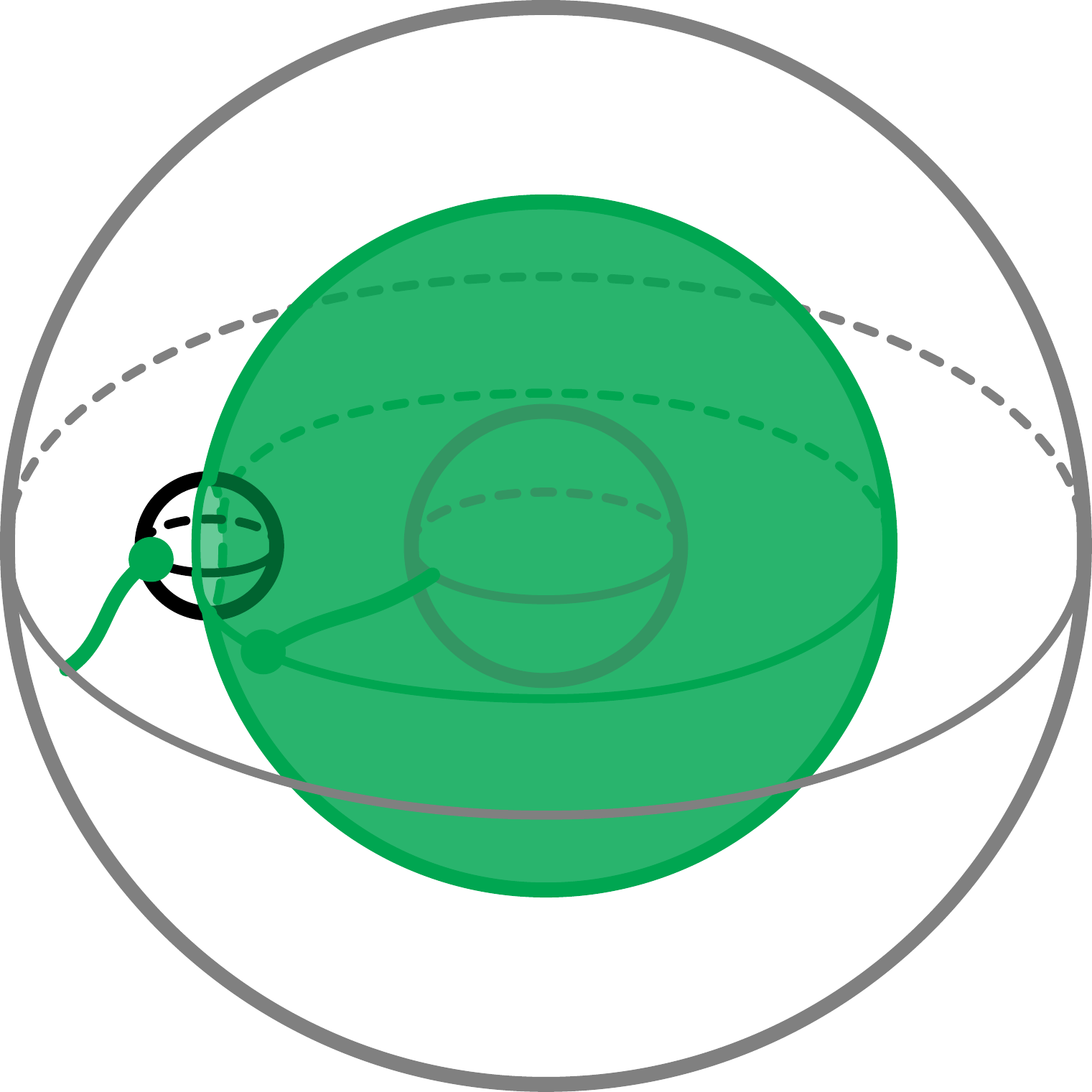}
    \caption{An illustration of the standard disk $D_n$ (left), arc $A_n$ (center), and one of the disk-arc pairs $P_n^\pm$ (right) in $Y_n\cong (S^1\ttimes S^2)^\circ$ in the case that $n$ is odd. Here, $Y_n$ is drawn as a punctured copy of $S^2\times I$, in which the two sphere boundary components are identified by a reflection.}
    \label{fig:standard_subsets_odd}
\end{figure}

We can now define arborescent spanning solids. Performing the Murasugi sum of orientable standard spun annuli along only 1-stripe patterns never yields a solid with a connected boundary, so we will use both 1- and 2-stripe patterns.

\begin{definition}\label{def:arborescent_spanning_solid}
    An {\emph {arborescent spanning solid}} is a compact 3-manifold $M\subset S^4$ for which there exists a disjoint union $Q=\bigsqcup_iQ_i$ of standard 3-spheres such that for each $i$, the intersection $Q_i\cap M$ is a 3-ball in a 1- or 2-stripe pattern, and the intersection of $M$ with each component of $S^4\cut  Q$ is a standard spun annulus or M\"obius band.
\end{definition}

Note that unlike the classical case, we must specify that $Q$ consists of standard 3-spheres.

\begin{definition}\label{def:arborescent_surface}
    A surface $S\subset S^4$ is {\emph{arborescent}} if $S$ bounds an arborescent spanning solid.
\end{definition}

As in the classical case, we can prescribe an arborescent spanning solid using a decorated tree. 

\begin{definition}\label{def:sunrise_tree}
    A {\emph{sunrise tree}} $G$ is a finite tree in which each vertex $v$ is assigned a label $n_v\in \mathbb{Z}^+$ and a choice of partition of the edges $\{e_1,\ldots, e_{m}\}$ incident to $v$ into subsets $A_v,B_v,L_v,R_v$. These partitions must satisfy the following properties:

    \begin{enumerate}[label=(\arabic*)]
        \item\label{case:ordering} The set $A_v$ is assigned a fixed ordering. 
        \item Each of $L_v,R_v$  contains at most one element.
        \item Suppose edge $e$ is incident to vertices $v,v'$.  If $e\in A_v$ then $e\in B_{v'}$; if $e\in B_v$ then $e\in A_{v'}$.
        \item Suppose edge $e$ is incident to vertices $v,v'$. If $e\in L_v\cup R_v$ then $e\in L_{v'}\cup R_{v'}$. Note that it is allowable for $e$ to be in both $R_v, R_{v'}$ or both $L_v, L_{v'}$ or both $R_v$, $L_{v'}$ or both $L_v$, $R_{v'}$.
    \end{enumerate}
    
    We indicate the partition $A_v,B_v,L_v,R_v$ and the ordering from \ref{case:ordering} via a planar embedding of $G$ as shown schematically in the left illustration in Figure \ref{fig:sunrise}. Each vertex $v$ is drawn as a semicircle containing its label $n_v$, with a straight line segment at the bottom and a curved arc at the top. The edges in the unordered set $B_v$ are incident to $v$ at the midpoint of the straight line, while the edges in $A_v$ are incident to $v$ along the interior of the curved arc in order from left to right. If $L_v\neq\varnothing$, the edge in $L_v$ is incident to $v$ at the left corner. If $R_v\neq\varnothing$, the edge in $R_v$ is incident to $v$ at the right corner. 

    We also include a weighting $w_i\in\mathbb{Z}/2\mathbb{Z}$ between adjacent edges in $L_v$, $A_v$, and $R_v$ (but not in the region between $R_v, L_v$ containing $B_v$, even if $B_v$ is empty). 
\end{definition}

An example of a sunrise tree is given in the right image of Figure \ref{fig:sunrise}.

\definecolor{darkgreen}{rgb}{0,0.67,0}
\definecolor{midblue}{rgb}{.1,.2,1}

\begin{figure}[ht]
    \labellist
    \pinlabel{$n_v$} at 45 45
    \pinlabel{\textcolor{darkgreen}{$B_v$}} at 45 -10
    \pinlabel{\textcolor{red}{$A_v$}} at 45 107
    \pinlabel{\textcolor{midblue}{$R_v$}} at 100 32
    \pinlabel{$L_v$} at 0 32
    \pinlabel{1} at 264 12
    \pinlabel{4} at 264 62
    \pinlabel{3} at 221 108
    \pinlabel{1} at 335 88
    \pinlabel{1} at 280 78
    \pinlabel{$w_1$} at 16 76
    \pinlabel{$w_2$} at 34 82
    \pinlabel{$w_k$} at 83 54
    \endlabellist
    \includegraphics[width=100mm]{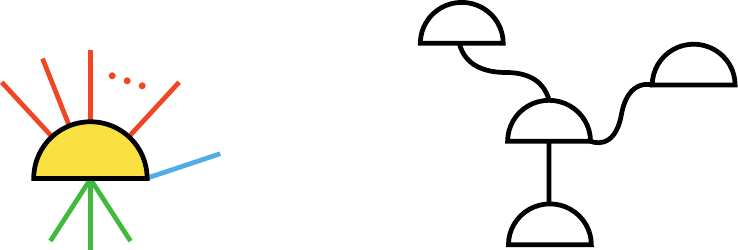}
    \vspace{.1in}
    \caption{Left: a schematic illustration of a vertex $v$ in a sunrise tree. Note that in this case, $L_v=\varnothing$. Right: a example of a sunrise tree with four vertices.}
    \label{fig:sunrise}
\end{figure}

\begin{remark}\label{rem:G_from_M}
    Every arborescent spanning solid $M$ encodes a sunrise tree $G_M$. Specifically, the vertices correspond to the components of $M\cut Q$, and the edges correspond to the 3-spheres $Q_i$ in Definition \ref{def:arborescent_spanning_solid}. If two components $M_v$ and $M_{v'}$ (corresponding to vertices $v$ and $v'$) are glued according to a 1-stripe pattern, and the gluing identifies a neighborhood of a disk in $M_v$ with a neighborhood of a arc in $M_{v'}$, then we record this with an edge $e$ connecting $A_v$ and $B_{v'}$. If $M_v$ and $M_{v'}$ are identified with a 2-stripe pattern, then we record this with an edge $e$ which joins $L_v$ or $R_v$ to $L_{v'}$ or $R_{v'}$, depending on the handedness of the 2-stripe pattern (see Construction \ref{constr:arborescent} below). See Figure \ref{fig:sunrise_graph_example} for a schematic illustration.
    
    \begin{figure}[ht]
    \centering
    \labellist \small
    \pinlabel{$0$} at 625 88
    \pinlabel{$1$} at 720 182
    \pinlabel{$0$} at 680 202
    \pinlabel{$0$} at 695 282
    \pinlabel{$2$} at 825 112
    \pinlabel{$1$} at 795 142
    \pinlabel{$0$} at 855 142
    \pinlabel{$1$} at 862 222
    \pinlabel{$1$} at 962 168
    \endlabellist
        \includegraphics[width=110mm]{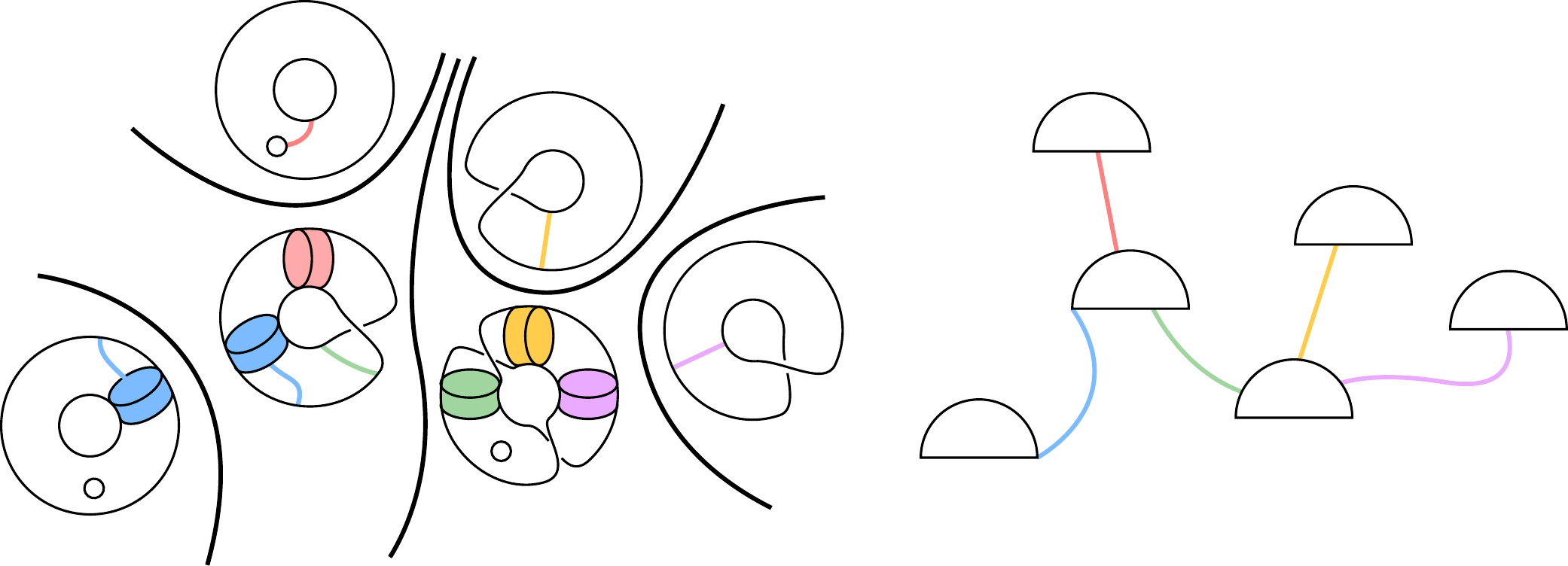}
        \caption{A schematic illustration of an arborescent spanning solid and its associated sunrise tree. The annular parts indicate standard spun annuli (with some number of twists), while the M\"obius band parts indicate standard spun M\"obius bands. Note that the lower-left edge corresponds to a 2-stripe pattern (of opposite handedness in the two components), while the remaining edges correspond to a 1-stripe pattern.}
        \label{fig:sunrise_graph_example}
    \end{figure}

     A core circle in each $M_v$ representing a generator of $H_1(M_v)$ intersects the disks and disk-arc pairs in $M_v$ in the same order that the corresponding edges appear on the curved part of $v$. The weighting between a pair of edges $e,e'\in L_v\cup A_v\cup R_v$ indicates whether the local orientation in the plumbing region of $Y_n$ corresponding to $e$ has the same (weight $0\in\mathbb{Z}/2\mathbb{Z}$) or opposite $(1\in\mathbb{Z}/2\mathbb{Z}$) orientation as the plumbing region corresponding to $e'$ (compared by transport through the segment of $Y_n$ corresponding to the arc between $e, e'$ with the assigned weight).
\end{remark}

Conversely, we show how to construct a spanning solid using a sunrise tree.

\begin{construction}\label{constr:arborescent}
    Given a sunrise tree $G$, we construct a spanning solid $M_G$ which we call the {\emph{standard arborescent spanning solid} obtained from $G$}.  We use $G$ to construct $M_G$ as a Murasugi sum of the standard solids $\{Y_{n_v}\mid v$ a vertex of $G\}$ as follows: 
    
    \begin{enumerate}[label=(\arabic*)]
        \item When $v$ and $v'$ are adjacent via an edge connecting $A_v$ to $B_{v'}$, we sum $Y_{n_v}$ and $Y_{n_v'}$ using a 1-stripe pattern. In $Y_{n_v}$, the spine of this plumbing is the standard disk $D_{n_v}$, and in $Y_{n_v'}$, the spine of this plumbing is the standard arc $A_{n_v'}$.
        \item When $v$ and $v'$ are adjacent via an edge connecting one of $L_v$ or $R_{v'}$ to one of $L_{v'}$ or $R_{v}$, we sum $Y_{n_v}$ and $Y_{n_v'}$ using a 2-stripe pattern. In both $Y_{n_v}$ and $Y_{n_v'}$, the spine of this pattern is a standard disk-arc pair $P_{n_v}^\pm$ or $P_{n_v'}^\pm$, respectively. The sign of the disk-arc pair in $Y_{n_v}$ is determined by whether the edge is in $R_v$ (positive) or $L_v$ (negative). Similarly, the sign of the disk-arc pair in $Y_{n_v'}$ is determined by whether the edge is in $R_{v'}$ (positive) or $L_v$ (negative). 
        \item We take all Murasugi sum regions in any $Y_{n_v}$ to be disjoint. The disk and disk-arc pairs appearing in the plumbing regions for a fixed $Y_{n_v}$ are arranged in the order $L_v, A_v,R_v$ (with respect to the direction prescribed by a generator for $H_1(Y_n;\mathbb{Z}))$. 
        \item As in Remark \ref{rem:G_from_M}, the $\mathbb{Z}/2\mathbb{Z}$ weights in between the edges of $L_v,R_v,A_v$ indicate whether the local orientation on $Y_{n_v}$ near the two associated disk regions are the same $(0)$ or opposite $(1)$. 
        Note that if the sum of weights on $n_v$ has the same parity at $n_v$, then the disk neighborhoods of regions associated to first and last edges on $n_v$ have the same local orientation when compared via  the last segment of $Y_{n_v}$. If the parities are opposite, then these orientations are opposite. 
       \item  The orientations on the Murasugi sum regions are those induced by the orientations on $Y_{2n}$ or collar of $\partial Y_{2n+1}$ as in Definition \ref{def:standard_objects}. 
    \end{enumerate}
\end{construction}

A sunrise tree determines an arborescent {knotted surface} up to smooth equivalence. 

\begin{theorem}\label{thm:arborescent_determined}
   Suppose $S_1, S_2$ are arborescent knotted surfaces bounding arborescent spanning solids $M_1$ and $M_2$, associated to sunrise graphs $G_1$ and $G_2$, respectively.
   \begin{enumerate}[label=(\arabic*)]
    \item\label{case:solid} If $G_1=G_2$, then $M_1$ and $M_2$ are smoothly equivalent.
    \item\label{case:surface} If $G_1$ and $G_2$ are the same when one ignores the labels from $\mathbb{Z}/2\mathbb{Z}$ at their vertices, then $S_1$ and $S_2$ are smoothly equivalent.
   \end{enumerate}
\end{theorem}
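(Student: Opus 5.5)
The plan is to show that every arborescent spanning solid $M$ with sunrise tree $G_M$ (Remark \ref{rem:G_from_M}) is equivalent to the standard solid $M_{G_M}$ of Construction \ref{constr:arborescent}. Part \ref{case:solid} then follows, since $G_1=G_2$ gives $M_1\simeq M_{G_1}=M_{G_2}\simeq M_2$. For part \ref{case:surface}, we then compare standard solids whose trees differ only in the $\mathbb{Z}/2\mathbb{Z}$ weights, and show their boundaries agree using Proposition \ref{prop:replumb}.

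\textbf{Step 1: reduce to the standard solid.} Induct on the number of vertices of $G_M$. Choose a leaf $v$ of $G_M$ and let $Q_i$ be the 3-sphere corresponding to its unique edge. Since $Q_i$ is a standard 3-sphere, it splits $S^4$ into two 4-balls, and $M$ is the Murasugi sum (Definition \ref{def:murasugi_sum}) of $M_v\cong Y_{n_v}$ with the solid $M'$ on the other side, capped off appropriately. The solid $M'$ is again arborescent, with tree $G_M\setminus v$, so by induction it is equivalent to $M_{G_M\setminus v}$. The Murasugi sum is determined by three pieces of data.
\begin{enumerate}[label=(\roman*)]
\item The isotopy class of the spine in each summand. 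By Definition \ref{def:standard_subsets}, in $Y_n$ the standard disk, standard arc, and disk-arc pairs $P_n^\pm$ are unique up to isotopy. Using Lemma \ref{lem:knotted_arc}, any knotting or tangling of the arcs can be undone, and the disks are unique nonseparating disks.
\item The position of the spine along the core of $Y_{n_v}$ relative to the other plumbing regions. This is recorded by the ordering of $A_v$ and by the sets $L_v$, $R_v$.
\item The gluing map $g$ together with the local orientations.
\end{enumerate}
For (iii), the identification of colored balls is determined up to isotopy by the striping, the handedness in the 2-stripe case (recorded by $L/R$), and orientations. Note that the diffeomorphism groups of $B^3$ rel a 1- or 2-stripe coloring are connected after fixing orientation, by Cerf/Hatcher. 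The local orientations are recorded by the weights. Hence the sum is equivalent to the one prescribed by Construction \ref{constr:arborescent}.

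\textbf{Step 2: handle the weights.} Changing a weight between two consecutive edges at $v$ amounts to reversing the local orientation of $Y_{n_v}$ at one plumbing region relative to the others. To compensate consistently, reverse the orientation on the entire side of the tree beyond that edge, namely both local orientations at that plumbing region. By Proposition \ref{prop:replumb}, reversing both local orientations at a single Murasugi sum changes the solid, by replacing $N$ with the complementary ball $N'$, but leaves the boundary unchanged. Applying this edge by edge, starting from a leaf and working through the tree, we can realize any change of weights without changing $\partial M$. This gives part \ref{case:surface}.

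\textbf{Main obstacle.} The hardest step is making (iii) rigorous. We must show that two Murasugi sums along isotopic spines, with matching colored-ball identifications, produce equivalent solids in $S^4$. This requires that the 4-balls $V_i$ realizing $\mathcal{B}$ are unique up to isotopy, which should follow from uniqueness of normal pushoffs, and that the ambient gluing is well defined. A second point needing care is that the disjointness of the plumbing regions must be achievable compatibly for all edges at a vertex. I would handle this first by isotoping all spines at $Y_{n_v}$ into disjoint fibers of the $S^1$-direction, using the light bulb trick for the arcs.
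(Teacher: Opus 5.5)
\textbf{Verdict.} Your Step 2 is fine and is essentially the paper's argument. Each coupled orientation flip at an edge is a Proposition \ref{prop:replumb} move, and flipping an edge together with every edge beyond it changes only the weights adjacent to that edge at $v$. The genuine gaps are in Step 1.

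\textbf{The simultaneous light bulb move fails.} This is exactly where you locate the main obstacle. You cannot isotope all spines of $Y_{n_v}$ simultaneously into disjoint $S^1$-fibers by the light bulb trick. When several arcs end on the same 2-sphere component of $\partial Y_n$, each one blocks the light bulb move for the others, and arcs that are individually standard can be knotted relative to one another. For example, take $n$ even and let $a_2$ be a standard arc in $Y_n$, so that $Y_n\setminus\mathring{\nu}a_2$ is a solid torus. Let $a_1$ be a parallel copy of $a_2$ with a local knot tied in. Then $a_1$ is still standard in $Y_n$ by the light bulb trick, but it is not isotopic to an unknotted arc in that solid torus. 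So ``standard spine plus position in the order'' does not determine the configuration of plumbing regions in $Y_{n_v}$.

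Your induction inherits this problem. The hypothesis $M'\simeq M_{G\setminus v}$ gives an equivalence with no control over where it sends the leaf's plumbing ball. The natural strengthening, to equivalences of solids with marked standard balls, is already false for a single vertex by the example above.

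The missing idea is the paper's. Perform the sums in a fixed order, and straighten each arc $\alpha_k$ only after the sums along $\alpha_1,\dots,\alpha_{k-1}$ have been carried out. At that point $Y_i$ is a subset of the partially built solid, so the isotopy straightening $\alpha_k$ may sweep through the plumbing regions already used. Consequently only the individual isotopy class of each spine matters. Lemma \ref{lem:knotted_arc} gives some of this flexibility, but only for local crossing changes of the one spine being summed. It does not let you transport the leaf's ball across the abstract equivalence $M'\simeq M_{G\setminus v}$.

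\textbf{The connectivity claim in (iii) is false for 1-striped balls.} You assert that coloring- and orientation-preserving diffeomorphisms of a 1- or 2-striped ball form a connected group. For the 1-striped ball this fails: rotation by $\pi$ about an axis in the equatorial plane preserves the coloring and the orientation but exchanges the two disks of $X$. Any isotopy through coloring-preserving maps must fix each component of $X$, so this rotation is not isotopic to the identity through such maps. For the 2-striped ball the components of $X$ are a disk and an annulus, so your claim does hold there.

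Thus each 1-stripe edge carries an extra binary datum: which end of the arc spine is matched with which side of the disk spine. This is recorded neither by the striping, nor by $L_v,R_v$, nor by the weights. It cannot be removed by appeal to Cerf--Hatcher. It has to be fixed by convention, namely directions on standard arcs and co-orientations on standard disks, as implicit in Construction \ref{constr:arborescent}.

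\textbf{Your opening goal asks for too much.} You aim to show that every solid satisfying Definition \ref{def:arborescent_spanning_solid} is equivalent to $M_{G_M}$. That definition does not require the spines to be standard, so the sunrise tree does not record enough to support this. As in the paper, individual standardness of the spines and these gluing conventions must be taken as hypotheses, not deduced from Definition \ref{def:standard_subsets}.
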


\begin{proof}
In order to prove part \ref{case:solid}, we must analyze what pieces of data were not specified in Definition \ref{def:arborescent_spanning_solid}. To prove part \ref{case:surface}, we must analyze the effect of changing the binary labels.

\begin{enumerate}[label=(\arabic*)]
\item We specified that the arcs and disk-arc pairs appearing as spines of 1-stripe and 2-stripe patterns are individually standard. Let $\alpha_1,\ldots,\alpha_n$ be the arcs in some $Y_i$ that are spines of 1-stripe patterns or the arc parts of 2-stripe patterns. Order the $\alpha_1,\ldots,\alpha_n$ so that in the construction, the Murasugi sums along these arcs occur in increasing order. Then perform ambient isotopy rel.\ boundary in $Y_i$ to straighten $\alpha_1$. (If $\alpha_1$ is part of a 2-stripe pattern spine, simultaneously standardize the attached disk.) After performing the Murasugi sum along $\alpha_1$, similarly apply ambient isotopy in $Y_i$ (now a subset of the spanning solid; we work rel.\ boundary in this solid) to straighten $\alpha_2$. Repeat iteratively. Thus, we may replace the set of arcs $\alpha_1,\ldots,\alpha_n$ with any set of arcs $\alpha'_1,\ldots,\alpha'_n\subset Y_i$ where each $\alpha'_j$ is isotopic rel.\ boundary (and rel.\ attached disk if $\alpha_j$ is in a 2-stripe pattern spine) to $\alpha_j$. Since we specify that the arcs are individually standard, this determines the spanning solid. 

\item 
In Definition \ref{def:arborescent_spanning_solid}, in order to perform the Murasugi sums that form $M$ from $Q$ we must specify local orientations near each Murasugi sum region. When a ball in $Q_i$ and a ball in $Q_j$ are identified, they have a relative orientation, i.e., making a choice of orientation the ball in $Q_i$ determines the orientation on the ball in $Q_j$. By Proposition \ref{prop:replumb}, these two choices may yield distinct spanning solids, but with equivalent boundaries.
\end{enumerate}
\end{proof}

\begin{remark}\label{rem:push_into_5ball}
    Analogous to the classical case, arborescent spanning solids which differ by reversing the local orientations on each summand near the plumbing regions become isotopic after pushing them into the 5-ball. 
\end{remark}

\begin{restatable}{question}{sunrisemoves}\label{Q:Sunrise_Moves}
    In analogy with the classical setting \cite{bonahon_siebenmann2010,cklsv26}, what moves on sunrise trees preserve the corresponding arborescent surfaces?
\end{restatable}

In order to construct interesting examples, it is essential that we use non-orientable spanning solids.

\begin{observation}
    If $M$ is an arborescent spanning solid in which at least one plumbing is done using a 1-stripe pattern with the disk spine in an orientable $Y_{2n}$, then $\partial M$ is disconnected.
\end{observation}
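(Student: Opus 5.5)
The plan is to track how boundary components change under a single Murasugi sum along a 1-stripe pattern, and then check that subsequent sums cannot reconnect the pieces.

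\textbf{Local analysis of one 1-stripe sum.} Let $Y_{2n}\cong (S^1\times D^2)^\circ$ be a summand of $M$ whose standard disk $D_{2n}$ is the spine of a 1-stripe plumbing. Its boundary $\partial Y_{2n}$ consists of two components: a torus $T$ and a sphere $\Sigma$, coming from spinning the two components of $T(2,2n)$ about one of them. The 1-stripe ball $B$ is a neighborhood of $D_{2n}$, so $X$ is an annulus $\nu(\partial D_{2n})\subset T$. Its complement $Y$ consists of two disks in the interior of $Y_{2n}$.

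On the other side, the summand $M'$ meets $B$ along a neighborhood of a standard arc, with $Y=S^0\times D^2$ lying in $\partial M'$. Gluing as in Definition~\ref{def:murasugi_sum} has the following effect on boundaries.
\begin{itemize}
\item The torus $T$ is cut along the annulus $X$. Since $\partial D_{2n}$ is essential in $T$ (it is the meridian of the solid torus), $T\setminus \mathring X$ is an annulus.
\item In $M'$, two disks $Y$ are removed from $\partial M'$.
\item The new surface is obtained by gluing along $\partial X=\partial Y$.
\end{itemize}
So the result is a tubing of $\partial M'$ along the two disks, with the tube $T\setminus\mathring X$.

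\textbf{Where the sphere goes.} The sphere component $\Sigma$ of $\partial Y_{2n}$ is untouched by this plumbing, since $X\subset T$. The key claim is therefore that $\Sigma$ survives as a separate component of $\partial M$.

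To prove this I would first fix the conventions. The standard disk's boundary lies on the torus component, and I would check that the figure and the definition confirm this. I would then observe that every other plumbing region in $Y_{2n}$ meets $\partial Y_{2n}$ in a controlled way:
\begin{itemize}
\item Disks $D_{2n}$ and disk-arc pairs have boundary on $T$, except the arc endpoint.
\item An arc endpoint may lie on either component.
\end{itemize}

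\textbf{Main obstacle.} The difficulty is that other summands could tube $\Sigma$ to the rest of the surface, via an arc $A_{2n}$ of $Y_{2n}$ with an endpoint on $\Sigma$. So the sphere alone does not obviously stay separate.

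To handle this, I would switch to a homological count. In $Y_{2n}$, the surface $\partial Y_{2n}$ is disconnected. I would show by induction over the tree that the number of boundary components only changes by tubings. Each edge of the tree performs either:
\begin{itemize}
\item a 1-stripe sum, which tubes across two regions, or
\item a 2-stripe sum.
\end{itemize}

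The induction should use an orientation/$\mathbb{Z}/2$ argument. The meridian disk $D_{2n}$ is nonseparating in $Y_{2n}$, yet its boundary circle $\partial D_{2n}$ on $T$ together with the annulus construction produces a closed surface in $M$ bounding a region. I would exhibit a properly embedded separating 2-sphere-like object. The cleanest candidate is the following: the disk $D_{2n}$ union the arc-side disk of $M'$ forms a sphere $S\subset M$ meeting $\partial M$ in... [this is the step I expect to need most care].

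Instead, I would try the Euler characteristic and linking count directly. Cutting $M$ along the spine sphere formed by $D_{2n}$ capped by the plumbing should separate $M$, and its intersection with $\partial M$ should be empty. If so, $\partial M$ has components on both sides, which is the desired disconnectedness.
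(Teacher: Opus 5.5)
Your local analysis is right. For the 1-stripe sum with disk spine $D_{2n}\subset Y_{2n}$ one has $\partial M=(\partial Y_{2n}\setminus\mathring X)\cup(\partial M'\setminus\mathring Y)$ with $X=\nu(\partial D_{2n})\subset T$. So this plumbing leaves the sphere $\Sigma$ untouched, and that proves the claim whenever no other plumbing region of that $Y_{2n}$ meets $\Sigma$.

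The obstacle you flag is never overcome, and it is more pervasive than you say. For even $n$ the standard arc always has one endpoint on $\Sigma$. So does the arc of every disk--arc pair $P^\pm_{2n}$: it is not boundary-parallel in $Y_{2n}\cut D_{2n}\cong S^2\times I$, so it must run to $\Sigma$. Hence every $B$-, $L$- or $R$-edge at that vertex tubes $\Sigma$ to the rest of the surface. Your remaining steps are only announced:
\begin{itemize}
\item an induction that ``only counts tubings'' gives no lower bound, since tubings can merge components;
\item the $\mathbb{Z}/2$ argument is never specified;
\item the separating sphere built from $D_{2n}$ cannot exist in general.
\end{itemize}

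No argument can close this gap, because the statement is false without an extra hypothesis. Suppose $M=M_1\ast M_2$ is a 1-stripe sum with disk spine in $M_1$ and arc spine in $M_2$, and $\partial M_1,\partial M_2$ are both connected. They are then 2-spheres, since every arborescent solid has $\chi(\partial M)=2$. Then $\partial M$ is two disks glued to an annulus, i.e.\ a 2-sphere.

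This situation occurs. Take the sunrise tree $a-b-c-d$ with all labels $2$, where $ab$ and $cd$ are 2-stripe edges and $bc\in A_b$, $bc\in B_c$.
\begin{itemize}
\item As in the spun-trefoil example, $\partial(Y_a\ast Y_b)$ and $\partial(Y_c\ast Y_d)$ are 2-spheres.
\item The standard disk $D_b$, disjoint from $P^\pm_b$, has boundary on the first sphere.
\item The standard arc of $Y_c$, disjoint from $P^\pm_c$, has endpoints on the second.
\end{itemize}
So $\partial M\cong S^2$, even though $M$ is oriented and has a 1-stripe plumbing with disk spine in $Y_2$. Replacing $Y_c\ast Y_d$ by $Y_1$ gives a three-vertex example.

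The correct form of your $\mathbb{Z}/2$ idea is Lefschetz duality. The number of components of $\partial M$ is $1+\dim\ker(H_2(M;\mathbb{F}_2)\to\operatorname{Hom}(H_1(M;\mathbb{F}_2),\mathbb{F}_2))$. Since $M\simeq\bigvee_vY_v$, this can be read off from the mod-2 intersection matrix of the spheres parallel to the $\Sigma_v$ (fiber spheres for odd labels) with the core circles. For all-even trees this matrix has zero diagonal, ones in both directions across 2-stripe edges, and a single one across each 1-stripe edge. Hence $\partial M$ is connected exactly when the tree has a perfect matching by 2-stripe edges, consistent with the example above.

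The observation therefore holds only under an additional hypothesis, such as the one your first two paragraphs actually use: no other plumbing region of that $Y_{2n}$ meets its sphere component.
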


\begin{proposition}\label{prop:2stripespin}
    If $M$ is an arborescent spanning solid in which every plumbing is done along a 2-stripe pattern, then $M$ is the spin of a spanning surface for a 2-bridge link.
\end{proposition}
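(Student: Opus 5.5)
The plan is to understand the sunrise tree of such an $M$ first, and then run Proposition \ref{prop:spun_murasugi} backwards. If every plumbing uses a 2-stripe pattern, then every edge of $G_M$ joins $L_v\cup R_v$ to $L_{v'}\cup R_{v'}$, so $A_v=B_v=\varnothing$ for every vertex. Since $|L_v|,|R_v|\le 1$, every vertex has valence at most two, and $G_M$ is a path $v_1,\dots,v_k$. By Theorem \ref{thm:arborescent_determined}\ref{case:solid}, $M$ is determined up to equivalence by $G_M$. It therefore suffices to exhibit, for each path-shaped sunrise tree $G$ with only 2-stripe edges, a classical spanning surface $F$ whose spin $S(F)$ realizes $G$.

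The classical model is a linear plumbing $F=b_1*\cdots*b_k$ of unknotted twisted bands, where $b_i$ has $n_{v_i}$ half-twists. This is Conway's arborescent construction (Construction \ref{cons:arborescent_classical}) on a path with at most two leaves, so $\partial F$ is a 2-bridge link. To spin it, puncture $S^3$ at a point of $\partial F$ lying on $b_1$, away from the plumbing squares. The spin of $b_i$ about a point on its boundary is exactly the standard solid $Y_{n_{v_i}}$ of Definition \ref{def:standard_objects}.

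Next I would show that each classical plumbing of $b_i$ with $b_{i+1}$ along a square spins to a 2-stripe Murasugi sum, extending the argument in the proof of Proposition \ref{prop:spun_murasugi}. After puncturing at a vertex, a square becomes a polygon whose spin is a $2$-striped ball. The difficulty is that the square in $b_{i+1}$ does not contain the spinning point. To get around this, I would slide the basepoint along $\partial F$: since $\partial F$ is connected within each band region and the plumbing squares of a linear chain meet consecutive bands, the squares can be arranged so that each one has a vertex on the boundary component through the spinning point. If $\partial F$ has two components, I would choose the component so that this holds for all squares, which is possible because every square in a linear chain abuts both boundary arcs of the band. Then I would identify the spine: in each $Y_n$, the spin of the square is a disk-arc pair $P_n^\pm$. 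The disk comes from spinning the edge of the square transverse to the band, and the arc comes from the edge that runs to the spinning point. The sign $\pm$ is controlled by which side of the square the spinning point lies on, and that corresponds to $L$ versus $R$.

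The main obstacle is matching all the data. I need the signs $L/R$ at both ends of each edge, and the $\mathbb{Z}/2$ weights, that is, the local orientations. For the signs, I would handle handedness by reflecting individual bands: the spin of a surface equals the spin of its mirror, and changing the plumbing direction in the classical surface swaps the handedness. For the weights, the classical plumbing direction choices (above or below) give both relative local orientations. So every path sunrise tree arises from some classical linear plumbing, and hence $M\cong S(F)$ with $\partial F$ a 2-bridge link.
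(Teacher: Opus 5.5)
Your reduction to a path is correct: every edge lies in some $L_v\cup R_v$ and $|L_v|,|R_v|\le 1$. Your overall plan also matches what the paper means by ``one can see this directly'' from Figure \ref{fig:spun_example}(b): realize $M$ as the spin of a linear classical plumbing, and recognize each 2-stripe sum as a spun square plumbing with disk-arc spines. The gap is in making \emph{all} plumbing regions 3-balls in a single spin.

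In $S(F;L_0)$ a subsurface $R\subset F$ becomes a 3-ball only if it contains the spinning point $p$. A square $P$ with $p\notin P$ spins to $P\times S^1$, a solid torus; the paper warns about exactly this in its introduction. Likewise, a band $b_i$ with $p\notin b_i$ contributes a product piece rather than a copy of $Y_{n_{v_i}}$. Having a vertex on the boundary component through $p$ is irrelevant (for a knot it is automatic). Sliding $p$ along $\partial F$ cannot help either, since one spin has only one basepoint. In a standard linear chain, $P_{i-1,i}$ and $P_{i,i+1}$ are disjoint in $b_i$. So for three or more bands, all but at most one square spin to solid tori, and your $S(F)$ is not exhibited as an arborescent solid. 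With your initial choice of $p$ on $b_1$ away from all squares, not even $P_{12}$ spins to a ball.

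What is missing is a rearrangement of the classical chain so that every band and every square passes through one point $p\in\partial F$. Near $p$, $F$ is a half-disk. Take $b_1,\dots,b_k$ to meet it in consecutive sectors, with $P_{i,i+1}=b_i\cap b_{i+1}$ the overlap of adjacent sectors. Then the squares share the corner $p$ and are otherwise disjoint. Concretely, plumb $b_{i-1}$ and $b_{i+1}$ onto $b_i$ from opposite sides along cross-strips of $b_i$ that touch only at $p$; the plumbing spheres can then be chosen nested, meeting only at $p$.

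After removing a half-disk at $p$:
\begin{itemize}
\item each band spins to $Y_{n_{v_i}}$;
\item each square spins to a 2-striped ball;
\item the plumbing spheres spin to disjoint standard 3-spheres (spins of disjoint properly embedded disks in $B^3$).
\end{itemize}
In $b_i$ the two squares sit on opposite sides of $p$ along $\partial b_i$. Their spines are therefore disk-arc pairs whose arcs leave the meridian disks in opposite directions, giving $P^-$ for $L_{v_i}$ and $P^+$ for $R_{v_i}$, as Construction \ref{constr:arborescent} requires.

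Your treatment of signs and weights then has to be redone in this model. The symmetry $S(F)\cong S(\overline{F})$ applies to the whole surface, not to a single band. So the leaf handedness and the $\mathbb{Z}/2$ weights must be checked to come from the remaining classical choices: the side of $p$ on which each leaf square sits, the plumbing side of each sum, and the twist signs.
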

\begin{proof}
  Because we specify that the 2-stripe plumbings are performed with spines given by the standard disk-arc pair in each $Y_i$, one can see this directly. This is essentially illustrated in Figure \ref{fig:spun_example}(b).
\end{proof}

\begin{corollary}\label{cor:arborescent_2-knot}
    If $M$ is an oriented, arborescent spanning solid with connected boundary, then $M$ is the spin of a Seifert surface for a 2-bridge knot.
\end{corollary}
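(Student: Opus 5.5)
The plan is to show that orientability together with connected boundary forces every plumbing in $M$ to be of 2-stripe type. Once that is established, Proposition \ref{prop:2stripespin} shows that $M$ is the spin of a spanning surface $F$ for a 2-bridge link. The remaining steps are to show that $F$ is orientable and that its boundary is a knot.

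\emph{Step 1: the pieces are orientable.} $M$ is oriented, and each component $M_v$ of $M\cut Q$ is a codimension-zero submanifold of $M$, so each $M_v$ is orientable. The odd-index blocks $Y_{2n+1}\cong(S^1\ttimes S^2)^\circ$ are non-orientable, so every vertex label $n_v$ of the sunrise tree $G_M$ is even. Each piece is therefore a standard spun annulus $Y_{2k}\cong(S^1\times D^2)^\circ$.

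\emph{Step 2: no 1-stripe plumbings.} Suppose some edge of $G_M$ corresponds to a 1-stripe plumbing. Its disk spine is the standard disk $D_{n_v}$ of some $Y_{n_v}$, and $n_v$ is even by Step 1. The Observation preceding Proposition \ref{prop:2stripespin} then says $\partial M$ is disconnected, which contradicts the hypothesis. Hence every edge is a 2-stripe plumbing, and Proposition \ref{prop:2stripespin} applies: $M=S(F;L_0)$ for a spanning surface $F$ of a 2-bridge link $L$.

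\emph{Step 3: $F$ is a Seifert surface and $L$ is a knot.} In the proof of Proposition \ref{prop:2stripespin}, $F$ is the classical linear plumbing of annuli with even numbers of half twists, one for each vertex. This is the surface whose spin gives the pieces $Y_{2k}$, and the standard disk-arc pairs are the spins of the plumbing squares. A plumbing of annuli is orientable, so $F$ is a Seifert surface.

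For connectivity, recall that spinning about $L_0$ produces one boundary component from $L_0$ (a 2-sphere) and one torus from each other component of $L$. So $\partial M=S(L;L_0)$ is connected exactly when $L=L_0$ is a knot. Therefore $M=S(F)$ with $F$ a Seifert surface for a 2-bridge knot.

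\emph{Main obstacle.} The delicate point is Step 3, namely correctly identifying the classical surface produced by Proposition \ref{prop:2stripespin}. I need to check that the orientations of the plumbing regions, which are determined by the $\mathbb{Z}/2$ weights and the signs $P^\pm$, are compatible with a global orientation. If they are not, $F$ could a priori be a non-orientable plumbing of twisted annuli whose spin is still orientable.

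I would first try to rule this out directly. The orientation of $M$ restricts to orientations on the spun annuli that agree on the identified balls. Each spun plumbing ball is the spin of a square in $F$, so these agreements should descend to matching orientations of the squares, giving an orientation of $F$.

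If that descent fails, the fallback uses the freedom in Theorem \ref{thm:arborescent_determined}\ref{case:surface}: replumbing changes $M$ only up to the $\mathbb{Z}/2$ labels. Since the statement concerns $M$ itself, though, I would rely on the direct orientation argument.
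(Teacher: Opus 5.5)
You follow the route the paper intends: the corollary is stated right after the Observation and Proposition~\ref{prop:2stripespin}, with no further argument. However, Step~2 fails, because the Observation is false once 2-stripe plumbings are present. Its heuristic is that the sphere component of $\partial Y_{2n}$ survives as a separate component of $\partial M$. But the arc of a disk--arc pair ends on that sphere, so a 2-stripe plumbing at the same vertex attaches it to the rest of the boundary.

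Here is a concrete counterexample. Let $N=Y_2*Y_2$ be a 2-stripe sum as in Figure~\ref{fig:spun_example}(b), namely the spun genus-one Seifert surface of the trefoil, so $\partial N=S(3_1)\cong S^2$. Let $N'$ be a second copy. Plumb $N$ and $N'$ along a 1-striped ball whose spines are a standard disk $D$ in one $Y_2$-piece of $N$ and a standard arc $\alpha$ in one $Y_2$-piece of $N'$, both away from the 2-stripe balls. With orientation-compatible gluings, this gives an oriented arborescent solid $M$: four vertices labelled $2$, two $L/R$ edges and one $A/B$ edge.

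Now $\partial D$ is a circle $\{p\}\times S^1$ on the sphere $\partial N$, so deleting an annulus around it leaves two disks. Both endpoints of $\alpha$ lie on the sphere $\partial N'$, so deleting two disks leaves an annulus. Hence $\partial M\cong S^2$ is connected, even though a 1-stripe plumbing with disk spine in an orientable $Y_2$ occurs. Proposition~\ref{prop:2stripespin} therefore cannot be invoked.

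The gap cannot be repaired, because the conclusion itself fails for this $M$. The ball meets $N'$ in a neighborhood of an arc and $N$ in a neighborhood of a disk. So you can represent generators of $H_1(N')$ and $H_2(N)$ away from the ball. Push-offs of the former then lie in the 4-ball containing $N'$, and the latter bound 3-chains in the other 4-ball. Hence the linking matrix $H_1(M)\times H_2(M)\to\mathbb{Z}$ defining $tA_+-A_-$ is block triangular, with diagonal blocks those of $N'$ and $N$. This gives $\Delta_{\partial M}=\Delta_{3_1}^2=(t^2-t+1)^2$, which has determinant $9$.

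A spun knot $S(K_0)$ has Alexander polynomial $\Delta_{K_0}$. The only 2-bridge knots of determinant $9$ are $6_1$ and $9_1$ up to mirror image, with $\Delta=2t^2-5t+2$ and $(t^9+1)/(t+1)$. So $\partial M$ is not a spun 2-bridge knot, and $M$ is not the spin of a Seifert surface for one.

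The statement needs an extra hypothesis, such as ``every plumbing is 2-stripe.'' Under that hypothesis your Steps~1 and~3 are fine. Your orientability worry also disappears: $F^\circ\times S^1\subset S(F)=M$, so $F$ is automatically orientable.
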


\begin{figure}[!ht]
\begin{center}
\labellist
\pinlabel{0} at 22 11
\pinlabel{0} at 96 11
\pinlabel{0} at 58 60
\endlabellist
\raisebox{15mm}{\includegraphics[width=30mm]{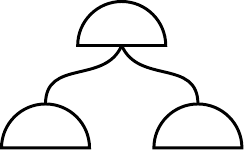}}
\hspace{1in}
\labellist \tiny
\pinlabel{drilled-out} at 215 155
\pinlabel{solid torus} at 215 110
\endlabellist
\includegraphics[width=80mm]{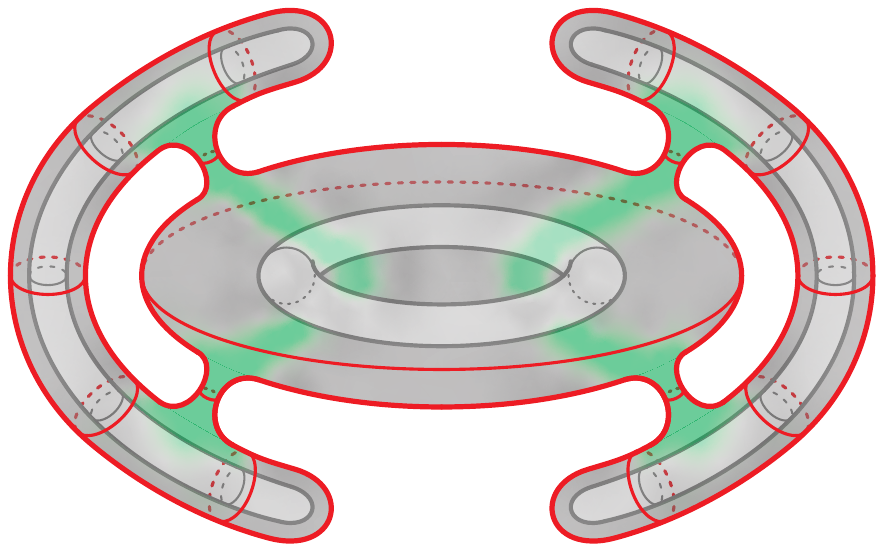}
\caption{The boundary of this arborescent solid has a component of genus 2 (shown in bold) and thus is not spun. The solid consists of a 3-ball with an unknotted solid torus drilled out, attached by two solid tubes (each) to two thickened 2-spheres. The 3-balls along which the Murasugi sums are performed are shaded.}
\label{fig:000}
\end{center}
\end{figure}

\begin{example}\label{ex:interesting_example}
The solid $Y_0$ has boundary consisting of one unknotted 2-sphere and one unknotted torus, embedded as a spun trivial link of two components. Let $M$ be a solid with boundary surface $S$. Murasugi summing a copy of $M$ with $Y_0$ using a 1-stripe pattern with arc spine in $M$ and standard disk $D_0$ spine in $Y_0$ yields a solid whose boundary $S'$ is obtained from $S$ by surgery along a framed arc (``attaching a tube") and adding a disjoint 2-sphere component. 
In particular, the sunrise tree shown left in Figure \ref{fig:000} describes a solid, shown right,  whose boundary is an 
arborescent surface with a genus-2 component (in addition to two spheres and a torus) and therefore is not spun. The same conclusion holds if we replace the 0's in the figure with any triple of non-negative even integers. For example, see Figure \ref{fig:222}.
\end{example}

\begin{figure}[!ht]
\begin{center}
\labellist \small
\pinlabel{2} at 22 11
\pinlabel{2} at 96 11
\pinlabel{2} at 58 60
\endlabellist
\raisebox{12.5mm}{\includegraphics[width=25mm]{figures/nonspinexample}}
\hspace{.125in}
\labellist \tiny
\endlabellist
\includegraphics[width=70mm]{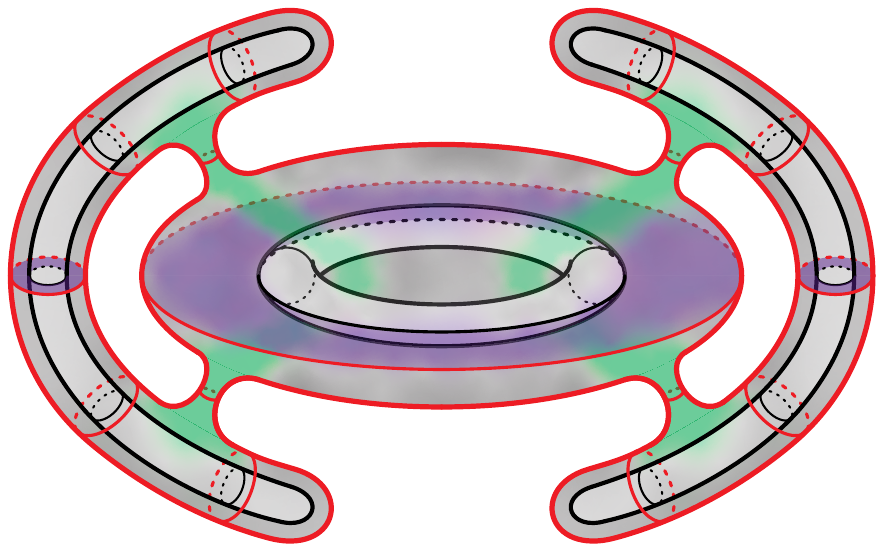}
\hspace{.125in}
\raisebox{10mm}{\includegraphics[width=40mm]{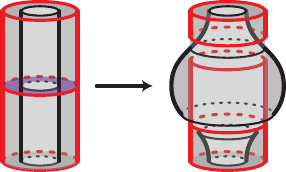}}
\caption{This arborescent solid, like the one in Figure \ref{fig:000}, is not spun.  The three annuli shown center indicate that one should transform the pictured solid (from Figure \ref{fig:000}) in the manner shown right to obtain the arborescent solid described by the sunrise tree shown left.}
\label{fig:222}
\end{center}
\end{figure}

\section{State solids for broken surface diagrams}\label{sec:states}

We begin by briefly reviewing states and state surfaces for classical link diagrams. Then, we adapt these ideas to dimension four.

\begin{definition}
   Given a diagram $E\subset S^2$ of a link $L\subset S^3$, a {\emph{Kauffman state}}  $X$ is obtained by resolving each crossing of $E$ by either of two possible smoothings (traditionally called $A$- or $B$-type).  The state $X$ is a disjoint union of circles in $S^2$ that agrees with $E$ away from crossings.
   
   One obtains a \emph{state surface} $F_X$ by capping off each circle $Y$ of a Kauffman state $X$ with a disk $U_Y$, typically on either side of $S^2$ in $S^3$, and then connecting the disks with half-twisted bands, one at each crossing \cite{ozawa11,adams_kindred2013}.  
\end{definition}

In the literature, a Kauffman state $X$ is sometimes defined to include  $A$- and $B$-labeled arcs that mark the crossings in $E$. 

\begin{remark}
    When obtaining a state surface from a Kauffman state $X$, the disks that cap off the circles of $X$ are usually taken to lie on a single side of $S^2$, but different choices of layering are also possible. 
    These choices often yield inequivalent surfaces, sometimes with notably different properties (for example, one surface can be incompressible while the other is compressible \cite{kindred_essential}). 
\end{remark}

\begin{remark}\label{rem:state_plumb_3D}
    For any circle $Y$ of a state $X$, any state surface $F_X$ de-plumbs along the disk $U_Y$. Assuming that $E$ has no nugatory crossings and the interior of $U_Y$ is disjoint from $S^2$, such a de-plumbing decomposes $F_X$ as a {\it nontrivial} Murasugi sum (meaning that neither summand is a 3-ball) whenever there are circles of $X$ on both sides of $Y$ in $S^2$.
\end{remark}

One can make the state surface construction more explicit by inserting, a l\`a Menasco \cite{menasco}, a small 3-ball $C_i$ about each crossing point of $E$. We write $C=\bigsqcup_iC_i$, and embed $L$ in $(S^2\setminus C)\cup\partial C$ such that each hemisphere of each $\partial C_i\cut S^2$ contains an overpass or underpass. See Figure \ref{fig:bubble}, which also shows how, in this context, one can define a Kauffman state to be a 1-manifold $X$ with 
\[L\cap S^2\subset X\subset (L\cup \partial C)\cap S^2.\]

\begin{figure}[ht]
    \begin{center}
        \includegraphics[width=\textwidth]{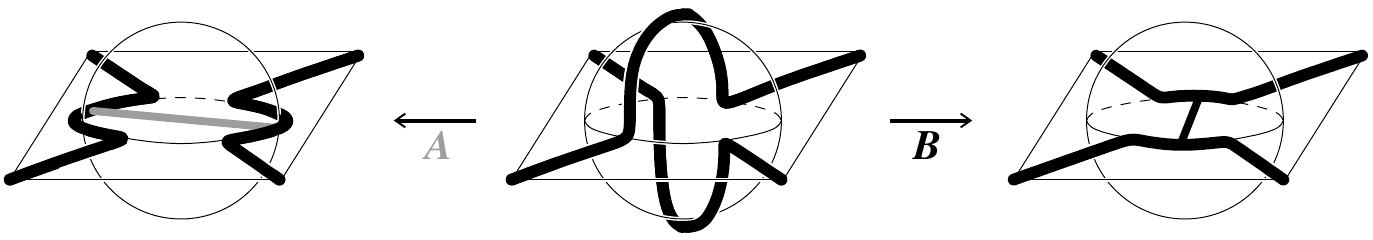}
        \caption{Embedding a link and smoothing a crossing at a Menasco bubble}\label{fig:bubble}
    \end{center}
\end{figure}

\begin{definition}
    We call a state surface {\it all-up} if each state circle is capped off with a disk whose interior lies entirely above $S^2\cup C$. Similarly, we call a state surface {\it all-down} if the interior of each such disk lies below $S^2\cup C$, or {\it up-down} if the interior of each such disk is disjoint from $S^2\cup C$, possibly with some above and some below.
\end{definition}

In this section, we will explore an analogous construction for broken surface diagrams $E\subset S^3$ of knotted surfaces in $K\subset S^4$. 
First, we will carefully and explicitly present the general construction of states and state solids from broken surface diagrams, in part by adapting Menasco's crossing bubbles to the context of broken surface diagrams. It will then be interesting to ask which states of the crossings in broken surface diagrams are {\it viable}, in the sense that they extend to state solids. The answer, unlike in dimension three, is not ``all of them.'' 
In fact, the answer depends subtly on the ``layering'', and 
we will answer several versions of this viability question in Theorems \ref{thm:up_solid} and \ref{thm:state_solid}.

\subsection{States of broken surface diagrams}\label{sec:states_subsection}

We will describe a method of constructing spanning solids for a knotted surface $K\subset S^4$ from a broken surface diagram $E\subset S^3$ for $K$. First, in \textsection\ref{sec:states_subsection}, we will describe how to ``smooth'' the crossings of $E$ to obtain a closed (possibly disconnected) surface $X$ called a \emph{state} (see Definition \ref{def:state}). 
Next, in \textsection\ref{sec:up_solid}, we will describe how to construct an ``all-up'' \emph{state solid} $M_X$ (see Definition \ref{def:state-solid}) by capping off each component of $X$ with a solid that is parallel into $S^3$ and whose interior lies entirely above $S^3$, and then connecting these solids with model pieces near the smoothed crossings. 
Lastly, in \textsection\ref{sec:layered_solid}, we will describe how to construct a state solid $M_X$ with other choices of layering.

We will use the following notation throughout this section.

\begin{notation}
Let $E\subset S^3$ be a connected broken surface diagram of a surface-link $K\subset S^4$, let $c$ be the union of the (marked) self-intersection points in $E$, and let $c_t\subset c$ and $c_b\subset c$ respectively be the set of all triple points and branch points of $E$. Write $c_d=c\setminus(c_t\cup c_b)$ for the set of double points in $E$.
\end{notation}

Take a regular neighborhood $S^3\times [-1,1]$ of $S^3\times\{0\}=S^3$ in $S^4$. Let $C_t$ and $C_b$ respectively be regular neighborhoods of $c_t$ and $c_b$ in $S^3\times[-1/2,1/2]$, and let $C_d$ be a regular neighborhood of $c_d\cut (C_t\cup C_b)$ in $(S^3\times[-1/2,1/2])\cut (C_t\cup C_b)$. Then each component of $C_d$ has the form $\alpha\times B^3$ for some arc or circle $\alpha$ of $c_d\cut (C_t\cup C_b)$, where each point in $\mathring{\alpha}$ has a neighborhood that intersects $E$ as shown in Figure \ref{fig:double_point}. Similarly, each component of $C_t$ is a ball that intersects $c$ as shown left in Figure \ref{fig:triple_branch_points}, each component of $C_b$ is a ball that intersects $c$ as shown right in Figure \ref{fig:triple_branch_points}, and $C=C_t\cup C_b\cup C_d$ is a regular neighborhood of $c$ in $S^3\times[-1/2,1/2]$. We encourage the reader to think of each component of $C_t$ and $C_b$ as a 4-dimensional Menasco ball \cite{menasco}, its boundary as a 3-dimensional Menasco bubble, and similarly for the components of $C_d$. We write
$$
\widehat{S^3}=\left( S^3\setminus\mathring{C}\right)\cup\partial C.$$

\begin{figure}
    \begin{center}
        \labellist
        \pinlabel{$\approx$} at 250 75
        \pinlabel{$\times I$} at 385 75
        \endlabellist
            \includegraphics[height=1.25in]{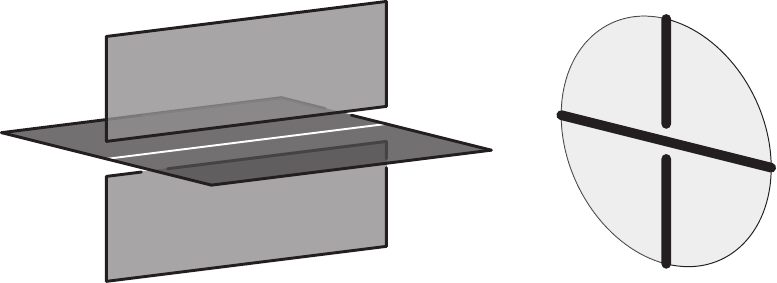}
                \caption{The neighborhood of a double point in a broken surface diagram}
        \label{fig:double_point}
    \end{center}
\end{figure}

\begin{figure}
    \begin{center}
        \;\hfill
        \includegraphics[height=1.25in]{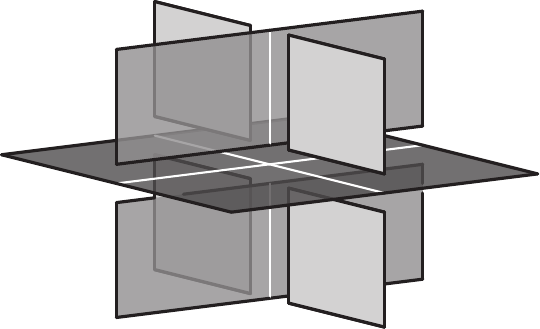}
        \hfill
        \includegraphics[height=1.25in]{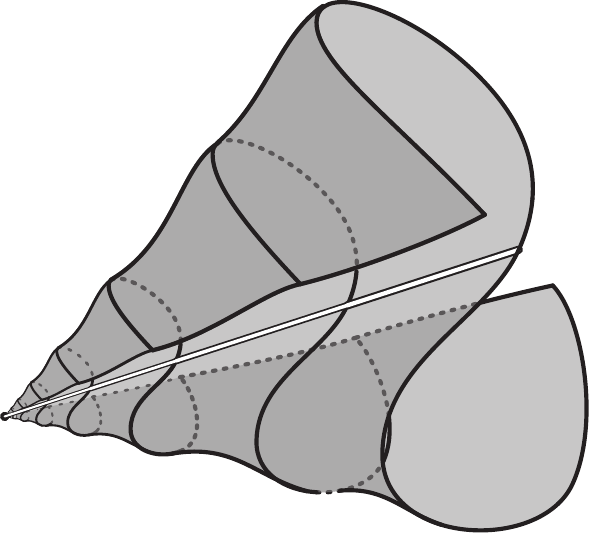}
        \hfill\;
        \caption{The neighborhood of a triple point (left) and a branch point (right) in a broken surface diagram. The branch point neighborhood may also be mirrored.}
        \label{fig:triple_branch_points}
    \end{center}
\end{figure}

\begin{definition}\label{def:Menasco_embedding}
While preserving $E$ (i.e., the projection of $K$ with crossing information), use the Menasco-style neighborhood $C$ of the crossing points of $E$ to isotope $K$ into $\widehat{S^3}\cup C_t$, so that near each double point, cusp, and triple point of $E$, $K$ appears as in Figures \ref{fig:Menasco_double_point}, \ref{fig:Menasco_branch_point}, and \ref{fig:Menasco_triple_point}, respectively.\footnote{In several figures, including Figures \ref{fig:Menasco_double_point}, \ref{fig:Menasco_branch_point}, and \ref{fig:Menasco_triple_point}, we view $\nu S^3$ as $S^3\times [-1,1]$, where $S^3$ is pictured spatially and $[-1,1]$ is the \textsc{roy-g-biv} color spectrum, with $S^3=S^3\times\{0\}$ shown in green (and mostly gray, for simplicity), and with $S^3\times(0,1)$ and $S^3\times(-1,0)$, respectively, shown in warm and cool colors. If viewing in black and white, we strongly recommend referring to the online version.} Namely:

\begin{enumerate}[label=(\arabic*)]
    \item For each component $\alpha\times B^3$ of $C_d$, $S^3$ cuts $\alpha\times S^2$ into two hemispheres, $\alpha\times S^2_\pm$, and we take $K$ to intersect each hemisphere in a rectangle, as shown in Figure \ref{fig:Menasco_double_point}.
    \item For the neighborhood $\{b\}\times D^4= D^4_b$ of each branch point $b$, $\partial D^4_b$ is a 3-sphere that intersects $S^3$ in a 2-sphere $Q_b$, which contains an endpoint $b'$ of an arc of $c_d\cut (C_t\cup C_b)$. View $\partial D^4_b=(Q_b\times[-1/2,1/2])/\sim$ as the suspension of $Q_b$, and denote the projection map $\pi:Q_b\times(-1/2,1/2)\to Q_b$.
    Then, as shown in Figure \ref{fig:Menasco_branch_point}, we take $K$ to intersect $\partial D^4_b$ in a disk $D$ that looks like a checkerboard surface for a 1-crossing diagram of the unknot, in the sense that $D$ contains a ``vertical'' arc $v$ with $\pi(v)=\{b'\}$ and $\pi|_{D\setminus v}$ is injective. Also see Figure \ref{fig:branched}
    \item\label{case:Menasco_Ct} For the neighborhood $\{t\}\times D^4= D^4_t$ of each triple point $t$, 
    $\partial D^4_t$ is a 3-sphere that intersects $S^3$ in a 2-sphere $Q_t$, which contains six endpoints $t_i$ of arcs of $c_d\cut (C_t\cup C_b)$. View $\partial D^4_t=(Q_t\times[-1/2,1/2])/\sim$ as the suspension of $Q_t$. Take a  3-dimensional (i.e., traditional) Menasco ball $B^3_i\subset \partial D^4_t$ about each point $t_i$, and consider
    \[\widehat{Q_t}=Q_t\cup \bigsqcup_iB^3_i\subset\partial D^4_t.\] 
    As shown in Figure \ref{fig:trivial3_saddle}, we take $K$ to intersect $D^4_t$ in three properly embedded disks that are simultaneously boundary-parallel (say, near $\partial D^4_t$, with just one critical point relative to the radial morse function from $t$); in the Figure, $K\cap \widehat{Q_t}$ is black.
\end{enumerate}
\end{definition}    

\begin{figure}
    \begin{center}
        \labellist
        \pinlabel{$\approx$} at 540 150
        \pinlabel{$\times I$} at 840 150
        \endlabellist
        \includegraphics[height=1in]{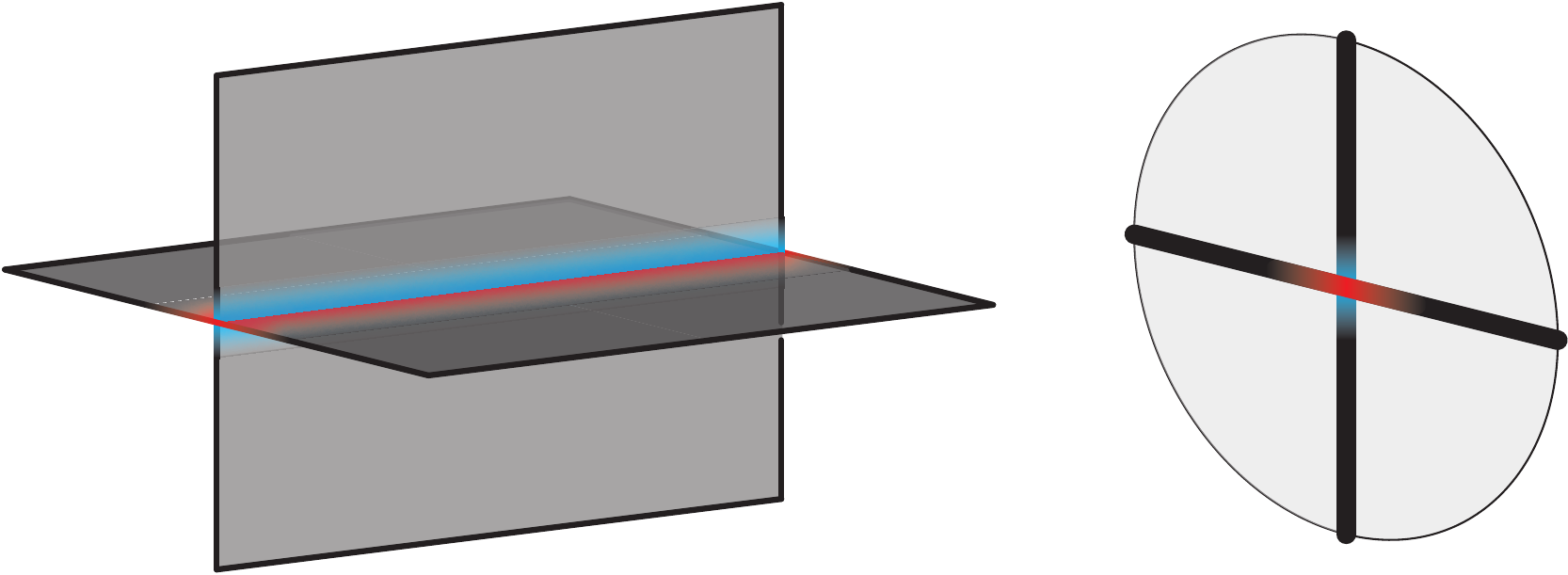}
        \caption{The neighborhood of a double point after $K$ has been pushed into the union of $S^3$ and the Menasco bubbles}
        \label{fig:Menasco_double_point}
    \end{center}
\end{figure}

\begin{figure}
    \begin{center}
        \labellist
        \pinlabel{$=$} at 485 90
        \pinlabel{$\cup$} at 930 90
        \pinlabel{$\cup$} at 1455 90
        \endlabellist
        \includegraphics[scale=.25]{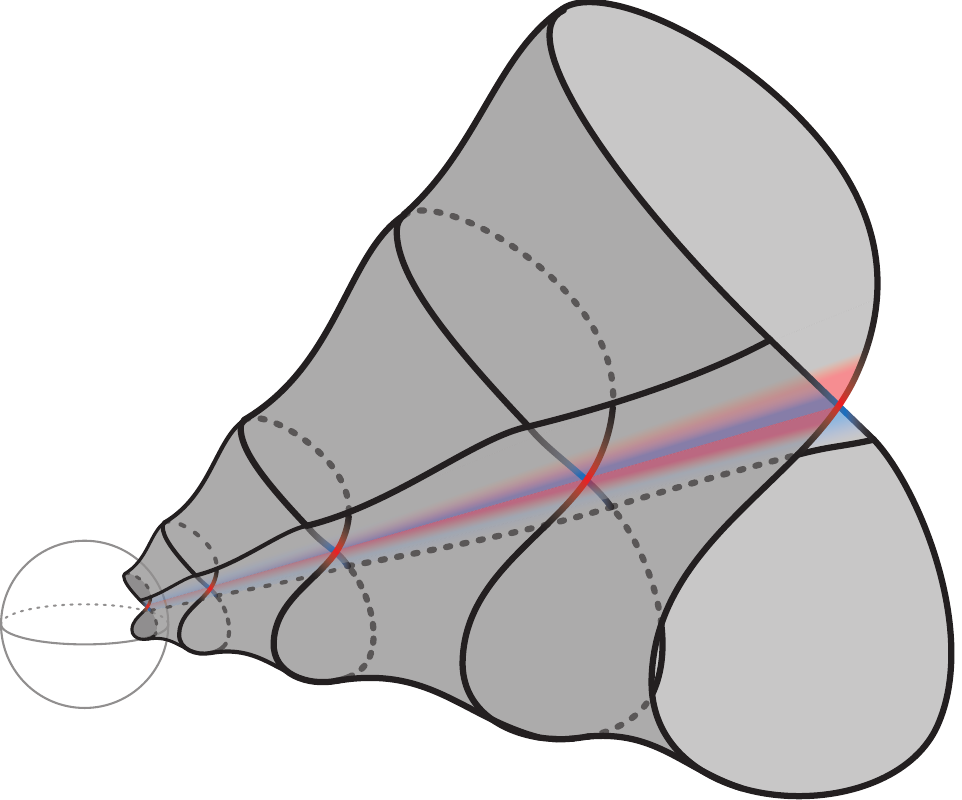}\hfill
        \raisebox{0in}{\includegraphics[scale=.25]{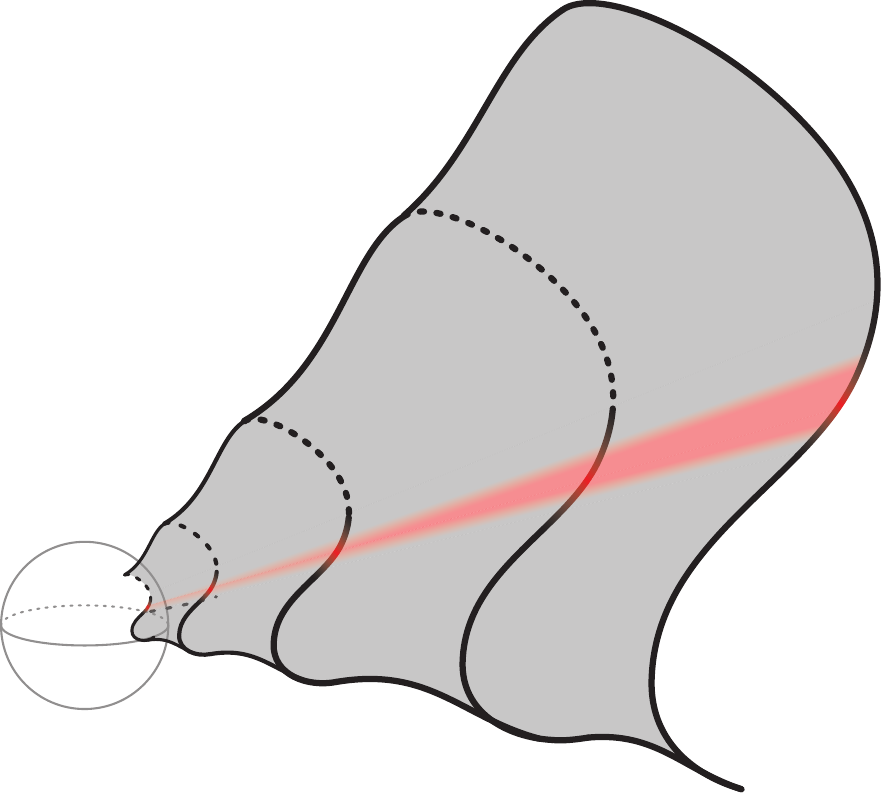}}
        \includegraphics[scale=.25]{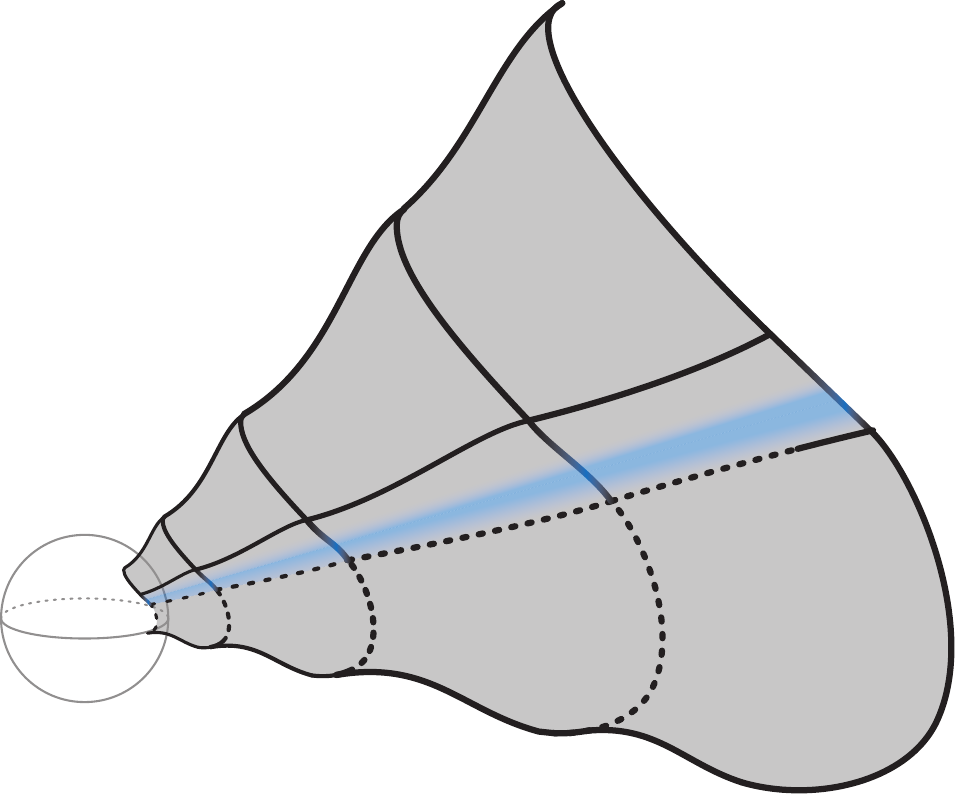}\hfill
        \includegraphics[scale=.75]{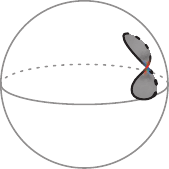}
        \caption{The neighborhood of a branch point after $K$ has been pushed into the union of $S^3$ and the Menasco bubbles (the bubble here is the suspension of the depicted 2-sphere)}
        \label{fig:Menasco_branch_point}
    \end{center}
\end{figure}

\begin{figure}
    \begin{center}
        \labellist
        \pinlabel{$=$} at 540 150
        \pinlabel{$\cup$} at 1110 150
        \pinlabel{$\cup$} at 1470 150
        \endlabellist
        \includegraphics[scale=.25]{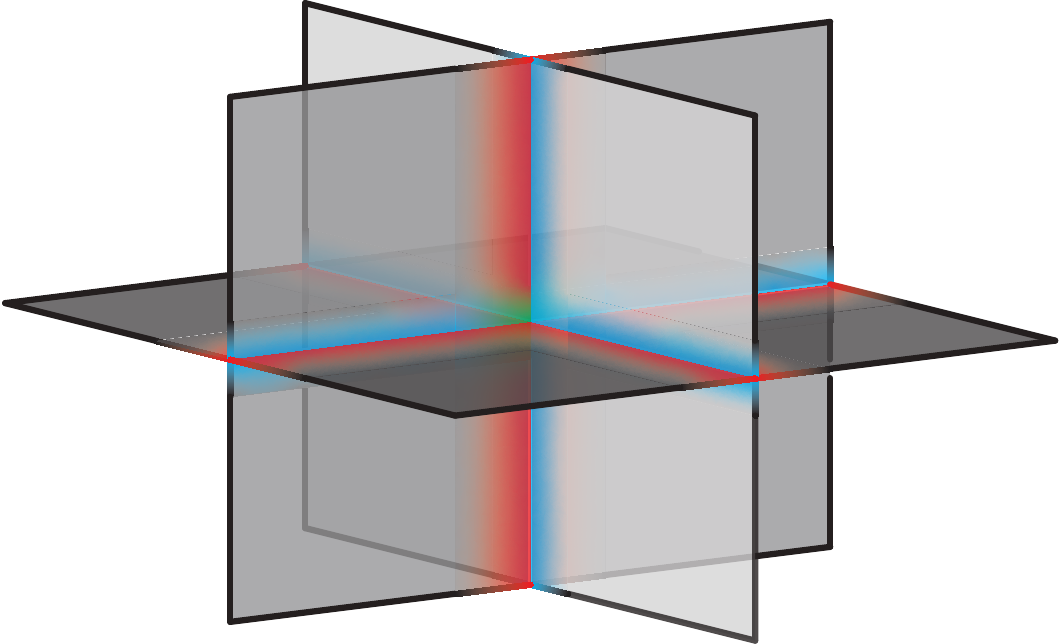}\hfill
        \raisebox{.35in}{\includegraphics[scale=.25]{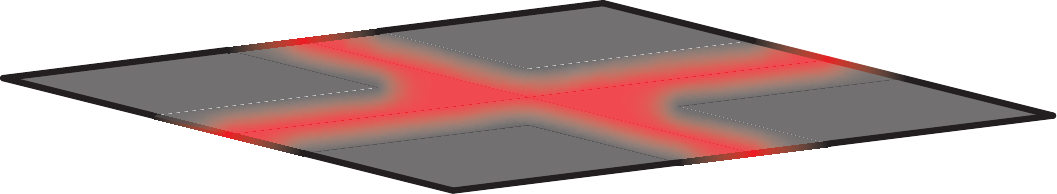}}\hfill
        \includegraphics[scale=.25]{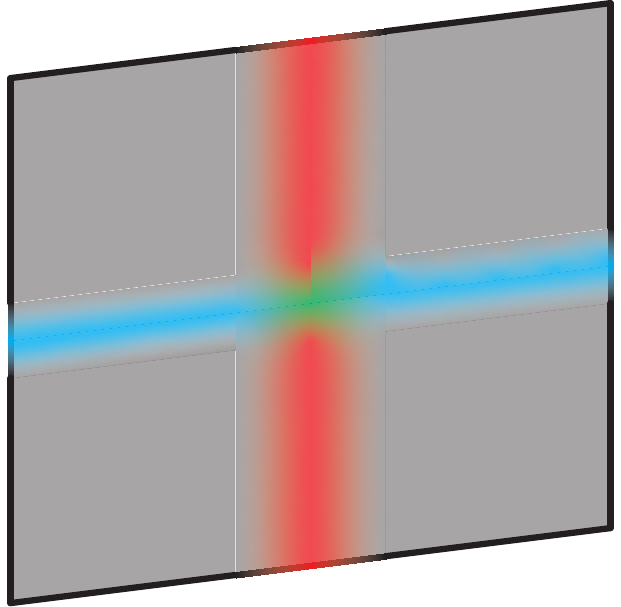}\hfill
        \includegraphics[scale=.25]{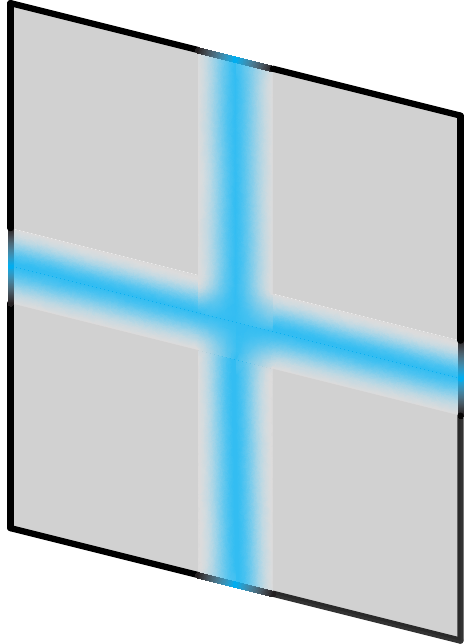}
        \caption{The neighborhood of a triple point after $K$ has been pushed into the union of $S^3$ and the Menasco bubbles}
        \label{fig:Menasco_triple_point}
    \end{center}
\end{figure}

    \begin{figure}
        \centering
        \labellist
        \pinlabel{$\approx$} at 192 32
        \pinlabel{$\approx$} at 450 32
        \endlabellist
        \includegraphics[width=\textwidth]{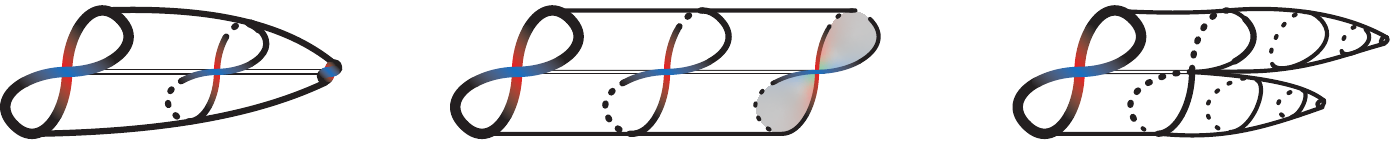}
        \caption{Local isotopies near a branch point}
        \label{fig:branched}
    \end{figure}

    \begin{figure}
        \begin{center}
        \labellist
        \pinlabel{$=$} at 1360 225
        \pinlabel{$\cup$} at 1780 225
        \pinlabel{$\cup$} at 2155 225
        \endlabellist
            \includegraphics[scale=.175]{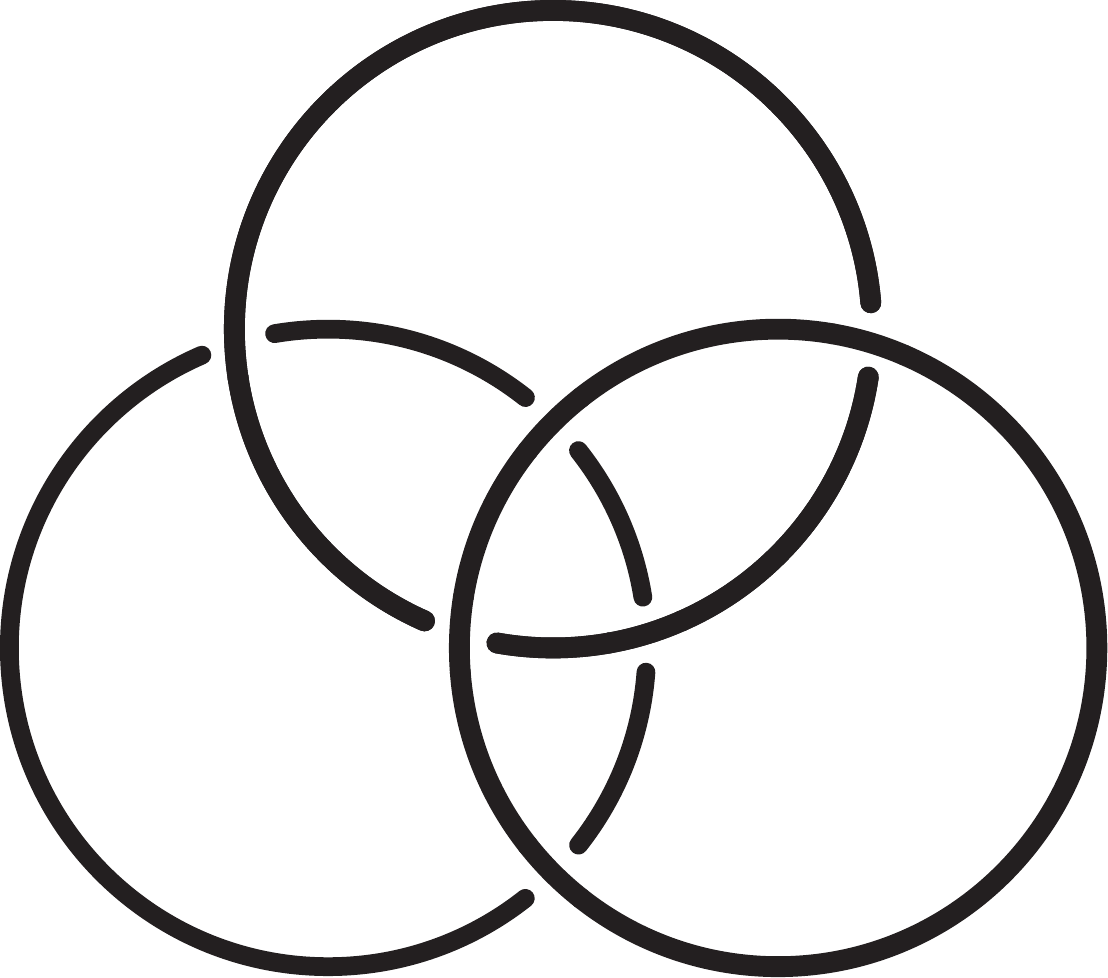}\hspace{.5in}
            \includegraphics[scale=.175]{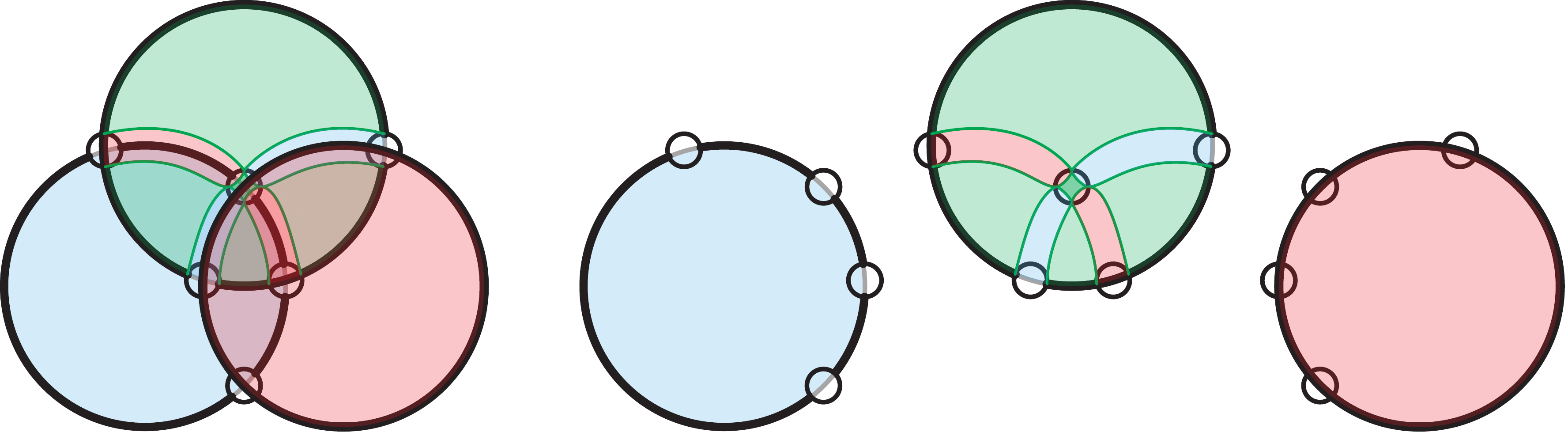}
            \caption{Left: The diagram $P$ of the trivial link of three components. Right: Embedding a {knotted surface} in the boundary of a Menasco neighborhood near a triple point}
            \label{fig:trivial3_saddle}
        \end{center}
    \end{figure}

    \begin{remark}In part \ref{case:Menasco_Ct} of Definition \ref{def:Menasco_embedding}, we could take all three disks of $K\cap D^4_t$ to lie in $\partial D^4_t$, specifically with the disks bounded by the top and bottom circles lying in its upper and lower hemispheres, respectively, both with interiors disjoint from all 3-dimensional Menasco balls $B^3_i$. The disk of $K\cap D^4_t$ bounded by the middle circle, however, would still intersect the equatorial 2-sphere $\partial D^4_t\cap S^3$, as it is incident to overpasses and underpasses from double points.  More importantly, it would need to intersect the interior of one of the 3-dimensional Menasco balls $B^3_i$. This would be not only inconvenient, but actually prohibitive during the state solid construction. For this and other technical reasons (like guaranteeing that a state and a knotted surface intersect in a predictable way near a triple point), we elect to perturb the interiors of all three disks into the interior of $D^4_t$. 
    There may be other settings in which it proves useful to embed $K$ entirely in $\widehat{S^3}$.
\end{remark}
    
\begin{notation}
    Write $K_0=K\setminus{C}$. 
    Note that $K_0\subset S^3$ and $ K\subset \widehat{S^3}\cup C_t$.
\end{notation}

\begin{definition}\label{def:state}
A {\emph{state}} of $D$ is a closed, possibly disconnected, surface $X$ embedded in $\widehat{S^3}$ such that: 

\begin{enumerate}[label=(\arabic*)]
    \item $K_0\subset X\subset \widehat{S^3}$, 
    \item For each (circle or arc) component of $c_d\cut (C_b\cup C_t)$, $X$ intersects the corresponding (torus or cylinder) component $S$ of $\partial C\cut (C_b\cup C_t)$ in one of the two opposite pairs of annuli or rectangles of $S\cut K$, as in Figure \ref{fig:Menasco_double_point_smoothed}---also see Figure \ref{fig:bubble}.
    \item\label{case:branched} For each component $B$ of $C_b$, $X\cap \partial C_b$ consists of two disks in $K\cap \partial C_b$, as in Figure \ref{fig:Menasco_branch_point_state}---$K\cap \partial C_b$ is a disk that projects to a checkerboard surface for the one-crossing diagram of the unknot, and $X\cap \partial C_b$ is comprised of the two checkerboard disks for this surface.
    \item\label{case:Ct}For each component $T$ of $C_t$, each component $X_0$ of $X\cap T$ is a disk which lies in $\partial T$ and whose interior is disjoint from $K$ and the interiors of the six 3-dimensional Menasco balls in $\partial T$. If all six points of $\partial T\cap c_d$ lie on the same side of $\partial X_0$ in the 2-sphere $\partial T\cap S^3$, then we place $X_0$ in $S^3$.
\end{enumerate}
\end{definition}

\begin{figure}[ht]
    \begin{center}
        \labellist
        \pinlabel{$\longleftarrow$} at 300 130
        \pinlabel{$\longrightarrow$} at 660 130
        \endlabellist
        \includegraphics[height=.75in]{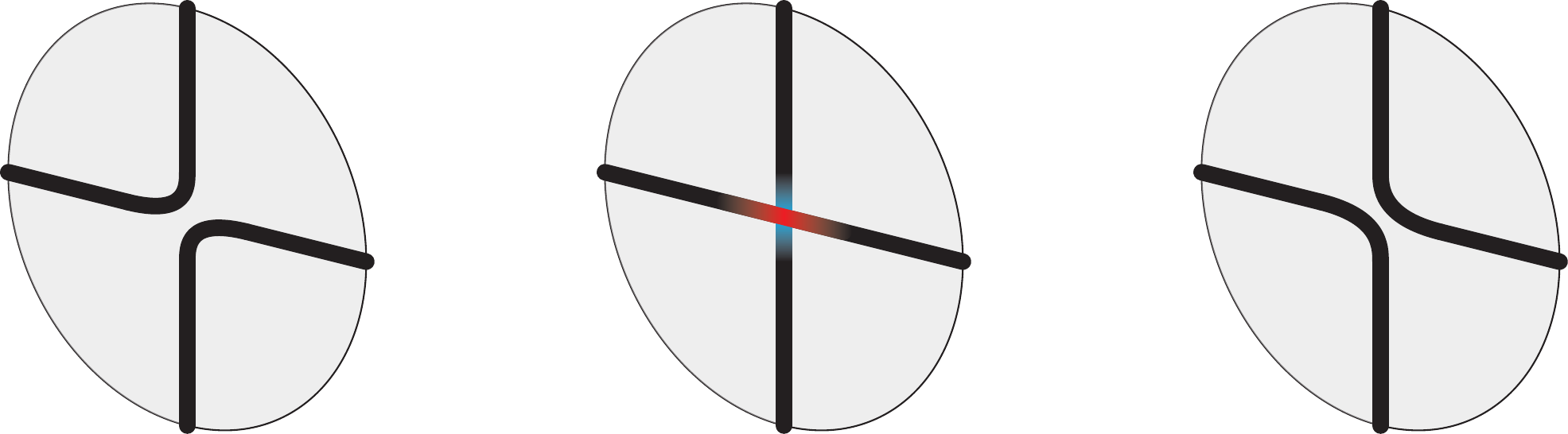}
        \caption{To smooth an arc or circle of double points, take the product of this picture with $I$ or $S^1$}
        \label{fig:Menasco_double_point_smoothed}
    \end{center}
\end{figure}

\begin{figure}[ht]
        \centering
        \includegraphics[width=.4\textwidth]{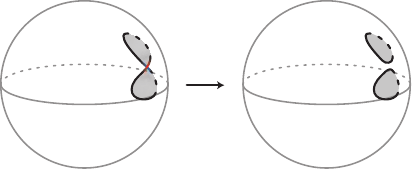}
        \caption{Smoothing a broken surface diagram near a branch point (also see Figure \ref{fig:Menasco_branch_point})}
        \label{fig:Menasco_branch_point_state}
\end{figure}

We consider two states $X$ and $X'$ of a broken surface diagram $E$ to be equivalent if $X\setminus C_t=X'\setminus C_t$ and if $X\cap C_t$ and $X'\cap C_t$ are isotopic rel.\ boundary in $C_t\cut K$. 
A state $X$ of $E$ is thus determined by (i) a binary choice of smoothing for each arc or circle of double points and (ii) a choice of isotopy class (rel.\ boundary in $C_t\cut K$) for each component of $X\cap C_t$. The choice in (ii) includes how the components are ``layered'' relative to $S^3$; this is reflected in Figures \ref{fig:ThreeGoodStatesUp}--\ref{fig:ThreeGoodStatesDown} by our coloring of each state circle on $\partial C_t$ as light highlights (orange or blue), according to whether the incident solid lies above (orange, lightest) or below (blue, darker) $S^3$. State circles for which this choice is immaterial, up to isotopy, are colored dark gray. 

\begin{remark}\label{rem:state_branched_condition}
    Not all choices in (2) above will yield states. For example, there is at most one permissible smoothing of any arc of double points which has an endpoint at a cusp. In particular, if any arc of double points has both endpoints at cusps of the same sign, then no smoothing to a state is possible, since the two branch points give incompatible restrictions. 
     See Figure \ref{fig:RP2}.  
    
    \begin{figure}[ht]
        \centering
        \includegraphics[width=\textwidth]{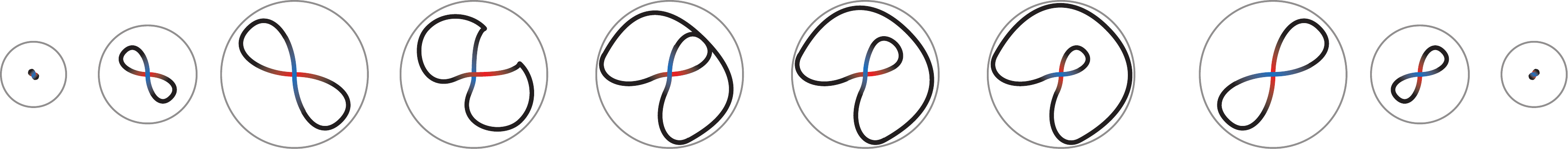}
        \caption{Viewing $S^2\times I\subset S^3$, this movie of knot diagrams gives a broken surface diagram $E$ of an unknotted projective plane; $E$ has one arc of double points, whose endpoints are cusps of the same sign, so $E$ has no state.}
        \label{fig:RP2}
    \end{figure}
\end{remark}

\begin{definition}\label{def:state-solid}
      Let $E$ be a broken surface diagram for a knotted surface $K$. A \emph{state solid} $M_X$ for a state $X$ of $E$ is a 3-dimensional spanning solid for $K$ obtained as follows.  First, take closed solids in $S^3$, one bounded by each component of $X$, and perturb them rel.\ boundary in $S^4$ to be disjoint. Second, glue on the following subsets of the Menasco neighborhood $C$:
    \begin{enumerate}[label=(\arabic*)]
        \item For each arc or circle $\alpha$ of double points, glue on 
        \[\text{(classical crossing band)}\times(I\text{ or }S^1).\]
        \item (Glue on nothing near each branch point.)
        \item\label{case:triple_cobordism} In each component $T$ of $C_t$, take $F\subset\partial C_t$ to be the union of the disk(s) of $X\cap T$ and the classical crossing bands in $\partial C_t$ coming from the smoothings of the incident arcs of double points. Construct a cobordism-with-boundary $M$ from $F$ to the empty set, such that the critical points of $M$ with respect to the radial morse function from $t$ are compressions of $F$ and three deaths; glue on $M$.
    \end{enumerate}
    A state solid $M_X$ is \emph{all-up} if the interior of each perturbed solid in the first part of the construction lies above $S^3$, \emph{all-down} if each such solid lies below $S^3$, and \emph{up-down} if each such solid is disjoint from $S^3$.
\end{definition}%

\begin{definition}\label{def:viable}
    A state $X$ for a surface $K$ is called {\emph{viable}} if there exists a state solid $M_X$.
\end{definition}

The rest of the section is devoted mainly to questions about viability. There is no concern or obstruction related to double points per se, and while part \ref{case:branched} of Definition \ref{def:state} generally restricts the states one may construct from a given diagram involving branched points (see Remark \ref{rem:state_branched_condition}), it also ensures that branched points present no obstructions when extending states to state solids. Thus, the only such obstructions will arise near triple points, coming from the requirement in part \ref{case:triple_cobordism} of Definition \ref{def:state-solid} that there is a cobordism from the state surface $F$ to the empty set.  

\begin{observation}\label{obsv:slopes}
    Because $F$ spans the trivial link of three components, a cobordism $M$ as in Definition \ref{def:state-solid}\ref{case:triple_cobordism} exists if and only if the component-wise boundary slopes of $F$ are identically zero. These slopes will generally depend both on the smoothings of the incident arcs of double points and on the layering of the solids that cap off the incident components of $X$. 
    This observation \ref{obsv:slopes} is similar to \cite[Proposition 3.2(1)]{trisections_seifertsolids}. 
\end{observation}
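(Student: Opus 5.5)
The plan is to prove the two directions separately, using that the three disks $\Delta=K\cap T$ are simultaneously boundary-parallel in $T\cong D^4$, so $\partial \Delta=\partial F$ is a three-component unlink $U\subset\partial T\cong S^3$. Throughout, the ``slope'' of $F$ along a component $U_i$ means the framing of $U_i$ given by an inward push-off into $F$. It is measured against the Seifert framing of $U_i$, which coincides with the framing induced by the disk $\Delta_i$.

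\emph{Necessity.} Suppose the cobordism $M\subset T$ exists, so that $\partial M=F\cup\Delta$.
\begin{itemize}
\item Along $\Delta_i$, the inward normal of $\Delta_i$ in $M$ is a nowhere-vanishing section of the normal bundle of $\Delta_i$ in $T$.
\item Restricted to $U_i$, this section points into $F$, so it realizes the $F$-framing of $U_i$.
\item A nowhere-vanishing normal section over a disk restricts on its boundary to the disk framing.
\item Since $\Delta_i$ is boundary-parallel, the disk framing is the Seifert framing, i.e.\ slope $0$. Isotoping $\Delta_i$ into $\partial T$ rel boundary makes this explicit as an ordinary linking-number computation in $S^3$.
\end{itemize}
This argument is done one component at a time, so it gives componentwise vanishing, as claimed.

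\emph{Sufficiency.} Assume every slope is zero. I will build $M$ in three steps.
\begin{enumerate}
\item Since $\Delta$ is boundary-parallel, isotope $T$ so that $\Delta$ lies in a thin collar $\partial T\times[0,\epsilon]$ as three trivial caps. By zero slope, $F$ and $\Delta$ meet along $U$ with matching normal framings, so $\Sigma=F\cup\Delta$ is a closed surface, smooth after rounding corners. It sits in the collar, with $F$ on the outer level and $\Delta$ pushed slightly inward.
\item Show that $\Sigma$, pushed into the interior of $T$, bounds a 3-manifold. If $\Sigma$ is orientable this is automatic. In general, use the criterion that a closed surface in $S^4$ bounds a solid iff its normal Euler number vanishes. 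Here $e(\Sigma)$ is the sum over components of the differences between the $F$-framing and the $\Delta$-framing, so it is $0$.
\item Rearrange the resulting solid so that, with respect to the radial Morse function from $t$, its only critical points are compressions of $F$ followed by three deaths. This is what Definition \ref{def:state-solid}\ref{case:triple_cobordism} requires.
\end{enumerate}

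I expect step 3 to be the main obstacle. The solid from step 2 is only abstract and may have index-$0$ and index-$1$ critical points in the wrong order. My first attempt would avoid the abstract criterion altogether:
\begin{itemize}
\item Work in $\partial T\cong S^3$. Since $F$ has zero slopes, attach three disjoint collars to push $\partial F$ onto the disks that $U$ bounds in $S^3$.
\item Maximally compress the resulting surface in the complement of those disks. Compressions in $S^3$ can be realized as compressions of $F$, which are index-$2$ critical points in the radial direction.
\item The result should be a collection of spheres together with the three disks. Cap the spheres with balls inside a slightly smaller level; these are the deaths.
\item Match the three disks to $\Delta$ by the boundary-parallelism isotopy.
\end{itemize}

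The delicate points are two. First, after compression, extra sphere components beyond the three disk-bounded ones must be handled by additional births and deaths, or absorbed by tubing. Second, the compressing disks must be kept away from the six Menasco balls in $\partial T$, consistent with Definition \ref{def:state}\ref{case:Ct}.
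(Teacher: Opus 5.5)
Your necessity argument is correct. The inward normal of $\Delta_i$ in $M$ is a nonvanishing normal section whose restriction to $U_i$ is the $F$-framing. For any properly embedded disk in $B^4$, such a section restricts to the $0$-framing, because the pushed-off disk is disjoint from $\Delta_i$ and so $\mathrm{lk}(U_i,U_i')=0$; boundary-parallelism is not needed.

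The gap is in the converse, which is where the content lies. The paper itself gives only the one-line reason and the pointer to \cite[Proposition 3.2(1)]{trisections_seifertsolids}.
\begin{itemize}
\item Steps 1--2 give at best an abstract solid bounded by $F\cup\Delta$, and you leave Step 3 open. (Incidentally, $F\cup\Delta$ is a closed surface whatever the slopes are; the slopes only enter $e(\Sigma)$.)
\item In your fallback, the decisive claim that maximal compression leaves ``three disks plus spheres'' is only asserted.
\item The way you propose to use the slope hypothesis does not work. $F$ generally meets the interiors of the disks $E_i\subset S^3$ bounded by the $U_i$, so you cannot push $\partial F$ onto them and then compress ``in the complement of those disks.''
\end{itemize}

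The order must be reversed. Compress $F$ maximally, in radial levels, to an incompressible $F'$. Compressions avoid $\partial F$, so the slopes are still zero.
\begin{itemize}
\item Zero slope at $U_i$ is exactly what lets you isotope $E_i$ rel boundary so that its collar meets $F'$ only along $U_i$. With slope $s\neq 0$, the two collars would necessarily cross in at least $|s|$ arcs emanating from $U_i$.
\item Then $E_i\cap F'$ is $U_i$ together with interior circles. The usual innermost-disk swaps remove these circles, using only the incompressibility of $F'$.
\item Push $\partial E_i$ slightly into the collar of $U_i$ in $F'$. This gives a disk with interior disjoint from $F'$ and boundary parallel to $U_i$. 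By incompressibility this curve is inessential, so the component of $F'$ containing $U_i$ is a disk, and distinct $U_i$ lie in distinct components.
\item A closed component of positive genus would likewise yield a compressing disk for $F'$, by an innermost-circle argument after shrinking the disk components into small balls away from a compressing disk. Hence closed components are spheres.
\item The three disks span the unlink, so they are isotopic, level by level, to the level disks of $\Delta$ and die at its three minima.
\end{itemize}
Any sphere components only add further deaths. That is a wrinkle in the literal ``three deaths'' of the definition, not an obstruction to existence. Your worry about the six Menasco balls is unfounded: all compressions happen in radial levels interior to $T$, not in $\partial T$.
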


As a warm-up, we consider only those state solids $M_X$ that are \emph{all-up}.

\subsection{Viability of states and state solids: the all-up case}\label{sec:up_solid}

Let $X$ be a state from a broken surface diagram $E$ of a knotted surface $K\subset S^4$, and suppose $M_X$ is an all-up state solid. Let $T$ be a component of $C_t$, and consider how $M_X$ intersects the 3-sphere $\partial T$. As described in part \ref{case:Menasco_Ct} of Definition \ref{def:Menasco_embedding}, $K\cap\partial T$ is a trivial link $L$ of three components in this 3-sphere, embedded following a six-crossing classical diagram $P$ on the 2-sphere $S^3\cap \partial T$, using six 3-dimensional Menasco balls which arise as the ends of the incident arcs of double points.  The resolutions of those incident arcs of double points determine a Kauffman state of $P$, which, by part \ref{case:Ct} of Definition \ref{def:state}, is the boundary of $X\cap\partial T$. Moreover, by part \ref{case:Ct} of Definition \ref{def:state-solid}, $M_X\cap T$ is a cobordism with boundary from a state surface $F$ (necessarily all-up, because $M_X$ is all-up) for this Kauffman state to the empty set; as per Observation \ref{obsv:slopes}, all three component-wise boundary slopes of $F$ must be zero. 

Thus, we must determine which all-up state surfaces of the diagram $P$ shown left in Figure \ref{fig:trivial3_saddle} have all component-wise boundary slopes equal to zero. 

In order for the state surface associated to a layered state to have component-wise boundary slopes identically zero, it is necessary, but not sufficient, for the sum of those slopes to be zero; near the triple point $t\in T$, this is equivalent to the condition that the closed surface $\nu t \cap (K\cup X)$ has normal Euler number zero. When the underlying diagram has zero writhe, as $P$ does, the latter happens precisely when the numbers of $A$- and $B$-smoothings are equal.  Thus, there are $\binom{6}{3}=20$ candidate states of $P$ for us to consider. Note that the two checkerboard states of $P$ are not among these, because they have two $A$- and four $B$-smoothings or vice-versa. Checking these candidate states, we make the following observations. 

\begin{figure}
    \centering
    \labellist\small
    \pinlabel{(A)} at 950 50
    \pinlabel{(B)} at 1960 50
    \pinlabel{(C)} at 2970 50
    \tiny
    \pinlabel{{\color{mygreen} $\frac{1}{2}$}} at 180 650
    \pinlabel{{\color{mygreen} $\frac{1}{2}$}} at 840 640
    \pinlabel{{\color{mygreen} $0$}} at 680 310
    \pinlabel{{\color{mygreen} $-1$}} at 320 310

    \pinlabel{{\color{myblue} $-\frac{1}{2}$}} at 390 600
    \pinlabel{{\color{myblue} $\frac{1}{2}$}} at 770 590
    \pinlabel{{\color{myblue} $0$}} at 450 120
    \pinlabel{{\color{myblue} $0$}} at 400 350
    \pinlabel{{\color{mybrown} $\frac{1}{2}$}} at 260 590
    \pinlabel{{\color{mybrown} $-1$}} at 600 350
    \pinlabel{{\color{mybrown} $-\frac{1}{2}$}} at 610 600
    \pinlabel{{\color{mybrown} $1$}} at 590 120
    \pinlabel{{\color{mygreen} $\frac{1}{2}$}} at 1200 650
    \pinlabel{{\color{mygreen} $\frac{1}{2}$}} at 1840 640
    \pinlabel{{\color{mygreen} $\frac{1}{2}$}} at 1690 310
    \pinlabel{{\color{mygreen} $-\frac{3}{2}$}} at 1320 310
    \pinlabel{{\color{myblue} $-\frac{1}{2}$}} at 1515 670  
    \pinlabel{{\color{myblue} $\frac{1}{2}$}} at 1780 580
    \pinlabel{{\color{myblue} $-\frac{1}{2}$}} at 1520 70
    \pinlabel{{\color{myblue} $\frac{1}{2}$}} at 1420 370
    \pinlabel{{\color{mybrown} $\frac{1}{2}$}} at 1260 580
    \pinlabel{{\color{mybrown} $-\frac{3}{2}$}} at 1610 355
    \pinlabel{{\color{mybrown} $\frac{3}{2}$}} at 1525 560
    \pinlabel{{\color{mybrown} $-\frac{1}{2}$}} at 1520 180
    \pinlabel{{\color{mygreen} $\frac{1}{2}$}} at 2220 660
    \pinlabel{{\color{mygreen} $-\frac{3}{2}$}} at 2870 480
    \pinlabel{{\color{mygreen} $\frac{1}{2}$}} at 2610 260
    \pinlabel{{\color{mygreen} $\frac{1}{2}$}} at 2360 410
    \pinlabel{{\color{myblue} $\frac{1}{2}$}} at 2750 700  
    \pinlabel{{\color{myblue} $-\frac{1}{2}$}} at 2420 600
    \pinlabel{{\color{myblue} $-\frac{1}{2}$}} at 2520 70
    \pinlabel{{\color{myblue} $\frac{1}{2}$}} at 2450 270
    \pinlabel{{\color{mybrown} $\frac{1}{2}$}} at 2280 580
    \pinlabel{{\color{mybrown} $\frac{1}{2}$}} at 2710 420
    \pinlabel{{\color{mybrown} $-\frac{1}{2}$}} at 2620 600
    \pinlabel{{\color{mybrown} $-\frac{1}{2}$}} at 2520 180
    \endlabellist
    \includegraphics[width=\textwidth]{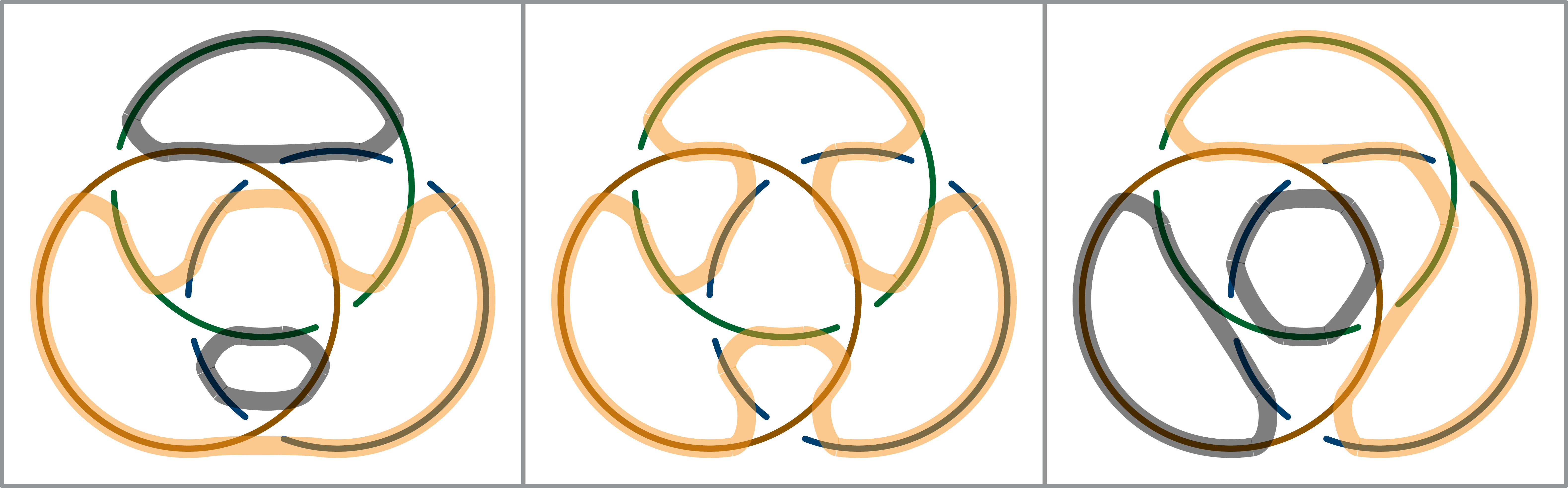}
    \caption{Three all-up layered states whose surfaces have net slopes of zero.}
    \label{fig:ThreeGoodStatesUp}
\end{figure}

\begin{enumerate}

\item Four of these states yield 2-sided surfaces; each is the Seifert state of $P$ under one of its four semi-orientations. Figure \ref{fig:ThreeGoodStatesUp} (A) shows such a state. (In this figure and in Figures \ref{fig:TwoBadStatesUp} and \ref{fig:ThreeGoodStatesDown}, the three link components from $P$ are drawn in three different colors, and the local contributions of each crossing to the boundary slope along its overpass and underpass are written in the appropriate color. Figures \ref{fig:CrossingBands1} and \ref{fig:CrossingBands2} may help clarify why these are the correct local contributions.) In each of these cases, we find that the all-up layering of each of these states has component-wise boundary slopes identically zero. \footnote{In the next subsection, we will see that the same will be true for any layering of these state surfaces. Each of these states can also be constructed from either checkerboard state of $P$ by reversing all three smoothings incident to an unshaded region.}

\item Four of the states of $P$, including the one in Figure \ref{fig:ThreeGoodStatesUp} (B), can be constructed as follows: choose a checkerboard shading of $P$, construct the associated state, choose one of the four shaded regions, and reverse all three smoothings around the chosen region. (Each such state can be constructed from either checkerboard shading of $P$, which is why there are four such states, rather than eight.) The all-up layering of each of these states has component-wise boundary slopes identically zero. In fact, we will see in the next subsection that {\it any} layering of these states gives us a positive answer.

\item Four of the states of $P$, including the one in Figure \ref{fig:ThreeGoodStatesUp} (C) can be constructed from a checkerboard state of $P$ by reversing the smoothing at one of the two crossings between the middle and top circles of the link. Figure \ref{fig:ThreeGoodStatesUp} (C) is one such state, the state obtained by reversing the smoothings of the two starred crossings is another, and reversing all smoothings in each of those two states gives the last two states presently under consideration.  As above, the all-up layering of each of these states has component-wise boundary slopes identically zero).

\item Four further states of $P$ are obtained in the same way, but with the role of the bottom circle assuming the role taken here by the top circle. Figure \ref{fig:TwoBadStatesUp} (A) shows one such state.  None of these states are viable because they each include a circle of slope 2 and another circle of slope -2.

\begin{figure}
    \centering
        \labellist\small
    \pinlabel{(A)} at 950 50
    \pinlabel{(B)} at 1960 50
    \tiny
    \pinlabel{{\color{mygreen} $\frac{1}{2}$}} at 180 530
    \pinlabel{{\color{mygreen} $\frac{1}{2}$}} at 850 620
    \pinlabel{{\color{mygreen} $\frac{1}{2}$}} at 590 260
    \pinlabel{{\color{mygreen} $\frac{1}{2}$}} at 340 410
    \pinlabel{{\color{myblue} $\frac{1}{2}$}} at 760 580  
    \pinlabel{{\color{myblue} $-\frac{1}{2}$}} at 400 600
    \pinlabel{{\color{myblue} $-\frac{1}{2}$}} at 500 70
    \pinlabel{{\color{myblue} $\frac{1}{2}$}} at 430 260
    \pinlabel{{\color{mybrown} $-\frac{3}{2}$}} at 300 700
    \pinlabel{{\color{mybrown} $\frac{1}{2}$}} at 680 420
    \pinlabel{{\color{mybrown} $-\frac{1}{2}$}} at 600 600
    \pinlabel{{\color{mybrown} $-\frac{1}{2}$}} at 500 180
    \pinlabel{{\color{mygreen} $\frac{1}{2}$}} at 1200 540
    \pinlabel{{\color{mygreen} $\frac{1}{2}$}} at 1840 640
    \pinlabel{{\color{mygreen} $\frac{1}{2}$}} at 1690 310
    \pinlabel{{\color{mygreen} $\frac{1}{2}$}} at 1340 420
    \pinlabel{{\color{myblue} $-\frac{1}{2}$}} at 1400 600  
    \pinlabel{{\color{myblue} $\frac{1}{2}$}} at 1780 580
    \pinlabel{{\color{myblue} $-\frac{1}{2}$}} at 1420 140
    \pinlabel{{\color{myblue} $\frac{1}{2}$}} at 1450 270

    \pinlabel{{\color{mybrown} $-\frac{3}{2}$}} at 1320 710
    \pinlabel{{\color{mybrown} $-\frac{3}{2}$}} at 1610 365
    \pinlabel{{\color{mybrown} $\frac{3}{2}$}} at 1620 140
    \pinlabel{{\color{mybrown} $-\frac{1}{2}$}} at 1610 610
    \endlabellist
    \includegraphics[width=.667\textwidth]
    {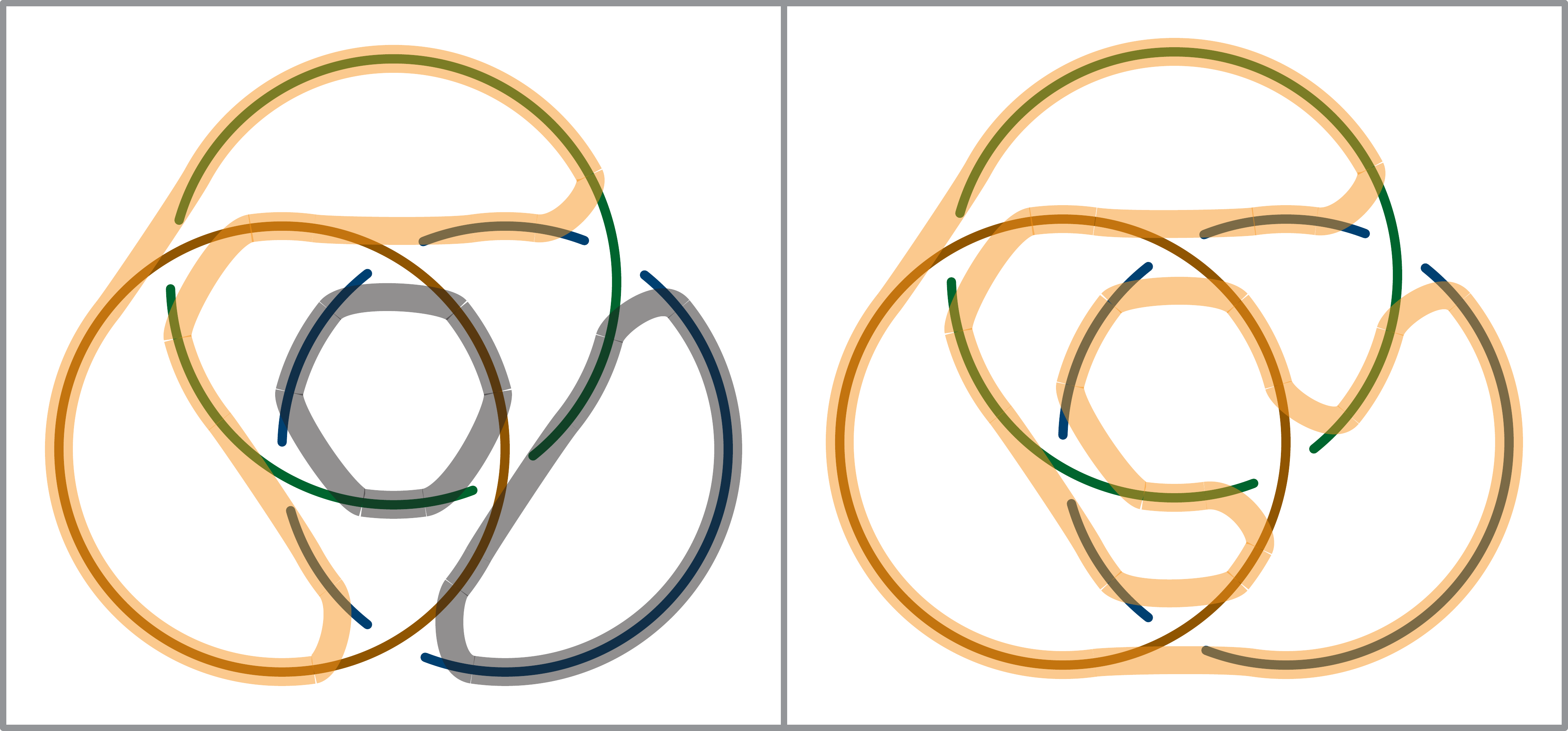}
    \caption{Two all-up layered states whose surfaces have net slopes, but not component-wise slopes, of zero.}
    \label{fig:TwoBadStatesUp}
\end{figure}

\item The remaining four candidate states can be constructed from either checkerboard state by reversing two non-adjacent smoothings incident to the middle circle and reversing the smoothing at one of the crossings between the top and bottom circles. Figure \ref{fig:TwoBadStatesUp} (B) shows one such state, which is non-viable because the slopes along the top and middle circles are -2 and 2, respectively. As in the previous case, none of these states are viable. 

\end{enumerate}

Thus, we have proven:

\begin{theorem}\label{thm:up_solid}
    A state of a broken surface diagram extends to an all-up state solid if and only if, up to diagrammatic symmetry, every triple point is smoothed in type (A) or (B) or (C) from Figure \ref{fig:ThreeGoodStatesUp}.    
\end{theorem}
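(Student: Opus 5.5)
The plan is to localize the problem to the triple points and then reduce it to a finite check on the six-crossing diagram $P$ of Figure \ref{fig:trivial3_saddle}.

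\textbf{Step 1: localize to triple points.} Definition \ref{def:state-solid} builds $M_X$ from three kinds of pieces: the capping solids for the components of $X$, taken all-up; the products (classical crossing band)$\times(I\text{ or }S^1)$ along arcs and circles of double points; and nothing near branch points. None of these constructions can fail. So the state extends to an all-up state solid if and only if, at each component $T$ of $C_t$, the required cobordism in part \ref{case:triple_cobordism} exists. The local pieces are chosen independently in disjoint neighborhoods, so the global condition is exactly the conjunction of the local conditions.

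\textbf{Step 2: reduce each local condition to slopes.} By Observation \ref{obsv:slopes}, the cobordism at $T$ exists if and only if the all-up state surface $F\subset\partial T$ of the induced Kauffman state of $P$ has all three component-wise boundary slopes equal to zero. For the nontrivial direction, suppose the slopes vanish. Since $K\cap\partial T$ is a three-component unlink, I would compress $F$ to three disjoint boundary-parallel disks and then cap them off with three deaths. Conversely, if such an $M$ exists, each component of $\partial F$ bounds a surface in $M\cap T$ disjoint from the other components, so its framing must be zero.

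\textbf{Step 3: restrict the candidates.} The sum of the slopes is the normal Euler number of $\nu t\cap(K\cup X)$. Since $P$ has writhe zero, this sum vanishes only when there are exactly three $A$-smoothings and three $B$-smoothings. That leaves $\binom63=20$ candidate states, and the two checkerboard states are excluded.

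\textbf{Step 4: enumerate.} Using the symmetry group of $P$ (rotations and reflections of the diagram, plus swapping the roles of the top and bottom circles when this is compatible with the crossing data), I would sort the 20 states into the five families of four listed above. For a representative of each family, I would compute the component-wise slopes by adding the local $\pm\frac12,\pm1,\pm\frac32$ contributions along overpasses and underpasses, as recorded in Figures \ref{fig:ThreeGoodStatesUp} and \ref{fig:TwoBadStatesUp}. Families (1)--(3), represented by (A), (B) and (C), give slopes $(0,0,0)$. Families (4) and (5) give slopes $\pm2$ on two components.

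\textbf{Main obstacle.} The hardest part is making the local crossing contributions rigorous, including their dependence on the all-up layering of the crossing bands, which Figures \ref{fig:CrossingBands1} and \ref{fig:CrossingBands2} are meant to justify. A second point needing care is checking that the symmetries really carry every state in each family to its representative while preserving slopes. To handle this, I would prove a lemma that each crossing contributes $\pm\frac12$ to each incident component, with sign determined by the smoothing and by over/under position, plus a layering correction. I would then verify the classification by computing slopes for all 20 states directly, without relying on symmetry.
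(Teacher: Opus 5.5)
Your proposal follows the paper's argument essentially step for step: localize to the triple-point neighborhoods, use Observation~\ref{obsv:slopes} to reduce viability to vanishing component-wise slopes of the all-up state surface of $P$, cut down to the $\binom{6}{3}=20$ balanced states via the writhe-zero/normal Euler number count, and sort these into five families of four, of which exactly the three families represented by (A), (B), (C) have slopes $(0,0,0)$. One caution: interchanging the top and bottom circles is compatible with the crossing data only if you also reverse the projection direction, which exchanges all-up with all-down (this swap is precisely what relates family (3) to the non-viable family (4)), so it is not a symmetry of the all-up problem, and your fallback of computing all 20 slopes directly is the right safeguard.
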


Note that of the 64 possible smoothings near a triple point, four are of type (A), four are of type (B), and four are of type (C).

\begin{remark}
    If a broken surface diagram is oriented, then all smoothings of triple points that are consistent with the orientation are of type (A), up to diagrammatic symmetry.

    As a consequence of Theorem \ref{thm:up_solid}, a broken surface diagram of an oriented surface $S$ always admits a state solid that is a Seifert solid for $S$ (take the state that uses smoothings consistent with the orientation of the surface). This algorithm is presented differently than the Seifert algorithm of Carter--Saito  \cite{cartersaito_solids}, as triple points are handled differently.
    
    In Carter--Saito's work, a closed surface in $S^3$ is obtained by first attaching tubes to $S\subset S^4$ to modify the surface so its projection has no triple points or closed curves of double points. They then smooth arcs of intersection in the resulting projection to get an embedded surface in $S^3$ that bounds a solid, and reverse engineer the previous smoothing and tube attachments to obtain a Seifert solid for $S$. 
\end{remark}

\subsection{Viability of states and state solids: the general case}\label{sec:layered_solid}

\begin{figure}
    \centering
        \labellist\small
    \pinlabel{(A)} at 950 50
    \pinlabel{(B)} at 1960 50
    \pinlabel{(C)} at 2970 50
    \tiny
    \pinlabel{{\color{mygreen} $\frac{1}{2}$}} at 180 650
    \pinlabel{{\color{mygreen} $\frac{1}{2}$}} at 840 640
    \pinlabel{{\color{mygreen} $-1$}} at 690 310
    \pinlabel{{\color{mygreen} $0$}} at 320 310
    \pinlabel{{\color{myblue} $-\frac{1}{2}$}} at 390 600
    \pinlabel{{\color{myblue} $\frac{1}{2}$}} at 770 590
    \pinlabel{{\color{myblue} $1$}} at 450 120
    \pinlabel{{\color{myblue} $-1$}} at 400 350
    \pinlabel{{\color{mybrown} $\frac{1}{2}$}} at 260 590
    \pinlabel{{\color{mybrown} $0$}} at 600 350
    \pinlabel{{\color{mybrown} $-\frac{1}{2}$}} at 610 600
    \pinlabel{{\color{mybrown} $0$}} at 590 120
    \pinlabel{{\color{mygreen} $\frac{1}{2}$}} at 1200 650
    \pinlabel{{\color{mygreen} $\frac{1}{2}$}} at 1840 640
    \pinlabel{{\color{mygreen} $-\frac{3}{2}$}} at 1710 310
    \pinlabel{{\color{mygreen} $\frac{1}{2}$}} at 1340 310
    \pinlabel{{\color{myblue} $\frac{3}{2}$}} at 1525 670  
    \pinlabel{{\color{myblue} $\frac{1}{2}$}} at 1780 580
    \pinlabel{{\color{myblue} $-\frac{1}{2}$}} at 1520 70
    \pinlabel{{\color{myblue} $\frac{3}{2}$}} at 1420 370
    \pinlabel{{\color{mybrown} $\frac{1}{2}$}} at 1260 580
    \pinlabel{{\color{mybrown} $\frac{1}{2}$}} at 1630 355
    \pinlabel{{\color{mybrown} $-\frac{1}{2}$}} at 1515 560
    \pinlabel{{\color{mybrown} $-\frac{1}{2}$}} at 1520 180
    \pinlabel{{\color{mygreen} $-\frac{3}{2}$}} at 2200 500
    \pinlabel{{\color{mygreen} $\frac{1}{2}$}} at 2850 640
    \pinlabel{{\color{mygreen} $\frac{1}{2}$}} at 2700 310
    \pinlabel{{\color{mygreen} $\frac{1}{2}$}} at 2350 420
    \pinlabel{{\color{myblue} $-\frac{1}{2}$}} at 2400 600  
    \pinlabel{{\color{myblue} $\frac{1}{2}$}} at 2780 580
    \pinlabel{{\color{myblue} $-\frac{1}{2}$}} at 2420 133
    \pinlabel{{\color{myblue} $\frac{1}{2}$}} at 2460 260

    \pinlabel{{\color{mybrown} $\frac{1}{2}$}} at 2330 710
    \pinlabel{{\color{mybrown} $-\frac{3}{2}$}} at 2610 365
    \pinlabel{{\color{mybrown} $\frac{3}{2}$}} at 2620 133
    \pinlabel{{\color{mybrown} $-\frac{1}{2}$}} at 2610 600
    \endlabellist
    \includegraphics[width=\textwidth]{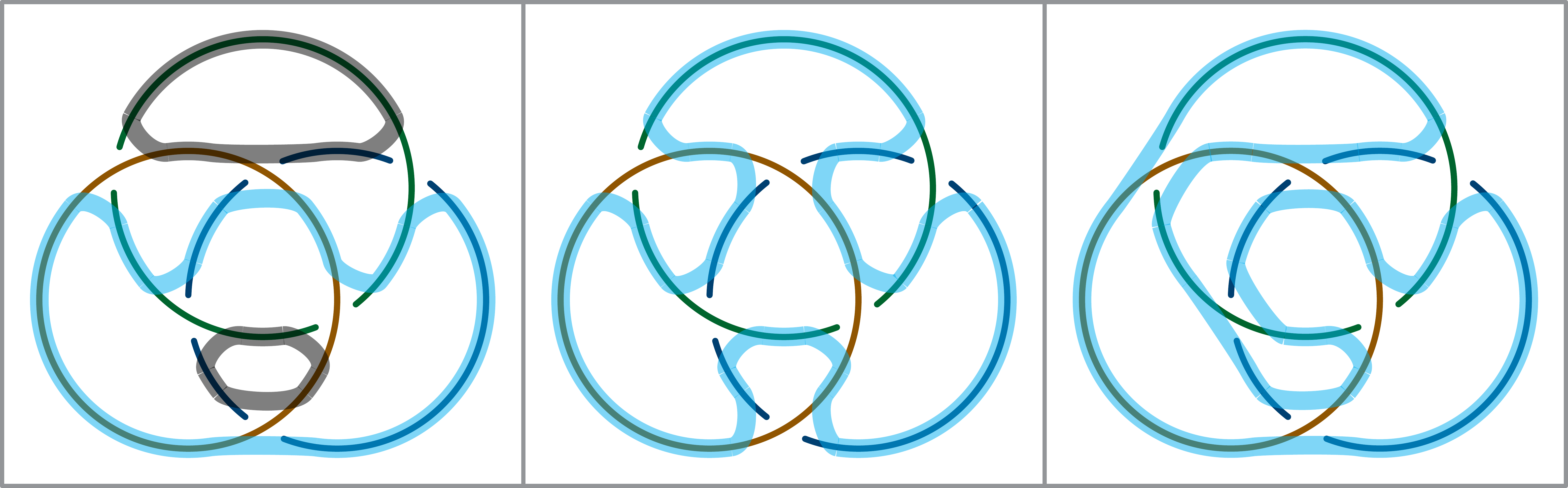}
    \caption{Three all-down layered states whose surfaces have net slopes of zero.}
    \label{fig:ThreeGoodStatesDown}
\end{figure}

\begin{figure}
    \centering
    \labellist\small
    \pinlabel{(A)} at 950 50
    \pinlabel{(B)} at 1960 50
    \pinlabel{(C)} at 2970 50
    \endlabellist
    \includegraphics[width=\textwidth]{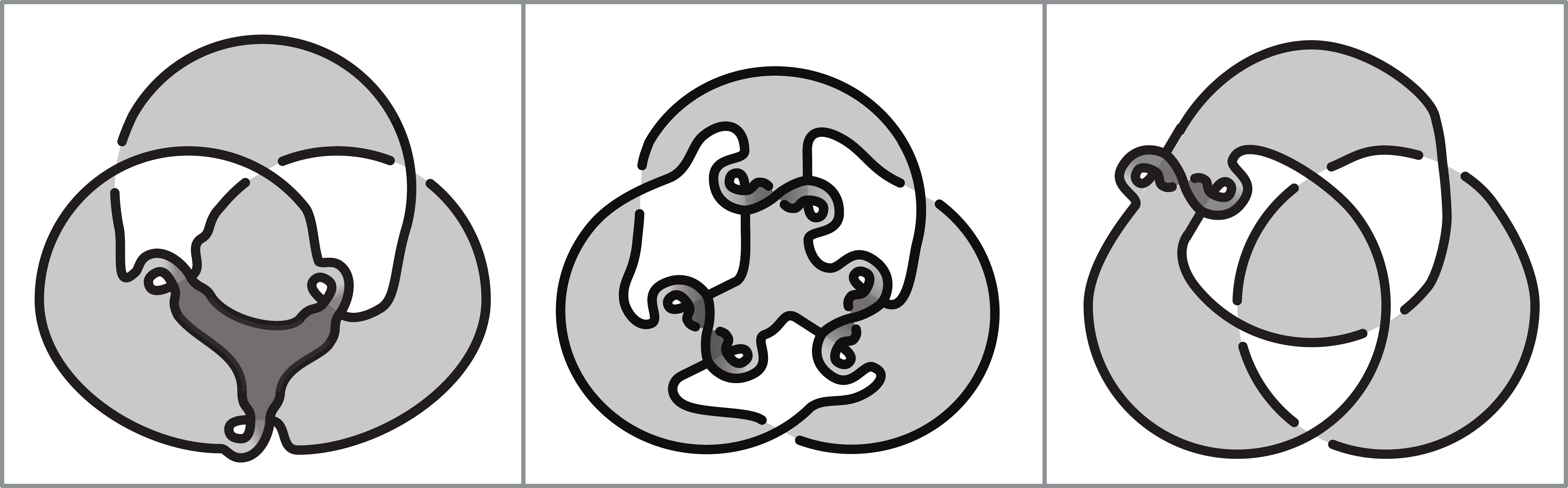}
    \caption{State surfaces (after small isotopies) that correspond to the layered states in Figure \ref{fig:ThreeGoodStatesDown}}
    \label{fig:ThreeStateSurfaces}
\end{figure}

\begin{figure}
    \centering
    \labellist\small
    \pinlabel{(A)} at 950 50
    \pinlabel{(B)} at 1960 50
    \pinlabel{(C)} at 2970 50
    \endlabellist
    \includegraphics[width=\textwidth]{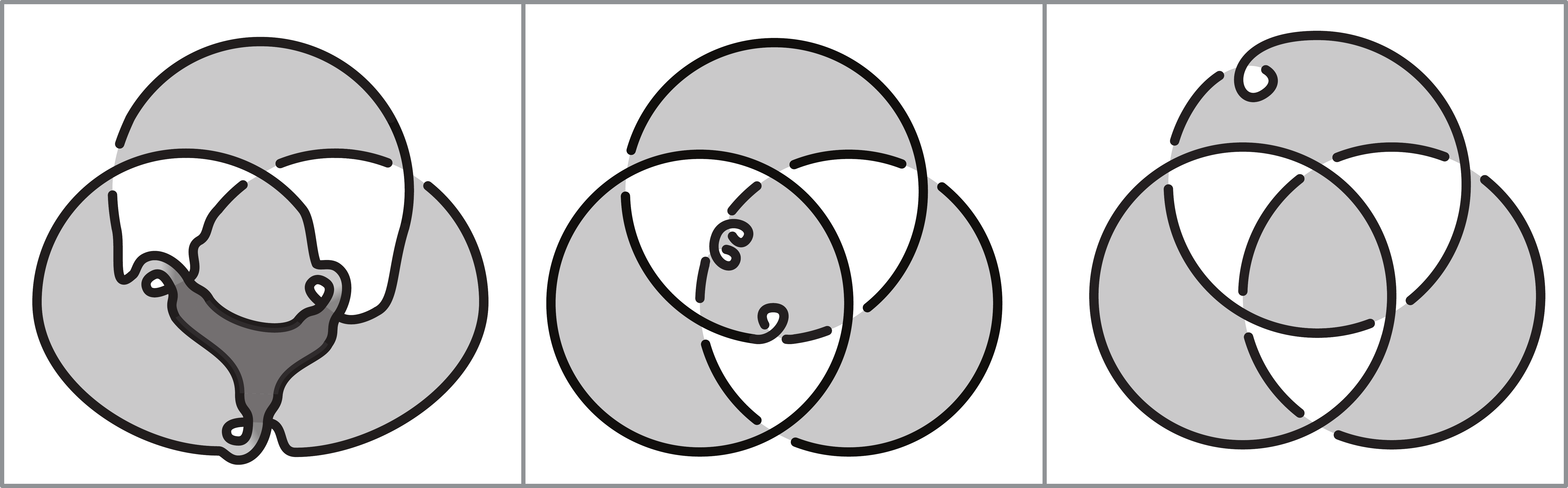}
    \caption{State surfaces (after small isotopies) that correspond to the layered states in Figures \ref{fig:ThreeGoodStatesDown}-\ref{fig:ThreeStateSurfaces}}
    \label{fig:ThreeStatesCrosscaps}
\end{figure}

Before stating the general theorem about which layerings of which smoothings near a triple point are viable in the construction of a state solid, we provide more explicit descriptions of the local state surfaces under consideration near a triple point. Figure \ref{fig:ThreeGoodStatesDown} shows three all-down layered states, analogous to the three all-up layered states from Figure \ref{fig:ThreeGoodStatesDown}, and Figures \ref{fig:ThreeStateSurfaces}-\ref{fig:ThreeStatesCrosscaps} each show the associated state surfaces.  The surfaces in Figures \ref{fig:ThreeStateSurfaces}-\ref{fig:ThreeStatesCrosscaps} are drawn in particular styles that may be easier to understand after consulting Figures \ref{fig:CrossingBands1}-\ref{fig:CrossingBands2}.

\begin{figure}
    \centering
    \labellist
    \pinlabel{{\color{myblue} $0$}} at -25 660
    \pinlabel{{\color{mybrown} $2$}} at 330 660
    \pinlabel{{\color{myblue} $2$}} at 405 660
    \pinlabel{{\color{mybrown} $0$}} at 765 660
    \pinlabel{{\color{myblue} $-\frac{1}{2}$}} at 875 660
    \pinlabel{{\color{mybrown} $\frac{3}{2}$}} at 1565 660
    \pinlabel{{\color{myblue} $\frac{3}{2}$}} at 1705 660
    \pinlabel{{\color{mybrown} $-\frac{1}{2}$}} at 2380 660
    \endlabellist
    \includegraphics[width=.7\textwidth]{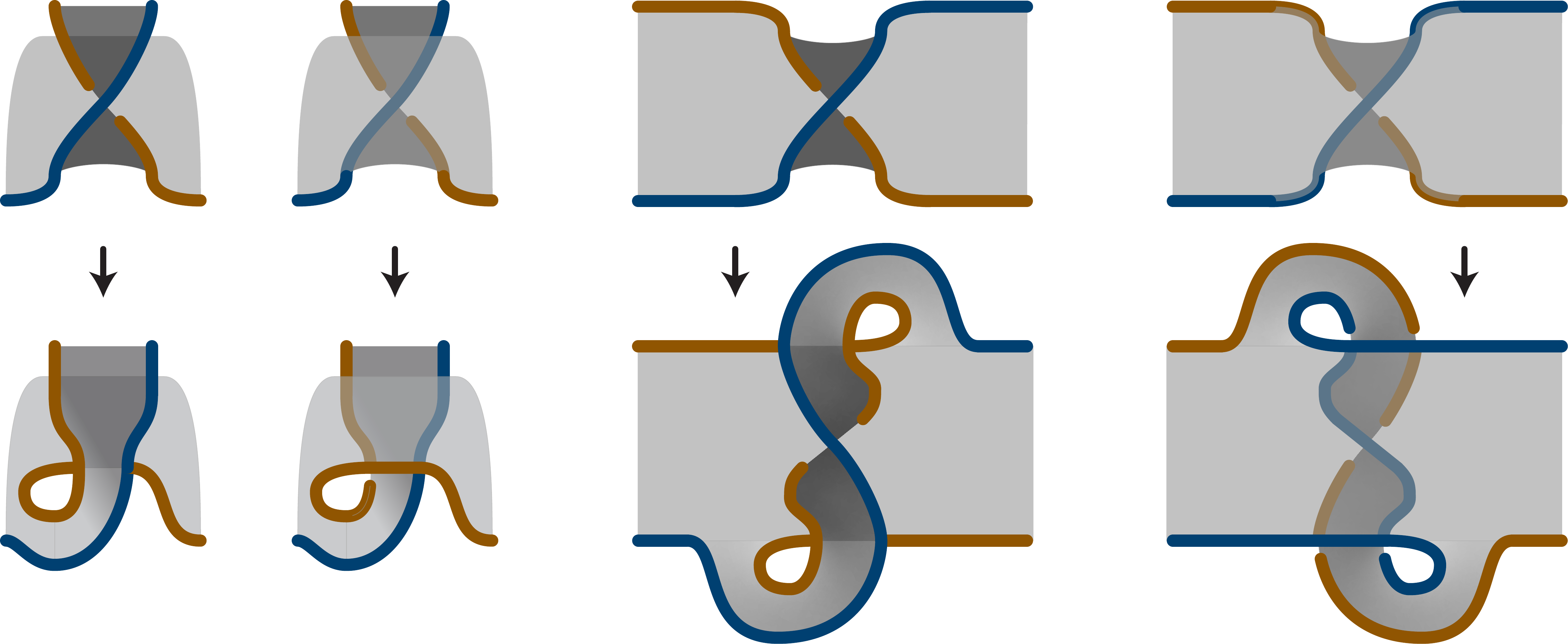}
    \caption{Isotopies of certain crossing bands}
    \label{fig:CrossingBands1}
\end{figure}

\begin{figure}
    \centering
    \labellist
    \pinlabel{{\color{myblue} $-\frac{1}{2}$}} at -60 160
    \pinlabel{{\color{mybrown} $\frac{3}{2}$}} at -30 430
    \pinlabel{{\color{myblue} $-\frac{1}{2}$}} at 3540 430
    \pinlabel{{\color{mybrown} $\frac{3}{2}$}} at 3510 160
    \endlabellist
    \includegraphics[width=\textwidth]{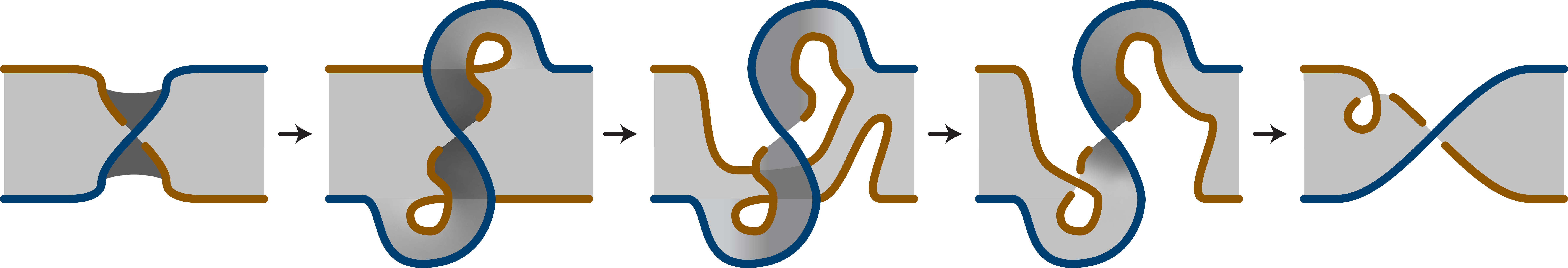}
    \caption{Visualization of the third isotopy from Figure \ref{fig:CrossingBands1}}
    \label{fig:CrossingBands2}
\end{figure}

An analysis similar to that used to prove Theorem \ref{thm:up_solid} yields:

\begin{theorem}\label{thm:state_solid}
    A state $X$ of a broken surface diagram $E$ extends to:
    \begin{enumerate}[label=(\arabic*)]
    \item \label{item:both} Both an all-up state solid and an all-down state solid if and only if, $E$ has no triple points.
    \item An all-down (resp. all-up) state solid if and only if, up to diagrammatic symmetry,  every triple point is smoothed in type (A), (B), or (C) from Figure \ref{fig:ThreeGoodStatesDown} (resp. Figure \ref{fig:ThreeGoodStatesUp}).    
    \item \label{item:updown} An up-down state solid if and only if, up to diagrammatic symmetry, every triple point is smoothed in type (A) or (B) or (C) from Figure \ref{fig:ThreeGoodStatesUp} or \ref{fig:ThreeGoodStatesDown}, and no component of $X$ contains both an orange (light) circle from Figure \ref{fig:ThreeGoodStatesUp} and a blue (light) circle from Figure \ref{fig:ThreeGoodStatesDown}.
    \item\label{item:allstates} A state solid only if, up to diagrammatic symmetry, every triple point is smoothed in one of the ways shown in Figure \ref{fig:ThreeGoodStatesUp}, \ref{fig:ThreeGoodStatesDown}, or \ref{fig:Relayered}. In Figure \ref{fig:Relayered}, we also allow the smoothing obtained by viewing the picture from the opposite side of the projection sphere.\footnote{Figure \ref{fig:ThreeGoodStatesDown} may also be seen as Figure \ref{fig:ThreeGoodStatesUp} from the opposite side of the projection sphere.}
    \end{enumerate}
\end{theorem}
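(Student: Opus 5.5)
The plan is to localize the whole question to the triple-point neighborhoods and then do a finite case check there. The argument preceding Theorem \ref{thm:up_solid} already shows that double points and branch points impose no obstruction. Part \ref{case:branched} of Definition \ref{def:state} and the product crossing bands of Definition \ref{def:state-solid} always extend, for any layering of the capping solids. So a state solid with a prescribed layering exists if and only if, at every component $T$ of $C_t$, the local state surface $F\subset\partial T$ has all three component-wise boundary slopes zero (Observation \ref{obsv:slopes}). The capping solids are otherwise independent once they are perturbed to be disjoint. Hence each part of the theorem reduces to a statement about the six-crossing diagram $P$ of Figure \ref{fig:trivial3_saddle} together with a layering, meaning a choice of above/below for each state disk in $\partial T$.

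The key steps, in order, are as follows.

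\begin{enumerate}
\item For each state of $P$, compute the slope contributions under the all-down layering, using the local contributions from Figures \ref{fig:CrossingBands1}--\ref{fig:CrossingBands2}. Switching a disk from up to down changes the contribution at each incident crossing by a fixed amount ($\pm1$ on the over- and underpass, as the figures show). The normal-Euler-number argument again restricts attention to the $\binom{6}{3}=20$ balanced states.
\item Rerun the five-family classification from the all-up case with these shifted values. This should give exactly the types (A), (B), (C) of Figure \ref{fig:ThreeGoodStatesDown}. Equivalently, all-down is all-up viewed from the opposite side of $S^3$, a reflection of the $[-1,1]$ factor that swaps the two families. This proves part (2) directly from Theorem \ref{thm:up_solid}.
\item For part \ref{item:both}, compare the lists. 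At every triple point, no single smoothing is simultaneously of an all-up good type and of an all-down good type. This holds because the mirror reverses the signs of the under/over slope asymmetries, as the labels in Figures \ref{fig:ThreeGoodStatesUp} and \ref{fig:ThreeGoodStatesDown} show. Conversely, with no triple points both layerings trivially work.
\item For part \ref{item:updown}, a component of $X$ carries a single layer, either entirely above or entirely below, and it may pass through several triple points. At each triple point, the layering seen locally is the restriction of this global choice. Local viability of mixed layerings then has to be checked. The claim is that only the uniform local layerings, or those where the layer of a gray (immaterial) circle does not matter, succeed. The global condition is therefore exactly that no component is forced both up (orange) and down (blue).
\item For part \ref{item:allstates}, allow arbitrary layerings, including disks meeting $S^3$ as in Definition \ref{def:state}\ref{case:Ct}. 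Enumerate the $2^3$ layerings of the three state disks per candidate state and record which give zero slopes. The survivors beyond the uniform cases should be the relayered pictures of Figure \ref{fig:Relayered} and their mirrors. Only necessity is claimed here.
\end{enumerate}

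The main obstacle is step 4 (and, relatedly, step 5). There we must verify that mixed local layerings at a triple point never produce zero slopes except in the listed configurations. This is a finite but error-prone bookkeeping of how the half-integer crossing-band contributions shift when one disk is moved across $S^3$. I would first derive a general rule: moving one disk across $S^3$ changes its slope by the signed count of incident crossings at which it is the over- versus the under-strand. Then I would apply this rule uniformly to the twelve good states rather than redrawing surfaces.
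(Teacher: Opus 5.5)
Your overall route is the paper's route: localize to the triple-point neighborhoods, and reduce, via Observation \ref{obsv:slopes}, to checking the component-wise slopes of the local state surface $F\subset\partial T$ for each smoothing and layering. The paper offers only ``an analysis similar to that used to prove Theorem \ref{thm:up_solid},'' and your step 2 is its footnote.

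The genuine gap is step 3. It is false that no triple-point smoothing is good for both all-up and all-down. Take a Seifert smoothing, i.e.\ type (A).
\begin{itemize}
\item Whatever sides the local state disks lie on, orient each disk so that its boundary carries the orientation inherited from $L=K\cap\partial T$.
\item Because the smoothing is the oriented one, each crossing band, oriented to agree with $L$ on its overpass, also agrees with $L$ on its underpass. It induces the opposite orientation on both of its state arcs.
\item Hence $F$ is orientable with $\partial F=L$ as oriented links, independently of the layering.
\item Pushing $F$ off itself gives $\mathrm{lk}(L_j,\partial F^+)=0$. So the slope of $L_j$ is $-\sum_{i\neq j}\mathrm{lk}(L_j,L_i)=0$, since $L$ is a trivial link.
\end{itemize}
Thus type (A) is viable for every layering: all-up, all-down, or mixed. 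The paper asserts the same for type (B) in the footnote and in item (2) of the all-up analysis. Its count after the theorem, that eight of the twenty balanced local states qualify for part (1), is exactly these two classes. Your mirror heuristic only shows that reflecting the $[-1,1]$ factor carries the all-up list onto the all-down list. It cannot give disjointness, and the class of Seifert smoothings is in fact invariant under that reflection.

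Consequently the ``only if'' half of part (1), read literally, cannot be proved by any argument. Take any oriented diagram with a triple point and its orientation-consistent state; every triple point is then of type (A), by the Remark following Theorem \ref{thm:up_solid}. That state extends to both an all-up and an all-down state solid. What your case analysis actually establishes is that $X$ extends to both if and only if every triple point is smoothed as in type (A) or (B). This includes, vacuously, the case of no triple points.

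The same oversight breaks step 4. Mixed local layerings do succeed, at every (A)-type triple point, so you cannot reduce the up-down case to uniform local layerings plus immaterial gray circles. The up-down analysis has to allow mixed layerings at such triple points.

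A smaller point: in step 5 the number of local state circles is not always three. For instance, reversing all three smoothings around one shaded triangle of a checkerboard state, which produces type (B), leaves a single state circle. So enumerate layerings per state rather than as $2^3$.
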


One may check that of the 64 states of $P$, 20 are viable in the sense that they yield surfaces with net slopes of zero, and of these, eight qualify for part (1) of Theorem \ref{thm:state_solid}, twelve qualify for part (2), 16 qualify for part (3), and all 20 qualify for part (4).

\begin{remark}
    While parts \ref{item:both}-\ref{item:updown} in Theorem provide necessary and sufficient conditions, part \ref{item:allstates} \ref{thm:state_solid} provides a necessary, but not sufficient condition. The issue is that if some component $X_0$ of a state $X$ intersects northern hemispheres of some components of $\partial C_t$ and southern hemispheres of others, then the interior of the solid that caps off $X_0$ will always intersect $S^3$, and sometimes this intersection will be forced to include points in $K$, which is impermissible. 
    
    A sufficient condition would thus be along the following lines. For each component $X_0$ of $X$, there is a (possibly empty) system of circles $\gamma\subset X_0\cap S^3$ such that 
    (i) no component of $X_0\setminus \gamma$ intersects northern hemispheres of some components of $\partial C_t$ and southern hemispheres of others; (ii) each component $\gamma_0$ of $\gamma$ is incident to two distinct components of $X_0\cut \gamma$, one intersecting northern hemispheres of $\partial C_t$  and the other intersecting southern hemispheres; and (iii) for (at least) one of the components $Y$ of $\widehat{S^3}\cut X_0$, there is a properly embedded surface $F$ in $Y\setminus \partial C$ with $\partial F=\gamma$.  
    
    In that case, for each component $Y_0$ of $Y\cut F$, delete all southern (resp. northern) hemispheres of $\partial C_t$ in $Y_0$ unless $\partial Y_0$ intersects such southern (resp. northern) hemisphere, and then, fixing $\partial Y_0\subset X_0\cup F$, push the interior of $Y_0$ into whichever 4-ball of $S^4\setminus (S^3\cup\mathring{C})$ is incident to $\partial Y_0$ (there will be only one such choice unless $Y_0=Y$, in which case either 4-ball is fine). Doing this for each component of $Y\cut F$ yields a spanning solid $M_X$.
\end{remark}

Recall from Remark \ref{rem:state_plumb_3D} that state surfaces in $S^3$ decompose naturally under Murasugi sum.  We now note, with caveats, that state solids behave analogously.

\begin{remark}\label{rem:state_solid_deplumb}
    Let $X$ be a state of a broken surface diagram $E$ of a knotted surface $K\subset S^4$, let $M_X$ be an associated state solid, and let $X_0$ be a component of $X$.  Suppose that $X_0$ is a 2-sphere, and that the interior of the ball $U$ in $M_X$ that caps it off lies entirely on one side of $S^3$.  Let $V$ be the 3-ball obtained by reflecting $U$ across $S^3$, write $Q=U\cup V$, write $B^4_1$ and $B^4_2$ for the components of $S^4\cut Q$, and write $M_i=M_X\cap B^4_i$, $i=1,2$.  Then $M=M_1\cup_UM_2$.  Further, this decomposition is a Murasugi sum if
    \begin{equation}
        \label{E:sum_condition}\tag{$*$}
        \text{wherever }X_0\text{ intersects the neighborhood }T\text{ of a triple point of }E,~X_0\cap T\subset S^3,
    \end{equation} so that $\partial X$ looks like one of the gray circles in Figure \ref{fig:ThreeGoodStatesUp} or Figure \ref{fig:ThreeGoodStatesDown}. The key point specified by condition \eqref{E:sum_condition} is that we need $\partial U\subset S^3$ in order for $U$ and $V$ to share a boundary, and thus for $U\cup V$ to be a 3-sphere.
\end{remark}

One might wonder if, given a knotted surface $L$ with normal Euler number 0, whether every broken surface diagram of $L$ has a state that yields a state solid. The answer is no. For example, take a connected sum of a positive projective plane and a negative projective plane, and construct a broken surface diagram that consists of two arcs, each with two cusps of matching signs as its endpoints. However, this question becomes more interesting when modified in either of the two following ways.

\begin{restatable}{question}{statewithsolidconnectsum}\label{Q:state_has_solid}
    Let $L$ be a knotted surface, and let $E$ be a broken surface diagram of $L$ such that, if you decompose $E$ under diagrammatic connect sum, each resulting component has normal Euler number 0.  Does $E$ have a state that yields a state solid?
\end{restatable}

In case the answer to Question \ref{Q:state_has_solid} turns out to be no, we offer the following weaker question.

\begin{restatable}{question}{qp}\label{Q:State_Solid_Always_Exists_2}
    Let $L$ be a knotted surface with normal Euler number 0. Does $L$ have a broken surface diagram that has a state that yields a state solid?
\end{restatable}

\begin{figure}
    \centering
    \labellist\tiny
    \pinlabel{{\color{mygreen} $-\frac{3}{2}$}} at 120 450
    \pinlabel{{\color{mygreen} $\frac{1}{2}$}} at 770 590
    \pinlabel{{\color{mygreen} $\frac{1}{2}$}} at 620 250
    \pinlabel{{\color{mygreen} $\frac{1}{2}$}} at 270 370
    \pinlabel{{\color{myblue} $-\frac{1}{2}$}} at 330 540  
    \pinlabel{{\color{myblue} $\frac{1}{2}$}} at 710 530
    \pinlabel{{\color{myblue} $-\frac{1}{2}$}} at 330 80
    \pinlabel{{\color{myblue} $\frac{1}{2}$}} at 380 200

    \pinlabel{{\color{mybrown} $\frac{1}{2}$}} at 230 630
    \pinlabel{{\color{mybrown} $-\frac{3}{2}$}} at 540 305
    \pinlabel{{\color{mybrown} $\frac{3}{2}$}} at 550 80
    \pinlabel{{\color{mybrown} $-\frac{1}{2}$}} at 540 540
    \endlabellist
    \includegraphics[width=0.3\textwidth]{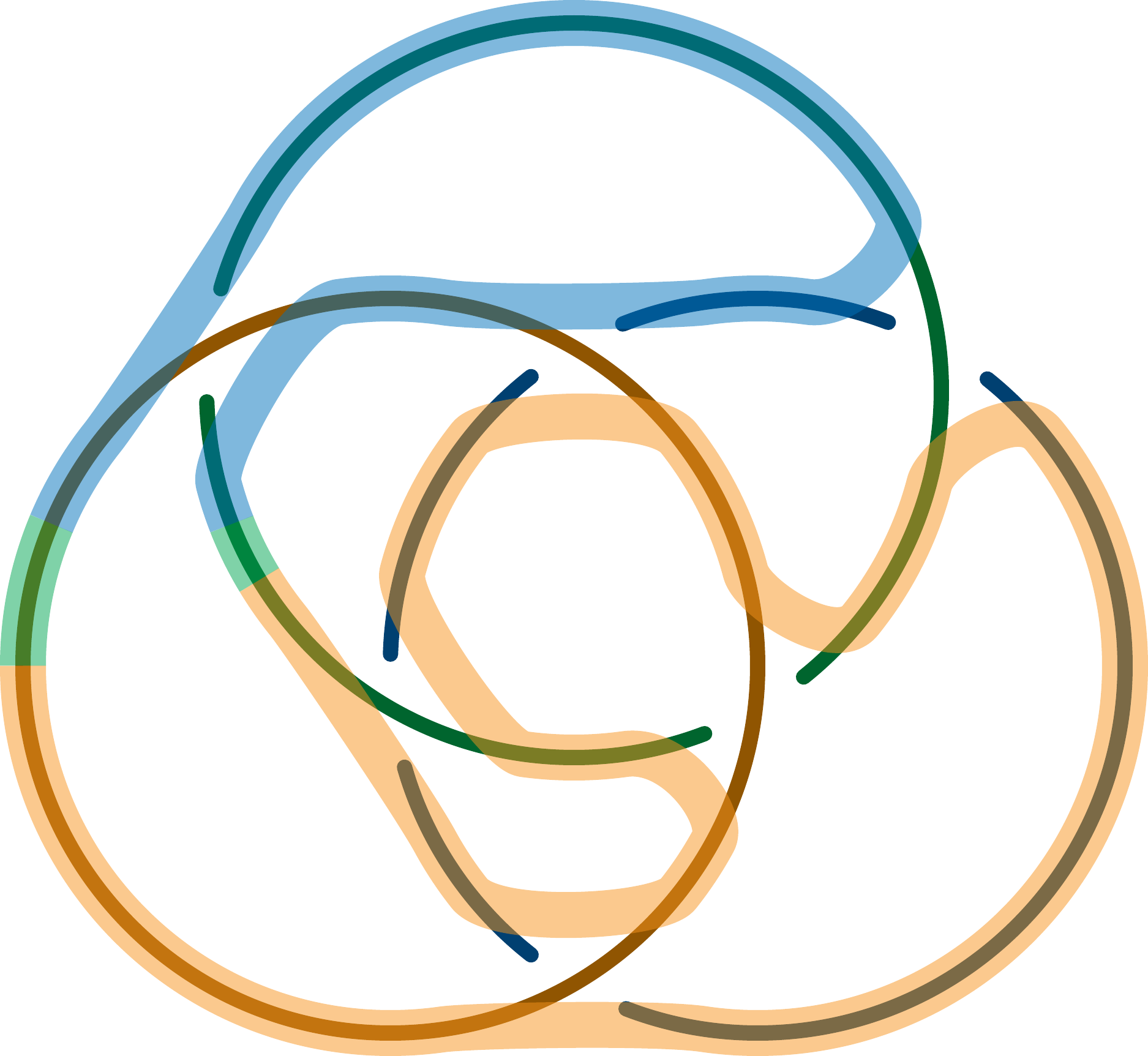}\hfill
    \includegraphics[width=0.3\textwidth]{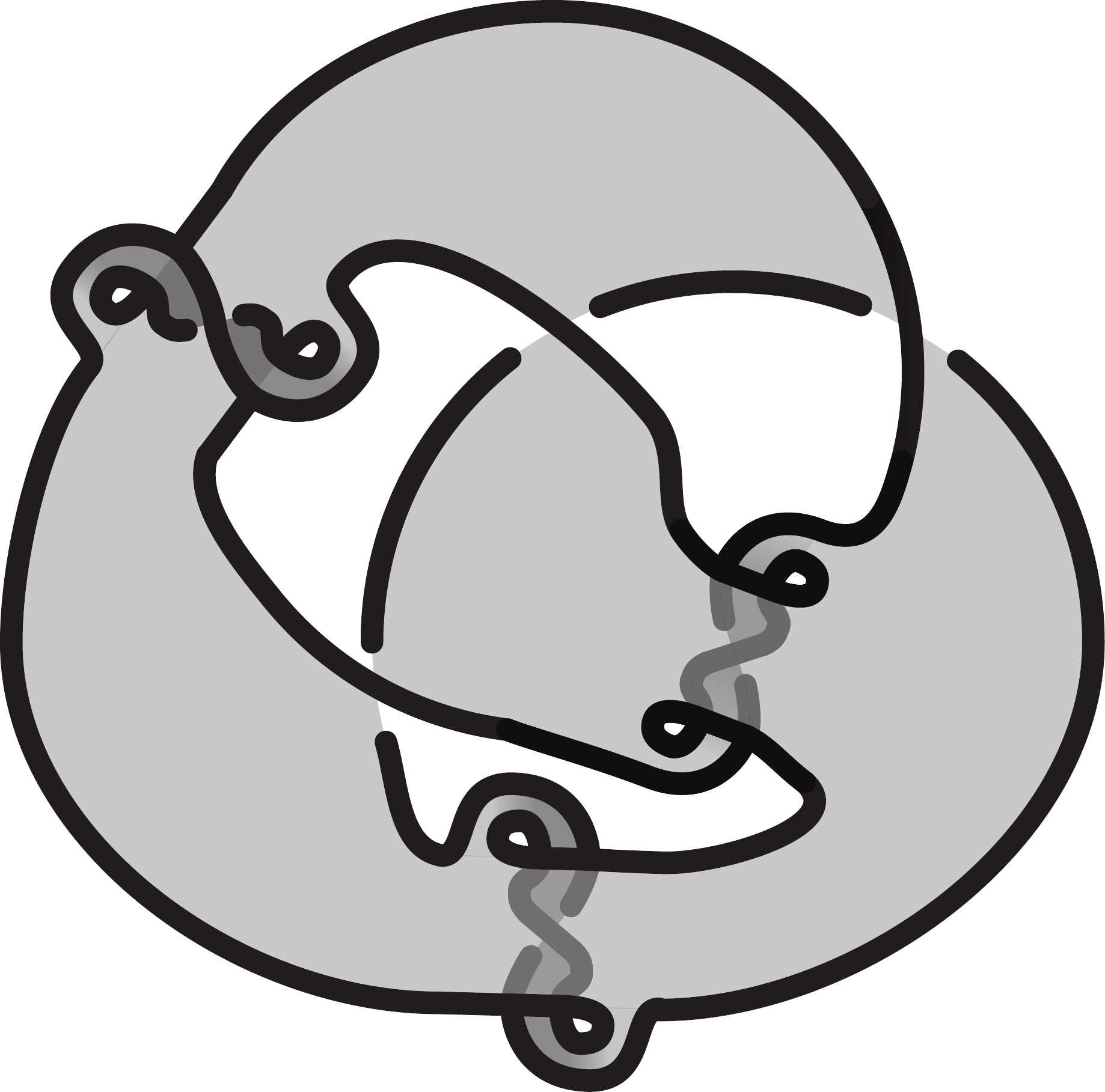}\hfill
    \includegraphics[width=0.3\textwidth]{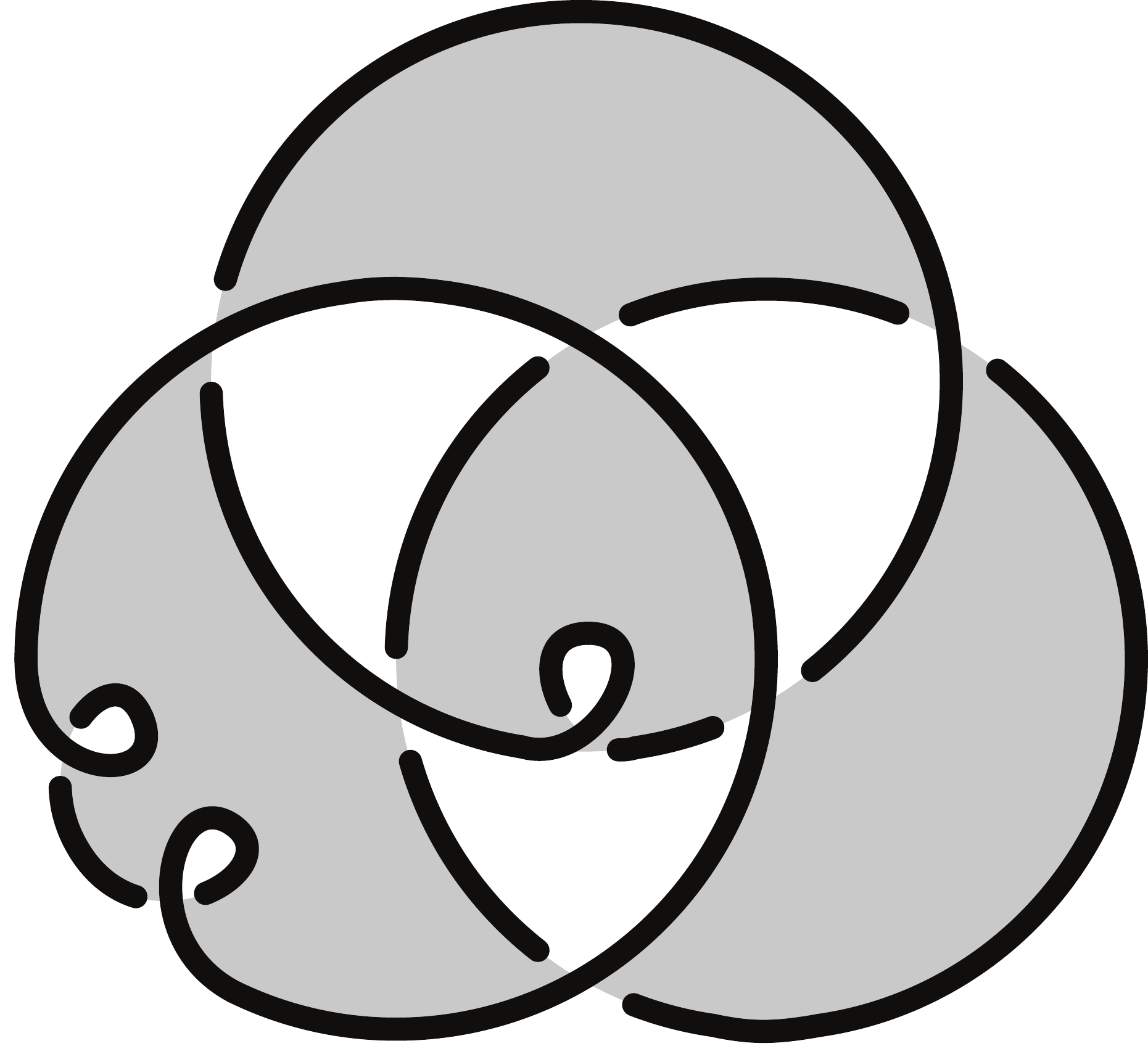}
    \caption{Relayering state (B) from Figure \ref{fig:TwoBadStatesUp} to obtain a state surface with component-wise slopes identically zero.}
    \label{fig:Relayered}
\end{figure}

\section{Pseudo-ribbon diagrams and their state solids}\label{sec:ribbon}

In this section, we focus on state solids obtained from pseudo-ribbon diagrams. 

\begin{definition}\label{def:pseudo_ribbon}
    A \emph{pseudo-ribbon diagram} is a broken surface diagram $E$ with no cusps or triple points, i.e., its crossing points consist of circles of double points. We call a knotted surface {\it pseudo-ribbon} if it has such a diagram.  
\end{definition}

Pseudo-ribbon surfaces are sometimes also called {\it simply knotted} in the literature (see e.g., \cite[\S2.2]{carter_kamada_saito}). Recall that by a result of Yajima \cite{yajima1964}, pseudo-ribbon 2-knots are, in fact, ribbon. 

\begin{proposition}\label{prop:cb_shading}
    Suppose that $E\subset S^3$ is a diagram of a knotted surface $K\subset S^4$ such that $E$ has no triple points or cusps. Then the components of $S^3\cut E$ are checkerboard colorable: one can assign each component a ``light'' or ``dark'' shading so that components of the same shade abut only at crossing circles.
\end{proposition}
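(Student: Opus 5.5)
The plan is to build the shading from a mod-2 intersection count. Fix a basepoint $p_0$ in the complement of $E$. For any other point $p$ in a component $R$ of $S^3\cut E$, choose a path $\gamma$ from $p_0$ to $p$ in general position with respect to $E$, meaning it is transverse to the immersed surface and misses the double curves. Shade $R$ light if $|\gamma\cap E|$ is even and dark if it is odd. Since $H_1(S^3;\mathbb{Z}/2)=0$, any two such paths differ by a closed loop. So well-definedness reduces to showing that every generic closed loop in $S^3$ meets $E$ an even number of times.

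For this, we realize $E$ as a $\mathbb{Z}/2$-cycle. The diagram $E$ is the image of a generic immersion $\iota:\Sigma\to S^3$ of the closed surface $\Sigma\cong K$. Thus $\iota_*[\Sigma]\in H_2(S^3;\mathbb{Z}/2)=0$. The mod-2 intersection number of a generic loop $\gamma$ with $\iota(\Sigma)$ is the count of preimages $|\iota^{-1}(\gamma)|$ mod 2. This equals the homological intersection pairing of $[\gamma]$ with $\iota_*[\Sigma]$, and so vanishes. Generic loops avoid the double curves, since those have codimension two, so the count is just $|\gamma\cap E|$. The hypothesis of no triple points or cusps has not been used yet; it enters only in the local analysis below.

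Next comes the local check of what ``abut'' means. Away from the singular set, a point of $E$ has a neighborhood cut into two sides, and a short transverse arc crosses $E$ once. So the two adjacent regions receive opposite shades. Near a crossing circle, the local model is the product of Figure \ref{fig:double_point} (two transverse sheets) with an interval or circle. There are four quadrant regions: diagonally opposite quadrants have the same shade (two crossings), and adjacent ones have opposite shades. The diagonal quadrants meet only along the double curve. Since there are no triple points or cusps, the double-point set is a disjoint union of circles with exactly this product model everywhere. Hence same-shade regions meet only along crossing circles, as claimed.

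The main obstacle is the bookkeeping in the last step. One has to confirm that the local model along every double curve really is a product with no monodromy that could swap quadrants. Any swap along a circle, though, would still exchange diagonal pairs, which the parity argument handles uniformly. So I expect this to be routine, with the global content carried entirely by $H_2(S^3;\mathbb{Z}/2)=0$.
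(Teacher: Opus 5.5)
Your proof is correct, but it takes a different route from the paper's.

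The paper argues by induction on the number of crossing circles. It smooths one crossing circle to get a pseudo-ribbon diagram $E'$ with fewer crossings and asserts that $E'$ is checkerboard colorable if and only if $E$ is. The reason is that a smoothing merges two diagonally opposite quadrants, and any checkerboard coloring must give those the same shade. The induction bottoms out at a crossingless diagram, i.e.\ an embedded closed surface in $S^3$, whose complementary regions are 2-colorable because it separates.

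Your parity argument replaces the induction with a single global fact: $\iota_*[\Sigma]=0$ in $H_2(S^3;\mathbb{Z}/2)$. That fact is essentially the same separation property that the paper's base case uses tacitly. What your route buys is generality. You never use the pseudo-ribbon hypothesis, so the argument gives checkerboard colorability for any broken surface diagram, including triple points and branch points, with ``crossing circles'' replaced by ``the double curves.'' The paper's smoothing induction genuinely needs the hypothesis, since smoothing double arcs that end at triple points or cusps is constrained or impossible (compare Remark~\ref{rem:state_branched_condition}). What the paper's route buys is that it stays inside the smoothing/state framework used throughout that section.

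Two small corrections:
\begin{itemize}
\item Two paths with common endpoints always concatenate to a closed loop; $H_1(S^3;\mathbb{Z}/2)=0$ is not what makes that true. What matters is that every generic loop meets $E$ evenly often. You correctly derive this from $H_2=0$; it would follow equally well from $H_1=0$ via the intersection pairing.
\item Your worry about monodromy along a double circle is unnecessary. Well-definedness of the coloring, together with the observation that the two sides of a regular point of $E$ get opposite shades, already gives the conclusion. In fact it shows that a monodromy rotating the quadrants by $90^\circ$, which would swap adjacent quadrants, cannot occur in $S^3$. The only possible nontrivial monodromy is the $180^\circ$ rotation, which is the M\"obius-band overpass/underpass case the paper meets later.
\end{itemize}
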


\begin{proof}
    Consider any crossing in $E$, and choose either smoothing of it.  The resulting diagram $E'$ is pseudo-ribbon, and $E'$ is checkerboard colorable if and only if $E$ is.  The proposition thus follows by induction on the number of crossing circles.
\end{proof}

\subsection{Murasugi sums of spun trivial M\"obius bands}\label{sec:trivial Mob}

In this section, we prove Theorem \ref{thm:ribbonstate}, which states that every pseudo-ribbon diagram has a state solid which is a particularly nice Murasugi sum of copies of the spun trivial M\"obius band $Y_1$. We prepare with the following observation, which is in some sense analogous to the classical fact that boundary summing a trivial M\"obius band onto a spanning surface does not change the boundary of the surface, up to isotopy. 

\begin{observation}
\label{obs:Y1_1_stripe}
    If $M=M_0*Y_1$ is a 1-stripe plumbing of a spanning solid $M_0$ with the trivial spun M\"obius band $Y_1$ in which $Y_1$ is attached along a thickening of the spin $d$ of the strip indicated in Figure \ref{fig:Y1_one_stripe_sphere}, then $\partial M$ and $\partial M_0$ are smoothly equivalent surfaces in $S^4$. Note that the core of the plumbing region in $Y_1$ is a disk (and thus the core of the plumbing region in $M_0$ is an arc).

    \begin{figure}[ht]
        \begin{center}
            \labellist\small \hair 4pt
            \pinlabel{{\color{red} $d$}} at 22 142
            \endlabellist
            \includegraphics[height=1.5in]{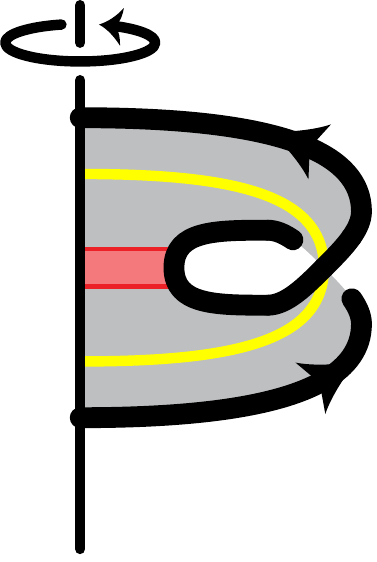}
            \caption{A spanning solid $M_0$ has boundary equivalent to that of the solid $M=M_0*Y_1$ whenever this Murasugi sum is a 1-stripe plumbing in which the plumbing region in $Y_1$ is the spin of the indicated stripe.}
            \label{fig:Y1_one_stripe_sphere}
        \end{center}
    \end{figure}
\end{observation}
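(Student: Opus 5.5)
The plan is to reduce the claim to the classical fact one dimension down and then spin. Recall the classical situation. A trivial M\"obius band $b\subset S^3$ spans the unknot. Take a spanning surface $F_0$ and plumb $b$ onto it along a square $P$ in which the arc of $P$ in $b$ is a cocore (spanning) arc of the band. The result is the boundary sum of $F_0$ with a trivial M\"obius band. Its boundary differs from $\partial F_0$ only by a Reidemeister I kink, so it is isotopic to $\partial F_0$.

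In the 4-dimensional setting, I would show that the spin $d$ of the indicated strip in Figure \ref{fig:Y1_one_stripe_sphere} plays exactly this role in $Y_1$. The order of steps is as follows.

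First, identify the Murasugi ball. Write $Y_1 = S(b)$ as in Definition \ref{def:spun_seifert_surface}. The strip is a neighborhood in $b^\circ$ of an arc whose endpoints lie on $\partial b^\circ \cap \mathrm{int}\, B^3$, and it meets $\partial B^3$ appropriately. Spinning it gives a thickened disk $\nu d$, which is a 1-striped ball $(B,X,Y)$. Its $Y$-part (the annulus) lies in $\partial Y_1$, and its disk spine is $d$. Dually, in $M_0$ the spine is an arc $\alpha$ with endpoints on $\partial M_0$, and the ball is $\nu\alpha$, with $X = S^0\times D^2$ lying on $\partial M_0$.

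Second, compute $\partial M$ directly from the definition of Murasugi sum. Removing the identified ball, $\partial M$ is the union of $\partial M_0$ minus the two disks $X$ and $\partial Y_1$ minus the annulus $Y$. So $\partial M$ is obtained from $\partial M_0$ by a 1-handle (tube) surgery along $\alpha$, followed by a surgery that reinserts the complement of the annulus in $\partial Y_1$. Since $\partial Y_1 = S(\text{unknot})$ is an unknotted 2-sphere and $Y$ is an annulus in it, $\partial Y_1 \setminus Y$ is two disks.

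Third, show that the net effect is trivial. For this I would use the fact that $d$ is a nonseparating disk in $Y_1\cong (S^1\ttimes S^2)^\circ$, namely the standard punctured-sphere fiber $D_1$. Cutting $Y_1$ along $\nu d$ leaves a 3-ball. Hence $M \cong M_0 \cup_{\text{two disks}} (\text{3-ball})$, i.e.\ $M$ is $M_0$ with a 1-handle attached along the disks $X\subset \partial M_0$. The 3-ball is $Y_1\cut \nu d$, an unknotted ball whose boundary contains the two disks $X$. I would then argue that this 1-handle is isotopic, rel $M_0$, into a small neighborhood of $\alpha\cup$ (a point of $\partial M_0$). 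Concretely: since the M\"obius band strip is the cocore of a band with a single half twist in an unknotted configuration, the plumbed-in piece can be slid, exactly as in the classical picture, into a collar of $\partial M_0$. Its new boundary then becomes a tube along $\alpha$ capped compatibly, which undoes the surgery, up to a local spun kink.

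A cleaner alternative is to choose $\alpha$ first by Lemma \ref{lem:knotted_arc}. That lemma does not straighten arbitrarily, but it does allow local crossing changes. Instead, I would isotope the whole picture so that the sum takes place in a 4-ball meeting $M_0$ in a half-ball neighborhood of $\alpha$. There the configuration is literally the spin of the classical local model (arc in a square, plumbed with the cocore of a trivial M\"obius band) about an axis disjoint from it, so it inherits the classical isotopy $\partial(F_0 * b)\simeq \partial F_0$, spun.

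The main obstacle is this last localization. The arc $\alpha$ in $M_0$ may be knotted, so one cannot simply assume the sum is a spin of a classical local model. I expect to handle it by noting that everything happens in $\nu\alpha \cup Y_1$. I would write $\partial M$ near $\alpha$ as $\partial M_0$ with a 1-handle attached along $\alpha$ and then cancelled by a 2-handle coming from $d$, which meets the belt circle of the tube once. This is a cancelling pair, so the surgery is undone ambiently within a neighborhood of $\alpha\cup Y_1$, independent of how $\alpha$ is knotted.
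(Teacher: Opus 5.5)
The paper gives no written proof of this observation. It is read off the figure, by analogy with boundary-summing a trivial M\"obius band. So your proposal has to stand on its own. Your second step is correct, but there is a genuine gap, and several intermediate claims are false.

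\textbf{False intermediate claims.}

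Cutting $Y_1$ along $\nu d$ does not leave a 3-ball. We have $\chi(Y_1)=\tfrac12\chi(\partial Y_1)=1$, so $\chi(Y_1\cut\nu d)=2$. In fact $Y_1\cut\nu d\cong S^2\times I$, with boundary spheres $d_+\cup P_1$ and $d_-\cup P_2$. Here $d_\pm$ are the two copies of $d$, and $P_1\sqcup P_2=\partial Y_1\setminus\mathring{Y}$. Thus $M$ is $M_0$ with $S^2\times I$ glued along $X_1\sqcup X_2$, abstractly $M_0\#(S^1\ttimes S^2)$. It is not $M_0$ with a 1-handle attached; a 1-handle would tube $\partial M_0$, contradicting your own (correct) second step.

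Your classical model is also wrong. Plumbing a trivial M\"obius band onto $F_0$ along a square whose arc in the band is a cocore is not a boundary sum, and it does change the boundary. It band-sums the two strands of $\partial F_0$ at the ends of $\beta$ along a half-twisted band. For example, if $F_0$ is an unknotted annulus and $\beta$ its cocore, a 2-component unlink becomes a knot. The classical fact the paper alludes to concerns boundary sum along a bigon.

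Two smaller points. Spinning a square about an axis disjoint from it gives a solid torus, not a 1-striped ball. And the arc whose spin is $d$ must run from $b^\circ\cap\partial B^3$ to $\partial b$, not have both ends on $\partial b$.

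\textbf{The essential gap: your last step.} Viewing $\partial M=(\partial M_0\setminus\mathring{X})\cup P_1\cup P_2$ as a tube along $\alpha$ followed by a compression only shows $\partial M\cong\partial M_0$ as abstract surfaces, which is already evident. The disks $P_1,P_2$ are the two hemispheres of $\partial Y_1$, not push-offs of $d$. Whether the compression undoes the tube in $S^4$ depends entirely on how $P_1\sqcup P_2$ sits in $B^4_Y:=S^4\setminus\mathring{V}$, where $V$ is the 4-ball realizing $\nu d$ as a $Y$-ball in $Y_1$.

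Your argument uses no property of $Y_1$ or $d$, so it would equally ``show'' something false. Take $W$ a Seifert solid for a nontrivial 2-knot and $d\subset W$ a boundary-parallel disk. Your reasoning would give that $\partial(B^3*W)$ is unknotted. But by Lemma \ref{lem:boundary_parallel_connect_sum} that sum is a boundary sum, with boundary $\partial W$.

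\textbf{What is needed.} You must show that the disk tangle $(B^4_Y,P_1\sqcup P_2)$ is trivial, i.e.\ boundary parallel. A trivial disk tangle is determined up to isotopy rel boundary by its boundary unlink. So $P_1\sqcup P_2$ can then be replaced by push-offs of $X_1\sqcup X_2$, giving $\partial M\simeq\partial M_0$ however $\alpha$ is knotted. In particular, no localization and no appeal to Lemma \ref{lem:knotted_arc} is needed.

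Triviality can be checked in the spin model:
\begin{enumerate}
\item Let $v\subset B^3$ be the 3-ball swept out by pushing the strip around the cocore arc $a$ off $b$, and take $V$ to be the spin of $v$.
\item Then $(B^4_Y,P_1\sqcup P_2)$ is the spin of the 2-string tangle $(B^3\setminus\mathring{v},\,U^\circ\setminus\mathring{v})$, where $U=\partial b$.
\item This tangle is trivial in the relative sense needed: each string is parallel, through disjoint disks, to a boundary arc running from $\partial B^3$ to $\partial v$. Hence the spun disks bound disjoint boundary-parallel 3-balls.
\item The tangle is trivial because the tripod $U^\circ\cup a$ is unknotted. Equivalently, the theta graph $\partial b\cup a$ is planar, which one sees by undoing the single crossing in the standard picture of $b$.
\end{enumerate}
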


We will make use of Observation \ref{obs:Y1_1_stripe} several times in \textsection\ref{sec:trivial Mob}. For convenience, if $Y_1$ is Murasugi summed to another solid as in Observation \ref{obs:Y1_1_stripe}, we may say that $Y_1$ is ``attached using $d$.''

\begin{proposition}\label{prop:2_knot_one_side}
    Suppose that $E\subset S^3$ is a pseudo-ribbon diagram of a knotted surface $K\subset S^4$ and $E$ has a state $X$ that is a 2-sphere. If all crossings of $E$ lie on the same side of $X$, then the associated spanning solid $M_X$ is a 1-stripe Murasugi sum of spun trivial M\"obius bands $Y_1$ of the form in Observation \ref{obs:Y1_1_stripe}. It follows that  $K$ is the trivial 2-knot.
\end{proposition}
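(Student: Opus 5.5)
The plan is to induct on the number of crossing circles of $E$, peeling off one crossing at a time as a copy of $Y_1$ attached using $d$. Since $E$ is pseudo-ribbon, every crossing is a circle $\gamma$ of double points, and no branch or triple points occur. The state solid $M_X$ is then the union of two kinds of pieces. The first is a single 3-ball $U$ capping off the 2-sphere $X$, pushed off $S^3$ rel boundary. The second is one piece $(\text{classical crossing band})\times S^1$ for each crossing circle $\gamma$. By hypothesis all crossing circles lie on one side of $X$ in $S^3$; call that side $W$, and let $W'$ be the complementary ball, which contains no crossings.

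First I will identify each crossing piece with a summand $Y_1$. The crossing band at $\gamma$ is a half-twisted band in a Menasco bubble $\alpha\times B^3$ with $\alpha=\gamma$. I will view $M_X$ near $\gamma$ as the spin, about an axis in $W'$, of a classical picture: a disk with a single half-twisted band attached, i.e.\ a trivial Möbius band. This should identify $(\text{band})\times S^1$, together with a collar of $U$ along the two annuli in which it meets $X$, with $Y_1$. The plumbing region in $Y_1$ is the spin of the strip $d$ in Figure~\ref{fig:Y1_one_stripe_sphere}, which is a disk core. Its counterpart in the rest of the solid is an arc running through $U$.

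To realize this as an honest Murasugi sum in the sense of Definition~\ref{def:murasugi_sum}, I need a standard 3-sphere $Q_\gamma$ separating the $Y_1$ piece from the rest. I will take $Q_\gamma$ to be the boundary of a small 4-ball containing the Menasco neighborhood of $\gamma$ together with a thin slab of $U$ near $\gamma$. This sphere meets $M_X$ in a 1-striped ball: the two disks of $X$ lie on $\partial M$, and the annulus lies in the interior. Because the crossings lie on one side of $X$, these regions can be chosen pairwise disjoint and nested compatibly. Deplumbing them one at a time then shows that $M_X = U' * Y_1 * \cdots * Y_1$, where $U'$ is a 3-ball, i.e.\ the remaining piece after all crossings are removed.

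Finally, Observation~\ref{obs:Y1_1_stripe} says that removing each $Y_1$ summand does not change the boundary surface up to equivalence. By induction, $K$ is equivalent to $\partial U'$, which is an unknotted 2-sphere, so $K$ is the trivial 2-knot.

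The main obstacle is the second step: checking that, with all crossings on one side, the plumbing 3-spheres can be chosen standard and disjoint, and that each local piece is attached exactly along the spun strip $d$ rather than some other 1-stripe region. If the crossings lay on both sides, the spinning axis would have to pass through crossings and this identification would fail. I would first try to check the identification in a single Menasco bubble using the product structure $\gamma\times B^3$ and an explicit spinning coordinate.
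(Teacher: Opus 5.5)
\textbf{There is a genuine gap in your second step: the $Y_1$ summand cannot be found locally near $\gamma$.}

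Write $\gamma_1,\gamma_2\subset X$ for the boundary circles of the crossing annulus $A$ at $\gamma$, and $D_1,D_2$ for the two disks of $X\cut(\gamma_1\cup\gamma_2)$. The piece $C_A=(\text{crossing band})\times S^1$ is a round 1-handle attached along annular neighborhoods of $\gamma_1,\gamma_2$. Together with a collar of $U$ along those annuli it is just a thickened annulus, i.e.\ a solid torus with torus boundary. It is not $Y_1\cong(S^1\ttimes S^2)^\circ$, so no 4-ball containing only the Menasco neighborhood of $\gamma$ and a thin slab of $U$ near $\gamma$ can cut off a copy of $Y_1$.

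To obtain $Y_1$, the round 1-handle must be attached to a 3-ball of the cap whose boundary contains both $\gamma_1$ and $\gamma_2$. Any disk in the sphere $X$ containing both circles contains $D_1$ or $D_2$. Although $\gamma_1,\gamma_2$ are close in $S^3$, they are typically far apart along $X$, just as a classical crossing arc can join two distant points of one state circle. So each summand is inherently global. For the same reason, when crossing annuli are nested the plumbing regions cannot be chosen pairwise disjoint.

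\textbf{Consequently, identifying the summand with the standard $Y_1$ attached using $d$ cannot be checked inside one Menasco bubble.} Let $B$ be the side of $X$ containing the crossings, $Z$ the other side, and $B_A$ the 3-ball component of $B\cut A$. The relevant global datum is the solid torus $Z\cup B_A\subset S^3$, and it can be knotted. For instance, take $Z$ to be a neighborhood of a knotted arc whose two ends come together at $\gamma$. In that case $(X,A)$ is not a spin about any axis, and the spinning coordinates you intend to use do not exist.

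The missing idea, which the paper's proof supplies, has two parts.
\begin{enumerate}
\item First isotope $M_X$ through $S^4$, proceeding from outermost annuli inward, so that every $Z\cup B_A$ becomes unknotted. This preserves the nesting; it changes $E$ and $X$ but not the class of $M_X$. Afterwards the cap can be taken to be $Z$ itself.
\item Then use the projection 3-sphere as the splitting sphere and $Z$ as the Murasugi ball. For an innermost $A$, push $C_A$ into $B^4_+$ and all other bands into $B^4_-$. This exhibits $M_X$ as a 1-stripe sum of $Z\cup C_A\cong Y_1$ with $M_X\cut C_A$, and one repeats with the next innermost annulus.
\end{enumerate}
This is where the one-sidedness hypothesis is really used. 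It supplies the crossing-free ball $Z$ that serves as the plumbing ball at every stage. It also guarantees that, for innermost $A$, the disks $D_1,D_2$ carry no other band attachments.
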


\begin{proof}
    Let $B$ denote the 3-ball of $S^3\cut X$ that contains all the crossing annuli for the state $X$, and write $Z=S^3\cut B$. For each crossing annulus $A$, one of the components $B_A$ of $B\cut A$ is a 3-ball.  
    We say that $A'$ lies ``inside" $A$ if $A'\subset B_A$.  Note that for any two distinct crossing annuli $A$ and $A'$, it is possible that $A$ lies inside $A'$, or vice versa, or neither, but not both.
    
    If necessary, isotope $M_X$ through 4-space so that for each annulus $A$, the solid torus $Z\cup B_A$ is unknotted in $S^3$. (Note that this may change $E$ and $X$, which is fine because we aim only to characterize the equivalence class of $M_X$.) 
    One way to effect this isotopy is find an outermost annulus $A$ for which $Z\cup B_A$ is knotted in $S^3$, isotope $B_A$ (and all crossing annuli within it) through 4-space to unknot $Z\cup B_A$, and then repeat,  proceeding inward.  Note that this isotopy preserves the nesting structure among the crossing annuli. The 3-ball used to cap off the state solid is isotopic rel.\ boundary to $Z$, so now take it to coincide with $Z$.

    Now consider an innermost crossing annulus $A$, and let $C_A$ denote the associated crossing band. While fixing $Z$ in $S^3$, push $C_A$ into the 4-ball $B^4_+$ above $S^3$, and push the rest of $M_X$ into the 4-ball $B^4_-$ below $S^3$. Cutting $M_X$ along $S^3$ now decomposes $M_X$ as a 1-stripe plumbing of $Z\cup C_A\cong Y_1$ with $M_X\cut C_A$ of the form in Observation \ref{obs:Y1_1_stripe}.  

    Now apply the preceding argument to an innermost crossing annulus within $M_X\cut C_A$ and repeat inductively. This procedure terminates when $M_X\cut (C_{A_1}\cup \cdots C_{A_n})$ contains no more crossing annuli and is thus isotopic to $Z$.  

    Observation \ref{obs:Y1_1_stripe} implies that $K$ is the trivial 2-knot. 
\end{proof}

\begin{proposition}\label{prop:connected_state_2_knot}
    If $E\subset S^3$ is a pseudo-ribbon diagram of a knotted surface $K\subset S^4$ and $E$ has a state $X$ that is a 2-sphere, then the associated spanning solid $M_X$ (with any layering of the 3-ball that caps off $X$) is a Murasugi sum of two solids, each of which is a 1-stripe Murasugi sum of spun trivial M\"obius bands $Y_1$, in which each band $Y_1$ is attached along $d$, as in Observation \ref{obs:Y1_1_stripe}.
\end{proposition}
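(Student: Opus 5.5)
The plan is to reduce to Proposition \ref{prop:2_knot_one_side} by cutting $M_X$ along a 3-sphere built from the capping ball, exactly as in Remark \ref{rem:state_solid_deplumb}.

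Since $E$ is pseudo-ribbon, there are no triple points, so condition \eqref{E:sum_condition} holds vacuously. The state $X$ is a 2-sphere in $S^3$, and it separates $S^3$ into two 3-balls $B_1$ and $B_2$. Every crossing annulus (the product of a classical crossing band with $S^1$) lies in $B_1$ or in $B_2$. Let $U$ be the 3-ball in $M_X$ capping off $X$; with whatever layering is given, its interior lies on one side of $S^3$, say above. Reflect $U$ across $S^3$ to get $V$, and set $Q=U\cup V$. Then $Q$ is a 3-sphere, which we may take to be standard, since $U$ is isotopic rel.\ boundary to either $B_i$ pushed off $S^3$. By Remark \ref{rem:state_solid_deplumb}, $M_X=M_1\cup_U M_2$ is a Murasugi sum along $U$, where $M_i=M_X\cap B^4_i$.

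Next I would identify each piece. The crossing annuli lying in $B_i$ lie on one side of $Q$, and so do the corresponding crossing solids (crossing band $\times S^1$). Thus $M_i$ consists of a copy of $U$ together with the crossing solids attached along the annuli in $B_i$. Now regard $\partial M_i$ as having its own pseudo-ribbon diagram in a new copy of $S^3$. That diagram has a 2-sphere state, namely a copy of $X$, and all of its crossings lie on one side of that state. Proposition \ref{prop:2_knot_one_side} then shows that $M_i$ is a 1-stripe Murasugi sum of copies of $Y_1$, each attached using $d$ as in Observation \ref{obs:Y1_1_stripe}. Here one must check that the ball capping $X$ in $M_i$ can be taken to be the complementary ball $Z$ of that proof. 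This holds because $U$ is parallel rel.\ boundary to both $B_1$ and $B_2$.

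The main obstacle is making the identification of $(B^4_i,M_i)$ precise. I expect the cleanest way to do this is to isotope $U$ rel.\ $\partial U$ to lie just above $B_{3-i}$. Then $M_i$ becomes, in a collar of $S^3$ inside $B^4_i$, the crossing solids together with a parallel copy of $B_{3-i}$. Capping off with the other half of $S^4$ realizes $M_i$ as a state solid for a diagram in which every crossing lies in $B_i$, so Proposition \ref{prop:2_knot_one_side} applies directly. The layering choice affects only which side $U$ is on, and by the symmetric argument (or Proposition \ref{prop:replumb}-style reflection) both cases are handled identically.
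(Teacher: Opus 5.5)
Your proposal is correct and follows essentially the paper's argument. You cut $M_X$ along a standard 3-sphere that contains the capping ball $U$ and separates the crossing solids on the two sides of $X$, identify each piece with the state solid that keeps only the crossings on one side, and apply Proposition \ref{prop:2_knot_one_side}.

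The only difference is cosmetic. The paper isotopes $M_X$ so that the projection sphere $S^3$ itself is the splitting sphere, with $M_X\cap S^3=U$, the crossing bands inside $B$ pushed up and the others pushed down. You instead use the sphere $U\cup V$ from Remark \ref{rem:state_solid_deplumb}. Your later move of $U$ over $B_{3-i}$ is harmless, since it is an ambient isotopy supported away from the crossing solids and carries $Q$ along with it.
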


\begin{proof}
    Let $B$ be a 3-ball in $S^3$ bounded by $X$, and let $U\subset M_X$ denote the 3-ball that caps off $X$. Then $M_X$ is the union of $U$ and several crossing bands, some of which lie in $B\times I$ and some of which lie in $\left(S^3\setminus\mathring{B}\right)\times I$. Isotope $M_X$ (and $K$) so that $M_X\cap S^3=U$, while all crossing bands that were initially inside $B$ now lie in the 4-ball $B^4_+$ above $S^3$ and all other crossing bands now lie in the 4-ball $B^4_-$ below $S^3$.  Now cutting along $S^3$ decomposes $M_X$ into a Murasugi sum of two solids $M_\pm=M_X\cap B^4_\pm$. Each summand $M_\pm$ is isotopic to a state solid obtained from $M_X$ by keeping only those crossing bands projecting to one side of $X$ and deleting those projecting to the other side of $X$. Proposition \ref{prop:2_knot_one_side} thus implies that each $M_\pm$ is a 1-stripe Murasugi sum of spun trivial M\"obius bands, in which each band $Y_1$ is attached using $d$ as in Observation \ref{obs:Y1_1_stripe}. 
\end{proof}

\begin{theorem}\label{thm:ribbonstate}
    Every pseudo-ribbon diagram $E$ of a 2-knot $K$ has a state solid that is a Murasugi sum of two spanning solids for the trivial 2-knot, each of which is a 1-stripe Murasugi sum of spun trivial M\"obius bands $Y_1$, in which each copy of $Y_1$ is attached using the region $d$ as in Observation \ref{obs:Y1_1_stripe}.

    In particular, every ribbon 2-knot has a spanning solid that is a Murasugi sum of some number of copies of $Y_1$, the spun trivial M\"obius band.
\end{theorem}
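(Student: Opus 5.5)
The plan is to reduce Theorem \ref{thm:ribbonstate} to Proposition \ref{prop:connected_state_2_knot}. It suffices to show that every pseudo-ribbon diagram $E$ of a 2-knot $K$ has a state $X$ that is a single 2-sphere. Such a state is automatically viable, since a pseudo-ribbon diagram has no triple points or cusps, so Observation \ref{obsv:slopes} imposes no condition. Once such an $X$ is found, Proposition \ref{prop:connected_state_2_knot} gives the desired decomposition of $M_X$ as a Murasugi sum $M_+ * M_-$. Here each $M_\pm$ is a 1-stripe sum of copies of $Y_1$ attached using $d$, and by Observation \ref{obs:Y1_1_stripe} each has boundary the trivial 2-knot. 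The second sentence of the theorem then follows from Yajima's result that ribbon 2-knots admit pseudo-ribbon diagrams, together with the fact that an iterated Murasugi sum of Murasugi sums is a Murasugi sum of the individual copies of $Y_1$.

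The main step is producing the spherical state, which I would do by induction on the number of double-point circles of $E$. The preimage of the crossing set in $S^2$ is a disjoint union of circles, and each crossing circle $\alpha$ has two preimage circles (over and under). Smoothing $\alpha$ in either of its two ways cuts the surface along these two preimage circles and reglues the four boundary circles in pairs.

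I would track the \emph{abstract} surface obtained after partial smoothing. Cutting $S^2$ along the $2k$ disjoint preimage circles yields planar pieces arranged in a tree pattern, since every circle in $S^2$ separates. A smoothing at $\alpha$ reconnects these pieces. One way reconnects them so that the result is still a sphere, with the two curves joined along a tube. The other way reconnects them so that the result is disconnected or has higher genus. The claim to verify is that at each circle exactly one of the two smoothings preserves "connected genus zero" relative to the current partial state. To check this, I would compute Euler characteristic, which is unchanged by smoothing, and the number of components: cutting along two separating circles gives three pieces, and one regluing connects all three pieces while the other leaves one piece isolated. In a sphere this suggests always choosing the connecting smoothing.

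The expected obstacle is that after the first smoothing the curves are no longer all separating in the current surface, so the choice at later circles depends on the earlier ones. To handle this I would instead argue globally via a checkerboard-type choice. Use Proposition \ref{prop:cb_shading} to shade $S^3\cut E$, and smooth every crossing circle so as to join the two dark regions adjacent to it. The resulting state $X$ is then the boundary of the union of the dark regions, and one must check that this is connected and of genus zero. If that fails, I would fall back on induction on the nesting tree of preimage circles, smoothing an innermost pair first; there the separating property still holds locally, so the connecting choice keeps the surface a sphere. Verifying this innermost-first induction carefully is where I expect the real work to lie.
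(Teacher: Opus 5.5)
Your reduction to Proposition \ref{prop:connected_state_2_knot} and your local computation at a single crossing circle match the paper's proof. The two preimage circles of a crossing circle $\gamma$ cut $S^2$ into two disks and an annulus. One smoothing glues the two disks to each other and closes the annulus up into a separate component; the other glues each disk to an end of the annulus, giving a sphere.

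The gap is that you stop there. You believe the choice at later circles depends on earlier ones, and you reach for other strategies instead. That obstacle does not exist. After the connecting smoothing at $\gamma$, you have an immersed $2$-sphere in $S^3$ whose double-point set is the remaining crossing circles, with no triple or branch points. Keeping the old crossing information, this is a pseudo-ribbon diagram $E'$ of some (possibly different) $2$-knot with one fewer crossing circle. Apply the induction hypothesis to $E'$ itself, rather than tracking a partial state against the original curve system in $S^2$. The remaining preimage circles now lie on a sphere, so they separate again, and any crossing circle may be chosen, with no innermost ordering needed. A connected state of $E'$, together with the chosen smoothing at $\gamma$, is a connected state of $E$; it is a sphere because $\chi$ is unchanged. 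The base case is a crossingless diagram, which is its own state.

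The checkerboard fallback would fail: smoothing every crossing circle to merge the dark regions generally yields a disconnected state. For example, cut the standard alternating $3$-crossing trefoil diagram open into a tangle with one arc and spin it. The two checkerboard states of the resulting pseudo-ribbon diagram are spins of classical states with $2$ and $3$ circles. Each therefore consists of one sphere (from the state arc) together with one or two tori (from the state circles). Commit to the inductive argument and discard this alternative.
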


\begin{proof}
    By Proposition \ref{prop:connected_state_2_knot} and the fact that $\chi(K)=2$, it will suffice to prove that $E$ has a connected state.  We will prove this by induction on the number of crossings in $E$.

    Let $f:S^2\to S^4$ denote an embedding of $K$ obtained by perturbing $E$ near its crossing circles.  Choose any crossing circle $\gamma\subset E$.  Then $f^{-1}(\gamma)$ cuts $S^2$ into two disks and an annulus.  One smoothing of $\gamma$ glues the two disks together;  choose and perform the opposite smoothing.  This yields a pseudo-ribbon diagram of another 2-knot with fewer circles of intersection. The proof now follows by induction.
\end{proof}

In order for a pseudo-ribbon diagram $E$ of a knotted surface $K$ to yield the type of state solid described in Theorem \ref{thm:ribbonstate}, it is necessary that $\chi(K)=2$.  This Euler characteristic condition is not, however, sufficient, as the diagram of $K$ may fail to have a connected state. Figure \ref{fig:torus_sphere_split} shows two such examples; in both cases, the diagrams have only disconnected states.

\begin{figure}[h]
    \centering
    \;\hfill\includegraphics[width=0.25\textwidth]{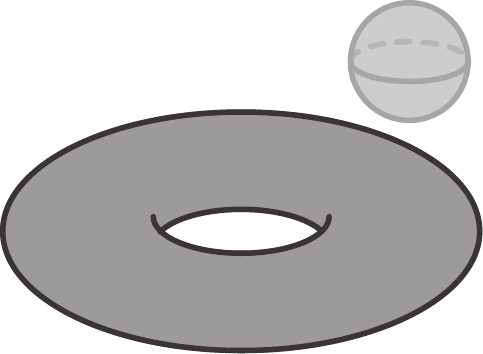}\hfill \includegraphics[width=0.25\textwidth]{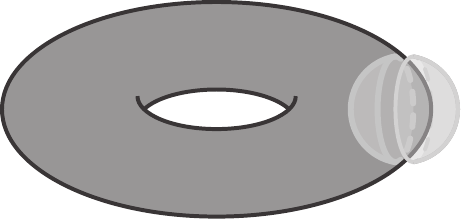}\hfill\;
    \caption{Two broken surface diagrams with no connected states}
    \label{fig:torus_sphere_split}
\end{figure}

\begin{remark}\label{remark:splitexample}
    Both examples in Figure \ref{fig:torus_sphere_connected} are split links. One might speculate that no split link in $S^4$ has the type of spanning solid described in Theorem \ref{thm:ribbonstate}. This, however, is false. For example, take the split union $K$ of an unknotted sphere and an unknotted torus. The surface $K$ has a crossingless diagram $D$ in which the torus component is unknotted in $S^3$. Take a properly embedded annulus $A$ in $S^3\cut D$ whose boundary consists of a circle on the sphere and an essential circle on the torus. Note that this annulus exists because the torus is unknotted in $S^3$. As shown in Figure \ref{fig:torus_sphere_connected}, isotope one component of $D$ through $\nu A$ to get a broken surface diagram of $K$ with two circles of self-intersection.  This diagram has a connected state, shown right in the figure, which yields the type of spanning solid described in Theorem \ref{thm:ribbonstate}.
\end{remark}

\begin{figure}[h]
    \centering
    \raisebox{0.05\textwidth}{\includegraphics[height=0.12\textwidth]{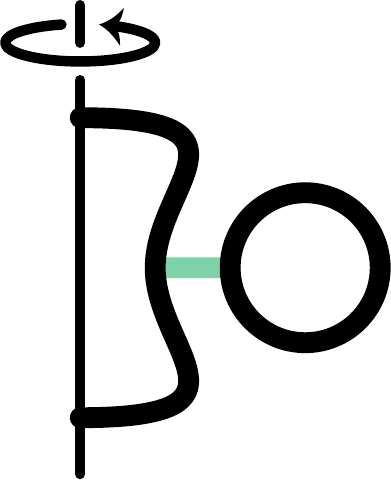}}
    \raisebox{0.1\textwidth}{$=$}
    \includegraphics[height=0.20\textwidth]{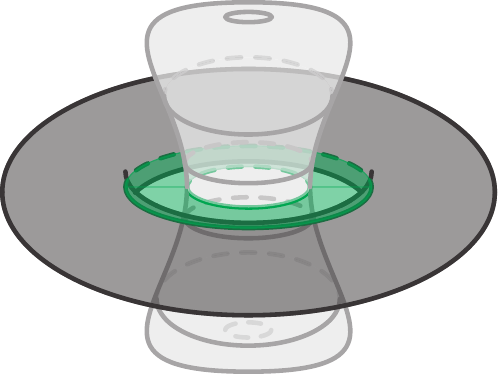} 
    \raisebox{0.1\textwidth}{$\to$}
    \includegraphics[height=0.20\textwidth]{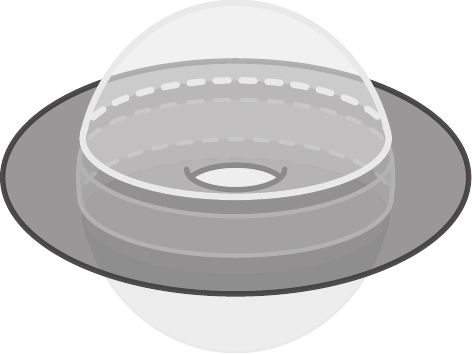}
    \raisebox{0.1\textwidth}{$=$}
    \raisebox{0.05\textwidth}{\includegraphics[height=0.12\textwidth]{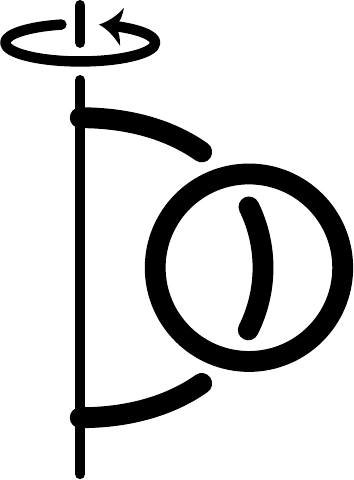}}
    \raisebox{0.1\textwidth}{$\to$}
    \raisebox{0.05\textwidth}{\includegraphics[height=0.12\textwidth]{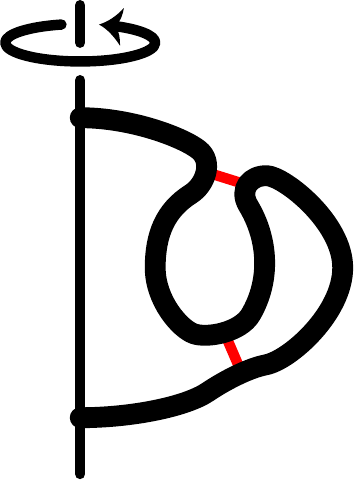}}
    \caption{Constructing a broken surface diagram (right) of the split {knotted surface} from Figure \ref{fig:torus_sphere_split} that has a connected state}
    \label{fig:torus_sphere_connected}
\end{figure}

\begin{remark}
    One might then suspect that every pseudo-ribbon surface $K$ with $\chi(K)=2$ bounds a spanning solid that is a Murasugi sum of copies of $Y_1$. As an attempt at proving this to be true, consider the following start to an argument. 

    Let $D\subset S^3$ be a pseudo-ribbon diagram of a  knotted surface $K\subset S^4$ with $\chi(K)=2$. Choose a state $X$ of $D$ for which $|X|$ is minimal.  Because $|X|$ is minimal, every crossing annulus for $X$ has the property that its two boundary circles either lie on the same closed component of $X$ or are each separating on their respective component of $X$ (or else we could change the smoothing of $X$ at that annulus and decrease $|X|$).

    Now we aim to find an annulus like the one used in Remark \ref{remark:splitexample} --- if we can find an annulus $A$ that is properly embedded in $S^3\cut X$ such that $A$ is disjoint from all crossing annuli for $X$ (including also the fact that $\partial A$ is disjoint from the boundaries of these crossing annuli), and such that the two circles of $\partial A$ are on distinct closed components of $X$ with at least circle being one nonseparating, then we can isotope one component of $D$ through $\nu A$ to create a new diagram $D'$ (with two more crossing circles than $D$) which has a state $X'$ with $|X'|=|X|-1$. If we can repeat this process indefinitely, doing so will eventually yield a pseudo-ribbon diagram of $K$ with a connected state, and the associated state solid will be the type we seek to construct. It turns out, however, that such an annulus need not exist.
\end{remark}

\begin{example}\label{ex:no_spun_mobius}
    Let $D$ be the disconnected pseudo-ribbon diagram constructed in Figure \ref{fig:torus_1_1}---the idea is to place a 2-crossing diagram of the Hopf link in the $x$-positive half of the $xz$-plane and to spin the diagram about the $z$-axis while simultaneously turning one circle once about the center of the other. One component of $D$ is a crossingless diagram of the trivial 2-knot. The other is a 2-crossing diagram of a link of two unknotted tori; the projection of each torus is unknotted in $S^3$, and both of their crossing circles are $(1,1)$ curves on both projected tori.

    This diagram $D$ has two 3-component states and two 2-component states; each of the latter comes from taking a connected state of the 2-crossing diagram of the Hopf link and roll-spinning it around the $z$-axis.  
    Each of those 2-component states consists of a sphere, an unknotted torus $T$, and a crossing annulus whose boundary circles $\gamma_1,\gamma_2$ both lie on $T$ as $(1,1)$ curves.  Importantly, every circle on $T$ either separates $T$, intersects (both) $\gamma_i$, or bounds no disk in $S^3\cut T$.  Therefore, the type of annulus $A$ that we seek does not exist in this setting.
    
    \begin{figure}[h]
        \centering
        \raisebox{.25in}{\includegraphics[height=1.1in]{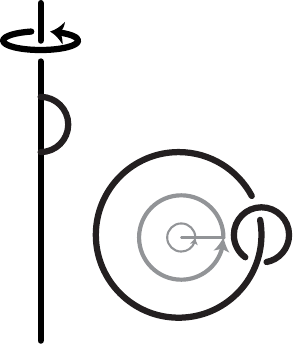}}
        \raisebox{.75in}{$~=~$}
        \includegraphics[height=1.6in]{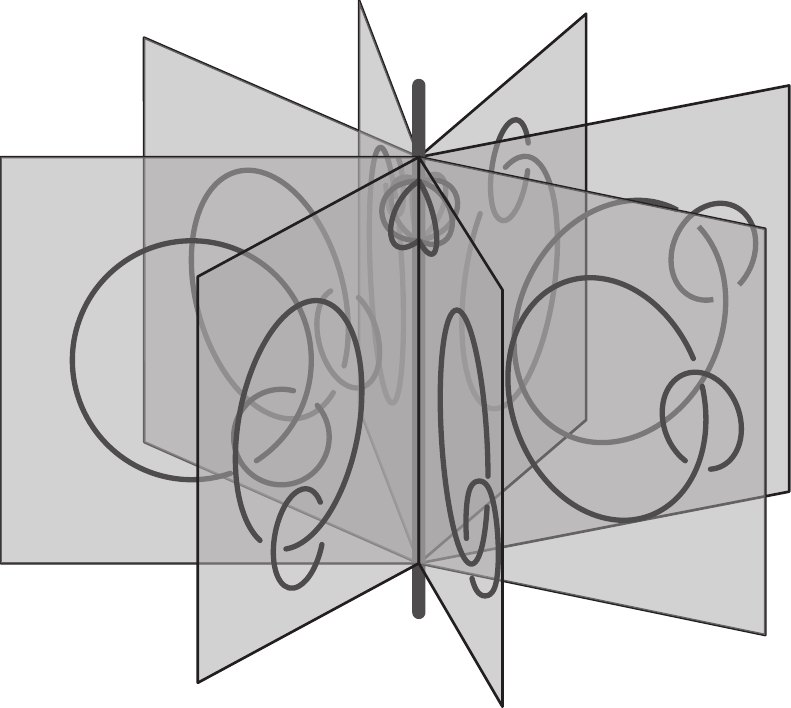}
        \raisebox{.75in}{$~=~$}
        \includegraphics[height=1.6in]{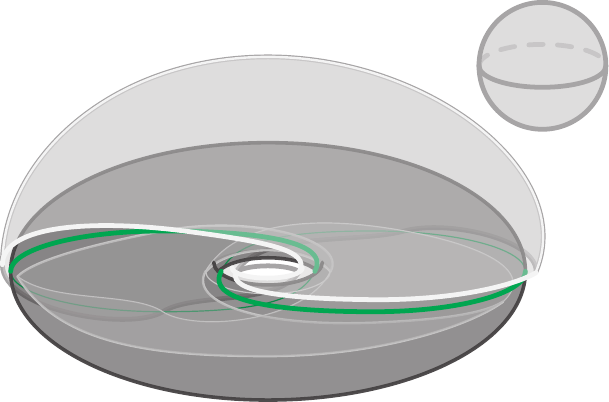}
        \caption{Does this {knotted surface} have a spanning solid that is a Murasugi sum of spun trivial M\"obius bands?}
        \label{fig:torus_1_1}
    \end{figure}

\end{example}

We have not proven, of course, that the knotted surface in Example \ref{ex:no_spun_mobius} has no broken surface diagram with a connected state, let alone that it has no spanning solid which is a (unspecified or particular type of) Murasugi sum of spun trivial M\"obius bands.  We ask the following questions.

\begin{restatable}{question}{connectedstateone}\label{q:mobius_1}
    Let $K$ be the knotted surface described in Example \ref{ex:no_spun_mobius}.  
    \begin{enumerate}[label=(\alph*)]
        \item Does $K$ have a pseudo-ribbon diagram with a connected state?
        \item Does $K$ have a spanning solid that is a Murasugi sum of two solids, each of which is a 1-stripe Murasugi sum of spun trivial M\"obius bands $Y_1$, each attached along $d$ as in Observation \ref{obs:Y1_1_stripe}?
        \item Does $K$ have a spanning solid that is a Murasugi sum of spun trivial M\"obius bands?
    \end{enumerate}    
\end{restatable}

More generally:

\begin{restatable}{question}{connectedstatetwo}\label{q:mobius_2}
    Which knotted surfaces $K$ with $\chi(K)=2$ and normal Euler number 0 have  
    \begin{enumerate}[label=(\alph*)]
        \item a pseudo-ribbon diagram with a connected state?
        \item a spanning solid that is a Murasugi sum of two solids, each of which is a 1-stripe Murasugi sum of spun trivial M\"obius bands $Y_1$, each attached using $d$ as in Observation \ref{obs:Y1_1_stripe}?
        \item a spanning solid that is a Murasugi sum of spun trivial M\"obius bands?
    \end{enumerate}
\end{restatable}

We highlight the following question in particular.

\begin{restatable}{question}{spunMobius}\label{q:mobius_3}
    Is there any knotted surface $K$ (pseudo-ribbon or not) with $\chi(K)=2$ and normal Euler number 0 for which no spanning solid of $K$ is a Murasugi sum of trivial M\"obius bands?
\end{restatable}

To compare Question \ref{q:mobius_3} with the classical setting, recall that every link $L\subset S^3$ has a spanning solid that is a (particular type of) Murasugi sum of trivial M\"obius bands.

\section{Ribbon-alternating {knotted surface}s}\label{sec:alternating}

In this section, we will define and characterize {\it ribbon-alternating {knotted surface}s}. 
We have already encountered these links, including in the construction of certain arborescent {knotted surface}s (where the links came from spinning alternating links) and in Example \ref{ex:no_spun_mobius}. We will see that these examples are nearly exhaustive of the class of alternating links in the sense that every ribbon-alternating {knotted surface} is either a (non-deformed) spun alternating tangle or a link of tori, and possibly one Klein bottle, constructed in a manner analogous to that shown in Figure \ref{fig:torus_1_1}. See Figure \ref{fig:Hex_Torus} for typical examples.

We note that Satoh \cite{satoh02} gave a definition of alternating diagrams for knotted surfaces that applies to arbitrary broken surface diagrams, including those with triple points. He showed that, in this sense, admitting an alternating diagram is equivalent to being pseudo-ribbon. Our definition is somewhat more restrictive.

\begin{figure}[ht]
    \begin{center}
        \includegraphics[height=1.5in]{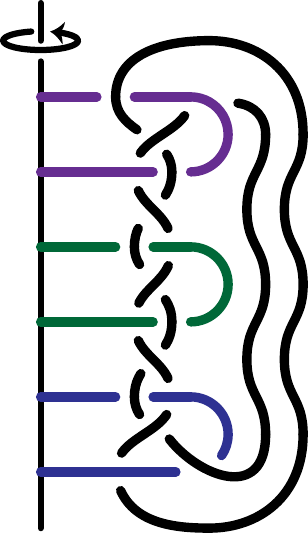}
        \hspace{.25in}
        \includegraphics[height=1.5in]{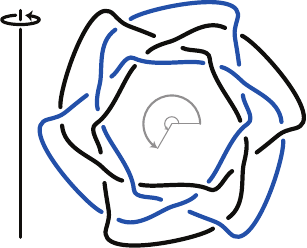}
        \hspace{.25in}
        \includegraphics[height=1.5in]{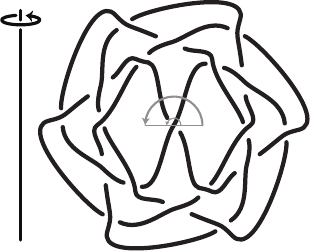}
        \caption{Constructing ribbon-alternating diagrams of knotted surfaces consisting of three linked spheres and a torus (left), two tori (center), and a Klein bottle (right).}
        \label{fig:Hex_Torus}
    \end{center}
\end{figure}

\subsection{Ribbon-alternating diagrams}\label{subsec:alternating_surface_links} Let $E$ be a pseudo-ribbon diagram in which every component has at least one crossing.  In analogy with the classical case, we define $E$ to be ribbon-alternating if, for every path $p:[0,1]\to E$ where $p(0)$ and $p(1)$ are both crossing points, $p(I)$ either runs between an overpass and an underpass or is homotopic rel. endpoints in $E$ into a crossing circle. We present the following equivalent definition because it is simpler and will be easier to work with.

\begin{definition}
    Let $E$ be a pseudo-ribbon diagram in which every component has at least one crossing.  We say that $E$ is \emph{ribbon-alternating} if each component of $E$ cut along its crossing circles either is a disk or is an annulus incident to one overpass and one underpass. 
\end{definition}

\begin{remark}\label{rem:ribbon}
    Each of our solids $Y_n$ in Definition \ref{def:standard_objects} is a checkerboard solid obtained from a ribbon-alternating diagram $E_n$. More generally, given any alternating tangle diagram in a 3-ball, the pseudo-ribbon diagram obtained by spinning it is a ribbon-alternating diagram.
\end{remark}

\begin{example}\label{ex:klein}
Figure \ref{fig:Klein_bottle_spun} shows three alternating diagrams of trivial tangles---the tangle shown left has two endpoints, and those shown center and right have none. Spinning the diagram shown left gives a 1-crossing ribbon-alternating diagram of the trivial 2-knot, spinning the diagram shown center while turning $240^\circ$ gives a 1-crossing ribbon-alternating diagram of the unknotted torus, and spinning the diagram shown right while turning $180^\circ$ gives a 1-crossing ribbon-alternating diagram $E$ of the unknotted Klein bottle with Euler number zero. The crossings arising left and center are easily removable and are examples of what we will call {\it nugatory} crossings in \textsection\ref{sec:nugatory_ribbon}, but the crossing $C$ in the diagram $E$ of the Klein bottle is not removable---$C$ cuts $E$ into a single annulus, and the overpass and underpass at $C$ are both M\"obius bands.  Figure \ref{fig:Klein} shows the resulting broken surface diagram of the Klein bottle.
\end{example}

\begin{figure}[ht]
    \begin{center}
        \includegraphics[height=1in]{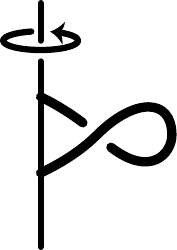}
        \hspace{.75in}
        \includegraphics[height=1in]{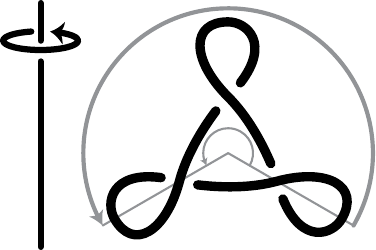}
        \hspace{.75in}
        \includegraphics[height=1in]{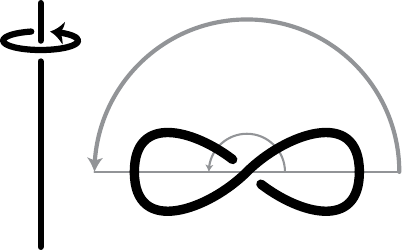}
        \caption{Spinning these tangle diagrams while turning as shown gives ribbon-alternating broken surface diagrams of the trivial 2-knot (left), unknotted torus (center), and Euler number zero unknotted Klein bottle (right).} 
        \label{fig:Klein_bottle_spun}
    \end{center}
\end{figure}

\begin{figure}[ht]
    \begin{center}
        \includegraphics[width=.5\textwidth]{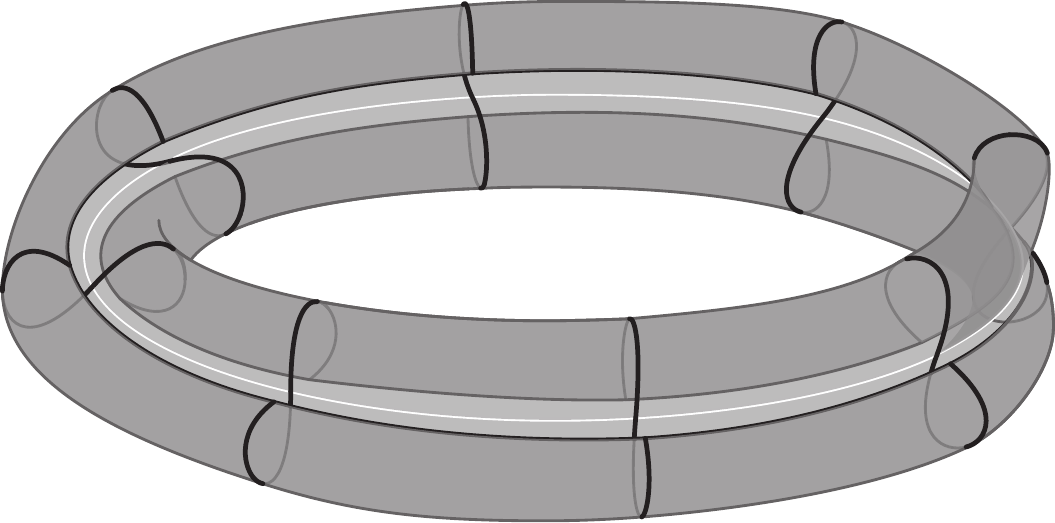}
        \caption{A ribbon-alternating diagram $E$ of the Euler number zero unknotted Klein bottle.}
        \label{fig:Klein}
    \end{center}
\end{figure}

We will see that these trivial examples are typical, in the sense that every ribbon-alternating knotted surface arises essentially in the same way as one of the surfaces in Example \ref{ex:klein}.  In particular:

\begin{proposition}\label{prop:spheres_tori}
    Each component of a ribbon-alternating knotted surface $K$ must be a sphere, torus, or Klein bottle which the crossing circles cut into disks and annuli.
\end{proposition}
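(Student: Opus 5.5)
Fix a component $K_0$ of $K$, with diagram $E_0\subset E$, and let $\Gamma\subset K_0$ be the preimage of the crossing circles. The plan is an Euler characteristic count. By definition of ribbon-alternating, $K_0\cut\Gamma$ consists of disks and annuli, and each annulus is incident to one overpass and one underpass. Each circle of $\Gamma$ has $\chi=0$, so
\[
\chi(K_0)=\sum_{R}\chi(R)=\#\{\text{disk regions}\},
\]
with the sum over regions $R$ of $K_0\cut\Gamma$. It remains to bound the number of disk regions by $2$. Then $\chi(K_0)\in\{0,1,2\}$. The value $\chi=1$ will be ruled out separately, and we conclude that $K_0$ is a sphere, torus, or Klein bottle.

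To bound the disk regions, I would form the dual graph $G$. Its vertices are the regions of $K_0\cut\Gamma$, and its edges are the circles of $\Gamma$, each circle joining the one or two regions on either side of it. Cutting $K_0$ along a single circle of $\Gamma$ leaves at most two complementary pieces near it, so the regions fit together along a chain. An annulus region has exactly two boundary circles, so its vertex has degree $2$, counting a loop twice. A disk region has exactly one boundary circle, so its vertex has degree $1$. Since $K_0$ is connected, $G$ is connected with all vertex degrees at most $2$, so $G$ is either a path or a cycle. The disk regions are the degree-$1$ vertices, so there are either $0$ of them (cycle) or $2$ of them (path, or a single edge joining two disks).

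Thus the disk count is $0$ or $2$, never $1$. A single vertex with a loop is an annulus whose two boundary circles are glued, which is a cycle with no disks. A path with two disk ends gives $\chi=2$, a sphere. A cycle gives $\chi=0$, a torus or Klein bottle. In every case the crossing circles cut $K_0$ into disks and annuli, as claimed.

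The main obstacle is justifying that each circle of $\Gamma$ separates locally in a way that makes $G$ a well-defined graph with the stated degrees. The preimage of a double circle consists of two circles in the abstract surface, one on the overpass sheet and one on the underpass sheet. Each is a two-sided or one-sided curve in $K_0$. If it were one-sided, its complement would have a single boundary circle, and the cutting would produce a region with an odd number of boundary circles counted appropriately. I would handle this by taking the regions to be components of $K_0\cut\Gamma$ and counting boundary circles of the cut surface. A one-sided curve then contributes one boundary circle to a single region, so that region (disk or annulus) has degree $1$ at that circle. The degree bound still holds, every vertex has degree $\le 2$, and a connected graph with this property is a path or cycle. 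With this accounting, all of the argument above goes through. A one-sided crossing curve occurs exactly in the Klein bottle case, as in Example \ref{ex:klein}, where the over- and underpasses are M\"obius bands. In that case the annulus region ends on two one-sided curves, and $\chi=0$ is preserved.
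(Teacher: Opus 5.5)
Your Euler characteristic count is correct and is essentially the paper's first step. Cutting along circles, whether one- or two-sided, does not change $\chi$, so $\chi(K_0)$ equals the number of disk regions; in particular $\chi(K_0)\ge 0$.

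\textbf{The gap is in ruling out $\chi(K_0)=1$.} The identification ``the disk regions are the degree-$1$ vertices'' holds only when every circle of $\Gamma$ is two-sided. Once you allow one-sided circles, as you must and as your last paragraph does, an end of the chain can be a region whose terminal boundary circle comes from a one-sided curve. That end is capped by a M\"obius band, not a disk. So your accounting produces four cases: the sphere (disks at both ends), the torus or Klein bottle (a cycle), the Klein bottle (M\"obius bands at both ends), and also disk $\cup$ annuli $\cup$ M\"obius band $\cong\mathbb{RP}^2$, with exactly one disk region. Consequently:
\begin{itemize}
\item the sentence ``with this accounting, all of the argument above goes through'' is false;
\item the promised separate treatment of $\chi=1$ never appears;
\item the assertion that one-sided crossing curves occur ``exactly in the Klein bottle case'' is unproved, and it is essentially the content of the proposition.
\end{itemize}

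The ribbon-alternating combinatorics alone cannot close this gap. Disk regions carry no over/under constraint. So a disk capped by a M\"obius band whose core is, say, the overpass preimage of a crossing with another component satisfies every condition in the definition.

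\textbf{The missing input is geometric.} The paper uses the fact that a pseudo-ribbon diagram has no branch points. Since the normal Euler number of a component is the signed count of its branch points, every component has normal Euler number $0$. An embedded $\mathbb{RP}^2\subset S^4$, however, has normal Euler number $\pm 2$ by Whitney--Massey. Alternatively, the projection of an $\mathbb{RP}^2$ component would be a generic immersion into $S^3$ with no triple points. This contradicts Banchoff's theorem that the number of triple points is congruent to $\chi$ mod $2$.

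With either step added, your count gives $\chi(K_0)\in\{0,2\}$ and the proof is complete. At that point the dual-graph analysis is no longer needed: $\chi\ge 0$ plus the exclusion of $\mathbb{RP}^2$ is exactly the paper's argument.
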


\begin{proof}
    Since $K$ is a union of disks and annuli, each component of $K$ must have non-negative Euler characteristic. Moreover, since $K$ admits a pseudo-ribbon diagram, every component of $K$ must have Euler number zero and hence not be a projective plane.
    \end{proof}

\begin{remark}
    Every ribbon-alternating diagram is checkerboard colorable, due to Proposition \ref{prop:cb_shading}.
\end{remark}

\begin{proposition}\label{prop:solid_torus}
        Suppose $E\subset S^3$ is a connected ribbon-alternating diagram of a knotted surface $K\subset S^4$.  Then:
        \begin{enumerate}[label=(\alph*)]
            \item Every component of $S^3\cut E$ is a 3-ball, solid torus, or knot exterior.
            \item Some component of $S^3\cut E$ is a 3-ball if and only if some component of $K$ is a 2-sphere.
            \item If some component of $K$ is a 2-sphere, then the union of the 3-ball components of $S^3\cut E$ is either a solid torus or all of $S^3$.          
        \end{enumerate}
\end{proposition}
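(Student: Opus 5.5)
The plan is to work locally near the crossing circles, where the ribbon-alternating condition constrains how the sheets of $E$ meet, and then pass to the global structure of $S^3\cut E$.

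\textbf{Local model.} Near a crossing circle $\gamma$, $E$ looks like (a circle's worth of) two transverse sheets. So $\nu\gamma$ is cut into four wedge-shaped pieces of the form (sector)$\times S^1$, two light and two dark in the checkerboard coloring of Proposition~\ref{prop:cb_shading}. Each complementary region $R$ of $S^3\cut E$ has boundary a union of pieces of $E$ cut along crossing circles. By Proposition~\ref{prop:spheres_tori}, these pieces are disks and annuli, glued along crossing circles, with a ridge along each crossing circle that $R$ meets.

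\textbf{Key step: $\partial R$ is a sphere or torus.} This uses the alternating condition. Each annulus piece runs from an overpass to an underpass. Following $\partial R$ through a crossing circle $\gamma$, $R$ passes from one sheet to the other. If it enters $\gamma$ along an underpass annulus, then by alternation it leaves along an overpass annulus, or it ends in a disk cap. So $\partial R$ is a chain of annuli joined end to end at crossing circles. The chain either closes up into a torus or is capped at both ends by disks, giving a sphere. The point to check carefully is that $\partial R$ meets each piece of $E$ at most along one side and is a closed surface; this holds because, near each crossing, $R$ occupies a single wedge. A region bounded by a 2-sphere is a 3-ball by Alexander's theorem. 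A region bounded by a torus in $S^3$ is a solid torus or a knot exterior. This gives (a).

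\textbf{Part (b).} A 3-ball region has sphere boundary, which is a chain capped by two disks. Each disk piece of $E$ lies on a component of $K$, and a component containing a disk piece has a cap. Working backwards, a component of $K$ made of annuli and disks with at least one disk is a sphere: since $\chi\ge 0$ and it is a linear chain capped at both ends, it has $\chi=2$. Conversely, a sphere component of $K$ has a disk piece, and the region adjacent to that disk on the appropriate side has boundary containing a cap. Its chain must then also end in a cap, since a chain containing a disk cannot close into a torus. So that region is a 3-ball. This direction is the one where I expect the most bookkeeping, because a sphere component of $K$ contributes caps to regions on both sides.

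\textbf{Part (c).} Let $U$ be the union of the 3-ball regions together with the crossing wedges and sheets between them. Adjacent 3-ball regions meet across sheets or along crossing circles. Since $E$ is connected, $U$ is glued together linearly by the chain structure, and this gluing has two possible outcomes. If every region is a ball, then $U=S^3$. Otherwise $\partial U$ consists of annulus pieces belonging to torus-bounded regions, and one shows that $U$ is a union of balls glued cyclically along disks. The cyclic pattern comes from spinning: the sphere components sweep around a circle. A cyclic gluing of balls along disks is a solid torus.

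The main obstacle is (c): ruling out gluings of the balls that are tree-like or of higher genus. I would handle this by a connectivity argument on the dual graph, whose vertices are the ball regions and whose edges are the shared disks. I would show that every vertex has valence exactly two, using the local wedge model at the crossings on caps. A connected 2-regular graph is a cycle, which gives the solid torus.
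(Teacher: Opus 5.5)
Your arguments for (a) and (b) are essentially the paper's. The boundary of a region is a closed connected surface assembled from the disk and annulus pieces of $K$ cut along the crossing circles, hence a sphere or a torus. A region is a ball exactly when a disk piece appears in its boundary. Three small repairs are needed there:
\begin{itemize}
\item Connectedness of $\partial R$ has to come from connectedness of $E$: a boundary component of $R$, pushed slightly into $R$, would otherwise separate $E$.
\item Your claim that $R$ fills a single wedge at each crossing circle is false; the outer region in the example below fills two wedges at each kink. It is also unnecessary if you work with the boundary of the cut-open region.
\item For the sphere claim in (b), you must cite Proposition~\ref{prop:spheres_tori}, since a chain capped by a disk at one end and a one-sided circle at the other is a projective plane.
\end{itemize}

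The genuine gap is in (c). The dual graph is indeed $2$-regular, and this is exactly the paper's one-line justification. However, two things are missing. First, you never prove the graph is connected; ``the cyclic pattern comes from spinning'' presupposes what Proposition~\ref{prop:knot_a} later extracts from (c). Second, and more seriously, the graph records only gluings along disk pieces. Your $U$, the union in $S^3$, also contains every annulus piece with ball regions on both sides, and every crossing circle at which ball regions fill non-adjacent wedges. So the assertion that $\partial U$ consists of annuli of torus-bounded regions fails, and $2$-regularity does not identify $U$.

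In fact (c), read this way, is false. Spin (without deformation) an alternating one-arc tangle diagram carrying two Reidemeister~I kinks, with crossings over, under, over, under along the arc and both loops bulging away from the axis.
\begin{itemize}
\item The pieces are two disks and three annuli, each annulus meeting one overpass and one underpass. So this is a connected ribbon-alternating diagram of the trivial $2$-knot.
\item Its regions are two $3$-balls (spins of the two faces meeting the axis) and two solid tori (spins of the kink loops).
\item The annulus spun from the edge between the kinks has a ball on each side.
\item The union of the two balls is $S^3$ minus two disjoint, unknotted, unlinked open solid tori. It has two torus boundary components, so it is neither a solid torus nor $S^3$.
\end{itemize}
A second failure mode also exists. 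Put a kink on an edge separating an axis face from a non-axis face, bulging into the axis face. Then a ball region fills opposite wedges at that crossing circle, so the union is not even a manifold. This also happens for spins of nontrivial knots.

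So (c) needs an extra hypothesis excluding such nugatory configurations. Alternatively, it must be read as ``the balls glued only along the disk pieces.'' In that reading your argument reduces (c) to connectivity of the dual graph, which still has to be proved.
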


In fact, in part (c), the union of the 3-ball components of $S^3\cut E$ can be all of $S^3$ only if $K$ is a trivial link of spheres. See Proposition \ref{prop:split}.

\begin{proof}[Proof of Proposition \ref{prop:solid_torus}]
Consider an arbitrary component of $S^3\cut E$.  Its boundary is a closed, connected surface embedded in $S^3$ formed by gluing together some of the disks and/or annuli of $K\cut (\text{crossing circles})$ along crossing circles. Therefore, the boundary is a sphere or a torus. This implies (a).

For (b), note that a component $Z$ of $S^3\cut E$ 
is a 3-ball if and only if 
its boundary includes a disk component of $K\cut (\text{crossing circles})$. Such a component exists exactly when $K$ includes a 2-sphere component. 

Lastly, note that each 3-ball of $S^3\cut E$ is incident to exactly two disk components of $K\cut (\text{crossing circles})$.  This implies (c). 
\end{proof}

In light of Propositions \ref{prop:spheres_tori} and \ref{prop:solid_torus}, it is sensible to consider ribbon-alternating knotted surfaces including 2-spheres, then those comprised only of tori, and finally those with Klein bottles. Before doing so, we establish several results that apply to any ribbon-alternating broken surface diagram $E$ of a knotted surface $K$.  The first tells us how many regions of $S^3\cut E$ can be incident to each crossing circle. The second is a procedure for changing $E$ into another diagram $E'$ of $K$, such that each component of $S^3\cut E'$ is either a 3-ball or a solid torus, rather than a nontrivial knot complement.

\begin{proposition}\label{prop:3_or_4}
    Suppose $\gamma$ is a crossing in a connected, ribbon-alternating diagram $E\subset S^3$ of a knotted surface $K\subset S^4$. If the overpass and underpass at $\gamma$ are M\"obius bands, then $\gamma$ is incident to two distinct components of $S^3\cut E$. 
    If the overpass and underpass at $\gamma$ are annuli, then $\gamma$ is incident to three or four distinct components of $S^3\cut E$.
\end{proposition}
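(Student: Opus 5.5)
The plan is to analyze the local picture along $\gamma$ and then pass from local quadrants to global components using the checkerboard coloring of Proposition \ref{prop:cb_shading}. A small tubular neighborhood $\nu\gamma\subset S^3$ is a solid torus. Each meridional disk of it meets $E$ in a classical crossing: two arcs, one from the overpass sheet and one from the underpass sheet, cutting the disk into four quadrants. Going once around $\gamma$ returns the quadrants to themselves via a monodromy, which must preserve the crossing picture. It preserves each sheet, because the overpass and underpass sheets are distinguished. So it is either the identity or the rotation by $180^\circ$.

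First I will relate this monodromy to the type of the overpass and underpass. The rotation by $180^\circ$ reverses the transverse direction of each arc inside its sheet. It therefore occurs exactly when the neighborhood of $\gamma$ in each sheet is a M\"obius band, and the identity occurs exactly when both are annuli. (The two sheets have the same type, since a single monodromy governs both.)

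\emph{M\"obius case.} The rotation identifies opposite quadrants, so $\nu\gamma\setminus E$ has exactly two components, coming from two adjacent quadrants. Adjacent quadrants are separated by a sheet, so they receive different shades in the checkerboard coloring of Proposition \ref{prop:cb_shading}. Hence they lie in distinct components of $S^3\cut E$, and $\gamma$ is incident to exactly two components.

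\emph{Annulus case.} There are four local quadrants $Q_1,\dots,Q_4$ in cyclic order. By the coloring, $Q_1,Q_3$ are one shade and $Q_2,Q_4$ the other, so the only possible identifications are $Q_1=Q_3$ and $Q_2=Q_4$. It suffices to show these cannot both hold, which leaves three or four components. Suppose both hold. I would join $Q_1$ to $Q_3$ by an arc in their common component $Z$ and close it up through $\nu\gamma$, crossing each sheet once near $\gamma$. This gives a loop $\ell_1$ meeting $E$ transversely in exactly two points, one on the overpass sheet and one on the underpass sheet. Similarly $Z'=Q_2=Q_4$ gives a loop $\ell_2$. The key tool is that a loop in $S^3$ meets any closed immersed surface an even number of times.
\begin{itemize}
\item If the overpass and underpass at $\gamma$ lie on different components of $K$, then $\ell_1$ meets the projection of one component exactly once, which is already a contradiction.
\item If they lie on the same component $K_0$, parity of $\ell_1\cdot E$ gives nothing, and a finer argument is needed.
\end{itemize}

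For the second case I would work in the preimage surface $\tilde K_0$, a sphere or torus by Proposition \ref{prop:spheres_tori}. It is cut by the lifted crossing circles into disks and annuli, with each annulus incident to one overpass and one underpass. In $\tilde K_0$, $\gamma$ has two lifts, $\tilde\gamma_o$ (overpass) and $\tilde\gamma_u$ (underpass). I would use the boundary surfaces $\partial Z$ and $\partial Z'$, each a sphere or torus assembled from these pieces (Proposition \ref{prop:solid_torus}). If $Q_1=Q_3$ and $Q_2=Q_4$, then each of the four half-sheets at $\gamma$ appears in both $\partial Z$ and $\partial Z'$. Consequently $\partial Z\cup\partial Z'$ contains the entire pieces of $\tilde K_0$ on both sides of both $\tilde\gamma_o$ and $\tilde\gamma_u$.

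Following the alternation from piece to piece, each region boundary switches from an overpass to an underpass at every crossing circle it passes. I expect this to force $\partial Z$ to traverse a closed chain of annuli covering all of $\tilde K_0$, and likewise $\partial Z'$. Then $Z$ and $Z'$ would be the two sides of a single embedded torus, contradicting that $\gamma$ is a genuine crossing circle.

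Making this last combinatorial step rigorous is the main obstacle. If the chain-following argument proves awkward, I would fall back on a mod-2 linking or intersection count. I would take an arc in $\tilde K_0$ from $\tilde\gamma_o$ to $\tilde\gamma_u$, push its image slightly into $Z$ and into $Z'$, and compare the resulting loops' intersections with the separating surfaces $\partial Z$ and $\partial Z'$.
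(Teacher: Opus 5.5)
Your M\"obius case and your reduction of the annulus case to ruling out $Q_1=Q_3$ and $Q_2=Q_4$ simultaneously match the paper. Your parity argument also correctly handles the sub-case where the two sheets at $\gamma$ lie on different components of $K$. (You should add why the monodromy cannot be a reflection fixing one arc: $\nu\gamma$ is a solid torus in the orientable $S^3$, so the monodromy is orientation-preserving.)

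The gap is the sub-case where both sheets lie on one component $K_0$. You leave it unproved, and neither suggested route closes it. The chain-following step is only a hope: nothing forces $\partial Z$ to contain all of $\tilde K_0$, and you have not derived a contradiction from $Z$ and $Z'$ being the two sides of an embedded torus. The fallback cannot work either. $\partial Z$ and $\partial Z'$ bound $Z$ and $Z'$, so mod-2 intersections of any loop with them vanish automatically. Moreover, loops pushed into $Z$ or $Z'$ miss $E$, hence both boundaries, entirely. The underlying problem is that your loop $\ell_1$ is closed up by going \emph{around} $\gamma$. The only surface it can detect near $\gamma$ is then $E$ itself, and with both sheets on $K_0$ it meets the projection of $K_0$ twice, which carries no information.

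The missing idea is an auxiliary closed surface that passes \emph{through} $\gamma$, transverse to both sheets.
\begin{enumerate}
\item Since $E$ is connected, $\partial Z'$ is a sphere or torus assembled from disks and annuli of $K\cut(\text{crossing circles})$ glued along parallel circles. Hence the two corner copies of $\gamma$ on $\partial Z'$ (at $Q_2$ and $Q_4$) cobound an annulus in $\partial Z'$.
\item Push the interior of this annulus into $Z'$ and reglue both boundary circles to $\gamma$, entering along the bisectors of $Q_2$ and $Q_4$. Because the monodromy is trivial, this gives an embedded torus $S_2\subset S^3$ with $S_2\cap E=\gamma$, on which $\gamma$ is nonseparating.
\item Take your arc in $Z$ from $Q_1$ to $Q_3$, but close it \emph{straight through} a point of $\gamma$ rather than around it. The resulting loop meets $S_2$ in exactly one transverse point. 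This contradicts the fact that every closed surface separates $S^3$.
\end{enumerate}
This is essentially the paper's proof: the paper builds a second torus $S_1$ from $Q_1=Q_3$ in the same way and notes that two tori in $S^3$ cannot meet along a single essential circle. The argument works regardless of which components the sheets belong to, so your case split on components becomes unnecessary.
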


\begin{proof}
    Proposition \ref{prop:cb_shading} asserts that $S^3\cut E$ is checkerboard colorable, implying that there is are components $T_1$ and $T_2$ of $S^3\cut E$ (shaded light and dark, respectively) that are incident to $\gamma$.   If the overpass and underpass at $\gamma$ are M\"obius bands, then these are the only components of $S^3\cut E$ incident to $\gamma$.
    
    Assume instead that the overpass and underpass at $\gamma$ are annuli. 
    If $T_1$ is the only lightly shaded region of $S^3\cut E$ incident to $\gamma$, then $\partial T_1$ contains two copies of $\gamma$, so there is a properly embedded annulus $A_1\subset T_1$ whose boundary consists of these two copies of $\gamma$. The natural map $T_1\to S^3$ takes $A_1$ to an embedded torus $S_1\subset S^3$ with $S_1\cap E=\gamma$.  Likewise, if $T_2$ is the only darkly shaded region of $S^3\cut E$ incident to $\gamma$, then we obtain an embedded torus $S_2\subset S^3$ with $S_2\cap E=\gamma$. Yet, $\gamma$ is essential on both tori, and it is impossible to embed two tori in $S^3$ such that they intersect along a single essential circle. Therefore, if there is only one lightly shaded region adjacent to $\gamma$, then there must be two distinct darkly shaded regions adjacent to $\gamma$ (and vice versa). Thus, $\gamma$ is incident to at least three components of $S^3\cut E$.
\end{proof}

\begin{procedure}\label{proc:unknot}
Let $E\subset S^3$ be a connected ribbon-alternating diagram of a knotted surface $K\subset S^4$.  If some component $R$ of $S^3\cut E$ is a (nontrivial) knot complement, then $E$ lies entirely in the knotted solid torus $T=S^3\cut R$. We will unknot the solid torus $T$ and obtain a new ribbon-alternating diagram of $K$. To achieve this, view $T$ as the tubular neighborhood of a knot $J$, and consider a crossing change of $J$ along a framed arc between $p,q\in J$ (with the interior of the arc in the complement of $J$). Let $D_p, D_q$ be meridia of $J$ in $T$ centered at $p,q$. Perform ambient isotopy in $S^4$ to move a neighborhood of $D_p$ above the sphere of projection, then isotope it above the 3-sphere as if performing the crossing change, and then push the neighborhood of $D_p$ back to the sphere of projection. We obtain a new projection $E'$ of $K$ contained in the solid torus $T'$ obtained from $T$ by the crossing change. The projection $E'$ agrees with $E$ away from the crossing change and locally within each relevant sheet of $T$, so $S^3\cut T'$ is still a component of $S^3\cut E'$. The other components of $S^3\cut E'$ agree with those of $S^3\cut E$. Repeat iteratively to unknot $T'$ to an unknotted solid torus $T''$, so that $S^3\cut T''$ is a solid torus. Thus, we obtain a ribbon-alternating diagram $E''$ of $K$ with one fewer component of $S^3\cut E''$ being a nontrivial knot complement.

By repeating this procedure for every component of $S^3\cut E$ that is a nontrivial knot complement, we eventually obtain a pseudo-ribbon diagram $\tilde{E}$ for $K$ such that every component of $S^3\cut \tilde{E}$ is a ball or solid torus.
\end{procedure}

The following three propositions will be useful in forthcoming induction arguments.

\begin{proposition}\label{prop:proc_h1}
    Let $\gamma$ be a crossing in a ribbon-alternating diagram $E\subset S^3$, let $R$ be a solid torus component of $S^3\cut E$, let $E'$ be obtained from $E$ via Procedure \ref{proc:unknot}, and let $\gamma'$ and $R'$ respectively be the crossing of $E'$ and component of $S^3\cut E'$ coming from $\gamma$ and $E$.  Then $\gamma$ represents a generator of $H_1(R)$ if and only if $\gamma'$ represents a generator of $H_1(R')$.
\end{proposition}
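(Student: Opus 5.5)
The plan is to exploit that Procedure \ref{proc:unknot} is a sequence of local moves, each a crossing change of the core $J$ of the knotted solid torus $T=S^3\cut R_0$. Here $R_0$ is the knot-exterior component being eliminated. Each move is supported in a small neighborhood of two meridian disks $D_p,D_q$ of $T$ together with a framed arc between them. It suffices to treat a single crossing change, since the full procedure is an iteration of these and the statement is transitive.

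First I dispose of the case where $R$ is the knot-exterior component itself. Then $R$ is not a solid torus, so this case lies outside the hypothesis; I only need to remark that the hypothesis forces $R\ne R_0$.

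Otherwise $R\subset T$. The homeomorphism $T\to T'$ furnished by the crossing change, which is the identity away from neighborhoods of $D_p\cup D_q$ and carries each sheet of $E\cap \nu D_p$ to the corresponding sheet of $E'$, restricts to a homeomorphism $R\to R'$ sending $\gamma$ to $\gamma'$. This is exactly the assertion in Procedure \ref{proc:unknot} that $E'$ agrees with $E$ locally within each relevant sheet of $T$. A homeomorphism induces an isomorphism $H_1(R)\to H_1(R')$ carrying $[\gamma]$ to $[\gamma']$. Since generators go to generators under an isomorphism of $\mathbb{Z}$, the equivalence follows.

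The main obstacle is justifying that the crossing change really gives a homeomorphism of pairs $(T,E)\to(T',E')$, rather than merely an isotopy in $S^4$. The point is that the ambient $S^4$-isotopy lifts the neighborhood $\nu D_p$, a product $D^2\times I$ meeting $E$ in a product pattern, above $S^3$ and sets it back down. The resulting diagram in $\nu D_p$ is therefore the same product pattern, reglued to the rest of $T$ by the same boundary identification. Hence $T'$ is obtained from the two pieces $T\setminus \nu D_p$ and $\nu D_p$ by identical intrinsic gluings, and intrinsically nothing changes; only the embedding of $T$ in $S^3$ changes. I would spell this out by writing $T=(T\setminus\mathring\nu D_p)\cup \nu D_p$ and checking that both pieces, with their intersections with $E$, are preserved. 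I expect this to be routine once written carefully.
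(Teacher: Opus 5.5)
Your argument is correct for the statement as written, but it does not follow the paper's route.

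\textbf{What the paper does.} The paper phrases its proof for the component $R$ whose complement $T=S^3\cut R$ is the solid torus that Procedure~\ref{proc:unknot} re-embeds. That is the procedure's own knot-exterior component, which you rule out by hypothesis. It detects generators via the criterion $i(\gamma,\lambda)=\pm1$ for a curve $\lambda\subset\partial T=\partial R$ generating $H_1(T)$. It then transports $\gamma$ and $\lambda$ through the homeomorphism $T\to T'$.

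\textbf{What you do.} You restrict that same homeomorphism to $R\subset T$, so no longitude or intersection count is needed. The only input the two arguments share is that the crossing change gives a homeomorphism of pairs $(T,E)\to(T',E')$. Your justification of this is adequate. It is cleanest if you note that the 4D isotopy has end map of the form $(x,w)\mapsto(\phi(x),w)$ near $K$: lift by a bump function of the $S^3$-coordinate, move by an isotopy of $S^3$ applied only at the lifted height, then lower again.

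\textbf{What each route buys.} The paper's route targets the one component that changes intrinsically, presumably the case of interest for tracking exceptional components. There, however, it runs into a framing issue that your route avoids.
\begin{itemize}
\item The criterion needs $\lambda$ to be the preferred longitude of $T$.
\item The crossing-change homeomorphism carries the blackboard framing of the core along, while the writhe changes by $\pm2$.
\item So it sends the preferred longitude of $T$ to the preferred longitude of $T'$ plus $\pm2$ meridians.
\item Writing $\gamma=a\lambda+b\mu$, one gets $[\gamma']=b\pm2a$, so being a generator is not preserved in general when $a\neq0$.
\item My own check suggests $a\neq0$ does occur, e.g.\ for a turned-spin diagram re-embedded in a knotted solid torus.
\end{itemize}
Your use of the solid-torus hypothesis to exclude that component is therefore substantive, not a technicality.
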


\begin{proof}
 The step in Procedure \ref{proc:unknot} which changes $R$ to $R'$ takes the solid torus $T=S^3\cut R$ to a solid torus $T'=S^3\cut R'$. Take a curve $\lambda\subset\partial T=\partial R$ that represents a generator of $H_1(T)$, and let $\lambda'$ be the corresponding curve on $\partial T'=\partial R'$.  Letting $i(\cdot,\cdot)$ denote algebraic intersection number, observe that $\gamma$ represents a generator of $H_1(R)$ if and only if $i(\gamma,\lambda)=1$, and $\gamma'$ represents a generator of $H_1(R')$ if and only if $i(\gamma',\lambda')=1$.  Finally, note that the change $E\to E'$ gives a diffeomorphism $T\to T'$ that sends $\gamma\to \gamma'$ and $\lambda\to \lambda'$.  The result follows.
\end{proof}

\begin{proposition}\label{prop:smoothing}
    Suppose $E\subset S^3$ is a connected ribbon-alternating diagram of a knotted surface $K\subset S^4$. Let $\gamma$ be a crossing circle of $E$ with the property that each incident component of $S^3\cut E$ is a solid torus, and consider the two smoothings $E\to E_1$ and $E\to E_2$ of $\gamma$.  Then both $E_i$ are ribbon-alternating. Moreover, some $E_i$ is disconnected (and in fact some component of $S^3\cut E_i$ is a thickened torus) if and only if $\gamma$ is incident to exactly three distinct components of $S^3\cut E$ (in which case the overpass and underpass at $\gamma$ are annuli).
\end{proposition}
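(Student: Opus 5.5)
The argument is local near $\gamma$, with the ribbon-alternating condition checked on the pieces of $E_i$ cut along their crossing circles. The smoothing $E\to E_i$ only changes $E$ in a neighborhood $\nu\gamma$. Inside it, the overpass, the underpass, and the four incident sheets are replaced by two sheets joined in one of the two ways. Every other crossing circle of $E$ survives in $E_i$, with the same over/under information.

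\textbf{Step 1: $E_i$ is ribbon-alternating.} Let $P,Q$ be the pieces of $E\cut(\text{crossing circles})$ incident to $\gamma$, where $P$ contains the overpass and $Q$ the underpass. By definition each of $P,Q$ is an annulus (or M\"obius band) running from an overpass to an underpass. So $P$ is an annulus with one boundary circle at $\gamma$ as overpass and its other end at some crossing $\gamma_1$ as underpass. Likewise $Q$ is an annulus with its $\gamma$-end as underpass and its other end at some $\gamma_2$ as overpass. The smoothing glues the $\gamma$-end of $P$ to the $\gamma$-end of $Q$ (or, for the other smoothing, to the corresponding ends on the other sides). In either case the result is an annulus ending at $\gamma_1$ (underpass) and $\gamma_2$ (overpass). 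Hence it is again incident to exactly one overpass and one underpass, and all other pieces are unchanged. The degenerate cases need separate handling: $\gamma_1=\gamma$ (M\"obius overpass), or gluings that produce a torus with no crossings. These are where the hypothesis that the incident regions are solid tori enters. A torus component with no crossings would bound regions that are not solid tori meeting $\gamma$ appropriately, and it would force the thickened-torus situation treated in Step 2. I would check that in the disk case the disk pieces simply merge into disks. One small point remains: each component must still have a crossing. If a component loses its last crossing, it is exactly the disconnection case below, where it becomes a crossingless torus. I would treat that as the ``disconnected'' conclusion rather than a failure of alternation.

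\textbf{Step 2: disconnection versus three incident regions.} By Proposition \ref{prop:3_or_4}, $\gamma$ meets two, three, or four regions. Smoothing $\gamma$ merges, in $E_i$, the two regions of one shade across $\nu\gamma$, while the two regions of the other shade become separated there.

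\emph{Four regions.} Both same-shade pairs are distinct. Merging two distinct solid tori along a disk-like neighborhood keeps the diagram connected, since the sheets remain joined through the other side. So neither $E_i$ is disconnected.

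\emph{Three regions.} Say the light shade is a single solid torus $T_1$ meeting $\gamma$ twice, as in the proof of Proposition \ref{prop:3_or_4}. The smoothing that cuts $T_1$ across $\gamma$ turns the annulus $A_1\subset T_1$ into the new region structure. The region of $E_i$ coming from $T_1\cup\nu\gamma$ then has two boundary tori, so it is a thickened torus, and $E_i$ is disconnected.

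\emph{Two regions.} The overpass and underpass are M\"obius bands, and one checks directly that both smoothings stay connected.

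\textbf{Main obstacle.} The delicate step is the converse in Step 2: if $E_i$ is disconnected, then exactly three regions meet $\gamma$. My plan is to argue that a disconnection of $E_i$ produces a region of $S^3\cut E_i$ with two boundary components. That region arises from a region of $S^3\cut E$ touching $\gamma$ along both sides of $\gamma$, which forces a shade with a single region. Proposition \ref{prop:3_or_4} then rules out both shades being single in the annulus case, leaving exactly three regions. The solid-torus hypothesis is used to see that this two-boundary region is $T^2\times I$: it is the solid torus $T_1$ cut along the essential annulus $A_1$, with the cut identified.
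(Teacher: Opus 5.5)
Your plan is the paper's plan: split according to whether $\gamma$ meets two, three, or four regions (Proposition \ref{prop:3_or_4}), and detect disconnection of $E_i$ through a region of $S^3\cut E_i$ with disconnected boundary. However, the step that carries the ``if and only if'' is missing, because your local model is wrong.

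Smoothing $\gamma$ does not merge two same-shade regions ``along a disk-like neighborhood.'' In the annulus case it attaches a thickened annulus $(S^1\times I)\times I$ along $(S^1\times S^0)\times I$, i.e.\ along two copies of $\gamma$, one on the boundary of each merged region. In the M\"obius case it attaches a thickened M\"obius band along an annulus. For such a round handle, connectivity of the new boundary is not automatic: if both attaching circles bounded disks in their boundary tori, merging two \emph{distinct} regions would already disconnect the boundary.

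What the paper uses is that each copy of $\gamma$ is nonseparating on the boundary torus of its solid torus. This holds because every piece of $E\cut(\text{crossing circles})$ on a torus boundary is an annulus (by Euler characteristic), so no crossing circle there bounds a disk. That is where the solid-torus hypothesis really enters, not in the degenerate cases of your Step 1.

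Your four-region sentence (``the sheets remain joined through the other side'') does not supply this fact. Your ``main obstacle'' argument silently assumes it when you assert that a region with disconnected boundary must come from a single region meeting $\gamma$ on both sides. So neither the four-region case nor the ``only if'' direction is actually established. The two-region case likewise needs the explicit check that a solid torus with a thickened M\"obius band glued along an annulus is again a solid torus; ``one checks directly'' does not do this.

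There are also some smaller points.

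\emph{The $T^2\times I$ claim.} Your description of the disconnecting region (``$T_1$ cut along $A_1$, with the cut identified'') is not correct. The region is $T_1$ with a thickened annulus attached along two parallel essential copies of $\gamma$, and it is $T^2\times I$ only when $\gamma$ is primitive in $H_1(T_1)$. Otherwise it is a cable space. For example, take an alternating unknot diagram with $n\ge 2$ identical inward kinks placed symmetrically about its center $x$, and spin it while turning by $2\pi/n$ about $x$. The kink crossings form one crossing circle meeting three solid tori, the central one twice. The disconnecting smoothing yields $V\setminus\mathring W$, where $W$ is a neighborhood of an $(n,1)$-cable of the core of $V$. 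The paper's proof makes the same ``thickened torus'' assertion. What you can and should prove is that this region has two torus boundary components, which is all that disconnection requires.

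\emph{Step 1.} Your gluing of an over-end to an under-end is fine. But your claim that a component losing its last crossing ``is exactly the disconnection case'' is false in the M\"obius case. The one-crossing Klein bottle diagram of Example \ref{ex:klein} smooths to a connected crossingless torus. This is a degenerate case that the paper's ``simple observation'' also passes over.
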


\begin{proof}
The first assertion is a simple observation.  For the second, note that smoothing $\gamma$ can change the diffeomorphism type of at most two components of $S^3\cut E$, and that $E_i$ is connected if and only if each component of $S^3\cut E$ has connected boundary.
We now describe what happens in each case.

If the overpass and underpass at $\gamma$ are M\"obius bands, then smoothing $\gamma$ changes one of the incident solid tori of $S^3\cut E$ by attaching a thickened (in $S^3$) M\"obius band $(S^1\ttimes I)\ttimes I$ along the annulus $(S^1\ttimes S_0\ttimes I)$, which yields another solid torus. See Figure \ref{fig:Klein_bottle_spun}, right.
In this case, we have shown that both $E_i$ are connected.

Assume instead from now on that the overpass and underpass at $\gamma$ are annuli. By Proposition \ref{prop:3_or_4}, at least one smoothing of $\gamma$ affects two distinct solid tori, in fact changing them by attaching a thickened annulus $(S^1\times I)\times I$ along a pair of thickened circles $(S^1\times S^0)\times I$, one on the boundary of each solid torus.  Because each attaching circle is nonseparating on its torus, the new component of $S^3\cut E_i$ has connected boundary, so $E_i$ is connected.  

If the other smoothing of $\gamma$ also affects two solid tori, it also yields a connected diagram. Assume instead that this
other smoothing of $\gamma$ affects only one solid torus $T$. Then it does so by attaching a thickened annulus along a parallel pair of essential curves on $\partial T$.  This changes $T$ to a thickened torus and thus yields a disconnected diagram $E_i$.

In summary, the first smoothing of $\gamma$ always yields a connected diagram, and the second one yields a disconnected diagram if and only if $\gamma$ is incident to exactly three solid tori.
\end{proof}

\begin{proposition}\label{prop:glue_gen}
For $i=1,2$, let $T_i\subset S^3$ be solid tori. Let $t_i$ be a generator of $H_1(T_i)$, let $\gamma_i\subset\partial T_i$ be circles with $[\gamma_i]=a_it_i$ for some positive integers $a_i$ with $a_1\geq 2$, and let $\gamma\subset \partial T_1\setminus\gamma_1$ be parallel to $\gamma_1$.  Construct a solid $M$ by gluing a thickened annulus $(S^1\times I)\times I$ to $T_1\sqcup T_2$ along the thickened circles $(S^1\times S^0)\times I=(\gamma_1\sqcup\gamma_2)\times I$, with everything embedded in $S^3$.  
Then $M$ is a knot complement, and $[\gamma]$ does not represent a generator of $H_1(M)$.
\end{proposition}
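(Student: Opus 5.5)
The plan is to compute $\pi_1$ and $H_1$ of the complement of $M$ directly, and then to read off the class of $\gamma$ in $H_1(M)$ using Mayer--Vietoris.

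\emph{Step 1: the complement of $M$ is a solid torus.} Write $M=T_1\cup T_2\cup N$, where $N=(S^1\times I)\times I$ is the thickened annulus. Since $T_1\sqcup T_2$ and $M$ are embedded in $S^3$, the exteriors $E_i=S^3\setminus\mathring T_i$ are knot exteriors. The exterior $W=S^3\setminus\mathring M$ is obtained from the exterior of $T_1\sqcup T_2$ by removing the thickened annulus $N$.

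I would argue more concretely by looking at $\partial M$. The region $\partial T_1\setminus\nu\gamma_1$ is an annulus, and so is $\partial T_2\setminus\nu\gamma_2$. The frontier of $N$ consists of two annuli $S^1\times\{0,1\}\times I$. Gluing these gives $\partial M$ as a single torus, so $\partial M$ is connected and $M$ is a genus-one-boundary compact submanifold of $S^3$. Its complement $W$ therefore also has torus boundary.

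By Alexander duality, $H_1(W)\cong H^1(M)$. Mayer--Vietoris for $M=(T_1\sqcup T_2)\cup N$, glued along the two thickened circles, gives
\[
H_1(\gamma_1)\oplus H_1(\gamma_2)\to H_1(T_1)\oplus H_1(T_2)\oplus H_1(N)\to H_1(M)\to \tilde H_0\text{-terms}.
\]
The first map sends the generators to $(a_1t_1,0,c)$ and $(0,a_2t_2,c)$, where $c$ generates $H_1(N)$. Hence
\[
H_1(M)\cong\langle t_1,t_2,c\mid a_1t_1=c,\ a_2t_2=c\rangle\oplus\mathbb Z,
\]
where the extra $\mathbb Z$ comes from the loop through $N$ created by the two attaching regions joining distinct components. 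So $H_1(M)\cong\mathbb Z\oplus\langle t_1,t_2\mid a_1t_1=a_2t_2\rangle$, which has rank $2$.

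This already clashes with $M$ being a knot complement, which has $H_1\cong\mathbb Z$. So I expect the intended reading to be that $T_1,T_2$ are the \emph{complementary} pieces, or that $M$ denotes the region whose complement is built this way. I would reinterpret accordingly: let $R=S^3\setminus\mathring M$, with $\partial R=\partial M$ a single torus. I would show $R$ is a solid torus via a Seifert-fibered / Dehn-surgery type argument: $R$ is the complement of a band sum, and a cabling region appears because $a_1\ge2$. I would then conclude that $M$ is the exterior of a knot, namely a cable when $a_1\ge 2$.

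\emph{Step 2: the class of $\gamma$.} Since $\gamma$ is parallel to $\gamma_1$ in $\partial T_1$, we have $[\gamma]=a_1t_1$ in $H_1(M)$. Because $a_1\ge2$, the class $a_1t_1$ is divisible by $a_1$ in the free part, so it cannot be a generator. Concretely, in the quotient group $H_1(M)$, the element $a_1t_1$ maps to $a_1$ times the image of $t_1$, and divisibility by $a_1\geq2$ prevents $[\gamma]$ from generating $H_1(M)\cong\mathbb Z$.

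\emph{Main obstacle.} The hardest step is identifying $M$ (or its complement) as a knot complement: one has to reconcile the homology count in Step 1 and show the complement is a solid torus. I would try to exhibit an explicit meridian disk, obtained by banding meridian disks of the $T_i$ across $N$. Once that is in place, the divisibility argument in Step 2 is routine.
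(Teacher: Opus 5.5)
Your proposal goes wrong at the Mayer--Vietoris step, and everything after that step depends on the error. The extra $\mathbb{Z}$ you add would have to come from the kernel of $\tilde H_0((\gamma_1\sqcup\gamma_2)\times I)\to\tilde H_0(T_1\sqcup T_2)\oplus\tilde H_0(N)$. The two attaching annuli lie in \emph{different} components $T_1$ and $T_2$, so this map is an isomorphism onto $\tilde H_0(T_1\sqcup T_2)\cong\mathbb{Z}$, and the kernel is zero. Joining distinct components is exactly the case in which no new loop appears, since the gluing pattern $T_1$--$N$--$T_2$ is a tree. The correct computation is $H_1(M)\cong\langle t_1,t_2,c\mid c=a_1t_1,\ c=\pm a_2t_2\rangle\cong\mathbb{Z}\oplus\mathbb{Z}/\gcd(a_1,a_2)$, which has rank $1$, so there is no clash with the statement. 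Your reinterpretation is therefore unnecessary. It also leads you to a claim that is false in general: the complement $R=S^3\setminus\mathring{M}$ need not be a solid torus. For example, take $T_1$ knotted with $a_1=2$, and let $T_2$ be a thin neighborhood of a push-off of $\gamma_1$, with $\gamma_2$ the longitude facing $T_1$ (so $a_2=1$). Then $M$ is isotopic to $T_1$ and $R$ is a nontrivial knot exterior, so no meridian-disk or Seifert-fibered argument can work. The proposal as written is also internally inconsistent. Step 2 uses $H_1(M)\cong\mathbb{Z}$, which Step 1 asserts is false, and under your reinterpretation $T_1\not\subset R$, so Step 2 would not even apply.

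The repair is short and needs no disk construction. Since $\partial M$ is a single torus in $S^3$, Alexander's theorem gives a solid torus on at least one side. Hence $M$ is either a solid torus or a knot exterior, which is the sense in which the first claim holds. In either case $H_1(M)\cong\mathbb{Z}$; this also follows from $H_1(M)\oplus H_1(R)\cong H_1(\partial M)\cong\mathbb{Z}^2$ together with Alexander duality. Your Step 2 then finishes the proof: $[\gamma]=a_1\,\iota_*(t_1)$ for the inclusion $\iota\colon T_1\hookrightarrow M$, so $[\gamma]\in a_1\mathbb{Z}$ and cannot be $\pm1$. Comparing with the corrected computation even shows $\gcd(a_1,a_2)=1$ and $[\gamma]=\pm a_1a_2$ times a generator.

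This is a different route from the paper's. The paper cuts $M$ along the co-core annulus of $N$, asserts that this annulus is $\partial$-parallel, and deduces that $M$ deformation retracts onto $T_1$. The homology argument avoids that assertion, which is a genuine advantage. Take $T_1$ a standard solid torus and $T_2$ a shrunken copy of the complementary solid torus. Let $\gamma_1\subset\partial T_1$ be a trefoil curve with $a_1=2$, and $\gamma_2\subset\partial T_2$ its parallel copy (so $a_2=3$), joined by a product annulus. Then $M$ is the trefoil exterior, the co-core is the essential cabling annulus, and $t_1$ maps to $3$ times a generator. Even so, $[\gamma]$ is $6$ times a generator, exactly as the homology count predicts.
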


\begin{proof}
    The thickened annulus and $T_i$ are embedded in $S^3$, and $M$ has torus boundary; therefore, $M$ is a knot complement. Consider an annulus $A=(S^1\times\{\text{point}\})\times I$ which is a co-core of the thickened annulus $(S^1\times I)\times I$.  It is properly embedded in $M$, hence is boundary-parallel in $M$. Yet, it is not $\partial$-parallel through $T_1$ because we assumed that $a_1\geq 2$, i.e., that $\gamma$ is not primitive in $T_1$.   
    It follows that $M$ deformation retracts to $T_1$.  Since $\gamma$ was not primitive in $T_1$, it is not primitive in $M$ either.    
\end{proof}

Turning our focus now to ribbon-alternating knotted surfaces that include 2-spheres, our next result improves upon Proposition \ref{prop:solid_torus} to ensure that, except in the most trivial cases, the 3-ball complementary regions of a ribbon-alternating broken surface diagram always have a solid torus as their union.

\begin{proposition}\label{prop:split}
    Suppose $E\subset S^3$ is a connected ribbon-alternating diagram of a knotted surface $K\subset S^4$. If the union of the 3-ball components of $S^3\cut E$ is all of $S^3$, then $E$ is an $n$-crossing diagram of the trivial link $K$ of $n+1$ 2-spheres. 
\end{proposition}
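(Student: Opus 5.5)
Since every complementary region of $E$ is a 3-ball, the plan is to show that $E$ is a linear chain of nested spheres and then undo the crossings one at a time.

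First, the combinatorics. By the proof of Proposition \ref{prop:solid_torus}, a component of $S^3\cut E$ is a 3-ball exactly when its boundary sphere contains a disk of $K\cut(\text{crossing circles})$. Each 3-ball region is incident to exactly two such disks. Since every region is a ball, every component of $K$ is a sphere; otherwise a torus or Klein bottle component would bound a non-ball region. Each sphere component is then cut by its crossing circles into two disks and some annuli. Each annulus joins an overpass to an underpass, so the crossing circles on a sphere component alternate between over and under.

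Next, the adjacency graph. Let $\Gamma$ have one vertex for each region (each a ball) and one edge for each crossing circle. I would argue that $\Gamma$ is a path. Every region is a ball bounded by a sphere made of two disks and the annuli between them. A crossing circle $\gamma$ lies on the boundaries of the regions around it, and by Proposition \ref{prop:3_or_4} it meets three or four of them, since the over- and underpasses are annuli or disks here, never M\"obius bands. Counting with Euler characteristic should then pin down the structure. Because $S^3$ is the union of balls glued along disks and annuli, $\chi(S^3)=0$ gives a relation between the number of regions and the number of crossings. Combined with connectedness of $E$, this should force $n$ crossings, $n+1$ sphere components, and each crossing circle on exactly two sphere components. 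Concretely, I expect the picture to be $n+1$ spheres, each meeting its neighbors in the chain along one crossing circle, as in the spin of a chain of $n$ nugatory-like crossings (cf.\ the left tangle in Figure \ref{fig:Klein_bottle_spun}).

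Then the triviality. Take an extremal sphere $S$ in the chain, one incident to only one crossing circle $\gamma$. Because it is extremal, one of the two disks of $S$ cut along $\gamma$ bounds, together with a disk on the other sheet, a ball region that contains no other part of $E$. The crossing $\gamma$ is then nugatory: we can push $S$ entirely above or below the projection, as in Example \ref{ex:klein} and Proposition \ref{prop:2_knot_one_side}. This removes $\gamma$ and splits $S$ off as an unknotted sphere. The result is a connected ribbon-alternating diagram whose regions are still all balls, with $n-1$ crossings and $n$ spheres. Induction on $n$, starting from the crossingless single unknotted sphere, then shows that $K$ is the trivial link of $n+1$ spheres.

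The hard step is the middle one: showing the crossings are arranged as a chain (a tree with each sphere meeting the others in at most two circles and alternating correctly) rather than some more complicated pattern in which an extremal sphere need not exist. If the Euler characteristic count is not enough, I would fall back on a different argument. Each region is a ball whose boundary contains exactly two disks, so the pattern of crossing circles on the boundary of each region is a family of parallel circles. Following these along the sheets should give a linear ordering, by nesting, of the crossing circles on each sphere, and from that ordering an outermost crossing can be extracted directly.
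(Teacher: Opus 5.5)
Your endgame matches the paper's: find a sphere that meets the rest of $E$ in a single circle, push it above or below everything to split it off as an unknotted sphere, and induct. The gap is the structural step that produces such a sphere. It is missing, and neither of your proposed routes can supply it.

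What is needed is the following:
\begin{itemize}
\item every component of $K$ is a sphere;
\item each sphere $S$ has embedded projection $p(S)$;
\item any two projections meet in at most one circle;
\item the graph $G$, with a vertex for each component of $K$ and an edge for each crossing circle, is a tree.
\end{itemize}
Then $n$ edges force $n+1$ vertices, and a leaf is the sphere you want.

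The Euler characteristic count cannot give this; it is vacuous here. Stratify $S^3$ into three kinds of pieces: the open regions (open balls, contributing $-1$ each), the open pieces of $E$ cut along crossing circles (open disks $+1$, open annuli $0$), and the crossing circles ($0$). Then $\chi(S^3)=0$ says only that the number of regions equals the number of disk pieces, i.e.\ twice the number of spheres. You already know this from counting incidences between ball regions and disk pieces, and the number of crossings never appears. Also, your $\Gamma$ is not a graph, since a crossing circle generally touches three or four regions (Proposition~\ref{prop:3_or_4}).

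The fallback does not help either. The linear order of crossing circles on each sphere is automatic from the ribbon-alternating condition. But a circle that is outermost on one sphere can be a self-crossing, or can lie on a sheet whose component crosses $E$ elsewhere (e.g.\ the middle circle of a chain of four spheres), so it need not be removable.

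The paper gets the needed facts topologically, from the absence of triple points. A self-crossing of some $p(S)$, a second circle in $p(S)\cap p(S')$, or a cycle in $G$ would each create a complementary piece with positive-genus boundary. The remaining sheets cannot cut such a piece into balls without triple points.

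Your expected picture is also wrong, and this affects both your target statement and your induction. The structure is a tree, not a chain. For example:
\begin{itemize}
\item Let $S_0$ be a long thin capped cylinder along the $x$-axis from $x=0$ to $x=10$.
\item Let $S_1,S_2,S_3$ be the round spheres of radii $1,3,5$ centered at the origin, so that $S_i$ meets $S_0$ in one circle near $x=2i-1$.
\item Take $S_0$ over, under, over at these three circles.
\end{itemize}
This is a connected ribbon-alternating diagram whose eight complementary regions are all balls. Here $G$ is the star $K_{1,3}$, and $S_0$ meets the others in three circles.

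In the same example, deleting the leaf $S_2$ leaves an annulus on $S_0$ bounded by two overpasses. So the reduced diagram is not ribbon-alternating, and you cannot re-invoke the hypothesis after removing an arbitrary extremal sphere. The paper avoids this by proving the tree structure once and inducting on $G$ directly.

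Finally, your one-line exclusion of tori and Klein bottles is not valid as stated. Such a component bounds no region of $S^3\cut E$. Its annular pieces sit in the boundary spheres of ball regions, in chains disk--annulus--$\cdots$--disk alongside pieces of other components, so a genuine argument is required. The paper's argument plays a compressing disk for the projected torus against the fact that every crossing circle bounds a disk in each incident ball region.
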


Thus, if $K$ is a nontrivial link containing a 2-sphere and $E$ is a connected ribbon-alternating diagram of $K$, then the union of the 3-ball components of $S^3\cut E$ is a solid torus.

\begin{proof}[Proof of Proposition \ref{prop:split}]
By assumption, each component of $S^3\cut E$ is a 3-ball.  Note that it follows that each crossing circle of $E$ bounds a disk in each incident region of $S^3\cut E$.

First, we claim that no component of $K$ is a torus or Klein bottle.  Suppose instead that $K$ has a torus or Klein bottle component $T$, and let $p(T)$ be the projection of $T$ in $E$. If $p(T)$ has a self-intersection, then at least one of the smoothings of $E$ along this  circle yields a new ribbon-alternating diagram of a link with a torus or Klein bottle component, such that the new diagram cuts $S^3$ into 3-balls (note here that each crossing circle is incident to at least three distinct components of $S^3\cut E$). We may thus assume that $p(T)$ has no self-intersections. It follows that $p(T)$ is an embedded torus in $S^3$, and therefore that there is a compressing disk $D$ on one side of $p(T)$.  

Let $R$ be the component of $S^3\cut p(T)$ that contains $D$.  Since $E$ cuts $R$ into 3-balls, the crossing circles of $E$ on $p(T)$ must be parallel to $\partial D$. Yet, these crossing circles do not bound disks on the other side of $p(T)$. This contradicts our earlier observation that each crossing circle of $E$ bounds a disk in each incident region of $S^3\cut E$. We thus conclude that $K$ has no torus or Klein bottle components.

Now let $S$ be a 2-sphere component of $K$, and suppose that $p(S)\subset E$ has some number of self-intersections. Then $S^3\cut p(S)$ has a component $X$ whose boundary has positive genus. Each essential curve in $\partial X$ that is nullhomologous in $X$ intersects a self-intersection circle of $p(S)$. Therefore, $X$ cannot be cut into $3$-balls in $S^3\cut E$ without crossings between self-intersection circles, i.e., triple points, which are not present in $E$. We conclude that $p(S)$ is an embedded 2-sphere.

Suppose there are two 2-sphere components $S,S'$ of $K$ such that $p(S)$ and $p(S')$ intersect in more than one circle. Then again, some component of $S^3\cut (p(S)\cup p(S)')$ has boundary of positive genus and cannot be cut into 3-balls by cutting along surfaces that do not cross the intersection circles of $p(S),p(S')$, yielding a similar contradiction. Thus, any two components of $K$ have projections in $E$ that intersect in at most one circle.

Let $G$ be a graph with vertices corresponding to the components of $K$, with an edge two vertices $v_i,v_j$ if there is an intersection between the corresponding projections in $E$. 
Suppose $G$ has a cycle $(v_1,v_2,\ldots, v_n, v_1)$. Let $S_1,\ldots, S_n$ be the components in $K$ corresponding to $v_1,\ldots, v_n$. Then finally, a component of $S^3\cut (p(S_1)\cup \cdots p(S_n))$ has boundary of positive genus and cannot be cut into 3-balls without introducing triple points. 
We conclude $G$ is a tree.

Take $K$ to lie close to the 3-sphere of projection. 
Let $v_0$ be a leaf in $G$. The corresponding component $S_0$ of $K$ has exactly one intersection circle in its projection. Let $v_1$ be the unique vertex adjacent to $v_0$, and let $S_1$ be the component of $K$ corresponding to $v_1$. If $S_0$ lies below (with respect to the projection direction) $S_1$, then isotope $S_0$ downward in $S^4$ to lie below all of the other components. If $S_0$ lies above $S_1$, then isotope $S_0$ upward to lie above the other components of $K$. In either case, we conclude that $S_0$ a split, unknotted component. Now induct on $G\setminus\{v_0\}$ to conclude that every component of $K$ is an unknotted 2-sphere split from the other components, and hence $K$ is an unlink of 2-spheres.
\end{proof}

We are now ready to characterize ribbon-alternating knotted surfaces that include 2-spheres.

\begin{proposition}\label{prop:knot_a}
Suppose $K\subset S^4$ is a nontrivial knotted surface that has a connected ribbon-alternating diagram, and let $E$ be a connected ribbon-alternating diagram of $K$ such that each component of $S^3\cut E$ is a 3-ball or solid torus (one may obtain $E$ from any ribbon-alternating diagram of $K$ via Procedure \ref{proc:unknot}). 
If some component of $K$ is a 2-sphere, then 

\begin{enumerate}[label=(\alph*)]
    \item\label{prop:spheres_a} the union of the 3-ball components of $S^3\cut E$ is an unknotted solid torus $T$, say with core $\tau$,
    \item\label{prop:spheres_b} the crossing circles on $\partial T$ are meridia on $\partial T$, and
    \item\label{prop:spheres_c} taking $\gamma$ to be the core of any remaining component $R$ of $S^3\cut E$, $\gamma$ is parallel to the crossing circles on $\partial R$.
\end{enumerate}

Therefore, $E$ is the result of spinning an alternating classical tangle (without deformation).
\end{proposition}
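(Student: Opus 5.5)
We prove (a)--(c) in order and then read off the spinning description. For (a), Proposition \ref{prop:solid_torus}(b),(c) together with Proposition \ref{prop:split} (since $K$ is nontrivial) show that the union $T$ of the 3-ball components of $S^3\cut E$ is a solid torus. Its complement $S^3\setminus \mathring T$ is a union of the remaining regions glued along crossing circles. By hypothesis each of these regions is a solid torus, and none is a 3-ball by definition of $T$. To see that $T$ is unknotted, we argue that some remaining region $R$ contains a compressing disk for $\partial T$, or that $S^3\setminus \mathring T$ deformation retracts onto a solid torus region. Here we would use Proposition \ref{prop:glue_gen}: if regions were glued with a nonprimitive class, the union would be a nontrivial knot complement, and we would compare this with the structure near $\partial T$. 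If this fails, we expect to apply Procedure \ref{proc:unknot} once more, now to $T$ viewed as a knotted solid torus. The procedure preserves the hypotheses and, by Proposition \ref{prop:proc_h1}, preserves primitivity of the crossing circles.

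For (b), the disk components of $K\cut(\text{crossing circles})$ are exactly the faces of the 3-ball regions. Each 3-ball region is bounded by two such disks meeting along crossing circles, so it is a product $D^2\times I$ whose boundary circles are crossing circles. Consecutive balls in $T$ are glued along disks bounded by crossing circles, and $T$ is the cyclic union of these balls. Hence the crossing circles on $\partial T$ bound meridian disks of $T$, which is (b).

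For (c), let $R$ be a solid torus region. Each annulus face of $\partial R$ is incident to an overpass and an underpass, so all crossing circles on $\partial R$ are parallel essential curves. We need them to be longitudinal, meaning they represent a generator of $H_1(R)$. We argue by induction on the number of crossings, using Proposition \ref{prop:smoothing}. Choose a crossing circle $\gamma$ between two solid-torus regions and smooth it so the diagram stays connected and ribbon-alternating. Then apply the inductive hypothesis to the smaller diagram, using Procedure \ref{proc:unknot} and Proposition \ref{prop:proc_h1} to restore the 3-ball/solid-torus condition without changing primitivity. Proposition \ref{prop:glue_gen} rules out the nonprimitive case: if the crossing circles on $R$ were multiples $a\ge2$ of a generator, then smoothing would produce a knot-complement region in which the parallel circle is nonprimitive, contradicting the inductive statement. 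The base case consists of diagrams with few regions, such as the spun trefoil or Hopf-type examples, which can be checked directly.

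\textbf{Conclusion.} Given (a)--(c), choose the spinning axis to be a great circle linking $\tau$, so that $S^3\setminus\tau$ fibers by half-planes meeting $T$ in meridian disks. By (b) and (c), every crossing circle and every region is invariant under the rotation, so $E$ is the spin of its intersection with one half-ball, which is an alternating tangle diagram. The main obstacle is (c), in particular keeping the induction within the class of diagrams whose regions are balls and solid tori.
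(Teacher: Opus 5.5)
Your derivation that $T$ is a solid torus, your argument for (b), and your inductive step for (c) are essentially the paper's. That inductive step smooths a crossing whose incident regions are all solid tori, restores the ball/solid-torus hypothesis with Procedure \ref{proc:unknot} and Proposition \ref{prop:proc_h1}, and applies the contrapositive of Proposition \ref{prop:glue_gen}. There are, however, two genuine gaps.

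\emph{The base case of (c).} Proposition \ref{prop:smoothing} requires every incident region to be a solid torus, so your induction can only smooth crossings off $\partial T$. It therefore bottoms out at diagrams in which every crossing circle lies on $\partial T$. These are not ``diagrams with few regions'' to be checked by hand: spinning an alternating tangle diagram all of whose crossings touch regions meeting the boundary of the disk gives such diagrams with arbitrarily many crossings. The missing idea is a short homological argument that uses only (b) and the fact that $T$ is a solid torus, knotted or not. A crossing circle $\gamma\subset\partial T$ is a meridian of $T$, so it generates $H_1(S^3\setminus\mathring{T})\cong\mathbb{Z}$. An incident solid-torus region $T'$ lies in $S^3\setminus\mathring{T}$. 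If $[\gamma]=a\,t'$ in $H_1(T')$, then $a$ times the image of $t'$ is a generator, which forces $a=\pm1$. The paper takes this as the base case and inducts on the number of crossings not on $\partial T$.

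\emph{Unknottedness of $T$.} You need this both for (a) and for the final spinning description, but you list candidate strategies without carrying any out, and the fallback fails for two reasons. First, Procedure \ref{proc:unknot} applies only when $E$ lies inside a knotted solid torus whose complement is a single region of $S^3\cut E$. Here $E$ has sheets on both sides of $\partial T$, so a crossing change of $\tau$ would drag sheets of $E$ through the outer regions. Second, and more fundamentally, replacing $E$ by another diagram cannot prove (a) for the given $E$, which is what the proposition asserts.

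The paper proves unknottedness \emph{after} (c). When all crossings lie on $\partial T$, take an incompressible Seifert surface $F$ for $\tau$ with $F\cap\partial T$ a longitude and $F\cap E$ minimal. Every component of $F$ inside an outer solid-torus region is then incompressible and not $\partial$-parallel, hence a disk, and one concludes that $F$ is a disk. The general case follows by smoothing crossings off $\partial T$, which leaves $T$ unchanged.

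\emph{The final fibration.} Your conclusion sets this up wrongly. The pages must be bounded by $\tau$ itself, so that the meridional crossing circles on $\partial T$ are rotation orbits transverse to the pages. Pages meeting $T$ in meridian disks, as for a rotation about a circle linking $\tau$, would instead contain those circles.
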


\begin{proof}
    Since $K$ is nontrivial with a ribbon-alternating diagram, Proposition \ref{prop:split} and part (c) of Proposition \ref{prop:solid_torus} imply that the union of the 3-ball components of $S^3\cut E$ is a solid torus $T$, say with core $\tau$. This proves all of \ref{prop:spheres_a} except the claim that $T$ is unknotted, and \ref{prop:spheres_b} follows immediately.

    For \ref{prop:spheres_c}, consider a crossing circle $\gamma$ on $\partial T$, and let $T'$ be an incident solid torus component of $S^3\cut E$.  The fact that $\gamma$ represents a primitive element of $H_1(S^3\cut T)$ implies that it also represents a primitive element of $H_1(T')$; it follows that $\gamma$ is parallel to the core of $T'$.  
    
    To see that this is true of the crossing circles on the boundary of all solid torus components of $S^3\cut E$, not just those incident to $T$, we argue by induction on the number of crossings of $E$ that are not incident to $T$---call this number $n$. We have already proven the base case.  For the induction step, consider a crossing circle $\gamma$ not on $\partial T$, and smooth $\gamma$ in a way that yields a connected ribbon-alternating diagram $E'$---this is possible by Proposition \ref{prop:smoothing}, and it involves merging two of the distinct solid tori $S_1$ and $S_2$ incident to $\gamma$ into one solid torus $S$. By induction, all crossing circles in $E'$ are parallel to the cores of all incident solid tori. In particular, the crossing circles on $\partial S$ are parallel to the core of $S$.  The contrapositive of Proposition \ref{prop:glue_gen} implies that $\gamma$ is also parallel to the core circles of its incident solid tori. 
    
    Next, we complete the proof of \ref{prop:spheres_a} by showing that $T$ must be unknotted. If $n=0$, choose an incompressible Seifert surface $F$ for $\tau$ and isotope it so that $F\cap\partial T$ is a longitude and $|(F\cap E)\cut T|$ is minimal.  Then, for each component $T'$ of $S^3\cut E$ other than those in $T$, each component of $F\cap T'$ is incompressible, orientable, and non-$\partial$-parallel in $T'$, and therefore is a disk.  It follows (since $n=0$) that $F$ is a disk and thus that $\tau$ is unknotted. For the induction step, consider any crossing circle $\gamma$ not on $\partial T$, and smooth $\gamma$ in a way that yields a simpler connected ribbon-alternating diagram $E'$---again, this is possible by Proposition \ref{prop:smoothing}.  It follows by induction that $T$ is unknotted.
    
    It remains only to prove the last assertion in the proposition. Fiber $S^3\setminus \tau$ with disks such that each crossing circle of $E$ is transverse to each disk.  Choose one of these disks $D^2$, and consider the classical tangle diagram $D=D^2\cap E$.  We can now see that the broken surface diagram $E$ is obtained by spinning $D$ while performing a planar isotopy of $D$ that returns $D$ to itself setwise (preserving orientations). Since $K$ contains 2-spheres, $D$ has endpoints. The isotopy of $D$ must fix these endpoints, so the spinning of $D$ proceeds without deformation.
    \end{proof}

Next, we characterize ribbon-alternating links comprised only of tori.

\begin{proposition}\label{prop:knot_b}
Suppose $K\subset S^4$ is a knotted surface comprised only of tori, suppose that $K$ has a nontrivial, connected, ribbon-alternating diagram, and let $E$ be an ribbon-alternating diagram of $K$ such that each component of $S^3\cut E$ is a solid torus (one may obtain $E$ from the first diagram via Procedure \ref{proc:unknot}). 
Then, with either one or two exceptions,  each component $T$ of $S^3\cut E$ has a core circle $\tau$ parallel to the crossing circles on $\partial T$; each exceptional core circle is unknotted.  Therefore, there is an alternating link diagram $D$ such that $E$ is obtained by spinning $D$ while performing a planar isotopy of $D$ that returns $D$ to $D$ setwise (preserving orientations). 
\end{proposition}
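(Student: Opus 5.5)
The plan is to follow the proof of Proposition \ref{prop:knot_a}, but without 3-ball regions. Since $K$ has no sphere components, Proposition \ref{prop:solid_torus}(b) shows that $S^3\cut E$ has no 3-ball components. After Procedure \ref{proc:unknot}, every region is therefore a solid torus, and every sheet of $E$ cut along crossing circles is an annulus with one overpass and one underpass.

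The first step is the local claim: if a crossing circle $\gamma$ represents a generator of $H_1(R)$ for one incident region $R$, then it represents a generator for every incident region. I would obtain this by induction on the number of crossings, using Proposition \ref{prop:smoothing} to smooth a crossing into a connected ribbon-alternating diagram. That smoothing merges two incident solid tori into one solid torus $S$. The contrapositive of Proposition \ref{prop:glue_gen} then transports primitivity across $\gamma$, and Proposition \ref{prop:proc_h1} keeps primitivity stable under any further applications of Procedure \ref{proc:unknot} needed to restore the hypothesis. By connectedness of $E$, it follows that for each region $T$, either every crossing circle on $\partial T$ is primitive in $H_1(T)$, in which case it is parallel to the core $\tau$, or none is.

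The main obstacle is controlling the regions where the crossing circles are not primitive, that is, where they are meridians or more general $(p,q)$ curves on $\partial T$. I would work in the tree/dual-graph picture. The crossing circles on the boundaries of the solid tori form parallel families, and the regions glue along them to give a decomposition of $S^3$ into solid tori glued along annuli, which is essentially a Seifert fibration of $S^3$. The Seifert fibrations of $S^3$ have at most two exceptional fibers, and the complement of a regular neighborhood of the exceptional fibers is a product. The regular regions should correspond to regions whose crossing circles are parallel to their cores, and the exceptional ones, of which there are at most two, have unknotted cores because the exceptional fibers of a Seifert fibration of $S^3$ form a Hopf-type link of unknots. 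To make this rigorous I would induct on crossings as in Proposition \ref{prop:knot_a}. Smoothing a crossing in the connected way gives a simpler diagram. If that smoothing merges a regular region with an exceptional one, Proposition \ref{prop:glue_gen} (using $a_1\ge 2$) would produce a knot-complement region, contradicting the solid-torus assumption unless the exceptional region's circles are primitive. I expect this bookkeeping of "one or two" exceptions to be delicate.

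Finally, for the spinning description, I would take the fibration of $S^3$ minus the exceptional cores, or minus a single unknot if there are fewer than two exceptions, by disks or annuli meeting the crossing circles transversely. Taking one fiber disk $D^2$, the diagram $D=D^2\cap E$ is an alternating link diagram, and the ribbon-alternating property gives alternation. Following $D$ around the $S^1$ direction gives a planar isotopy returning $D$ to itself setwise, and this isotopy preserves orientation because the regions are solid tori and $K$ consists of tori, as in the last paragraph of the proof of Proposition \ref{prop:knot_a}. Because $D$ has no endpoints, the return isotopy may be a nontrivial rotation; this is exactly the "turning" allowed in the statement.
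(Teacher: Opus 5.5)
Your first step is false, and the rest of the argument leans on it. Here is a counterexample. Spin the standard trefoil diagram while turning it $240^\circ$ about its center (compare Example \ref{ex:klein}).

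This gives a connected one-crossing ribbon-alternating diagram of a torus in which every region is a solid torus. The three classical crossings sweep out a single crossing circle $\gamma$. It is incident to three regions: the swept lobes (in two opposite quadrants), the swept central triangle, and the outer region containing the spinning axis.

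Here $\gamma$ is parallel to the core of the lobe region. In the other two regions it represents $3$ and $2$ times a generator of $H_1$. Moreover, the smoothing that keeps the diagram connected merges those two regions into a trefoil exterior, not a solid torus.

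Structurally, if primitivity propagated to all incident regions, connectedness of $E$ would make every region regular or every region exceptional. That is incompatible with the statement you are proving. The contrapositive of Proposition \ref{prop:glue_gen} only relates the merged region to the two opposite regions being merged. In Proposition \ref{prop:knot_a} this was enough because the induction hypothesis asserted that \emph{every} region of the smoothed diagram is regular. That hypothesis was anchored by the meridional crossing circles on the union of the 3-ball regions, and there is no such anchor when $K$ consists of tori.

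Your proposed rigorous induction also misreads Proposition \ref{prop:glue_gen}. A ``knot complement'' there includes the solid torus. Merging an exceptional region with a regular one yields a solid torus isotopic to the exceptional one, which is again exceptional, so nothing is contradicted. The missing idea is to carry the exceptional regions through the induction instead of trying to exclude them. The paper does this as follows:
\begin{itemize}
\item It picks a crossing whose incident regions $S_1,\dots,S_4$ satisfy $S_1\neq S_3$ (Proposition \ref{prop:3_or_4}).
\item It smooths that crossing to merge $S_1$ and $S_3$ into $R$.
\item It notes that $R$ is exceptional exactly when $S_1$ or $S_3$ is, while every other region is unchanged.
\item It obtains the count of one or two unknotted exceptions from a direct analysis of the one-crossing base case, where the crossing circle meets exactly three regions.
\end{itemize}

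Your Seifert-fibration heuristic could instead become a genuinely different proof. You would fiber each region by curves parallel to its crossing circles, match these fibrations on the shared annuli, and invoke the classification of Seifert fibrations of $S^3$. But you would first have to handle regions in which the crossing circles are meridians, such as the region containing the axis of an ordinary spin, where the fibration degenerates. As written it is only a heuristic.

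Finally, in the spinning step, remove just one unknotted exceptional core and fiber its complement by disks. Removing two cores leaves annular fibers, not a disk $D^2$.
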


\begin{remark}
It follows that if $L$ is the link described by $D$, then $K$ is obtained from $L$ by spinning, turned spinning (as in \cite{boyle_torus}), or symmetry-spinning via a periodic symmetry of $L$. Note that since the orientations of $D$ are fixed by the rotation, the rotation can only fix points on $D$ if the rotation is a 360$^\circ$ rotation, yielding a turned spin. Otherwise, the rotation has no fixed points, and hence is a periodic symmetry of $D$. For more discussion of knotted surfaces obtained by symmetry-spinning, defined more generally, see \cite{kanenobu1987,teragaito1989,teragaito1990}. 
\end{remark}

\begin{proof}[Proof of Proposition \ref{prop:knot_b}]
    We will prove the last assertion at the end.  For the rest, we argue by induction on the number of crossing circles in $E$.  
    
    Suppose $E$ has a single crossing circle $\gamma_0$. (The overpass and underpass at $\gamma_0$ are annuli.) Then $\gamma_0$ is incident to each component of $S^3\cut E$, and there are exactly three components---otherwise, by Propositions \ref{prop:3_or_4} and \ref{prop:smoothing}, either smoothing of $\gamma_0$ would yield a connected surface that cuts $S^3$ into three components, which is impossible.  Therefore, by Proposition \ref{prop:smoothing}, one of the smoothings of $\gamma_0$ yields a disconnected diagram $E'$ that cuts $S^3$ into two solid tori (unchanged by the smoothing) and a thickened torus. Viewing $E'$ as a state of $E$, we may record the smoothed crossing $\gamma_0$ by inserting a crossing annulus $A$ in the thickened torus $R$ of $S^3\cut E'$; $A$ cuts $R$ into a single solid torus---corresponding to a component of $S^3\cut E$---whose core circle is parallel to the core circle of $A$ (corresponding to $\gamma_0$). Finally, note that the core circles of the solid tori of $S^3\cut E'$ are unknotted circles which are also cores of solid tori of $S^3\cut E$, and at most one of these circles is parallel to the crossing of $E$. 

    For the induction step, 
    choose a crossing circle $\gamma$ of $E$, and write $S_1,S_2,S_3,S_4$ for the incident solid tori of $S^3\cut E$, such that (taking indices modulo 4) each $S_i$ abuts $S_{i+1}$ along $E\cut(\text{crossing circles})$ and $S_1\neq S_3$. (Here, we use Proposition \ref{prop:3_or_4}.)  Smooth $\gamma$ in the way that merges $S_1$ and $S_3$ into a new region $R$. The resulting diagram $E'$ is connected and ribbon-alternating. 
    
    If $S_1$ or $S_3$ was exceptional in $E$, then by Proposition \ref{prop:glue_gen} the crossing circles on $\partial R$ do not represent generators of $H_1(R)$, i.e., $R$ is exceptional (in $E'$); therefore, by induction, the core circle of $R$ is unknotted---as therefore is the core circle of whichever of $S_1$ or $S_3$ is exceptional in $E$---and either one of the components of $S^3\cut E'$ other than $R$ is exceptional (and unknotted), or none are.  These components match the components of $S^3\cut E$, so the result follows. 

    If neither $S_1$ nor $S_3$ was exceptional in $E$, then neither is $R$ in $E'$, and the result similarly follows by induction.

    It remains only to prove the last assertion in the proposition. Choose a core circle $\tau$ of a solid torus $T$ of $S^3\cut E$ such that $\tau$ is not parallel to the crossing circles on $\partial T$. We know that $\tau$ is unknotted.  Fiber $S^3\setminus \tau$ with disks such that each crossing circle of $E$ is transverse to each disk.  Choose one of these disks $D^2$, and consider the classical link diagram $D=D^2\cap E$.  We can now see that the broken surface diagram $E$ is obtained by spinning $D$ while performing a planar isotopy of $D$ that returns $D$ to $D$ setwise (preserving orientations).     
\end{proof}

Finally, we characterize nonorientable ribbon-alternating knotted surfaces.

\begin{proposition}\label{prop:knot_c}
    Let $E$ be a connected ribbon-alternating diagram of a nonorientable knotted surface $K\subset S^4$.  If some component of $K$ is a Klein bottle $K_0$, then the rest of the components of $K$ are tori, and there is a unique self-crossing $\gamma_0$ of $K_0$ such that both smoothings of $E$ at $\gamma_0$ yield diagrams of orientable knotted surfaces.  Further, with one exception,  each component $T$ of $S^3\cut E$ has a core circle $\tau$ parallel to every crossing circle on $\partial T$ besides $\gamma_0$. 
    Hence, there is a classical alternating link diagram $D$ such that $E$ is obtained  by spinning $D$ while rotating by some odd multiple of $180^\circ$ about a crossing of $D$, where the rotation fixes $D$ setwise.
\end{proposition}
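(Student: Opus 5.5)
We follow the template of Propositions \ref{prop:knot_a} and \ref{prop:knot_b}. First we pin down the component types. By Proposition \ref{prop:spheres_tori}, every component of $K$ is a sphere, torus, or Klein bottle. No component is a 2-sphere. If one were, Proposition \ref{prop:knot_a} would present $E$ as an undeformed spin of a tangle, and such a spin has no nonorientable components, contradicting the existence of $K_0$. We may apply Procedure \ref{proc:unknot} at the outset, and Proposition \ref{prop:proc_h1} shows this preserves the relevant homological data, so we may assume every component of $S^3\cut E$ is a solid torus.

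Next we locate $\gamma_0$. Cutting $K_0$ along its crossing circles leaves only annuli. Each crossing circle lies on an overpass and an underpass, and these are either both annuli or both M\"obius bands. Since $K_0$ is nonorientable, gluing annuli end to end around the cyclic chain must produce a Klein bottle. This happens exactly when some crossing has M\"obius band sheets, or when the cyclic gluing carries an orientation reversal.

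We will argue that there is exactly one self-crossing $\gamma_0$ of $K_0$ whose over- and underpass are M\"obius bands. At such a crossing, the proof of Proposition \ref{prop:smoothing} shows that both smoothings modify a single solid torus by attaching a thickened M\"obius band. Either smoothing replaces the two M\"obius sheets by an annulus, so $K_0$ becomes a torus and the resulting diagram is orientable. At any other crossing, smoothing keeps a nonorientable piece.

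For uniqueness we induct on crossings not equal to $\gamma_0$. We smooth such a crossing $\gamma$ in the connected way given by Proposition \ref{prop:smoothing}, and use Proposition \ref{prop:glue_gen} to transport information back. A second M\"obius crossing would survive in the smoothed diagram. In the base case, where only $\gamma_0$-type crossings remain, we count regions using Proposition \ref{prop:3_or_4}. A solid torus cannot absorb two thickened M\"obius bands while staying embedded and satisfying the core-parallel conditions, so the base case has a single M\"obius crossing.

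The core-parallel statement is proved as in Proposition \ref{prop:knot_b}, by the same induction. Smoothing $\gamma\neq\gamma_0$ merges $S_1$ and $S_3$ into $R$. Proposition \ref{prop:glue_gen} says that if the crossings on $\partial R$ are parallel to the core of $R$, then the same holds for $S_1$ and $S_3$. Exceptional regions, meaning those not core-parallel, propagate to exceptional regions.

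The base case is the one-crossing Klein bottle diagram of Example \ref{ex:klein}. There, at $\gamma_0$, the M\"obius bands force the incident regions to satisfy: on one side $\gamma_0$ is a $(2,1)$-type curve, and the other side is exceptional. We must show that exactly one exceptional region persists. In Proposition \ref{prop:knot_b} there could be one or two; here the second slot is used up by the region containing the M\"obius sheets. We expect this count to be the main obstacle. The plan is to show that the region containing the thickened M\"obius band at $\gamma_0$ is one of the two candidate exceptional regions of the orientable smoothed diagram, and then apply Proposition \ref{prop:knot_b} to either smoothing at $\gamma_0$.

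Finally we build $D$. The unique exceptional core $\tau$ is unknotted, by the argument of Proposition \ref{prop:knot_b} applied after smoothing $\gamma_0$. Fiber $S^3\setminus\tau$ by disks transverse to all crossing circles, and set $D=D^2\cap E$. The monodromy returning $D^2$ to itself is a planar motion fixing $D$ setwise. It must reverse the local sheets at $\gamma_0$ in order to produce the M\"obius bands, so it is a rotation by an odd multiple of $180^\circ$ centered at the crossing of $D$ coming from $\gamma_0$. The other crossings come in core-parallel families, consistent with the pattern in Figure \ref{fig:Klein_bottle_spun}, right.
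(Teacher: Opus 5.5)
\textbf{Gap 1: uniqueness of the M\"obius crossing.} Your proposal never proves that $E$ has exactly one crossing whose overpass and underpass are M\"obius bands, and most of the statement depends on it. It gives the uniqueness of $\gamma_0$. It shows that $\gamma_0$ is a \emph{self}-crossing of a single Klein bottle; you never rule out a M\"obius crossing between two distinct Klein bottles. It gives that the remaining components are tori. And it gives that \emph{both} smoothings at $\gamma_0$ are orientable, since a second M\"obius crossing anywhere in $E$ would survive either smoothing.

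Your supporting arguments do not close this:
\begin{itemize}
\item Saying that a second M\"obius crossing ``would survive in the smoothed diagram'' shows persistence, not nonexistence.
\item The claim that a solid torus ``cannot absorb two thickened M\"obius bands while staying embedded and satisfying the core-parallel conditions'' is an assertion, not an argument.
\item Your core-parallel induction bottoms out in the one-crossing diagram of Example \ref{ex:klein}, which already presupposes uniqueness, and you leave the count of exceptional regions open.
\end{itemize}
Since your final step needs that core-parallel structure to build the disk fibration, the argument is circular as organized.

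The missing idea is a reduction to the orientable case. The paper smooths each M\"obius crossing $\gamma_i$, records it by a M\"obius band $A_i$, and uses the doubled annulus $A_i'$ to insert a pair of annular crossings (Figure \ref{fig:Whitehead_Mobius_annulus}). This yields a connected ribbon-alternating diagram $E''$ of an orientable surface whose complementary regions are all solid tori. Proposition \ref{prop:knot_b} then presents $E''$ as a spin of an alternating diagram under a planar rotation. That rotation must carry each $A_i$ to itself, so it is through an odd multiple of $180^\circ$ and every $A_i$ contains its axis. A rotation of the disk has a single fixed point, so there is only one $A_i$. Collapsing the bigon back to a crossing gives the final description, from which the remaining claims can be read off. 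Your remark that the monodromy must fix the point of $D$ coming from $\gamma_0$ is exactly this fixed-point idea. However, it only has force once the rotational description exists, and the doubling trick supplies that without any induction.

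\textbf{Gap 2: existence of $\gamma_0$ and orientability after smoothing.} You leave open your own alternative that $K_0$ is nonorientable with only annular crossings; this must be excluded for $\gamma_0$ to exist. Also, ``either smoothing \dots\ so $K_0$ becomes a torus'' does not follow as stated, because two annuli glued along their boundaries can form a Klein bottle. Both points follow from a parity observation:
\begin{itemize}
\item Along any component, the lifts of crossing circles alternate between overpass and underpass, since each annulus of $E$ cut along its crossings meets one of each. So a cyclic chain has an even number of lifts.
\item The co-orientation of the sheets pointing into the light regions of the checkerboard coloring (Proposition \ref{prop:cb_shading}) reverses across each annular crossing.
\end{itemize}
Hence a component all of whose crossing lifts are two-sided is orientable. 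The same observation is what justifies the orientability of $E''$ in the reduction above.
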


\begin{proof}
        We will prove the last assertion at the end.  
        For each crossing $\gamma_i$ of $E$ at which the overpass and underpass are M\"obius bands, smooth $\gamma_i$ arbitrarily. Denote the  resulting diagram by $E'$. Insert a M\"obius band $A_i$ to record where each smoothed crossing $\gamma_i$ was. Double each $A_i$ to get a properly embedded annulus $A_i'\subset S^3\cut E'$.  Then add a new crossing to $E'$ along each annulus $A_i'$ to get a connected ribbon-alternating diagram $E''$ of an orientable knotted surface $K'\subset S^4$.  Observe that $E''$ cuts $S^3$ into solid tori---one coming from each M\"obius band $A_i$, one coming from each component of $S^3\cut E$.  
        Proposition \ref{prop:knot_b} implies that $E''$ is the result of spinning an alternating 2-crossing diagram $D$ of the unknot while rotating $D$, while returning $D$ to itself setwise, preserving orientations.  This rotation must also return each $A_i$ to itself, setwise, and must therefore be through an odd multiple of $180^\circ$, with each $A_i$ containing the axis of rotation.  Therefore, there is just one $A_i$, and $E$ is the result of spinning an alternating diagram $D'$ while rotating $D'$ by some odd multiple of $180^\circ$ around a crossing. See Figure \ref{fig:Whitehead_Mobius_annulus}.
\end{proof}

\begin{figure}[h]
    \begin{center}
        \includegraphics[height=.9in]{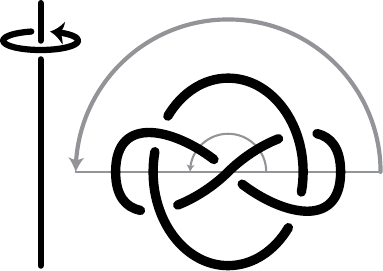}\raisebox{.4in}{$\to$}
        \includegraphics[height=.9in]{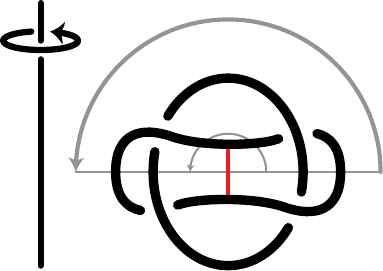}\raisebox{.4in}{$\to$}
        \includegraphics[height=.9in]{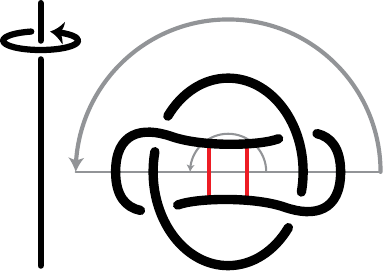}\raisebox{.4in}{$\to$}
        \includegraphics[height=.9in]{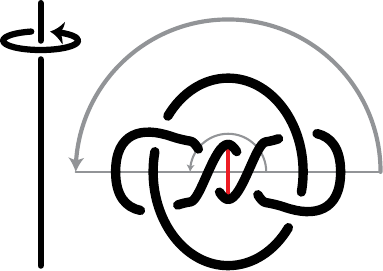}
    \end{center}
    \caption{Replacing a crossing where the overpass and underpass are M\"obius bands with two crossings where the overpass and underpass are annuli}
    \label{fig:Whitehead_Mobius_annulus}
\end{figure}

For convenience, we collect the main preceding results in the following theorem.

\begin{theorem}\label{thm:alternating}
    Let $K\subset S^4$ be a knotted surface admitting a connected, ribbon-alternating diagram $E$. 
    \begin{enumerate}[label=(\alph*)]
        \item\label{caseA} If some component of $K$ is a 2-sphere, then $E$ is the result of spinning an alternating classical tangle diagram (without deformation), and in particular $K$ is a union of spheres and tori. 
        \item\label{caseB} If $K$ is orientable and contains no 2-spheres, then it is a union of tori, and 
        there is an alternating link diagram $D\subset D^2$ such that $E$ is obtained by spinning $D$ while turning $D$ about some point $x\in D^2\setminus D$ of rotational symmetry 
        (thus preserving orientations). 
        \item\label{caseC} If $K$ is nonorientable, then $K$ has exactly one Klein bottle component, the other components of $K$ are tori, and there is an alternating link diagram $D$ such that $E$ is obtained by spinning $D$ while rotating $\pm 180^\circ$ about a crossing of $D$, where the rotation fixes $D$ setwise.
    \end{enumerate}
\end{theorem}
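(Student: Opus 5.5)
The theorem collects Propositions \ref{prop:knot_a}, \ref{prop:knot_b}, and \ref{prop:knot_c}. The plan is to reduce to those propositions and add only a few bookkeeping steps.

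First, normalize the diagram. By Procedure \ref{proc:unknot} we may replace $E$ by a connected ribbon-alternating diagram of $K$ in which every component of $S^3\cut E$ is a 3-ball or a solid torus. Procedure \ref{proc:unknot} only changes $E$ inside knotted solid tori by isotopies of $K$. So any spinning description of the new diagram still describes $K$. One wrinkle is that the statement asserts the description for the original $E$; I would either read the conclusion up to this replacement, or note that the hypotheses of the propositions are achieved after the procedure. If $K$ is the trivial link of spheres, Proposition \ref{prop:split} shows $E$ is an $n$-crossing diagram of $n+1$ unknotted spheres. This is visibly the spin of a trivial alternating tangle, which settles that degenerate case of (a). By Proposition \ref{prop:spheres_tori}, every component of $K$ is a sphere, a torus, or a Klein bottle.

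For (a), suppose some component is a 2-sphere. In the nontrivial case, Proposition \ref{prop:knot_a} applies directly: $E$ is the spin of an alternating classical tangle without deformation. Spinning a tangle without deformation produces a sphere from each arc and a torus from each closed component. Hence $K$ is a union of spheres and tori; in particular there is no Klein bottle, consistent with Proposition \ref{prop:knot_c}.

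For (b), suppose $K$ is orientable with no spheres. Then every component is a torus, since Klein bottles are nonorientable. Proposition \ref{prop:knot_b} gives an alternating link diagram $D$ and a planar isotopy returning $D$ to itself, preserving orientation, along which $E$ is the spin. The remaining step is to upgrade this isotopy to a rotation about a point $x\in D^2\setminus D$. By the remark after Proposition \ref{prop:knot_b}, the monodromy is a periodic symmetry of $D$ or a full turn. A finite-order orientation-preserving homeomorphism of the disk is conjugate to a rotation (Ker\'ekj\'art\'o/Brouwer), and its fixed point $x$ misses $D$ unless the map is the identity. In the identity case we choose $x$ off $D$ and turn by $360^\circ$. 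I expect this upgrade from periodic isotopy to rigid rotation, including conjugating the spinning, to be the main technical point.

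For (c), suppose $K$ is nonorientable. Some component is then a Klein bottle, so Proposition \ref{prop:knot_c} gives that the other components are tori. Its construction leaves a single Möbius crossing $A_i$, which yields uniqueness of the Klein bottle. It also gives $D$ with $E$ obtained by rotating an odd multiple of $180^\circ$ about a crossing. Rotations by $\pm180^\circ$ and by $(2k+1)180^\circ$ about the same point differ by full turns. A full turn of $D$ about a crossing can be absorbed up to diagram isotopy, as in the turned-spin discussion. This reduces the rotation to $\pm180^\circ$.
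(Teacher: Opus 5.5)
Your reduction to Propositions \ref{prop:knot_a}, \ref{prop:knot_b}, and \ref{prop:knot_c} matches the paper's proof, and the bookkeeping around it is fine. However, the last step of (c) fails as justified.

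You claim that a full $360^\circ$ turn about the crossing ``can be absorbed up to diagram isotopy, as in the turned-spin discussion.'' A full turn is exactly the difference between a spin and a turned spin, and that discussion shows such a turn can change the surface. By Example \ref{ex:35_nugatory}, the turned spin of $T(3,5)$ is the unknotted torus. The ordinary spun torus of $T(3,5)$, by contrast, has the $T(3,5)$ knot group as its group, so it is knotted.

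Concretely, an extra full turn replaces $K$ by its image under the fiberwise rotation $(p,t)\mapsto(R_{2\pi t}(p),t)$ of $B^3\times S^1$. Since $t\mapsto R_{2\pi t}$ is the generator of $\pi_1(SO(3))\cong\mathbb{Z}/2$, nothing makes this an isotopy of $K$. You give no reason why placing the axis through a crossing would help. Note also that if full turns could always be absorbed, the $\pm$ in the statement would be superfluous.

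What does hold, and what the paper uses, is that \emph{two} full turns can be absorbed. The loop $t\mapsto R_{4\pi t}$ is null-homotopic in $SO(3)$, and rotating fiberwise along a null-homotopy gives an isotopy of $K$ in $S^4$. Now $(2k+1)\cdot180^\circ$ equals $180^\circ+\tfrac{k}{2}\cdot720^\circ$ for $k$ even, and $-180^\circ+\tfrac{k+1}{2}\cdot720^\circ$ for $k$ odd. So you get $\pm180^\circ$, with the sign forced by the parity of $k$.

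Even a $720^\circ$ change generally alters the diagram $E$ itself. The conclusion should therefore be read, as the paper does, as a statement about some diagram of the same $K$, not ``up to diagram isotopy.'' Likewise, in the identity case of (b), keep whatever number of full turns occurs (or reduce only modulo $720^\circ$) rather than normalizing to a single $360^\circ$ turn. The statement of (b) allows any amount of turning, so no normalization is needed there.

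There is also a smaller slip in (a). Proposition \ref{prop:split} says that if the 3-ball regions fill $S^3$, then $K$ is a trivial link of spheres. It does not say the converse, and the converse is false: the 1-crossing ribbon-alternating diagram of the trivial 2-knot in Example \ref{ex:klein} is not an $n$-crossing diagram of $n+1$ spheres. So splitting into ``$K$ trivial'' versus ``$K$ nontrivial'' leaves cases uncovered.

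Split instead on whether the 3-ball regions fill $S^3$ (then use Proposition \ref{prop:split}) or form a solid torus (Proposition \ref{prop:solid_torus}(c)). In the second case the proof of Proposition \ref{prop:knot_a} applies verbatim, because nontriviality of $K$ is used there only to exclude the first case.
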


\begin{proof}
The theorem simply collects part of what we have proven in Propositions \ref{prop:knot_a}, \ref{prop:knot_b}, and \ref{prop:knot_c}. For the last assertion, note that adding multiples of $720^\circ$ to the rotation of $D$ does not change the underlying knotted surface. 
\end{proof}

\begin{theorem}\label{thm:non_alternating}
If $K$ is the spin of a 
classical nonalternating knot $L$, then $K$ is not ribbon-alternating. 
\end{theorem}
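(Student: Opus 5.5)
Suppose, for contradiction, that $K=S(L)$ admits a ribbon-alternating diagram $E$; we may assume $E$ is connected, since $L$ is a knot and so $K$ is a 2-knot. As $K$ is a 2-sphere, Theorem \ref{thm:alternating}\ref{caseA} (that is, Proposition \ref{prop:knot_a}) says that $E$ is the spin, without deformation, of an alternating classical tangle diagram $D\subset D^2$. Because $K$ has a single component, which is a sphere, $D$ must be a one-string tangle with two endpoints on $\partial D^2$, possibly together with closed components. Any closed component of $D$ would spin to a torus component of $K$, so in fact $D$ is a single arc. Spinning a one-string tangle $(B^3,t)$ without deformation gives the spin $S(J)$ of the knot $J$ obtained by closing $t$ with a trivial arc. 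Closing the alternating tangle diagram $D$ along an arc outside $D^2$ produces an alternating diagram of $J$, so $J$ is an alternating knot and $K\cong S(J)$.

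The problem therefore reduces to showing that $S(L)\cong S(J)$, with $J$ alternating, is impossible when $L$ is nonalternating. Spinning does not determine the knot, since for example $S(L)=S(\overline L)$, and spins of distinct knots can coincide. So some invariant is needed that survives spinning and detects alternating-ness. I would first try the fundamental group together with its peripheral data: $\pi_1(S^4\setminus S(L))\cong \pi_1(S^3\setminus L)$. For prime alternating $J$ and hyperbolic or otherwise generic $L$ this group, possibly together with extra structure, determines the knot up to mirror image. Mirror images preserve alternating-ness, so this would yield a contradiction.

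The main obstacle is this last step. Knot groups do not determine knots when the knots are composite, because of the choice of orientation of each summand. To handle it, I would decompose both $L$ and $J$ into prime factors, using the fact that the group determines the prime factors up to mirror and reversal. A knot is alternating if and only if each of its prime summands is alternating, by Menasco. Combined with Gordon--Luecke for the prime factors, this shows that $L$ and $J$ have the same prime factors up to mirror image. Hence $L$ is alternating, which is a contradiction.

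An alternative to the fundamental-group argument would be to work directly with the diagram. Here one would note that the tangle $D$ is determined by $E$ as a cross-section, and compare $E$ with a diagram of $S(L)$, but that seems harder. I would therefore commit to the group-theoretic route, checking carefully that the tangle closure is indeed alternating and is a knot rather than a link.
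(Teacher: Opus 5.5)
Your proposal is correct and takes essentially the same route as the paper: use part (a) of Theorem \ref{thm:alternating} to write $K$ as the spin of an alternating knot, then use the isomorphism of knot groups together with Feustel--Whitten and Gordon--Luecke to match prime factors up to mirror image, and conclude via Menasco's theorem that $L$ is alternating. Your first paragraph spells out a step the paper leaves implicit: the tangle is a single arc, and its closure is an alternating knot diagram because the $2c$ crossing passages along the arc alternate. The exploratory remarks about peripheral data and hyperbolic $L$ are unnecessary and can be dropped.
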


\begin{proof}
    Suppose otherwise. Then by Theorem \ref{thm:alternating}, there is a classical alternating knot $L'$ such that $K$ is also the spin of $L'$.  Note that $\pi_1(S^3\setminus K)$, $\pi_1(S^3\setminus L)$, and $\pi_1(S^3\setminus L')$ are all isomorphic.  Therefore, the prime factors of $L$ and $L'$ are equivalent up to mirroring---this follows from \cite[Theorem 1]{feustel-witten} and \cite[Theorem 1]{gordon-luecke}. Yet, the prime factors of $L'$ are all alternating, since $L'$ is alternating, and so the prime factors of $L$ are all alternating. This implies contrary to assumption that $L$, and thus $K$, is ribbon-alternating. 
\end{proof}

\begin{restatable}{question}{spunNAlink}\label{q:non_alternating}
If $K$ is a spin of a prime classical nonalternating link $L$, must $K$ be non ribbon-alternating? What if $L$ is not prime? 
\end{restatable}

Example \ref{ex:35_nugatory} will show that turned spins do not always respect the non (ribbon) alternating condition.

\subsection{Nugatory crossings in pseudo-ribbon diagrams}\label{sec:nugatory_ribbon}

Inspired by Tait's classical conjectures, we will ask several questions about ribbon-alternating {knotted surface}s. Before we can do so, we must adapt the notion of {\it nugatory} crossings to dimension four.  We begin with the definitions of nugatory crossings for classical link diagrams and tangle diagrams.

\begin{definition}\label{def:nugatory_classical}
A crossing $c$ in a classical link diagram $E\subset S^2$ is \emph{nugatory} if there is a circle $\gamma\subset S^2$ with $\gamma\cap E=\{c\}$ (transversally). A crossing $c$ in a classical tangle diagram $E\subset D^2$ is \emph{nugatory} if there is a circle or properly embedded arc $\gamma\subset S^2$ with $\gamma\cap E=\{c\}$. In both cases, we say the circle or arc $\gamma$ {\it detects} the nugatory crossing $c$.
\end{definition}

A classical nugatory crossing can be removed by isotopy, as shown in Figures \ref{fig:nugatory_detour} and \ref{fig:nugatory_flype} and discussed further below.  For now, we note just that removing a nugatory crossing (in a tangle) detected by an arc generally requires sliding the endpoints of the arc along the boundary of the 3-ball that contains the tangle.

\begin{definition}\label{def:nugatory}
Let $E\subset S^3$ be a pseudo-ribbon diagram of a knotted surface $K\subset S^4$. Call a crossing circle $\gamma$ in $E$ \emph{nugatory} if there is an embedded sphere or torus $F\subset S^3$ that satisfies $F\cap E=\gamma$, and we say that this sphere or torus \emph{detects} the nugatory crossing.
\end{definition}

\begin{remark}
    The overpass and underpass at any nugatory crossing are annuli, not M\"obius bands.
\end{remark}

If a sphere or torus $F$ detects a nugatory crossing $\gamma$ in a pseudo-ribbon diagram $E\subset S^3$ of a knotted surface $K\subset S^4$, and if all crossing circles of $E$ besides $\gamma$ lie on the same side of $S^3\cut F$, then $K$ has an isotopy which removes $\gamma$. To see this, imagine smoothing $\gamma$ so that the resulting diagram $E'$ is disjoint from $F$. Then, one of the components of $S^3\cut E'$ is a crossingless sphere or torus.  A 3-ball or solid torus it bounds in $S^4\cut E$ (together with half of a standard solid crossing band in $\nu \gamma$) guide an isotopy which removes $\gamma$. When $F$ is a 2-sphere, this isotopy pushes one disk of $K\cut \gamma$ past another such disk, and when $F$ is a solid torus, the lift of $\gamma$ in $K$ cuts an annulus off of $K$, and this isotopy removes this annulus by pushing it past $\gamma$.

Not all nugatory crossings, however, can be removed in this way. Indeed, both components of $S^3\setminus F$ could contain crossings of $E$.  Classical link diagrams encounter an analogous inconvenience in that some (classical) nugatory crossings cannot be removed merely by a Reidemeister-1 move---see the left image in Figure \ref{fig:nugatory_detour}. Still, there are at least two good ways to rectify this classical inconvenience (in which the original diagram $E$ is always a diagrammatic connect sum). 

\begin{figure}[ht]
    \begin{center}
        \labellist\tiny
        \pinlabel{$1$} at 457 105
        \pinlabel{$2$} at 460 25
        \pinlabel{$3$} at 340 105
        \pinlabel{$4$} at 337 25
        \pinlabel{$2$} at 638 100
        \pinlabel{$1$} at 630 17
        \pinlabel{$3$} at 515 105
        \pinlabel{$4$} at 515 25
        \endlabellist
    \includegraphics[height=1.2in]{figures/Nugatory_Crossing_1A}\quad\raisebox{.6in}{$\to$}\quad\includegraphics[height=1.2in]{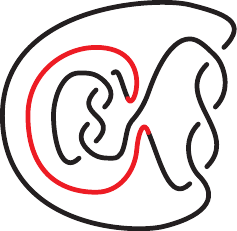}
        \hfill
        \includegraphics[height=1.2in]{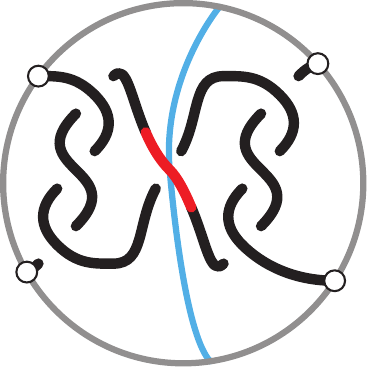}\quad\raisebox{.6in}{$\to$}\quad\includegraphics[height=1.2in]{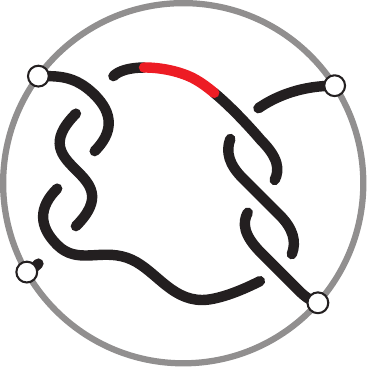}
        \caption{Left: removing a (classical) nugatory crossing detected by a circle via a detour move. Right: removing a nugatory crossing detected by an arc may require moving the endpoints of the tangle.}
        \label{fig:nugatory_detour}
    \end{center}
\end{figure}

One option is to remove the nugatory crossing via a detour move, as shown in Figure \ref{fig:nugatory_detour}.  If $E$ is ribbon-alternating, the resulting diagram will not be ribbon-alternating, but one can fix this by reflecting one of the diagrammatic connect summands (this corresponds to rotating the associated connect summand of the knot).  

\begin{figure}[ht]
    \begin{center}
        \includegraphics[height=1.2in]{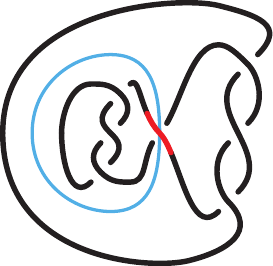}~\raisebox{.6in}{$\longrightarrow$}~\includegraphics[height=1.2in]{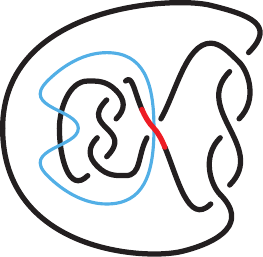}~\raisebox{.6in}{$\underset{\text{flype}}{\longrightarrow}$}~\includegraphics[height=1.2in]{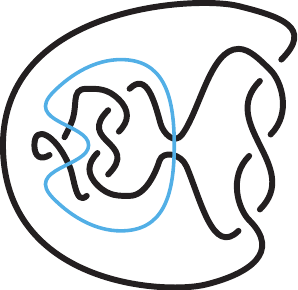}~\raisebox{.6in}{$\underset{\text{R1}}{\longrightarrow}$}~\includegraphics[height=1.2in]{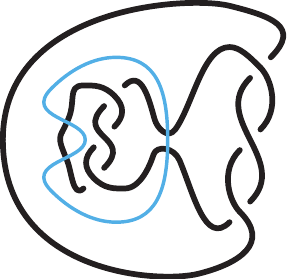}
        \caption{Removing a nugatory crossing via a flype and an R1 move.}
        \label{fig:nugatory_flype}
    \end{center}
\end{figure}

A second option is to observe that any classical nugatory crossing can be removed by performing a {\it flype} and then an R1 move. See Figure \ref{fig:nugatory_flype}.  
We now adapt flype moves to dimension four; we will find that they help us remove any nugatory crossing detected by a torus, while preserving the ribbon-alternating condition (if applicable).  A similar type of move will allow us to remove any nugatory crossing detected by a sphere, again while preserving the ribbon-alternating condition. 

\begin{definition}\label{def:flype_T2}
    Suppose that $E\subset S^3$ is a broken surface diagram of a knotted surface $K\subset S^4$, and $T\subset S^3$ is a torus which intersects $E$ in a crossing circle $\gamma$ and two other circles $\gamma_1,\gamma_2$, such that $\gamma\cup \gamma_1\cup \gamma_2$ cuts $T$ into three annuli. Let $A$ be the annulus bounded by $\gamma_1\cup\gamma_2$. 
    Let $\nu T=T\times [-1,1]$ be a regular neighborhood of $T$ in $S^3$. As illustrated in Figure \ref{fig:nugatory_torus_parts}, write $A_+$ and $A_-$ for the overpass and underpass of $E$ at $\gamma$, and write $\gamma^\pm_\pm=\partial A_\pm\cap (T\times\{\pm1\})$ and $\gamma_i^\pm=\gamma_i\cap (T\times\{\pm1\}) $, also as shown.
    Note that $\gamma^+_+\cup \gamma^+_-\cup \gamma_1^+\cup \gamma_2^+$ cuts $T\times\{1\}$ into four annuli, one co-bounded by $\gamma^+_+\cup\gamma^+_-$, another $A\times\{1\}$ co-bounded by $\gamma_1^+\cup \gamma_2^+$, and two more. If necessary, switch the subscripts on $\gamma_1$ and $\gamma_2$ so that the boundaries of these last two annuli are $\gamma_+^+\cup \gamma_1^+$ and $\gamma_-^+\cup \gamma_2^+$, as in Figure \ref{fig:nugatory_torus_parts}, top.
    
    Change $E$ to another diagram $E'$ of $K$ as follows. Replace the two annuli $\gamma_i\times[-1,1]$ with two annuli $A'_-,A'_+\subset A\times [-1,1]$ with $\partial A'_-=\gamma_1^+\cup \gamma_{2}^-$ and $\partial A'_+=\gamma_1^-\cup \gamma_{2}^+$, taking indices modulo 2, with $A'_+$ passing over $A'_-$.  Also replace the annuli $A_+$ and $A_-$ with two product 
    annuli in $T\times[-1,1]$, one bounded by $\gamma_+^+\cup\gamma_-^-$ and the other by $\gamma_-^+\cup\gamma_+^-$. Finally, choose a component $Y$ of $(S^3\cut T)\times[-1,1]$, a properly embedded annulus $A_R$ in $Y$, and an involution $f:Y\to Y$ that fixes $E\cap\partial Y$ setwise and fixes $A_R$ pointwise (that is, $f$ is a reflection of $Y$ across $A_R$ that fixes $E\cap\partial Y$ setwise), and replace $E\cap Y$ with $f(E\cap Y)$ (preserving crossing information). Keep $E\setminus (Y\cup T\times[-1,1])$ unchanged.  We say that the resulting diagram $E'$ is obtained from $E$ via a {\emph{flype}}.
\end{definition}

\begin{figure}[ht]
    \begin{center}
        \labellist\small
        \pinlabel{$A_-$} at 410 455
        \pinlabel{$A_+$} at 410 510
        \pinlabel{$A_+$} at 610 507
        \pinlabel{$A_-$} at 614 452
        \pinlabel{$\gamma_+^-$} at 685 460
        \pinlabel{$\gamma_+^+$} at 460 500
        \pinlabel{$\gamma_-^+$} at 460 460        
        \pinlabel{$\gamma_1^-$} at 110 500
        \pinlabel{$\gamma_1^+$} at 1040 500
        \pinlabel{$\gamma_{2}^-$} at 110 460
        \pinlabel{$\gamma_{2}^+$} at 1040 460
        \pinlabel{{\color{ForestGreen} $A$} } at 1000 480
        \pinlabel{{\color{ForestGreen} $A$} } at 30 480
        \pinlabel{$A_-$} at 410 455
        \pinlabel{$A_+$} at 410 510
        \pinlabel{$A_+$} at 610 507
        \pinlabel{$A_-$} at 614 452
        \pinlabel{$\gamma_-^-$} at 685 500
        \pinlabel{$\gamma_+^-$} at 685 460
        \pinlabel{$\gamma_+^+$} at 460 500
        \pinlabel{$\gamma_-^+$} at 460 460
        \pinlabel{$\gamma_+^-$} at 685 100
        \pinlabel{$\gamma_+^+$} at 460 140
        \pinlabel{$\gamma_-^+$} at 460 100 
        \pinlabel{$\gamma_-^-$} at 685 140
        \pinlabel{$\gamma_1^-$} at 110 140
        \pinlabel{$\gamma_1^+$} at 1040 140
        \pinlabel{$\gamma_{2}^-$} at 110 100
        \pinlabel{$\gamma_{2}^+$} at 1040 100
        \pinlabel{ $A'_-$ } at 990 150
        \pinlabel{ $A'_-$ } at 40 150
        \pinlabel{ $A'_+$ } at 990 90
        \pinlabel{ $A'_+$ } at 40 90
        \pinlabel{$A_-$} at 410 455
        \pinlabel{$A_+$} at 410 510
        \pinlabel{$A_+$} at 610 507
        \pinlabel{$A_-$} at 614 452
        \pinlabel{$\gamma_+^-$} at 685 460
        \pinlabel{$\gamma_+^+$} at 460 500
        \pinlabel{$\gamma_-^+$} at 460 460
        \Huge
        \pinlabel{{\color{Fuchsia}     \scalebox{-1}[-1]{$T_1$}}} at 220 124
        \pinlabel{{\color{Fuchsia}\scalebox{1}[-1]{$T_1$}}} at 800 124
        \pinlabel{{\color{Fuchsia}     \scalebox{-1}[1]{$T_1$}}} at 220 472
        \pinlabel{{\color{Fuchsia}\scalebox{1}[1]{$T_1$}}} at 800 472
        \endlabellist
        \includegraphics[width=130mm]{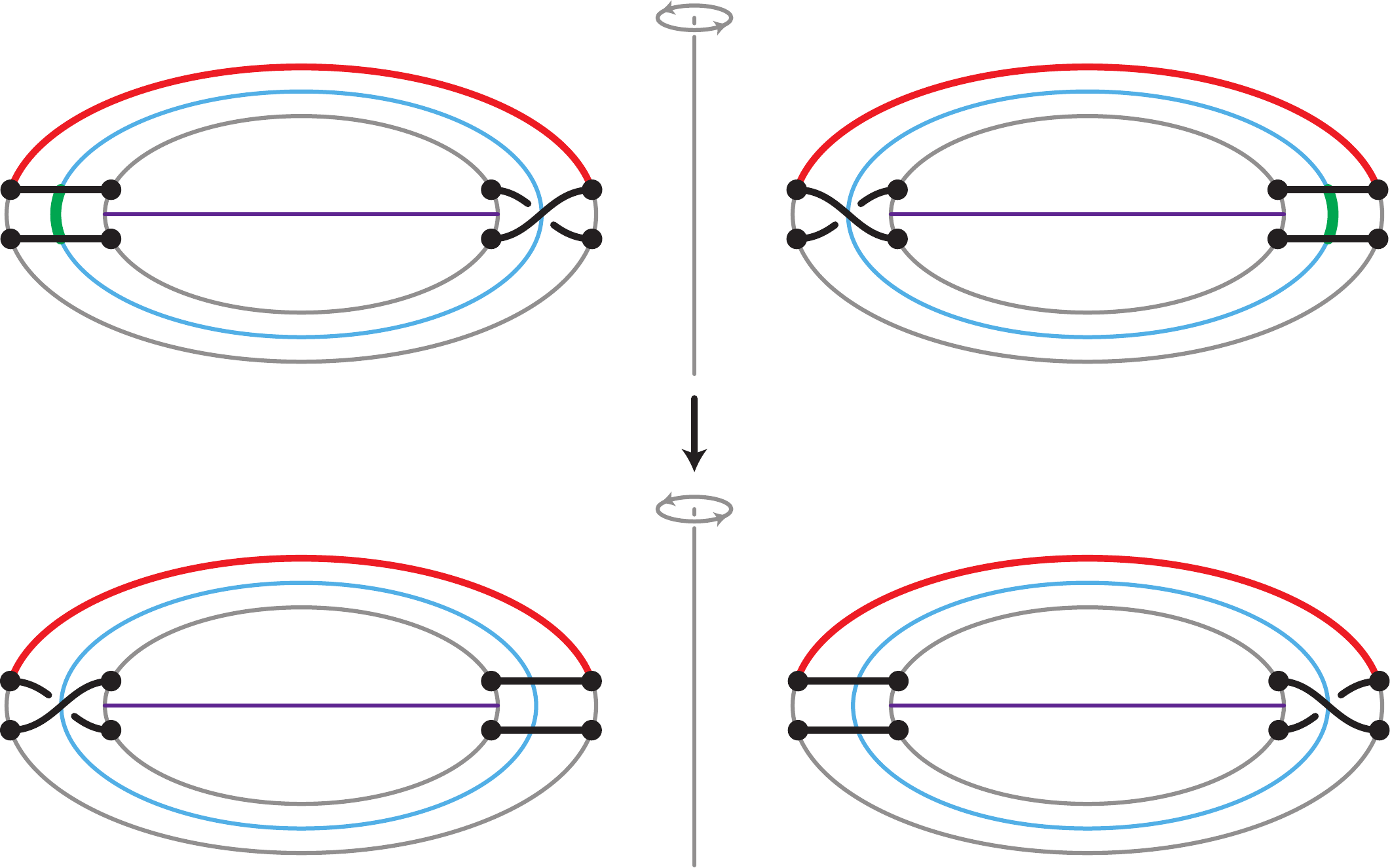}
        \caption{The process of flyping a broken surface diagram along a torus, illustrated as the spin of an unknotted circle. The torus $T$ is drawn in light blue, and the annulus $A$ is drawn in green. }
        \label{fig:nugatory_torus_parts}
    \end{center}
\end{figure}

\begin{figure}[ht]
    \begin{center}
        \includegraphics[height=1.2in]{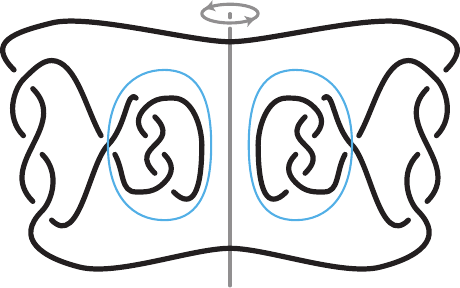}\raisebox{.6in}{~$\to$~}
        \includegraphics[height=1.2in]{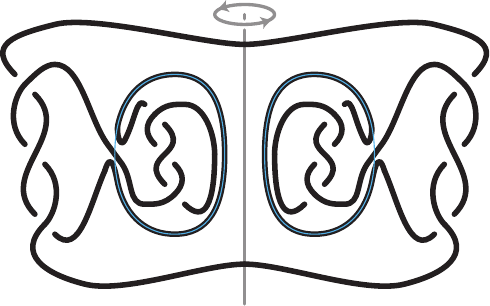}
        \caption{Removing a nugatory crossing detected by a torus via a detour move}
        \label{fig:nugatory_torus_detour}
    \end{center}
\end{figure}

\begin{remark}
If $E\subset S^3$ is an ribbon-alternating diagram of a knotted surface $K\subset S^4$ and $T\subset S^3$ is a torus that detects a nugatory crossing $\gamma$ in $E$, then there are two convenient ways to remove $\gamma$ from $E$.  One option is to perform a detour move, perturbing $T$ so that it contains the entire overpass at $\gamma$ and then replacing this overpass with the rest of $T$. See Figure \ref{fig:nugatory_torus_detour}.  This change in the diagram corresponds to an isotopy of $K$, because $T$ bounds a solid torus through which these annuli are parallel, and it takes $E$ to a pseudo-ribbon diagram $E'$ of $K$ with one fewer crossing than $E$.  Unfortunately, $E'$ is not necessarily ribbon-alternating.  The easiest way to fix this is to perform a flype on $E$ before removing the nugatory crossing by a small detour move.  See Figure \ref{fig:nugatory_torus_flype}.   
\end{remark}     

\begin{figure}
    \begin{center}
        \includegraphics[height=1.1in]{figures/Nugatory_Torus_1A.pdf}\raisebox{.6in}{~$\to$~}
        \includegraphics[height=1.1in]{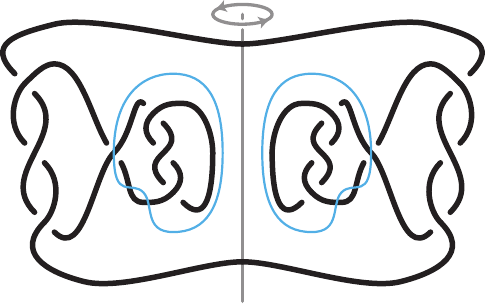}\raisebox{.6in}{~$\to$~}
        \includegraphics[height=1.1in]{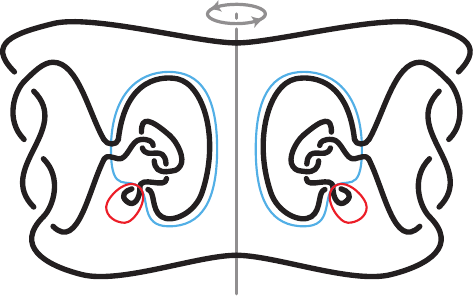}
        \caption{Removing a nugatory crossing detected by a torus via a flype move (followed by a small detour move, not pictured)}
        \label{fig:nugatory_torus_flype}
    \end{center}
\end{figure}

\begin{example}\label{ex:2/5_nugatory}
    The crossing circle in the ribbon-alternating broken surface diagram shown in the left image of Figure \ref{fi:2/5_nugatory} is nugatory and is detected by the torus shown red.  Removing this crossing circles as shown demonstrates that the diagram represents the unknotted torus in $S^4$. In fact, this proves that any pseudo-ribbon diagram with a single crossing circle represents an unknotted surface \cite{shima1998}.
\end{example}

\begin{figure}[h]
    \centering
    \includegraphics[height=1.2in]{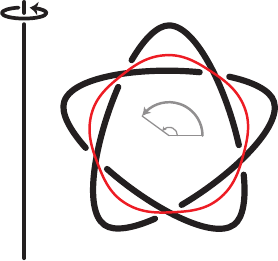}\raisebox{.6in}{$~\to$}
    \includegraphics[height=1.2in]{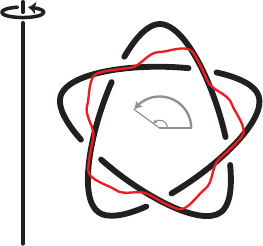}\raisebox{.6in}{$~\to$}
    \includegraphics[height=1.2in]{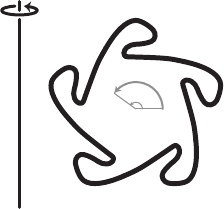}
    \caption{Removing a nugatory crossing in a ribbon-alternating diagram of the unknotted torus in $S^4$}
    \label{fi:2/5_nugatory}
\end{figure}

\begin{example}\label{ex:1/6_nugatory}
    All three crossing circles in the ribbon-alternating broken surface diagram shown in the left image of Figure \ref{fi:1/6_nugatory} are nugatory and are detected by the three tori shown red, green, and purple.  Removing these crossing circles as shown demonstrates that the diagram represents the unknotted torus in $S^4$.
\end{example}

\begin{figure}[ht]
    \centering
    \includegraphics[height=1.1in]{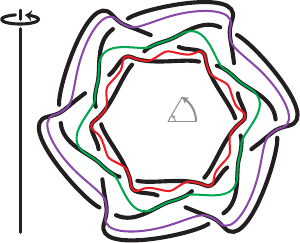}\raisebox{.5in}{$~\to$\!\!\!}
    \includegraphics[height=1.1in]{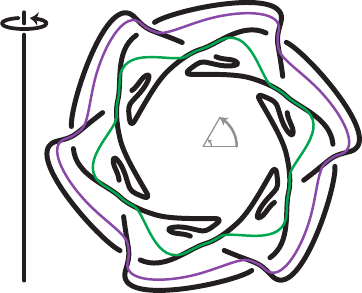}\raisebox{.5in}{$~\to$\!\!\!}
    \includegraphics[height=1.1in]{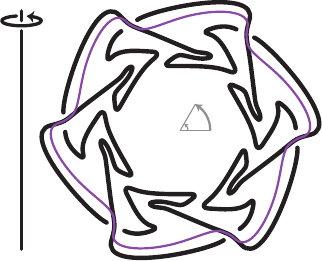}\raisebox{.5in}{$~\to$\!\!\!}
    \includegraphics[height=1.1in]{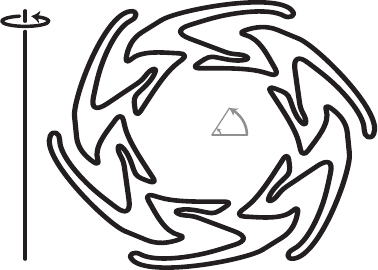}
    \caption{Removing three nugatory crossings in a ribbon-alternating diagram of the unknotted torus in $S^4$}
    \label{fi:1/6_nugatory}
\end{figure}

\begin{example}\label{ex:35_nugatory}
    Figure \ref{fi:35_nugatory} shows that one can construct the unknotted torus as a turned-spin of the $(3,5)$ torus knot. This shows in particular that turned spins need not respect the nonalternating condition.
\end{example}

\begin{figure}[ht]
    \centering
    \includegraphics[height=1.1in]{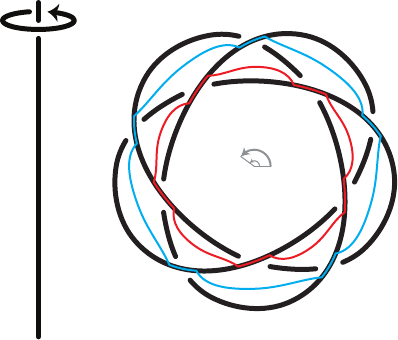}\raisebox{.5in}{$~\to$\!\!\!}
    \includegraphics[height=1.1in]{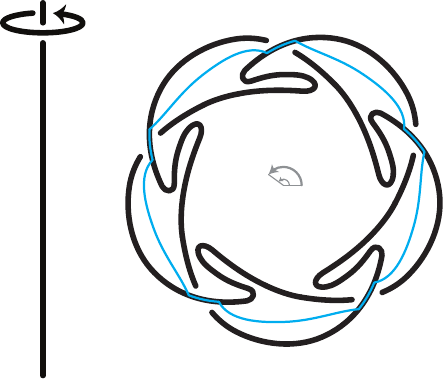}\raisebox{.5in}{$~\to$\!\!\!}
    \includegraphics[height=1.1in]{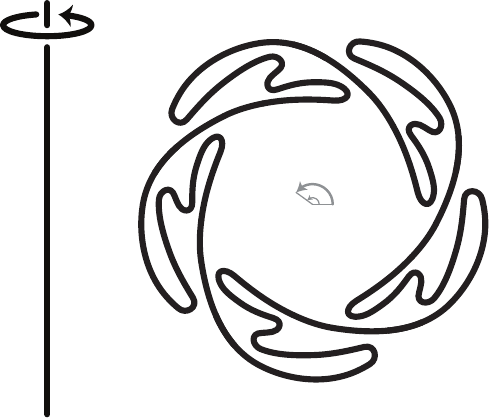}
    \caption{This turned spin of the classical $(3,5)$ torus knot is the unknotted torus in $S^4$. The former is nonalternating, but the latter is ribbon-alternating---see Figure \ref{fig:Klein_bottle_spun}, center, for example.}
    \label{fi:35_nugatory}
\end{figure}

Now suppose that a 2-sphere $F$, rather than a torus, detects a nugatory crossing $\gamma$ in a broken surface diagram $E\subset S^3$ of a knotted surface $K\subset S^4$.  If the two disks of $F\cut \gamma$ lie in the same component of $S^3\cut E$, then taking an arc between these disks and surgering along it yields a torus that also detects $\gamma$, and we can remove $\gamma$ in either of the manners just described.  Next, we describe how to remove any nugatory crossing detected by a sphere.  See Figure \ref{fig:Nugatory_Sphere}.

\begin{figure}
    \begin{center}
        \includegraphics[height=1.2in]{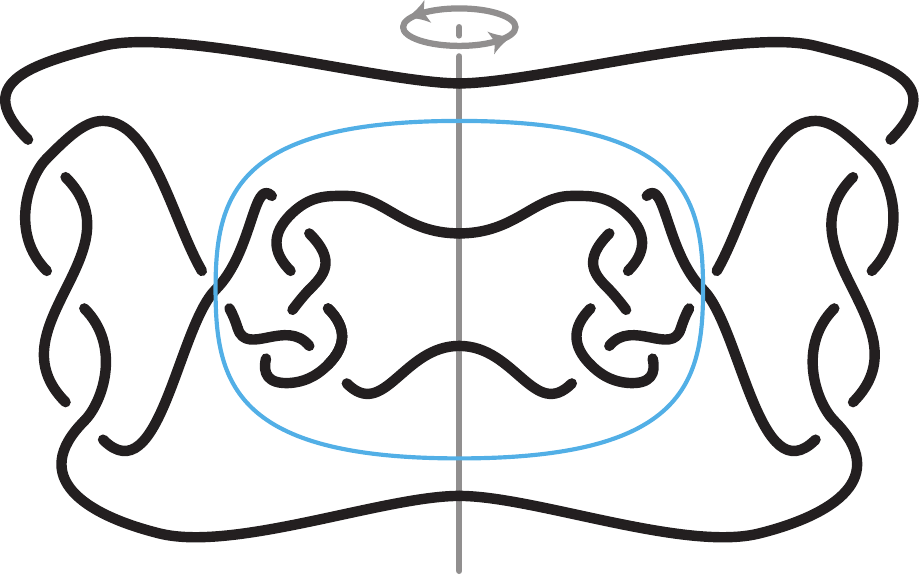}
        \raisebox{.6in}{$\longrightarrow$}
        \includegraphics[height=1.2in]{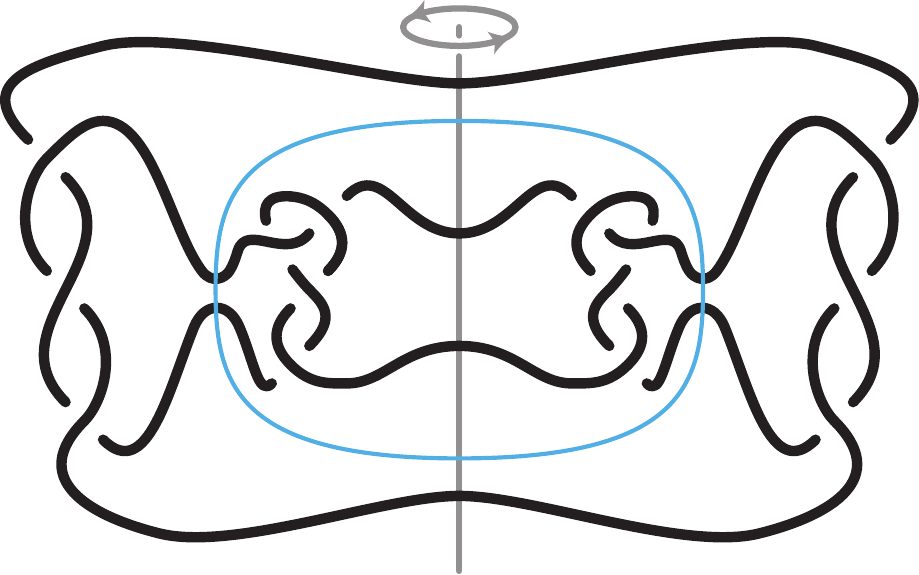}
        \caption{Left: Detecting a nugatory crossing with a sphere. Right: Removing the nugatory crossing as described in Remark \ref{rem:nugatory_sphere}.}
        \label{fig:Nugatory_Sphere}
    \end{center}
\end{figure}

\begin{remark}\label{rem:nugatory_sphere}
Suppose that $E\subset S^3$ is an ribbon-alternating diagram of a knotted surface $K\subset S^4$ and $Q\subset S^3$ is a sphere that detects a nugatory crossing $\gamma$ in $E$.  Let $Y$ be a properly embedded disk in either 3-ball of $S^3\cut Q$ with $\partial Y=\gamma$, let $S$ be a standard 3-sphere obtained as the suspension of $Q$ in $S^3\times [-1,1]$, and let $X$ be the 4-ball of $S^4\cut S$ which contains $Y$.  Let $X_+$ and $X_-$ be the 4-balls obtained by slightly enlarging and shrinking $X$, so that the Menasco neighborhood of $\gamma$ lies entirely inside $X_+$ but is disjoint from $X_-$.  Perform an ambient isotopy of $S^4$ which fixes $S^4\setminus\mathring{X}_+$ pointwise and rotates $X_-$ by $\pi$ about $Y\cap X_-$, ultimately returning $Y\cap X_-$ to itself setwise, and carrying the Menasco neighborhoods in $X_-$ to sets that intersect $S^3$ in such a way that they remain viable Menasco neighborhoods. Then perturb. It is possible to do all of this in a way that removes the crossing $\gamma$ while, essentially, reflecting the part of $E$ on the same side as $Y$ across $Y$.  See Figures \ref{fig:Nugatory_Sphere} and \ref{fig:nugatory_sphere_parts}.  
\end{remark} 

\begin{figure}
    \begin{center}
        \labellist 
        \large
        \pinlabel{{\color{Fuchsia}     \scalebox{-1}[-1]{$T_1$}}} at 260 60
        \pinlabel{{\color{Fuchsia}\scalebox{1}[-1]{$T_1$}}} at 295 60
        \pinlabel{{\color{Fuchsia}     \scalebox{-1}[1]{$T_1$}}} at 60 55
        \pinlabel{{\color{Fuchsia}\scalebox{1}[1]{$T_1$}}} at 95 55
        \endlabellist
        \includegraphics[height=1.5in]{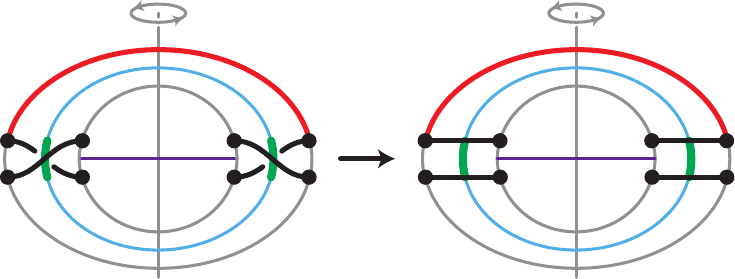}
        \caption{Removing a nugatory crossing detected by a sphere}
        \label{fig:nugatory_sphere_parts}
    \end{center}
\end{figure}

We end this section with some questions.

\begin{restatable}{question}{TaitOne}\label{Q:Tait_1}
    Let $E\subset S^3$ be a connected ribbon-alternating diagram of knotted surface $K\subset S^4$. If $E$ has no nugatory crossings, must $E$ have the minimal number of crossing circles among all pseudo-ribbon diagrams of $K$?
\end{restatable}

\begin{restatable}{question}{TaitFlypes}\label{q:flypes}
    Let $E_1,E_2\subset S^3$ be connected ribbon-alternating diagrams of the same knotted surface $K\subset S^4$, and assume that neither $E_i$ has any nugatory crossings. 
    \begin{enumerate}[label=(\arabic*)]
        \item Must $E_1$ and $E_2$ be related by flypes and the moves described in Procedure \ref{proc:unknot}?
        \item  If both $E_1$ and $E_2$ have the form described in Theorem \ref{thm:alternating}, must $E_1$ and $E_2$ be related by flypes alone?
    \end{enumerate}
\end{restatable}

\begin{restatable}{question}{nonsplit}\label{q:nonsplit}
    Let $E\subset S^3$ be an ribbon-alternating diagram of knotted surface $K\subset S^4$. If $E$ is connected and has no nugatory crossings, must $K$ be nonsplit?
\end{restatable}

\begin{remark} It is instructive to compare Question \ref{q:nonsplit} to the classical situation: a classical alternating link is nonsplit if and only if it has a connected diagram---no need to mention nugatory crossings---but a classical alternating tangle is nonsplit (meaning that every diagram is connected) if and only if it has a {\it reduced} alternating diagram. Split knotted surfaces, however, can have connected ribbon-alternating diagrams: for example, spin any connected alternating diagram of any split (e.g., trivial) tangle.
\end{remark}

\begin{restatable}{question}{connectsum}\label{Q:Connect_Sum}
    Let $E\subset S^3$ be a connected ribbon-alternating diagram of a knotted surface $K\subset S^4$. If $E$ has no nugatory crossings and does not decompose as a diagrammatic connect sum, does it follow that $K$ does not decompose as a connect sum (say, along any standard 3-sphere)?
\end{restatable}

\section{Essential spanning solids}\label{sec:essential}

In this last section, we define a notion of essential spanning solids in analogy with the classical case.

\begin{definition}\label{def:pi1essential}
Let $M\subset S^4$ be a spanning solid for a {knotted surface} $K\subset S^4$. 
Consider the inclusion map $j:\mathring{M}\hookrightarrow S^4\setminus K$, and the restriction $\iota:\partial(\nu M)\hookrightarrow S^4\setminus\mathring{\nu}M$ of $j$ to $\partial(\nu M)$.

We say that $M$ is \emph{$\pi_1$-injective} if every circle $\gamma$ with $[\gamma]\in \ker(\iota_\star)$ intersects $K$ unless $[\gamma]=0\in\pi_1(M)$.
Equivalently, $M$ is $\pi_1$-injective if and only if $\iota$ induces an injection $\iota_*$ of fundamental groups.

We say that $M$ is \emph{$\pi_1$-essential} if every circle $\gamma$ with $[\gamma]\in \ker(\iota_*)$ intersects $K$ at least twice unless $[\gamma]=0\in\pi_1(M)$.
Note that $M$ is $\pi_1$-essential if and only if it is $\pi_1$-injective and no circle $\gamma$ with $[\gamma]\in \ker(\iota_*)$ intersects $K$ exactly once. 
\end{definition}

The Whitney approximation theorem implies that any simple closed curve $\gamma$ with $[\gamma]\in\ker(j_*)$ is homotopic in $\partial(\nu M)$ to a smooth curve that bounds a smooth properly immersed disk in $S^4 \setminus M$.  

\begin{remark}
    Note that if $M$ is orientable then it is $\pi_1$-injective if and only if it is $\pi_1$-essential. This follows from the fact that $\partial(\nu M)\cong M\cup_{K}(-M)$, and so no curve in $\partial(\nu M)$ may intersect $K$ exactly once. On the other hand, when $M$ is non-orientable then $\nu(M)$ is a twisted $I$-bundle over $M$ and $K$ is nonseparating in $\partial(\nu M)$. Thus, when $M$ is non-orientable, these two notions may not agree.
\end{remark}

Ozawa proved that any Murasugi sum of $\pi_1$-essential spanning surface in $S^3$ is $\pi_1$-essential \cite[Lemma 3.4]{ozawa11}.  We now adapt this fact to spanning solids in $S^4$.

\begin{theorem}
\label{thm:essential} 
   Any Murasugi sum $M=M_0*M_1$ of $\pi_1$-essential spanning solids is $\pi_1$-essential.
\end{theorem}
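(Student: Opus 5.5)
We adapt Ozawa's proof of \cite[Lemma 3.4]{ozawa11} (itself modeled on Gabai's argument for Murasugi sums) to one dimension higher. Write $S^4=B^4_0\cup_Q B^4_1$, where $Q$ is the summing 3-sphere, $M\cap B^4_i=M_i$ and $M\cap Q=f(B)$ is the identified colored 3-ball. Suppose $M$ is not $\pi_1$-essential. Then, by Definition \ref{def:pi1essential} and the remark after it (Whitney approximation), there is a loop $\gamma\subset\partial(\nu M)$ that meets $K=\partial M$ in at most one point transversely and is nontrivial in $\pi_1(M)$. There is also a smooth immersed disk $\Delta\colon D^2\to S^4\setminus\mathring\nu M$ with boundary $\gamma$. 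Among all such pairs $(\gamma,\Delta)$ we choose one minimizing a complexity, for instance the number of components of $\Delta^{-1}(Q)$ together with the number of points of $\gamma\cap\partial(\nu B)$. After a general-position perturbation, $\Delta$ is transverse to $Q$. Because $Q\setminus\mathring\nu M$ is a 3-ball minus a neighborhood of the 3-ball $f(B)$, it is a thickened 2-sphere, hence simply connected. So $\Delta^{-1}(Q)$ consists of circles and of properly embedded arcs with endpoints on $\gamma^{-1}(\partial\nu f(B))$.

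\textbf{Step 1: remove circles.} An innermost circle of $\Delta^{-1}(Q)$ maps into the simply connected region $Q\setminus\mathring\nu M$. We therefore cut out the disk it bounds and replace it with a disk lying in $Q\setminus\mathring\nu M$, pushed slightly off $Q$. This lowers the complexity. This step is routine.

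\textbf{Step 2: arcs.} If no arcs remain, then $\gamma$ and $\Delta$ lie entirely in one side, say $B^4_i$. In that case $\gamma$ can be viewed on $\partial\nu M_i$ and $\Delta$ lies in $B^4_i\setminus\nu M_i\subset S^4\setminus\nu M_i$. The essentiality of $M_i$ then forces $\gamma$ to be trivial in $\pi_1(M_i)$, hence trivial in $\pi_1(M)$, a contradiction. Otherwise we take an outermost arc $\alpha$ of $\Delta^{-1}(Q)$. It cuts off a subdisk $\Delta'$ lying in one side $B^4_i$, with $\partial\Delta'=\alpha\cup\beta$, where $\beta\subset\gamma$. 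The image of $\alpha$ lies in $Q\setminus\mathring\nu M$, and its endpoints lie on $\partial\nu f(B)$. The key point is to use the coloring. Viewed from $B^4_i$, the relevant part of $\partial\nu f(B)$ is the double of $f(B)$, pushed to one side. Since $f(B)$ is a 3-ball, the arc $\alpha$ can be homotoped within $\partial(\nu M)\cap Q$ relative to its endpoints, giving an arc $\alpha'$ that crosses $K$ in at most one point, through the part $X$ or $Y$ lying on $\partial M_i$. The loop $\beta\cup\alpha'$ lies in $\partial\nu M_i$ and bounds $\Delta'\cup(\text{homotopy})$ in the complement of $M_i$. We apply essentiality of $M_i$ to this loop, splitting into cases according to how many points of $K$ the loop $\beta\cup\alpha'$ contains, which is $0$, $1$, or $2$.

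If the loop has at most one intersection point with $K_i$, essentiality shows that $\beta$ is homotopic rel endpoints, inside $M_i$ (transported to $\partial\nu M$), to an arc in $\nu f(B)$. This homotopy lets us isotope $\gamma$ across $Q$, removing $\alpha$ without increasing the number of intersections with $K$ and without changing the class of $\gamma$ in $\pi_1(M)$. That lowers the complexity, which contradicts minimality.

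The main obstacle is the remaining case, where $\beta\cup\alpha'$ must meet $K_i$ twice. In that case we expect to exploit the complementary coloring: points of $K_i$ on $\partial f(B)$ lie in $X$ for $M_0$ but in $Y$ for $M_1$. We would first try to choose $\alpha'$ on the other side of $f(B)$, so that the count drops to at most one, using that $\gamma$ has at most one point of $K$ in total. We expect to need to combine outermost arcs from both sides, as in Gabai's argument. We would also need to handle non-orientable summands carefully, where $K$ fails to separate $\partial\nu M_i$.
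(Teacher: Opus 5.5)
\textbf{There is a genuine gap: the ``remaining case'' you leave open is the heart of the proof, and an outermost-arc strategy cannot close it.}

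Your framework matches the paper's: a transverse immersed disk, a complexity counting intersections with the summing sphere, removal of circles, and the no-arc case. (Minor point: $V=Q\setminus\mathring\nu M$ is a 3-ball, not a thickened 2-sphere, since $Q$ is a 3-sphere; it is simply connected either way.)

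To see the problem, put a marker at each endpoint of an arc of $\Delta\cap V$, pointing to the side on which $\gamma$ runs along the copy of $f(B)$. Add one more marker at the point of $\gamma\cap K$, if there is one. Take a region of $\Delta\cut V$ lying in $B^4_i$. To get a disk missing $M_i$, you must push its arcs through $B^4_{1-i}$ onto the copy $U_{1-i}$ of $f(B)$ on that side. The resulting loop then meets $K_i$ exactly once for each marker on the region.

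An outermost subdisk whose two endpoint markers both point into $B^4_i$ therefore gives a loop meeting $K_i$ twice, and essentiality says nothing about such loops. Re-choosing $\alpha'$ does not help:
\begin{enumerate}
\item Closing up through the $B^4_i$-side copy of $f(B)$ instead changes the class in $S^4\setminus M_i$ by a loop around an arc of $f(B)$ joining two points of $\partial M_i$. That loop is generally nontrivial, as one sees one dimension down for a co-core arc of a Hopf band.
\item Arcs in $\partial V$ can be forced to cross $K$ twice, e.g.\ between the two disks of $X$ in a 1-striped ball.
\end{enumerate}

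The missing idea is the Gabai--Ozawa pigeonhole count. The $\ell$ arcs cut $\Delta$ into $\ell+1$ regions, which carry at most $2\ell+1$ markers in total, each marker belonging to exactly one region. So some region $D_0$ has at most one marker. This region is generally not outermost and may have several arcs of $\Delta\cap V$ on its boundary. Pushing all of its arcs at once into $U_{1-i}$, through $B^4_{1-i}$, gives a disk in $S^4\setminus M_i$ whose boundary meets $K_i$ at most once. Essentiality of $M_i$ then makes that loop trivial in $M_i$.

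\textbf{Your reduction step also fails as stated.} A null-homotopy in $M_i$, or a homotopy of $\beta$ in $\partial(\nu M_i)$, can pass around the part of $\partial f(B)$ lying in $\partial M_i$, across which $M_{1-i}$ continues. Likewise the push through $B^4_{1-i}$ can meet $M_{1-i}$. So neither yields a homotopy in $\partial(\nu M)$ or a new disk in $S^4\setminus M$, and you cannot simply ``isotope $\gamma$ across $Q$.''

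The paper handles this as follows:
\begin{enumerate}
\item Take a singular disk $F\subset M_0$ bounded by the new loop, with $|F\pitchfork\partial U|$ minimal.
\item By minimality of $\Delta$, an outermost subdisk of $F\cut\partial U$ has its outer arc in $U_1$, and that arc is homotopic to an arc $\alpha'$ of $\Delta\cap V$.
\item Find a disk $F_0'\subset V$ cobounded by $\alpha'$ and an arc of $\partial U$, and boundary-compress $\Delta$ along $F_0'$.
\end{enumerate}
The two resulting disks meet $V$ in fewer arcs and have boundaries meeting $K$ at most once. By minimality both boundaries are trivial, hence so is $\gamma$, a contradiction.
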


Rather than prove this now, we prove it below, together with the more general  Theorem \ref{thm:essential Generalized}, since the proofs are almost identical. We note the following corollary of Theorem \ref{thm:essential} (and Corollary \ref{cor:spin_tangle_surface}).

\begin{corollary}
    If $G$ is a sunrise tree in which each vertex has weight at least 2, then the arborescent solid described by $G$ is $\pi_1$-essential.
\end{corollary}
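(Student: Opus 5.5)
The plan is to deduce the corollary from Theorem \ref{thm:essential} by induction on the number of vertices of $G$. The base case is that each standard building block $Y_n$ with $n\ge 2$ is $\pi_1$-essential. The induction step is immediate: by Construction \ref{constr:arborescent}, $M_G$ is an iterated Murasugi sum of the solids $Y_{n_v}$ along 1- and 2-stripe patterns. Removing a leaf $v$ of $G$ exhibits $M_G$ as a Murasugi sum $M_{G\setminus v} * Y_{n_v}$ along the 3-sphere $Q_i$ corresponding to the leaf edge. The summand $M_{G\setminus v}$ is again a standard arborescent solid from a sunrise tree with all weights at least $2$. Hence, once the base case is known, Theorem \ref{thm:essential} applies at each step. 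So all the work lies in the base case.

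For the base case, I would use Corollary \ref{cor:spin_tangle_surface}, which presumably says that the spin of a $\pi_1$-essential spanning surface (for a tangle or link, spun about a boundary component) is $\pi_1$-essential. Granting this, it suffices to know the classical fact that the unknotted annulus or M\"obius band $F_n$ spanning $T(2,n)$ is $\pi_1$-essential exactly when it has at least two half-twists, i.e.\ $n\ge 2$.

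\begin{itemize}
\item For $n=1$, $F_1$ is a trivial M\"obius band. A small loop around the boundary meets $\partial F_1$ once and bounds a disk, so $F_1$ is not essential. This is why the hypothesis requires weight $\ge 2$.
\item For $n\ge 2$, $\partial F_n$ is a nontrivial torus link. The complement of $F_n$ is a handlebody of genus one, and the boundary of $\nu F_n$ wraps around it $n$ times, as the $(2,n)$ or $n$-times-winding curve. This gives $\pi_1$-injectivity.
\item Also for $n\ge 2$, no loop meeting $\partial F_n$ exactly once can bound a disk in the complement. Such a loop would force $\partial F_n$ to be split or to contain a trivial M\"obius band summand, which fails.
\end{itemize}

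If Corollary \ref{cor:spin_tangle_surface} is not quite stated in this form, I would instead check the base case directly. By van Kampen on $S^4 = (B^3\times S^1)\cup(S^2\times D^2)$, the complement of $Y_n$ has fundamental group equal to that of the complement of $F_n^\circ$ in $B^3$, namely $\mathbb{Z}$. The inclusion of the boundary of $\nu Y_n$ then factors through the corresponding classical map. The $S^1$ spin direction dies in $S^2\times D^2$ on both sides, so it contributes nothing to the kernel.

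The main obstacle is the base case, and specifically the "meets $K$ exactly once" clause for the non-orientable blocks $Y_n$ with $n$ odd. There, the boundary of $\nu Y_n$ is a twisted double, and one must check that loops crossing $K$ once are not nullhomotopic in the exterior. I would handle this by noting that such a loop maps to an odd multiple of the generator of $\pi_1(\text{exterior})\cong\mathbb{Z}$ times $n$, which is nonzero.
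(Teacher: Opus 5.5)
Your route is the paper's. The corollary is recorded there as an immediate consequence of Theorem \ref{thm:essential} together with Corollary \ref{cor:spin_tangle_surface}, and that corollary says exactly what you presumed. The base case the paper relies on is that for $n\ge 2$ the surface $F_n$ is a checkerboard surface of the reduced alternating $n$-crossing diagram of $T(2,n)$, hence $\pi_1$-essential by Ozawa's lemma (cf.\ Proposition \ref{prop:ess_cb}). Your leaf-removal induction is the natural way to iterate Theorem \ref{thm:essential}.

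The one step you should not keep is the computation in your last paragraph for the ``meets $K$ exactly once'' clause. A loop crossing $K$ once does not map to an odd multiple of $n$ times the generator. If it did, the same reasoning would make $Y_1$ essential, contradicting your own first bullet. Here is the correct computation, after reducing to the classical picture via Proposition \ref{prop:pi_1_induced}:
\begin{itemize}
\item On the torus $\partial\nu F_n$, let $\mu$ be a meridian of the core of $F_n$; it generates $\pi_1$ of the complementary solid torus. Let $\lambda$ be a Seifert longitude; it is nullhomotopic there.
\item Then $\partial F_n$ represents $2\lambda+n\mu$.
\item A loop $a\lambda+b\mu$ meeting $\partial F_n$ once satisfies $an-2b=\pm1$, and it maps to $b\in\mathbb{Z}$.
\item If $b=0$ then $an=\pm1$, which forces $n=1$.
\end{itemize}
Either this computation or the citation of Ozawa closes the base case. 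It also replaces your heuristic third bullet, which as stated would need a Dehn's-lemma step to pass from a singular disk to a trivial M\"obius band summand. A minor point: for even $n$, the push-offs of the core of the annulus map to $n/2$ times the generator, not $n$ times. This is still nonzero, so injectivity is unaffected.
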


\subsection{Generalized Murasugi sum}\label{S:Generalized}

\begin{definition}\label{def:generalized_murasugi_sum}
    Let $M\subset S^4$ be a spanning solid. We say that $M$ \emph{decomposes as a generalized Murasugi sum} if there is a standard 3-sphere $Q\subset S^4$ such that  $U=Q\cap M$ and $V=Q\cut U$ are each 3-dimensional 1-handlebodies, thus inducing a Heegaard splitting of $Q$, and (generically) $\partial U\cap \partial M$ is a 1-manifold. If $B^4_0$ and $B^4_1$ are the components of $S^4\cut Q$, we write $M_i=M\cap B^4_i$ and denote $M=M_0\hat{*}M_1$.

    Conversely, let $M_0\subset S^4_0$ and $M_1\subset S^4_1$ be spanning solids with $\partial M_i=K_i$. Suppose that there are 4-balls $B^4_i\subset S^4_i$ such that $U_0=\partial B^4_0\cap M_0$ and $U_1=\partial B^4_1\cap M_1$ are 3-dimensional 1-handlebodies. Assume also that there is a diffeomorphism $f:U_0\to U_1$ such that 
    $f(\partial{U}_0\cap\partial M_0)\cup(\partial{U}_1\cap \partial M_1)=\partial U_1$. Finally, assume 
    $f(\partial{U}_0\cap\partial M_0)\cap\partial{M}_1$ is a 1-manifold
    and $f$ extends to an orientation-reversing diffeomorphism $F:\partial B^4_0\to\partial B^4_1$.
    
    Then we can glue $M_0$ to $M_1$ via $f$ in the 4-sphere $B^4_0\cup_F B^4_1$. We write $U=f(U_1)=U_2$, $Q=f(\partial B^4_0)=\partial B^4_1$ and $V=Q\setminus\mathring{U}$. If $V$ is a handlebody, we call the resulting solid $M$ a \emph{generalized Murasugi sum} of $M_0$ and $M_1$ along $Q$, and denote it $M=M_0\hat{*}M_1$.  
\end{definition}

Note that Murasugi sum $M_0*M_1$ is the special case of generalized Murasugi sum $M_0\hat{*}M_1$ in which $U=Q\cap (M_0\hat{*} M_1)$ is a 3-ball. Thus, the following theorem statement is a generalization of Theorem \ref{thm:essential}.

\begin{theorem}
\label{thm:essential Generalized} 
    Any generalized Murasugi sum $M=M_0\hat{*}M_1$ of $\pi_1$-essential spanning solids is $\pi_1$-essential.
\end{theorem}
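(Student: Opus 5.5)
We adapt Ozawa's argument for Murasugi sums of surfaces in $S^3$ \cite[Lemma 3.4]{ozawa11}, arguing by contradiction and minimizing intersections with the splitting sphere $Q$. Suppose $M=M_0\hat{*}M_1$ is not $\pi_1$-essential. Then there is a loop $\gamma\subset\partial(\nu M)$ with $[\gamma]\in\ker(\iota_*)$ that meets $K=\partial M$ in at most one point and is not nullhomotopic in $M$. (Here $\gamma$ is viewed in $M$ through the projection $\partial(\nu M)\to M$.) By the Whitney approximation remark following Definition \ref{def:pi1essential}, $\gamma$ bounds a smooth, generic, properly immersed disk $\Delta\subset S^4\setminus\mathring{\nu}M$.

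Make $\Delta$ transverse to $Q\setminus\mathring{\nu}M$. Away from $\nu M$, $Q$ meets the exterior of $M$ in a thickening of the handlebody $V$. Moreover, $\partial(\nu M)\cap Q$ is a twofold cover of $U$, glued along $\partial U\cap\partial M$. Among all such pairs $(\gamma,\Delta)$, choose one minimizing first the number of points of $\gamma\cap\partial(\nu U)$, and then the number of components of $\Delta\cap Q$.

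\emph{Step 1: remove closed curves of $\Delta\cap Q$.} A closed curve $c$ of $\Delta\cap Q$ lies in $Q\cut\nu U\cong V$. A 4-dimensional general-position argument would let us homotope $\Delta$ off all such curves, but $\pi_1(V)$ is not trivial, so we do not rely on that. Instead, take an innermost $c$. It bounds a subdisk of $\Delta$ lying in one ball $B^4_i$, so $c$ is nullhomotopic in $B^4_i\setminus\mathring{\nu}M_i$. We then want to show $c$ is nullhomotopic in $V$ itself, by pushing $c$ into $\partial V\subset\partial(\nu M_i)$ and applying $\pi_1$-essentiality of $M_i$. This step needs care, and we may need to allow $\gamma$ to change by a homotopy in $\partial(\nu M)$. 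Once $c$ bounds a disk in $Q$ (after homotopy), we cut and paste $\Delta$ to reduce the number of components of $\Delta\cap Q$.

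\emph{Step 2: handle arcs.} After Step 1, $\Delta\cap Q$ consists of arcs with endpoints on $\gamma\cap\partial(\nu U)$. If there are no arcs, $\gamma$ lies in a single side, say in $\partial(\nu M_i)$ up to the half over $\partial U$. Then $\pi_1$-essentiality of $M_i$ shows $\gamma$ is trivial in $\pi_1(M_i)$, hence in $\pi_1(M)$, a contradiction. Otherwise, take an outermost arc $a$. It cuts off a subdisk $\Delta'\subset B^4_i$ whose boundary is $a\cup\beta$, with $\beta\subset\gamma\cap\partial(\nu M_i)$. Push $a$ slightly into $\partial(\nu M_i)$ along the twofold cover of $U$, so that $\partial\Delta'$ becomes a loop $\gamma'\subset\partial(\nu M_i)$ that is null in the exterior of $M_i$. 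Track how $K_i$ meets $\gamma'$: the points of $\gamma'\cap K_i$ come from $\beta\cap K$ together with crossings over $\partial U\cap\partial M_i$. If $|\gamma'\cap K_i|\le1$, then $\pi_1$-essentiality of $M_i$ shows $\gamma'$ is trivial in $\pi_1(M_i)$. Homotoping $\beta$ across this nullhomotopy, rel endpoints, into $\nu U$ and then to the other side reduces $|\gamma\cap\partial(\nu U)|$, contradicting minimality. The condition $|\gamma\cap K|\le1$ must be distributed correctly: outermost arcs can be chosen so that $\beta$ avoids the single point of $\gamma\cap K$, since there are at least two outermost arcs whenever there are any arcs.

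\emph{Main obstacle.} The hard part is the bookkeeping at $\partial U\cap\partial M$. When $a$ is pushed into $\partial(\nu M_i)$, it may cross $K_i$ along the 1-manifold $\partial U\cap\partial M_i$. The two sides $X$ and $Y$ (or $U\cap\partial M_0$ and $U\cap\partial M_1$) play complementary roles: an arc of $\partial(\nu U)$ from the $M_0$ side passes through $\partial M_1$-parts. We would first try to choose the push of $a$ within the cover of $U$ to avoid $\partial U\cap\partial M_i$ altogether. This is possible when $U$'s coloring allows it, since $a$ runs between the two sheets over $U$ only through $\partial U\cap\partial M_{1-i}$. Where that fails, we would instead choose the push across the complementary region, guaranteeing at most one intersection with $K_i$. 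The handlebody hypothesis on $V$ and $U$ is what we expect to need in Step 1.
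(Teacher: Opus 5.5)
Your Step 2 fails at exactly the point your ``main obstacle'' paragraph leaves open. An arc $a$ of $\Delta\cap V$ lies in $V$ and touches $\partial U$ only at its endpoints, so it cannot be pushed slightly into $\partial(\nu M_i)$. Note that $\partial(\nu M)\cap Q$ is essentially the surface $\partial U$, not a double of $U$; the two copies of $U$ in $\partial(\nu M)$ lie just off $Q$, one in each ball.

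You have to homotope $a$ rel endpoints across $V$ into $\partial V$. Then, through $B^4_{1-i}$ where $M_i$ is absent, you move it onto the copy of $U$ facing $B^4_{1-i}$. This is where the handlebody hypothesis on $V$ is used, not in Step 1.

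Once the cap is forced onto that copy, you cannot choose the intersections of $\gamma'$ with $K_i$. At an endpoint of $a$ near $\partial U\cap\partial M_i$, the arc $\beta$ arrives along the copy of $U$ facing $B^4_i$. Near there, the two copies of $U$ in $\partial(\nu M_i)$ meet only along $K_i$, so $\gamma'$ crosses $K_i$ at that endpoint. An outermost subdisk can have both endpoints of this type. Then $|\gamma'\cap K_i|=2$, $\pi_1$-essentiality of $M_i$ says nothing, and your minimality does not rule this out.

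The missing idea is the paper's marker count. Put a marker at each endpoint of an arc of $D\cap V$, on the side where $\gamma$ runs along $U$, plus one at the point of $\gamma\cap K$ if there is one. The markers on a region of $D\cut V$ are exactly the points where its capped-off boundary will cross $K_i$. Since $2\ell$ or $2\ell+1$ markers are spread over $\ell+1$ regions, some region $D_0$ carries at most one. That region need not be outermost, so all arcs on its boundary must be capped simultaneously in the opposite ball. This gives a loop $\gamma_0\subset\partial(\nu M_0)$ that bounds a disk in the complement of $M_0$ and meets $K_0$ at most once. By essentiality it then meets $K_0$ not at all and is nullhomotopic in $M_0$.

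Your last move is also too quick. The nullhomotopy of $\gamma'$ lives in $M_i$. Its lift to $\partial(\nu M_i)$ can run over the copy of $U$ facing $B^4_{1-i}$ and across $K_i$ along $\partial U\cap\partial M_i$, where $\partial(\nu M_i)$ and $\partial(\nu M)$ differ. So you cannot just slide $\beta$ across it.

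The paper instead takes a disk $F\subset M_0$ bounded by $\gamma_0$ that meets $\partial U$ minimally, and an outermost piece $F_0$ of $F\cut\partial U$. If the outer arc of $F_0$ lies on $\gamma$, this contradicts minimality of $|\partial D\pitchfork\partial U|$. If it lies on one of the caps, one obtains a disk $F_0'\subset V$ along which $D$ is boundary-compressed into two disks, each meeting $V$ in fewer arcs; minimality applied to both gives the contradiction. This second case is why the paper minimizes $|D\pitchfork V|$ first. Your complexity puts crossings of $\gamma$ first and has no boundary-compression step, so it leaves this case uncovered.

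Your Step 1 on closed curves of $\Delta\cap V$ is also unresolved. The paper's proof works directly with the arcs, treating $D\cut V$ as $\ell+1$ disks, so the substance of the argument lies in the two points above.
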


Since Theorem \ref{thm:essential Generalized} implies Theorem \ref{thm:essential}, we will only prove Theorem \ref{thm:essential Generalized}.

\begin{proof}[Proof of Theorem \ref{thm:essential Generalized}]
    Let $U=M_0\cap M_1\subset M$ be the 3-manifold
    along which $M_0$ and $M_1$ have been plumbed, and let $V$ be a handlebody in $S^4\cut M$ with $\partial V=\partial U$, such that $U\cup V$ is a standard 3-sphere. Note that if we have a non-generalized Murasugi sum, then $U$ and $V$ are 3-balls. Decompose $S^4$ into two 4-balls $B^4_0\cup B^4_1$ along the standard 3-sphere $U\cup V$, so $B^4_0\cap B^4_1=U\cup V$ and $M\cap B^4_i=M_i$.
    
    Write $K=\partial M$ and $K_i=\partial M_i$. Note that $S^4\cut M=(B^4_0\cut M_0)\cup_V(B^4_1\cut M_1)$, and that $\partial(S^4\cut M)$ contains two copies of $U$. We use $U_i$ to denote the copy of $U$ in $\partial B^4_i\cut M_i$. See Figure \ref{Fi:1D_Murasugi_Sum}.
    
    \begin{figure}[ht]
        \centering
        \labellist \small \hair 4pt
        \pinlabel {$=$} at 145 223
        \pinlabel {$*$} at 310 223
        \pinlabel {$B^4_0$} at 65 250
        \pinlabel {$B^4_1$} at 65 150
        \pinlabel {$M_0$} at 220 200
        \pinlabel {$M_1$} at 400 150
        \pinlabel {{\color{ForestGreen} $U$}} at 65 180
        \pinlabel {{\color{CornflowerBlue} $V$}} at 115 223
        \pinlabel {{\color{ForestGreen} $U_0$}} at 65 68
        \pinlabel {{\color{ForestGreen} $U_1$}} at 65 43
        \endlabellist
        \includegraphics[width=.8\textwidth]{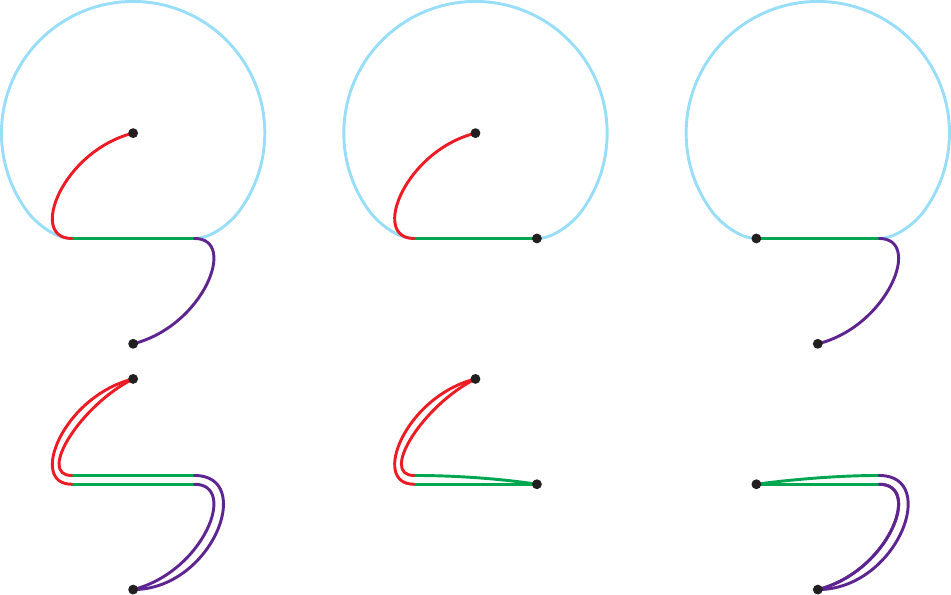}
        \caption{A schematic picture of a generalized Murasugi sum in $S^4$, labeled for the proof of Theorem  \ref{thm:essential Generalized}}
        \label{Fi:1D_Murasugi_Sum}
    \end{figure}
    
    Assume for contradiction that $M$ is not $\pi_1$-essential. Then, by definition there is a circle $\gamma\subset \partial(\nu M)$ with $|\gamma\pitchfork K|\leq 1$ that bounds a smooth properly immersed disk $D\subset S^4\cut M$ but bounds no disk in $\partial(\nu M)$. 
    If $\gamma$ intersects $K$ once, we assume that it does so in a single point which does not lie on $\partial V$. 
    
    Among all possible choices of such $\gamma$ and $D$, assume that these have been chosen so that the pair $(|D\pitchfork V|, |\partial D\pitchfork\partial U|)$ is minimal, lexicographically. Let $f:D^2\to D$ denote the immersion; we assume generically that any self-intersections of $D$ are disjoint from $V$.

\begin{figure}[ht]
    \centering
    \includegraphics[width=90mm]{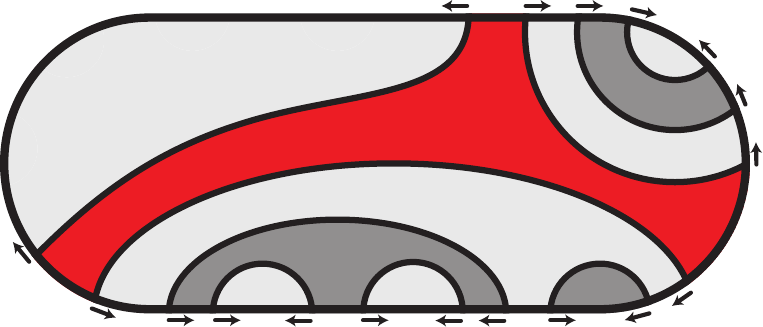}
    \caption{An example of the disk $D$, its arcs of intersection with the handlebody $V$, and markers at the endpoints of these arcs. Here, just one component of $D\cut V$, colored red, has fewer than two markers.}
    \label{fig:OM}
\end{figure}

    Let $\ell\in\mathbb{N}$ be the number of arc components of $V\cap D$. Then $V$ cuts $D$ (more precisely $f^{-1}(V)$ cuts $D^2$) into $\ell+1$ disks.  Decorate each of the $2\ell$ endpoints of $V\cap D$ on $\gamma$ with an arrow (``marker") that points in the direction where $\gamma$ runs along $U$.  
    If $\gamma$ intersects $K$, add a marker at this point on $\gamma$ as well.  There are either $2\ell$ or $2\ell+1$ markers, each of which lies on the boundary of exactly one of the $\ell+1$ disks of $D\cut  V$, so there is a disk component $D_0$ of $D\cut V$ with fewer than two markers. See Figure \ref{fig:OM}.  Assume without loss of generality that $D_0\subset B^4_0$. 
    
    \begin{figure}[!ht]
        \centering
        \labellist
        \pinlabel{$B^4_0$} at -50 700
        \pinlabel{$B^4_1$} at -50 0
        \endlabellist
        \includegraphics[width=140mm]{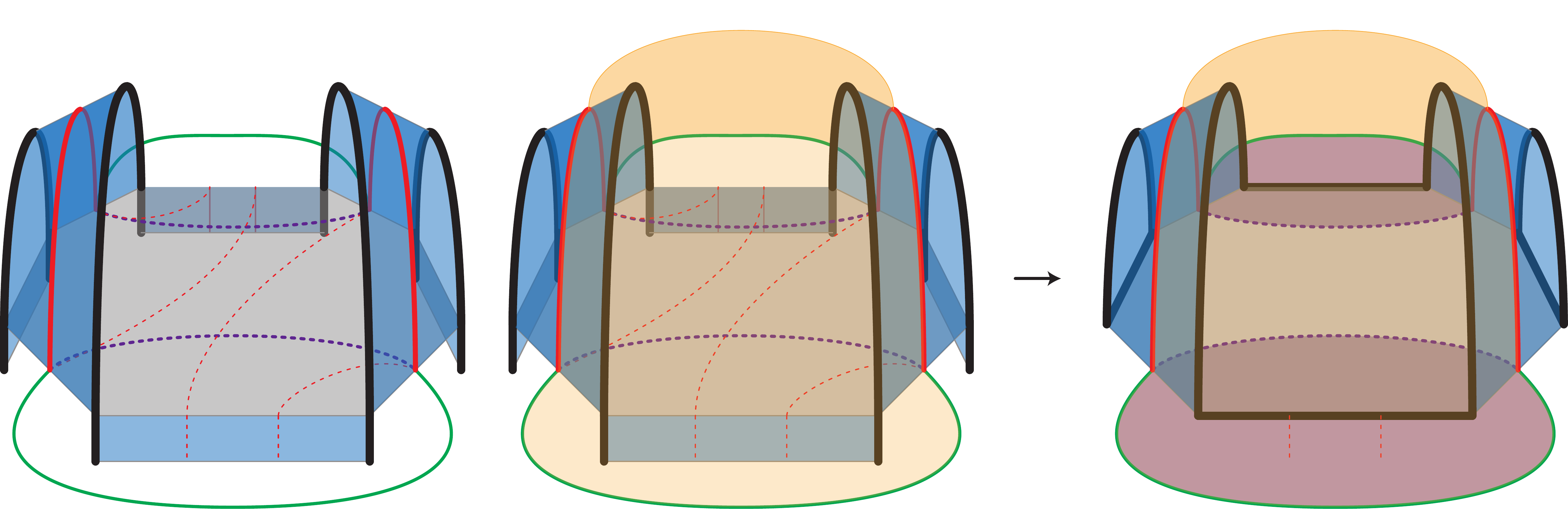}
        \caption{In this picture for the situation down a dimension, $U$ is an octagon, $U\cup V$ is a horizontal plane with $V$ not pictured, and $D_0$, shown in orange with red (highlighted parabolas) and green (horizontal arcs) boundary, intersects $V$ in two arcs, each of which is parallel through a disk in $B^4_1$ to n arc in $U$}
        \label{fig:D_0}
    \end{figure}
    
    Each arc component of $D_0\cap V$ is parallel through a homotopy rel.\ boundary 
    in $B^4_1$ (ignoring $M\cap\mathring{B}^4_1$) to an arc in $U_1$. (This is where we use the hypothesis that $V$ is a handlebody; also see Remark \ref{rem:more_generalized_murasugi_sum}.) Note that during the homotopy, each arc traces out an immersed disk. The union of all of these disks in $B^4_1$ with $D_0$ is a properly immersed disk $E_0$ in $S^4\cut M_0$. (See Figure \ref{fig:D_0} for a schematic.) Because $\partial D_0$ contains at most one marker, the circle $\gamma_0=\partial E_0$ either is disjoint from $K_0$ or intersects $K_0$ in a single point. Because $M_0$ is $\pi_1$-essential, the latter is impossible, and 
    $\gamma_0$ must be homotopically trivial in $M_0$.

\begin{figure}[!ht]
    \centering
        \labellist
        \pinlabel{$B^4_0$} at 330 150
        \pinlabel{$B^4_1$} at 330 25
        \endlabellist    \includegraphics[width=140mm]{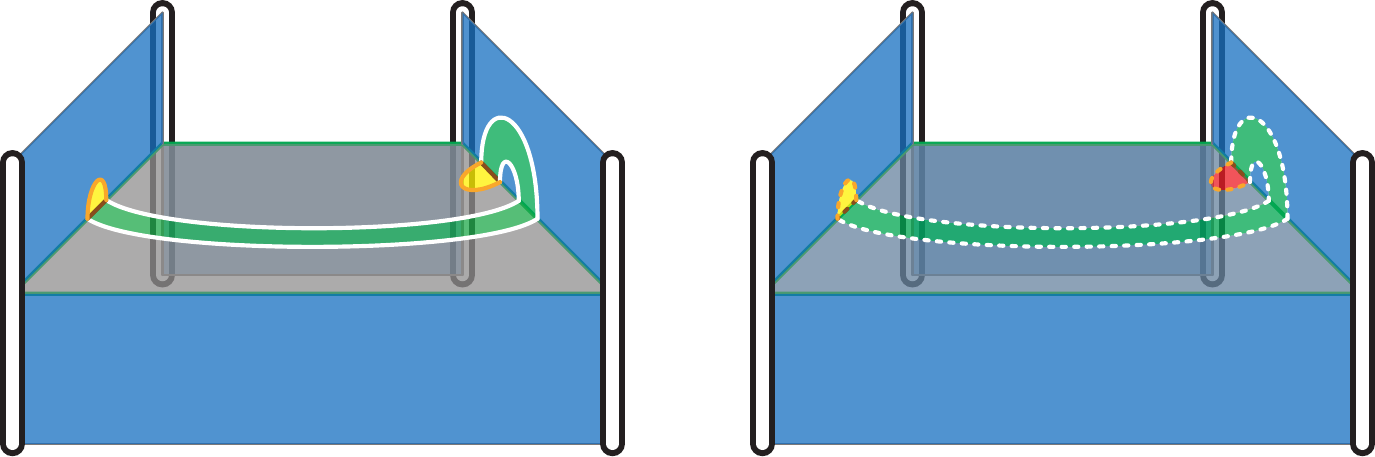}
    \caption{Schematics of the possibilities for the disk $F\subset M_0$, shown green, yellow, and red; 
    An outermost disk $F_0$ of $F\cut \partial U$, shown yellow as on the left/right of the left panel and the left of the right panel (immediately removable by isotopy) or red as in the right of the right panel (takes more work). The arcs $\alpha$ and $\beta$ lie on the boundary of $F_0$.}
    \label{fig:Trivial_Disk_F}
\end{figure}

    Consider an immersed disk $F\subset M_0$ with $\partial F=\gamma_0$.  Choose $F$ so as to minimize $|F\pitchfork \partial U|$.  Consider an outermost disk $F_0$ of $F\cut\partial U$. See Figure \ref{fig:Trivial_Disk_F}.  The boundary of $F_0$ is a union of arcs $\alpha, \beta$, where $\alpha$ is a component of $\gamma_0\cut \partial U$ and $\beta\subset \partial U\cut \gamma_0$.  We cannot have $\alpha\subset \partial D$ due to the minimality of $|D\pitchfork V|$ and $|\partial D\pitchfork\partial U|$, so $\alpha$ 
    and $F_0$ must lie in $U$.  More precisely, since $\alpha\subset\gamma_0\subset M_0$, $\alpha$ lies in $U_1$ and therefore, by minimality, 
    is homotopic rel.\ boundary to an arc component $\alpha'$ of $D\cap V$. The circle $\alpha'\cup\beta$ bounds a disk $F'_0$ in $V$.

\begin{figure}[!ht]
    \centering
    \labellist
        \pinlabel{$B^4_1$} at 330 150
        \pinlabel{$B^4_0$} at 330 25
        \endlabellist 
    \includegraphics[width=140mm]{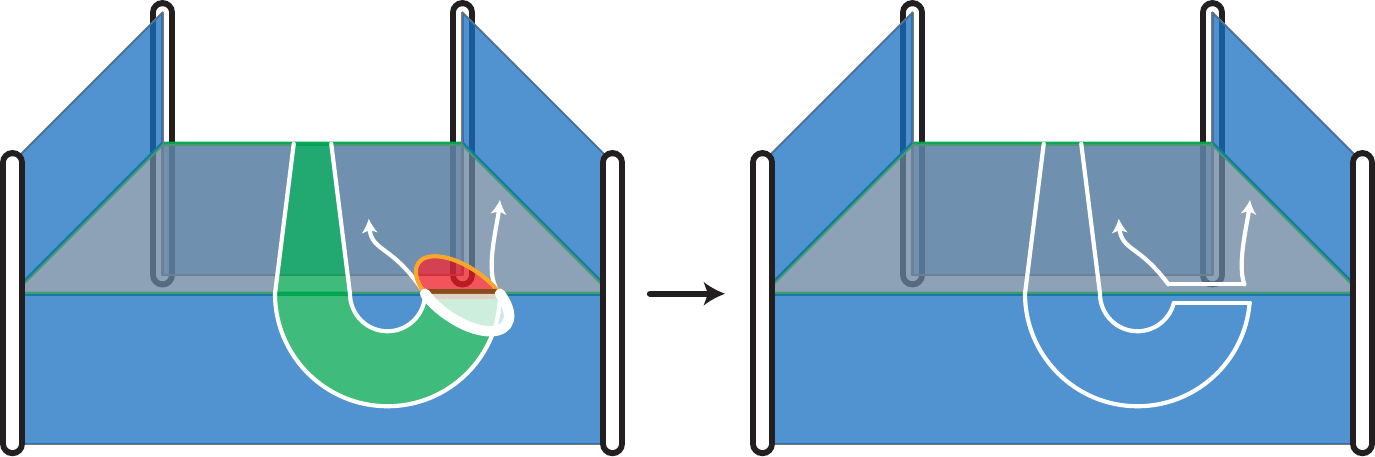}
    \caption{A schematic for surgering $D$ along $F'_0$---the color scheme is the same is in Figure \ref{fig:Trivial_Disk_F}; additionally, the arc $\alpha'$ (thick), the disk $F'_0$ (co-bounded by $\alpha'$ and the brown arc $\beta$), $\partial D$, $\partial D_1$, and $\partial D_2$ are all white.}
    \label{fig:Surgering_D}
\end{figure}

    Surger (boundary compress) $D$ along $F'_0$.  (Note that we are in 4D and this requires more care than in a classical 3D argument. Nevertheless, $F'_0$ is a subset of $V$ and hence can be framed in $V$. We rotate the associated framing so that thickening $F'_0$ to $D^2\times I$ causes $D^2\times 0$ and $D^2_1$ to lie on opposite sides of $U\cup V$ in $S^4$. We are only performing boundary compression, so there is no obstruction to ensuring that along $\alpha'$, the thickened disk $D^2\times I$ meets $D$ in $\alpha\times I$.)

    This boundary compression splits $D$ into two disks $D_1$ and $D_2$, each of whose boundary either intersects $K$ exactly once or is homotopically trivial in $\partial (\nu M)$, 
    and each of which intersects $V$ in fewer components than $D$ does. 
    The minimality of $|X\pitchfork D|$ implies that $\partial D_1$ and $\partial D_2$ are both disjoint from $K$ and homotopically trivial in $\partial(\nu M)$, which contradicts $\partial D$ being homotopically essential in $\partial(\nu M)$. 
\end{proof}

\begin{remark}\label{rem:more_generalized_murasugi_sum}
    Although Definition \ref{def:generalized_murasugi_sum} requires both $U$ and $V$ to be handlebodies, the assumption that $U$ is a handlebody is not used in the above proof of Theorem \ref{thm:essential Generalized}. Thus, the theorem also applies in this more general setting. 
\end{remark}

\begin{remark}\label{rem:state_solid_deplumb_gen}
    Remark \ref{rem:state_solid_deplumb} extends naturally to generalized Murasugi sums---if a component $X_0$ of a state $X$ is a Heegaard surface for $S^3$ that satisfies condition \eqref{E:sum_condition}, then $M_X$ decomposes as a generalized Murasugi sum along the solid that caps off $X_0$.
\end{remark}

We end this subsection with some natural questions about $\pi_1$-essential solids. 

\begin{restatable}{question}{essential}\label{Q:Pi_1_Injective}
    Does every 2-knot have a $\pi_1$-injective spanning solid?
\end{restatable}

\begin{restatable}{question}{rationalB}\label{Q:Rational_B3}
Which rational homology 3-balls embed in $S^4$?
\end{restatable}

\begin{restatable}{question}{essentialPiOne}\label{Q:finite_acyclic}
    Does every nontrivial 2-knot have a $\pi_1$-injective spanning solid whose fundamental group is nontrivial and either finite or acyclic?
\end{restatable}

Note that a positive answer to Question \ref{Q:finite_acyclic} would resolve the following open question.

\begin{restatable}{question}{acyclic}\label{Q:cyclic_pi_1}
    Does every nontrivial 2-knot have acyclic fundamental group?
\end{restatable}
Question \ref{Q:cyclic_pi_1} is often referred to as the ``unknotting problem."

\subsection{Essential state solids from pseudo-ribbon diagrams}\label{sec:ribbon_state_solids}

    Remark \ref{rem:state_solid_deplumb_gen} and Theorem \ref{thm:essential Generalized} (or more simply Remark \ref{rem:state_solid_deplumb} and Theorem \ref{thm:essential}) provide a means of obtaining classes of $\pi_1$-essential solids: first, characterize certain $\pi_1$-essential building blocks, and then describe convenient ways to combine them by (generalized) Murasugi sum. This approach motivates the following definition.

\begin{definition}
A \emph{$2n$-tangle} is a properly embedded 1-manifold $T\subset B^3$ comprised of $n$ arcs and any number of circles.  A \emph{spanning surface} for $T$ is an embedded compact surface $F\subset B^3$ with $T\subset \partial F\subset T\cup\partial B^3$.
\end{definition}

\begin{proposition}\label{prop:pi_1_induced}
Suppose $F\subset B^3$ is a spanning surface for a $2n$-tangle $T\subset B^3\subset S^3$, $M\subset S^4$ is the solid obtained by spinning $F$ (without deformation),  $K=\partial M$, and $q:S^4\to B^3$ is the associated quotient map.  
Then the restrictions $q:S^4\setminus K\to B^3\setminus T$ and $q:\mathring{M}\to \mathring{F}$ and $q:\partial(\nu M)\to\partial(\nu F)$ induce isomorphisms of fundamental groups. 
\end{proposition}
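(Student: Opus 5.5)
The plan is to make the spinning construction explicit and treat each of the three restrictions of $q$ in the same way. Write $S^4=(B^3\times S^1)\cup(\partial B^3\times D^2)$, so that $q$ is the projection $B^3\times S^1\to B^3$ on the first piece and $(x,z)\mapsto x$ on $\partial B^3\times D^2$. Then $M=(F\times S^1)\cup((F\cap\partial B^3)\times D^2)$ and $K=(T\times S^1)\cup(\partial T\times D^2)$. The key structural fact is that each space $Z\in\{S^4\setminus K,\ \mathring M,\ \partial(\nu M)\}$ has the form $Z=(Z'\times S^1)\cup(Z''\times D^2)$. Here $Z'\subset B^3$ is the corresponding space downstairs ($B^3\setminus T$, $\mathring F$, or $\partial(\nu F)$ with $\nu F$ taken in $B^3$), and $Z''=Z'\cap\partial B^3$. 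The gluing is along $Z''\times S^1$.

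I would compute $\pi_1(Z)$ by van Kampen applied to this decomposition. We have $\pi_1(Z'\times S^1)=\pi_1(Z')\times\mathbb Z$ and $\pi_1(Z''\times D^2)=\pi_1(Z'')$, amalgamated over $\pi_1(Z''\times S^1)=\pi_1(Z'')\times\mathbb Z$. Gluing in the $D^2$ factor kills the $S^1$ generator, so
\[
\pi_1(Z)\cong(\pi_1(Z')\times\mathbb Z)/\langle\langle \{1\}\times\mathbb Z\rangle\rangle\cong\pi_1(Z').
\]
This needs $Z''$ to be nonempty and connected, or, component by component, the argument must be run as a graph of groups. For each component of $Z''$, the $S^1$ fiber over a point of that component is killed, and these fibers are all identified with the single central generator of $\pi_1(Z'\times S^1)$ when $Z'$ is connected. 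So killing the fiber over one component kills it everywhere. The extra components of $Z''$ contribute amalgamations of $\pi_1(Z''_j)$ into $\pi_1(Z')$ over $\pi_1(Z''_j)$ itself, which adds nothing, so no HNN-type generators appear beyond those already in $\pi_1(Z')$. The composite $\pi_1(Z')\to\pi_1(Z)\to\pi_1(Z')$ induced by a section $Z'\times\{pt\}$ followed by $q$ is the identity. Together with surjectivity of the section map (all fibers die), this shows that $q_*$ is an isomorphism.

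The remaining work is checking that each space really decomposes this way, together with two edge cases. For $S^4\setminus K$, the relevant data are $Z'=B^3\setminus T$ and $Z''=\partial B^3\setminus\partial T$, which is connected. For $\mathring M$, we have $Z'=\mathring F\cup(\text{interior of }F\cap\partial B^3)$ in the appropriate sense; it is cleaner to work with $\mathring M$ as a deformation retract of $M$ and use $F$ itself. If $F\cap\partial B^3=\emptyset$, then $Z=F\times S^1$, which would break the claim. So I would note that spinning a spanning surface requires $F\cap\partial B^3\neq\emptyset$ whenever $n\ge1$, and handle $n=0$ or components of $F$ missing $\partial B^3$ as a caveat; presumably the intended statement assumes each component of $F$ meets $\partial B^3$.

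For $\partial(\nu M)$ I would choose $\nu M$ to be the spin of $\nu F$, with $\nu F$ a neighborhood in $B^3$ meeting $\partial B^3$ in a neighborhood of $F\cap\partial B^3$. Its boundary then decomposes compatibly, with $Z''=\partial(\nu F)\cap\partial B^3$. I expect this bookkeeping, making $\nu M$ and its boundary genuinely of product-plus-$D^2$ form near the spinning axis $\partial B^3\times\{0\}$, to be the main obstacle. The van Kampen computation itself is routine.
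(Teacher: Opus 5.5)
Your proposal is correct and follows the paper's argument: van Kampen on $S^4=(B^3\times S^1)\cup(S^2\times D^2)$, with the $D^2$ factor killing the central $S^1$ generator. The paper writes this out only for $S^4\setminus K$, so your uniform treatment of all three spaces, your section argument that $q_*$ itself is the isomorphism, and your caveat about components of $F$ disjoint from $\partial B^3$ (where the claims for $\mathring M$ and $\partial(\nu M)$ really do fail) all go beyond it, and the bookkeeping you flagged resolves once $\partial(\nu F)$ is read as the frontier of $\nu F$ in $B^3$, so that $Z''$ is a union of circles and each piece $Z''\times D^2$ is a solid torus whose meridian is an $S^1$ fiber.
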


\begin{proof}
The spinning construction gives decompositions $S^4=(B^3\times S^1)\cup(S^2\times D^2)$ and $K=(T\times S^1)\cup(\partial T\times D^2)$ and thus $S^4\setminus K=((B^3\setminus T)\times S^1)\cup((S^2\setminus\partial D)\times D^2)$. If $\langle g_1,\dots,g_m~|~r_1,\dots,r_n\rangle$ is a presentation of $\pi_1(B^3\setminus T)$, then $\pi_1((B^3\setminus T)\times S^1)$ has a presentation of the form $\langle g_1,\dots,g_m,a~|~r_1,\dots,r_n\rangle$. Taking the basepoint and a curve $\alpha$ representing $a$ both to lie in $S^2\times S^1=\partial(B^3\times S^1)=\partial(S^2\times D^2)$, we see that $\alpha$ is nullhomotopic in $(S^2\setminus \partial T)\times D^2$.  The proposition thus follows from the Van Kampen theorem.\end{proof}

\begin{corollary}
\label{cor:spin_tangle_surface}
Suppose $F\subset B^3$ is a spanning surface for a $2n$-tangle $T\subset B^3\subset S^3$ and $M\subset S^4$ is the solid obtained by spinning $F$. If $F$ is $\pi_1$-essential in $S^3$, then $M$ is $\pi_1$-essential in $S^4$.  
\end{corollary}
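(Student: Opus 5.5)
The plan is to read off the result from Proposition \ref{prop:pi_1_induced} by comparing the relevant maps and kernels through the quotient map $q$. Write $X_F=B^3\setminus\mathring{\nu}F$ and $X_M=S^4\setminus\mathring{\nu}M$, with inclusions $\iota_F:\partial(\nu F)\to X_F$ and $\iota_M:\partial(\nu M)\to X_M$. Here $\partial(\nu F)$ means the frontier of $\nu F$ in $B^3$, together with the part meeting $\partial B^3$. The spinning construction is compatible with thickening: $\nu M$ is the spin of $\nu F$, and $X_M$ is the spin of $X_F$ relative to $X_F\cap\partial B^3$. So $q$ restricts to maps $X_M\to X_F$, $\partial(\nu M)\to\partial(\nu F)$, $\mathring M\to\mathring F$ and $K\to T$. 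Together these give a commutative square $q\circ\iota_M=\iota_F\circ q$.

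First I will check that $q:X_M\to X_F$ is a $\pi_1$-isomorphism. This is the same Van Kampen argument as in the proof of Proposition \ref{prop:pi_1_induced}. The $S^1$-factor generator dies in the $D^2$-factor piece, and $X_F\cap\partial B^3$ is nonempty because $F$ meets $\partial B^3$ when $n>0$; when $n=0$ one can puncture away from $F$. Proposition \ref{prop:pi_1_induced} already gives that $q$ is a $\pi_1$-isomorphism on $\partial(\nu M)$ and on $\mathring M$. Since the square commutes and its vertical maps are isomorphisms, $q_*$ carries $\ker(\iota_{M*})$ bijectively onto $\ker(\iota_{F*})$, and it respects the maps to $\pi_1(M)\cong\pi_1(F)$ induced by the projections of the twofold covers.

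Next, suppose $\gamma\subset\partial(\nu M)$ represents an element of $\ker(\iota_{M*})$, meets $K$ at most once, and is nontrivial in $\pi_1(M)$. The goal is to produce a curve in $\partial(\nu F)$ that violates the essentiality of $F$. By general position, perturb $\gamma$ off the fixed set $\partial T\times D^2$ of the spinning, keeping its intersection number with $K$ at most one. Then $\gamma$ lies in the $(B^3\times S^1)$-part. There, projecting to $B^3$ and taking $q(\gamma)$ gives a loop in $\partial(\nu F)$. This loop meets $T$ at most once (transversally, after a small perturbation), lies in $\ker(\iota_{F*})$, and is nontrivial in $\pi_1(F)$. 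By the $\pi_1$-essentiality of $F$ in $S^3$, this is a contradiction. The essentiality of $F$ in $B^3$ versus in $S^3$ agree, because capping with the outer ball only adds $\pi_1$-trivial pieces; I will remark on this briefly.

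The main obstacle is the geometric bookkeeping of intersection counts with $K$, not the group theory. The issue is that $q(\gamma)$ may fail to be an embedded loop, and the once-intersecting case must be preserved under projection. To handle this, I will phrase essentiality homotopically: the relevant condition is that a class of $\pi_1(\partial(\nu F))$ is represented by a loop crossing $T$ once. Both the homotopy-invariance and the once-crossing condition are detected by the $\mathbb{Z}/2$-valued intersection with $K$, equivalently the map $\pi_1(\partial\nu M)\to\mathbb{Z}/2$ that swaps the sheets. This map is compatible with $q$. With this in hand, the correspondence of kernels from the second paragraph finishes the proof.
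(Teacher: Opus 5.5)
Your core argument in the third paragraph is exactly the paper's proof: project the offending loop by $q$, note that its crossings with $K$ correspond to crossings of $q(\gamma)$ with $T$, push kernel membership through the commutative square, and transfer (non)triviality via the isomorphisms of Proposition \ref{prop:pi_1_induced}; the paper only phrases the last step as $[q\circ g]=0\in\pi_1(\partial(\nu F))\Rightarrow[g]=0\in\pi_1(\partial(\nu M))$ instead of going through $\pi_1(M)\cong\pi_1(F)$. The surrounding extras are unnecessary and partly inaccurate: only $q_*(\ker\iota_{M*})\subset\ker\iota_{F*}$ is used, so the isomorphism on exteriors is superfluous; only the trivial implication from nullhomotopic in $B^3\setminus\mathring{\nu}F$ to nullhomotopic in $S^3\setminus\mathring{\nu}F$ is needed, and your claim that the two notions agree is unjustified because capping quotients $\pi_1$ by the image of the planar surface $\partial B^3\setminus\mathring{\nu}F$; and the final $\mathbb{Z}/2$ paragraph should be dropped, because parity of intersection with $K$ does not determine whether a class has a representative meeting $K$ exactly once, whereas your direct count already suffices once essentiality of $F$ is read for arbitrary (not necessarily embedded) loops.
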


\begin{proof}
Denote $q:S^4\setminus K\to B^3\setminus T$ as in Proposition \ref{prop:pi_1_induced}, and denote the inclusion map $j:\mathring{M}\hookrightarrow S^4\setminus K$ and its restriction $\iota:\partial(\nu M)\hookrightarrow S^4\setminus\mathring{\nu}M$ as in Definition \ref{def:pi1essential}.  Let $g:I\to \partial \nu M$ be a loop, write $\gamma=g(I)$, and suppose that $[g]\in \ker~\iota_*$ and  $|\gamma\cap \partial M|\leq 1$.  Then $|q(\gamma)\cap \partial F|\leq 1$ and, by Proposition \ref{prop:pi_1_induced}, $[q\circ g]\in\ker~(q\circ\iota)_*$. 
Since $F$ is $\pi_1$-essential, it follows $[q\circ g]=0\in\pi_1(\partial(\nu F))$. 
Hence by Proposition \ref{prop:pi_1_induced}, $[g]=0\in \pi_1(\partial\nu M)$.
\end{proof}

\begin{proposition}\label{prop:ess_cb}
    Let $E\subset D^2$ be an alternating diagram of a $2n$-tangle $T\subset B^3$. If no nugatory crossing in $E$ is detected by a circle,\footnote{That is, if no circle $\beta\subset D^2$ satisfies $\beta\cap E=\{p\}$ for a crossing point $p$ of $E$} then both checkerboard surfaces are $\pi_1$-essential in $S^3$, as are the solids obtained by spinning them (without deformation).
\end{proposition}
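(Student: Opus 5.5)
The second assertion, about the spun solids, follows immediately from the first by Corollary \ref{cor:spin_tangle_surface}. So the work is to show that each checkerboard surface $F$ of a reduced (in the circle sense) alternating tangle diagram $E\subset D^2$ is $\pi_1$-essential in $S^3$, or equivalently in $B^3$ with the tangle $T$.

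I would prove this by adapting the classical argument for link diagrams. The sources I have in mind are Ozawa's proof that checkerboard surfaces of reduced alternating diagrams are $\pi_1$-essential, and the Menasco-bubble style proof via Aumann/Adams--Kindred. I would not use those results as black boxes. Instead I would reduce to them by a doubling or closure trick.

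Close the tangle diagram to a link diagram $\hat E\subset S^2$ by reflecting $E$ across $\partial D^2$. The reflected copy is taken with crossings switched, so that $\hat E=E\cup\overline{E}$ is alternating across the arcs of $\partial D^2$. The checkerboard surface $F$ then extends to a checkerboard surface $\hat F$ of $\hat E$, and $\hat F$ is the union of $F$ and its mirror glued along $\partial F\cap\partial B^3$.

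Next I would check that $\hat E$ is reduced. A circle detecting a nugatory crossing of $\hat E$ could be isotoped to be symmetric or disjoint from $\partial D^2$. It would then restrict to a circle or arc in $D^2$ meeting $E$ in a single crossing. The hypothesis only excludes detection by circles, so arcs need care. An arc $\beta\subset D^2$ with $\beta\cap E=\{p\}$ doubles to a circle through $p$ and its mirror $\bar p$, meeting $\hat E$ in two crossings. That circle is not a nugatory detector, so reducedness of $\hat E$ should survive. I expect this doubling bookkeeping to be the main obstacle. I also need connectivity or primeness-type hypotheses to apply the classical theorem, and if $\hat E$ is disconnected I would treat its components separately.

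Given that $\hat E$ is reduced alternating, the classical theorem says $\hat F$ is $\pi_1$-essential in $S^3$. Then deduce the statement for $F\subset B^3$. Take a loop $\gamma\subset\partial\nu F$ meeting $T$ at most once that bounds a disk in $B^3\setminus\mathring\nu F$. Its image in $\partial\nu\hat F$ bounds the same disk in $S^3\setminus\mathring\nu\hat F$, and it meets $\partial\hat F$ at most once. Hence it is trivial in $\pi_1(\hat F)$. Since $F$ is a retract of $\hat F$ via the reflection fold, $\gamma$ is trivial in $\pi_1(F)$, which gives essentiality of $F$.

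If the closure step proves too fiddly, the fallback is to run the direct argument in $B^3$. Take a minimal-complexity immersed disk $D$ and intersect it with the Menasco bubbles and the projection disk. Outermost-arc arguments together with alternation rule out each intersection pattern. The only pattern that survives is exactly a circle meeting $E$ in one crossing, and the hypothesis excludes it.
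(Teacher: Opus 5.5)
\textbf{Agreement with the paper.} Your treatment of the spun solids via Corollary~\ref{cor:spin_tangle_surface} is exactly what the paper does. For the checkerboard surfaces themselves, the paper simply cites Lemma~3.3 of \cite{ozawa11}, whereas you reduce to the closed-link theorem by doubling.

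Several parts of that reduction are sound. Your $\hat E$ is the diagram of $T\cup\rho(T)$, where $\rho$ is the $\pi$-rotation of $S^3$ about the circle $\partial D^2$. It is alternating, and $\hat F=F\cup\rho(F)$. Folding by $\rho$, which fixes the shaded arcs of $\partial D^2$ pointwise, retracts $\hat F$ onto $F$, and that is what your last step needs.

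Reducedness of $\hat E$ is also true, but your justification does not establish it. Showing that one particular doubled circle is not a detector says nothing about other circles. The correct argument is this: a region of $D^2\cut E$ merges only with its own mirror in $S^2\cut\hat E$. Hence opposite quadrants at a crossing lie in a common region of $S^2\cut\hat E$ if and only if they lie in a common region of $D^2\cut E$, i.e.\ if and only if a circle in $D^2$ detects that crossing.

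\textbf{The gap: connectivity.} You defer this with ``treat its components separately,'' and that does not work. The diagram $\hat E$ is connected exactly when $E$ itself is connected and meets $\partial D^2$; connectivity of $E\cup\partial D^2$ is not enough.

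Suppose $E$ has components $E_1,\dots,E_k$ with $k\ge 2$, each meeting $\partial D^2$. A region of $D^2\cut E$ separating them meets $\partial D^2$ in at least two arcs, so it doubles to a non-disk region of $S^2\cut\hat E$. For the shading in which that region is shaded, a circle in it parallel to one of its boundary curves is essential in $\hat F$, yet it bounds a disk above the projection sphere. So $\hat F$ is genuinely not $\pi_1$-essential, even though $F$ can be.

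For instance, two parallel crossingless arcs give a disk $F$ but a compressible annulus $\hat F$. The same failure occurs for two reduced alternating knotted-arc diagrams placed side by side, which is a case the proposition is meant to cover. Treating the components of $\hat E$ separately cannot fix this, because $\hat F$ is not a disjoint union of their checkerboard surfaces.

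\textbf{What is missing.} You need a splitting step before doubling:
\begin{enumerate}
\item Cut $D^2$ along arcs $\delta\subset D^2\setminus E$ with endpoints on $\partial D^2$ that separate the $E_i$.
\item Each disk $\delta\times[-1,1]\subset B^3$ either misses $F$ or meets it exactly in $\delta$. So $F$ is a disjoint union or boundary sum of the checkerboard surfaces of the connected pieces.
\item Prove a lemma that $\pi_1$-essentiality survives these operations. A three-dimensional analogue of the outermost-arc argument in the proof of Theorem~\ref{thm:essential Generalized} would do.
\item Only then double each piece in its own subdisk.
\end{enumerate}

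\textbf{A tacit hypothesis.} The statement itself needs $E\cup\partial D^2$ to be connected. A crossingless arc together with a split standard trefoil diagram satisfies the hypothesis. But when the trefoil's outer region is shaded, a loop in that region encircling the trefoil is essential in $F$ and compresses above the projection disk. So no argument can cover closed components that do not reach $\partial D^2$, and you should add this hypothesis explicitly.
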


\begin{proof}
    These checkerboard surfaces are $\pi_1$-essential by Lemma 3.3 of \cite{ozawa11}, so these solids are $\pi_1$-essential by Corollary \ref{cor:spin_tangle_surface}.
\end{proof}

\begin{figure}[ht]
    \begin{center}
        \includegraphics[height=1.5in]{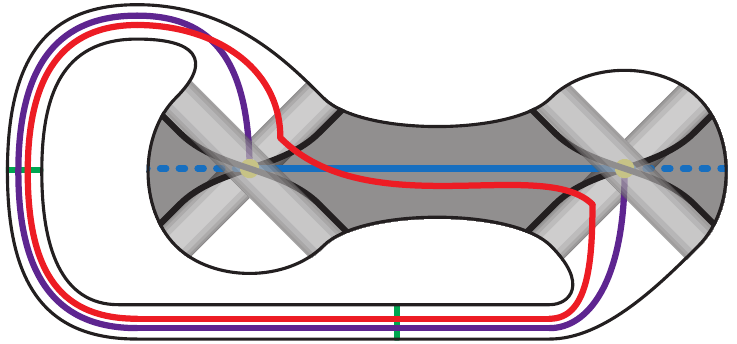}
        \caption{Checkerboard surfaces from certain alternating tangle diagrams are $\pi_1$-essential.}
        \label{fig:ess_cb}
    \end{center}
\end{figure}

Even the trivial implication of Proposition \ref{prop:ess_cb} is intriguing:

\begin{example}\label{ex:trivial_s2_s2}
The trivial {knotted surface} $L$ of two spherical components has $\pi_1$-essential spanning solids with fundamental group of arbitrarily high rank $n$.  To see this, take an alternating diagram $E$ of the trivial 4-tangle that has at least $2n+1$ crossings. The checkerboard surfaces $F_1$ and $F_2$ of $E$ satisfy $\beta_1(F_1)+\beta_1(F_2)=2n$, so at least one of $\beta_1(F_i)\geq n$. The same holds for the solids $M_i$ obtained by spinning $F_i$, so at least one of $\operatorname{rank}(\pi_1(M_i))\geq n$, and Proposition \ref{prop:ess_cb} implies that both $M_i$ are $\pi_1$-essential. 
\end{example}

For each diagram $E$ in Example \ref{ex:trivial_s2_s2}, there is a triangle of $D^2\cut E$ incident to $\partial D^2$ and a single crossing $p$, so there is a properly embedded arc $\beta\subset D^2$ with $\beta\cap E=\{p\}$.  Hence, no such diagram $E$ is reduced, as the arc $\beta$ detects the nugatory crossing $p$.  When we spin everything, $\beta$ yields a sphere which detects the nugatory crossing obtained from $p$.  The same phenomenon holds more generally: if an arc or circle $\beta$ detects a nugatory crossing $p$ in any $2n$-tangle, then, after spinning, the sphere or torus from $\beta$ detects the nugatory crossing circle from $p$.  

It is striking how differently the two kinds of nugatory crossings affect $\pi_1$-essentiality.  In Example \ref{ex:trivial_s2_s2}, for example, every crossing either is nugatory or eventually becomes nugatory as other nugatory crossings are removed, and yet the entire complicated checkerboard surface is $\pi_1$-essential, as is the solid obtained by spinning it.  Morally, there ought to be something ``inessential'' about such a complicated spanning solid for the most trivial 2-component {knotted surface}.  This 
motivates:

\begin{definition}\label{def:pi_2_ess}
    A spanning solid $M$ for a knotted surface $K\subset S^4$ is \emph{$\pi_2$-essential} if inclusion $i:\mathring{M}\hookrightarrow S^4\setminus K$ induces an injective map $i_*:\pi_2(\mathring{M})\hookrightarrow \pi_2(S^4\setminus K)$. 
\end{definition}

Proving that Murasugi sum in dimension 4 respects this property, however, is beyond the scope of this project.  We pose the following question.

\begin{restatable}{question}{piTwo}\label{Q:Murasugi_sum_pi_1_injective}
    Suppose $M=M_1*M_2$ is a spanning solid constructed as the Murasugi sum of spanning solids $M_1$ and $M_2$.  \begin{enumerate}[label=(\arabic*)]
        \item If $M_1$ and $M_2$ are $\pi_2$-injective, must $M$ also be $\pi_2$-injective? 
        \item If $M_1$ and $M_2$ are both $\pi_1$-essential and $\pi_2$-injective, must $M$ also be $\pi_2$-injective?
    \end{enumerate}
\end{restatable}

\begin{example}
 Figure \ref{fi:35_adequately_homogeneous} shows a pseudo-ribbon diagram of the spun $(3,5)$ torus knot whose Seifert state is $\pi_1$-essential for the reasons described at the beginning of this subsection, using Theorem \ref{thm:essential Generalized} and Proposition \ref{prop:ess_cb}.   
\end{example}

\begin{figure}[ht]
    \centering
    \includegraphics[width=40mm]{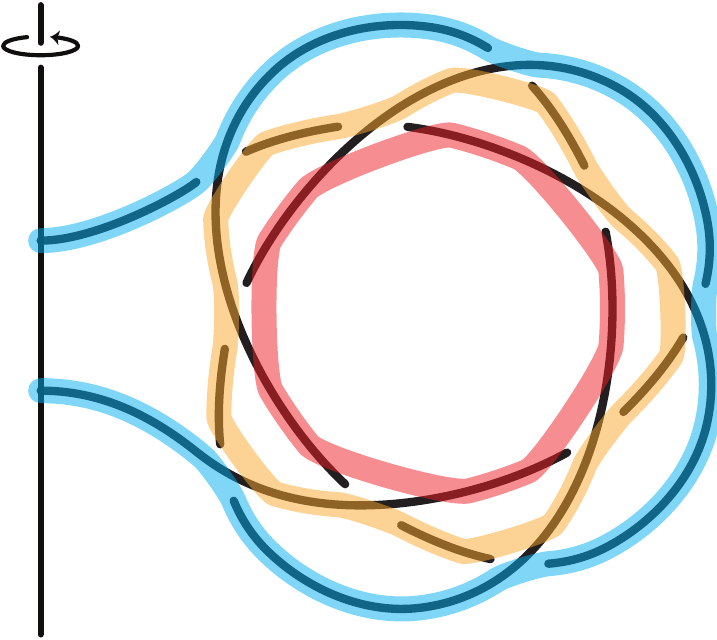}
    \hspace{10mm}
    \includegraphics[width=40mm]{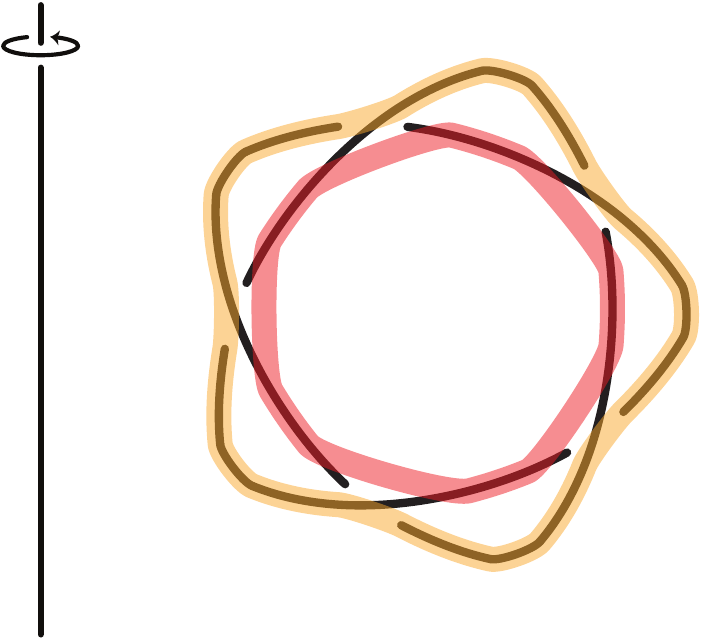}
    \hspace{10mm}
    \includegraphics[width=40mm]{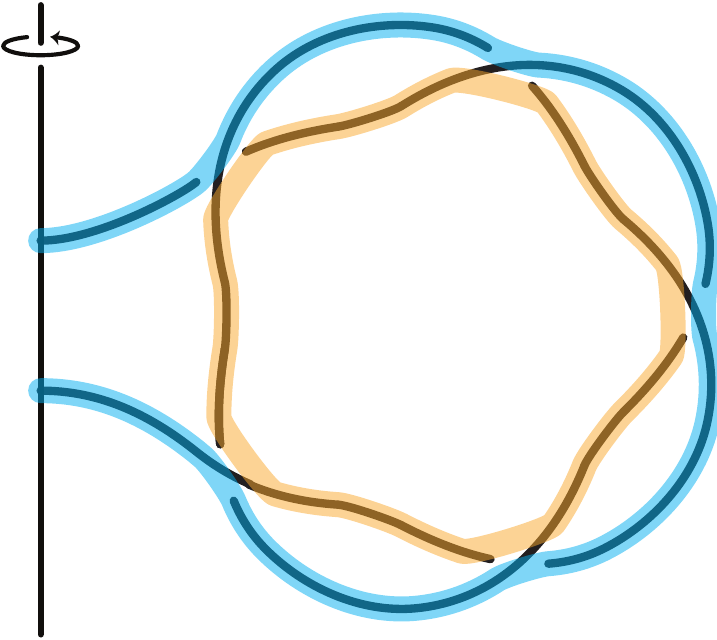}
    \caption{The state solid from the Seifert state of the pseudo-ribbon diagram of the spun $(3,5)$ torus knot shown left is a generalized Murasugi sum of the solids from the states shown center and right.}
    \label{fi:35_adequately_homogeneous}
\end{figure}

\begin{example}\label{ex:T34}
    Consider the spin $M$ of the incompressible Seifert surface for the torus link $T(3,4)$.  Figure \ref{fi:T34} shows that $M$ is a Murasugi sum of the the spins of the fiber surfaces of the torus links $T(2,\pm 4)$, and Figure \ref{fi:T34B} shows $M$ as a different generalized Murasugi sum of the same two solids. 
    The 2-knot $\partial M$ is non ribbon-alternating by Theorem \ref{thm:non_alternating}.
\end{example}

\begin{figure}[ht]
    \centering
    \labellist
    \pinlabel {$=$} at 460 250
    \pinlabel {$*$} at 820 250
    \endlabellist
    \includegraphics[width=100mm]{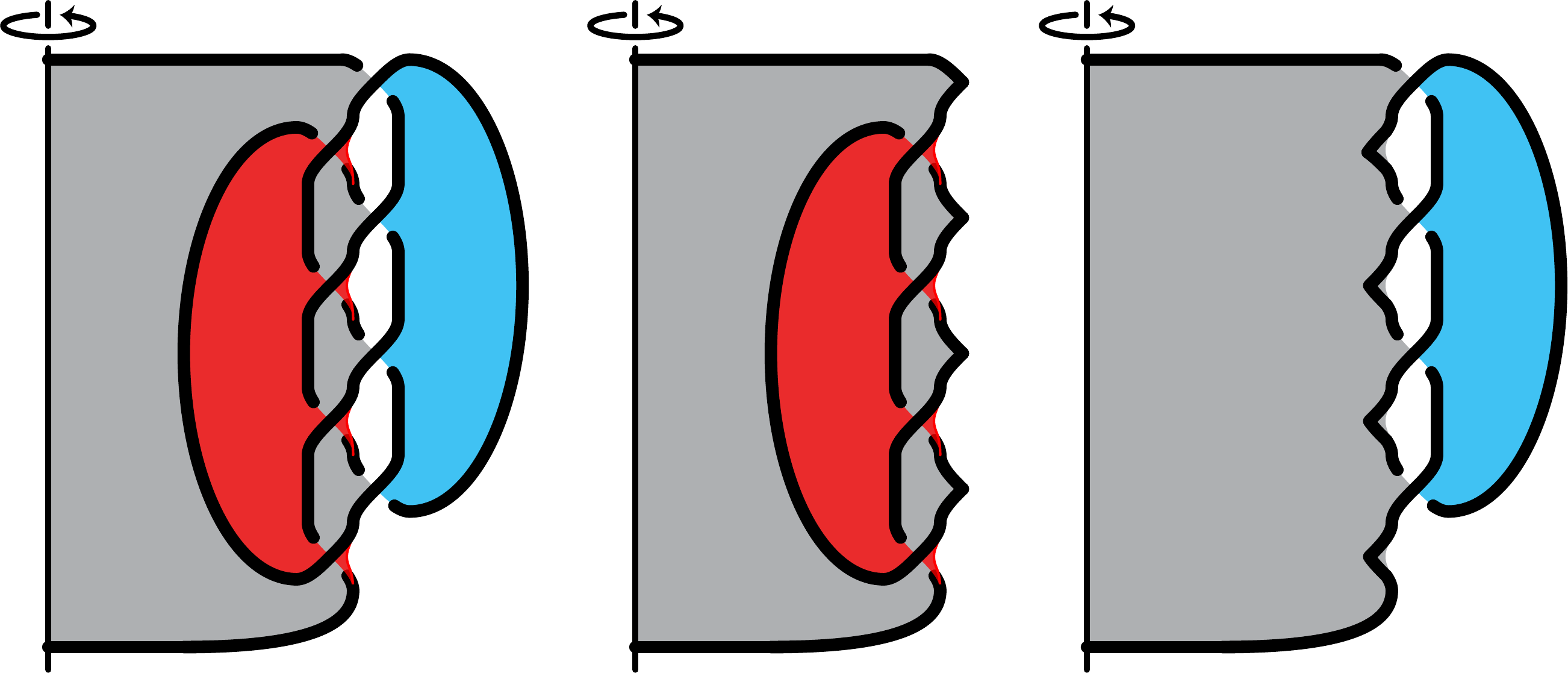}
    \caption{A Murasugi sum of $\pi_1$-essential checkerboard solids from ribbon-alternating diagrams.}
    \label{fi:T34}
\end{figure}

\begin{figure}[ht]
    \centering
    \labellist
    \pinlabel {$=$} at 400 170
    \pinlabel {$\hat{*}$} at 815 170
    \endlabellist
    \includegraphics[width=.8\textwidth]{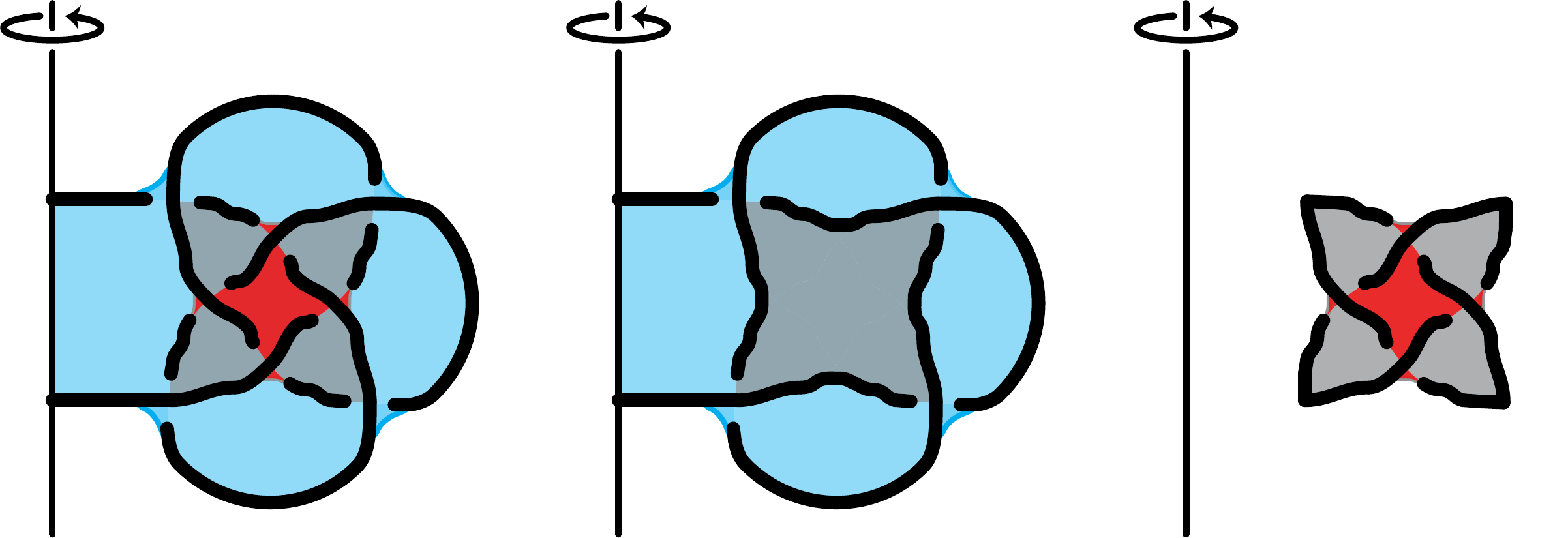}
    \caption{A second way to realize the solid from Figure \ref{fi:T34} as a generalized Murasugi sum of $\pi_1$-essential checkerboard solids from ribbon-alternating diagrams.}
    \label{fi:T34B}
\end{figure}

\section{Collected questions}\label{sec:questions}

We have open questions scattered throughout the paper. We collect them here for the reader's convenience.

\sunrisemoves*

\statewithsolidconnectsum*

\qp*

\connectedstateone*

\connectedstatetwo*

\spunMobius*

\spunNAlink*

\TaitOne*

\TaitFlypes*

\nonsplit*

\connectsum*

\essential*

\rationalB*

\essentialPiOne*

\acyclic*

\piTwo*

\bibliographystyle{alpha}
\bibliography{references}

@article {boyle_torus,
    AUTHOR = {Boyle, Jeffrey},
     TITLE = {The turned torus knot in {$S^4$}},
   JOURNAL = {J. Knot Theory Ramifications},
  FJOURNAL = {Journal of Knot Theory and its Ramifications},
    VOLUME = {2},
      YEAR = {1993},
    NUMBER = {3},
     PAGES = {239--249},
      ISSN = {0218-2165,1793-6527},
   MRCLASS = {57Q45 (57M25)},
  MRNUMBER = {1238874},
MRREVIEWER = {Charles\ Livingston},
       DOI = {10.1142/S0218216593000155},
       URL = {https://doi.org/10.1142/S0218216593000155},
}

@article {cartersaito_solids,
    AUTHOR = {Carter, J. Scott and Saito, Masahico},
     TITLE = {A {S}eifert algorithm for knotted surfaces},
   JOURNAL = {Topology},
  FJOURNAL = {Topology. An International Journal of Mathematics},
    VOLUME = {36},
      YEAR = {1997},
    NUMBER = {1},
     PAGES = {179--201},
      ISSN = {0040-9383},
   MRCLASS = {57Q45 (57N10)},
  MRNUMBER = {1410470},
MRREVIEWER = {Masayuki\ Yamasaki},
       DOI = {10.1016/0040-9383(95)00068-2},
       URL = {https://doi.org/10.1016/0040-9383(95)00068-2},
}

@article {gordon-luecke,
    AUTHOR = {Gordon, C. McA. and Luecke, J.},
     TITLE = {Knots are determined by their complements},
   JOURNAL = {J. Amer. Math. Soc.},
  FJOURNAL = {Journal of the American Mathematical Society},
    VOLUME = {2},
      YEAR = {1989},
    NUMBER = {2},
     PAGES = {371--415},
      ISSN = {0894-0347,1088-6834},
   MRCLASS = {57M25 (57M40)},
  MRNUMBER = {965210},
MRREVIEWER = {Martin\ Scharlemann},
       DOI = {10.2307/1990979},
       URL = {https://doi.org/10.2307/1990979},
}

@article {feustel-witten,
    AUTHOR = {Feustel, C. D. and Whitten, Wilbur},
     TITLE = {Groups and complements of knots},
   JOURNAL = {Canadian J. Math.},
  FJOURNAL = {Canadian Journal of Mathematics. Journal Canadien de
              Math\'{e}matiques},
    VOLUME = {30},
      YEAR = {1978},
    NUMBER = {6},
     PAGES = {1284--1295},
      ISSN = {0008-414X,1496-4279},
   MRCLASS = {57M25},
  MRNUMBER = {511562},
MRREVIEWER = {L.\ Neuwirth},
       DOI = {10.4153/CJM-1978-105-0},
       URL = {https://doi.org/10.4153/CJM-1978-105-0},
}

@inproceedings{gabai_murasugisumnatural,
	author = {Gabai, David},
	booktitle = {Low-dimensional topology ({S}an {F}rancisco, {C}alif., 1981)},
	doi = {10.1090/conm/020/718138},
	mrclass = {57M25 (57N10)},
	mrnumber = {718138},
	pages = {131--143},
	publisher = {Amer. Math. Soc.},
	series = {Contemp. Math.},
	title = {The {M}urasugi sum is a natural geometric operation},
	url = {https://doi.org/10.1090/conm/020/718138},
	volume = {20},
	year = {1983}}

@article{murasugi,
	author = {Murasugi, Kunio},
	doi = {10.2307/2373107},
	fjournal = {American Journal of Mathematics},
	issn = {0002-9327},
	journal = {Amer. J. Math.},
	mrclass = {55.20},
	mrnumber = {157375},
	mrreviewer = {L. Neuwirth},
	pages = {544--550},
	title = {On a certain subgroup of the group of an alternating link},
	url = {https://doi.org/10.2307/2373107},
	volume = {85},
	year = {1963}}

@article{ozbagci_popescupampu,
	author = {Ozbagci, Burak and Popescu-Pampu, Patrick},
	doi = {10.1007/s40598-015-0033-3},
	fjournal = {Arnold Mathematical Journal},
	issn = {2199-6792},
	journal = {Arnold Math. J.},
	mrclass = {57R17 (57R75)},
	mrnumber = {3460043},
	mrreviewer = {Richard Keith Hind},
	number = {1},
	pages = {69--119},
	title = {Generalized plumbings and {M}urasugi sums},
	url = {https://doi.org/10.1007/s40598-015-0033-3},
	volume = {2},
	year = {2016}}

@article{thompson1994,
	author = {Thompson, Abigail},
	journal = {Pacific J. Math},
	number = {2},
	pages = {393--395},
	title = {A note on {M}urasugi sums},
	volume = {163},
	year = {1994}}

@article{bonahon_siebenmann2010,
	author = {Bonahon, Francis and Siebenmann, Laurent},
	journal = {preprint},
	note = {Available at: \url{https://dornsife.usc.edu/francis-bonahon/wp-content/uploads/sites/205/2023/06/BonSieb-compressed.pdf}},
	title = {New geometric splittings of classical knots and the classification and symmetries of arborescent knots},
	year = {2010}}

@inproceedings{stallings1978,
	author = {Stallings, John R.},
	booktitle = {{A}lgebraic and {G}eometric {T}opology, {P}art 2,},
	pages = {55--60},
	series = {Proc. Sympos. Pure Math.},
	title = {Constructions of fibred knots and links},
	volume = {32.1},
	year = {1978}}

@article {menasco,
    AUTHOR = {Menasco, W.},
     TITLE = {Closed incompressible surfaces in alternating knot and link
              complements},
   JOURNAL = {Topology},
  FJOURNAL = {Topology. An International Journal of Mathematics},
    VOLUME = {23},
      YEAR = {1984},
    NUMBER = {1},
     PAGES = {37--44},
      ISSN = {0040-9383},
   MRCLASS = {57M25},
  MRNUMBER = {721450},
MRREVIEWER = {Cameron\ McA.\ Gordon},
       DOI = {10.1016/0040-9383(84)90023-5},
       URL = {https://doi.org/10.1016/0040-9383(84)90023-5},
}

@article{lines1985,
	author = {Lines, Daniel},
	journal = {Journal of the London Mathematical Society},
	number = {3},
	pages = {557--571},
	publisher = {Oxford University Press},
	title = {On Odd-Dimensional Fibred Knots Obtained by Plumbing and Twisting},
	volume = {2},
	year = {1985}}

@article{lines1986,
	author = {Lines, Daniel},
	doi = {10.1017/S0305004100065919},
	journal = {Mathematical Proceedings of the Cambridge Philosophical Society},
	number = {1},
	pages = {117--131},
	title = {On even-dimensional fibred knots obtained by plumbing},
	volume = {100},
	year = {1986}}

@article{lines1987,
	author = {Lines, Daniel},
	journal = {Canadian Mathematical Bulletin},
	number = {4},
	pages = {429--435},
	publisher = {Cambridge University Press},
	title = {Stable plumbing for high odd-dimensional fibred knots},
	volume = {30},
	year = {1987}}

@inproceedings{conway1970,
	author = {Conway, John H.},
	booktitle = {Computational Problems in Abstract Algebra (Proc. Conf., Oxford, 1967)},
	pages = {329--358},
	publisher = {Pergamon Press},
	title = {An enumeration of knots and links, and some of their algebraic properties},
	year = {1970}}

@article{gabai1986genera,
	author = {Gabai, David},
	journal = {Memoirs of the American Mathematical Society},
	number = {59},
	pages = {1-98},
	publisher = {American Mathematical Soc.},
	title = {Genera of the arborescent Links},
	volume = {339},
	year = {1986}}

@article{montesinos1973,
	author = {Montesinos, Jose M},
	journal = {Bol. Soc. Mat. Mexicana},
	number = {1},
	pages = {32},
	title = {Variedades de {S}eifert que son recubridores ciclicos ramificados de dos hojas},
	volume = {18},
	year = {1973}}

@article{hatcher_thurston1985,
	author = {Hatcher, Allen and Thurston, William},
	journal = {Inventiones mathematicae},
	number = {2},
	pages = {225--246},
	publisher = {Springer-Verlag Berlin/Heidelberg},
	title = {Incompressible surfaces in 2-bridge knot complements},
	volume = {79},
	year = {1985}}

@article {trisections_seifertsolids,
    AUTHOR = {Joseph, Jason and Meier, Jeffrey and Miller, Maggie and Zupan,
              Alexander},
     TITLE = {Bridge trisections and {S}eifert solids},
   JOURNAL = {Algebr. Geom. Topol.},
  FJOURNAL = {Algebraic \& Geometric Topology},
    VOLUME = {25},
      YEAR = {2025},
    NUMBER = {3},
     PAGES = {1501--1522},
      ISSN = {1472-2747,1472-2739},
   MRCLASS = {57K45 (57K10)},
  MRNUMBER = {4930569},
       DOI = {10.2140/agt.2025.25.1501},
       URL = {https://doi.org/10.2140/agt.2025.25.1501},
}

@article{cappell_shaneson76,
	author = {Sylvain E. Cappell and Julius L. Shaneson},
	issn = {0003486X, 19398980},
	journal = {Annals of Mathematics},
	number = {1},
	pages = {61--72},
	publisher = {[Annals of Mathematics, Trustees of Princeton University on Behalf of the Annals of Mathematics, Mathematics Department, Princeton University]},
	title = {Some New Four-Manifolds},
	url = {http://www.jstor.org/stable/1971056},
	urldate = {2025-10-23},
	volume = {104},
	year = {1976}}

@article{kindred_essential,
	adsurl = {https://ui.adsabs.harvard.edu/abs/2024arXiv240816948K},
	archiveprefix = {arXiv},
	author = {{Kindred}, Thomas},
	doi = {10.48550/arXiv.2408.16948},
	eid = {arXiv:2408.16948},
	eprint = {2408.16948},
	journal = {arXiv e-prints},
	month = aug,
	note = {\href{https://arxiv.org/abs/2408.16948}{\UrlFont{arXiv:2408.16948}}},
	primaryclass = {math.GT},
	title = {{How essential is a spanning surface?}},
	year = 2024}

@article{adams_kindred2013,
	author = {Colin C. Adams and Thomas Kindred},
	journal = {Algebraic and Geometric Topology},
	number = {5},
	pages = {2967--3007},
	title = {A classification of spanning surfaces for alternating links},
	volume = {13},
	year = {2013}}

@article{yajima1964,
  title={On simply knotted spheres in ${R}^4$.},
  author={Yajima, Takeshi},
    journal={Osaka J. Math.},
volume={1},
pages= {133-152},
  year={1964}
}

@book{carter_kamada_saito,
  title={Surfaces in 4-space},
  author={Carter, Scott and Kamada, Seiichi and Saito, Masahico},
  volume={142},
  year={2013},
  publisher={Springer Science \& Business Media}
}

@incollection {kanenobu1987,
    AUTHOR = {Kanenobu, Taizo},
     TITLE = {Untwisted deform-spun knots: examples of symmetry-spun
              {$2$}-knots},
 BOOKTITLE = {Transformation groups ({O}saka, 1987)},
    SERIES = {Lecture Notes in Math.},
    VOLUME = {1375},
     PAGES = {145--167},
 PUBLISHER = {Springer, Berlin},
      YEAR = {1989},
      ISBN = {3-540-51218-7},
   MRCLASS = {57Q45},
  MRNUMBER = {1006689},
MRREVIEWER = {Alexander\ I.\ Suciu},
       DOI = {10.1007/BFb0085606},
       URL = {https://doi.org/10.1007/BFb0085606},
}

@article {teragaito1989,
    AUTHOR = {Teragaito, Masakazu},
     TITLE = {Twisting symmetry-spins of {$2$}-bridge knots},
   JOURNAL = {Kobe J. Math.},
  FJOURNAL = {Kobe Journal of Mathematics},
    VOLUME = {6},
      YEAR = {1989},
    NUMBER = {1},
     PAGES = {117--125},
      ISSN = {0289-9051,2760-5760},
   MRCLASS = {57Q45},
  MRNUMBER = {1023531},
MRREVIEWER = {Alexander\ I.\ Suciu},
}

@article {teragaito1990,
    AUTHOR = {Teragaito, Masakazu},
     TITLE = {Twisting symmetry-spins of pretzel knots},
   JOURNAL = {Proc. Japan Acad. Ser. A Math. Sci.},
  FJOURNAL = {Japan Academy. Proceedings. Series A. Mathematical Sciences},
    VOLUME = {66},
      YEAR = {1990},
    NUMBER = {7},
     PAGES = {179--183},
      ISSN = {0386-2194},
   MRCLASS = {57Q45},
  MRNUMBER = {1078403},
MRREVIEWER = {Jonathan\ A.\ Hillman},
       URL = {http://projecteuclid.org/euclid.pja/1195512399},
}

@article {shima1998,
    AUTHOR = {Shima, Akiko},
     TITLE = {An unknotting theorem for tori in {$S^4$}},
   JOURNAL = {Rev. Mat. Complut.},
  FJOURNAL = {Revista Matem\'atica Complutense},
    VOLUME = {11},
      YEAR = {1998},
    NUMBER = {2},
     PAGES = {299--309},
      ISSN = {1139-1138,1988-2807},
   MRCLASS = {57Q45 (57Q35)},
  MRNUMBER = {1666485},
MRREVIEWER = {J.\ P.\ Levine},
       DOI = {10.5209/rev\_REMA.1998.v11.n2.17249},
       URL = {https://doi.org/10.5209/rev_REMA.1998.v11.n2.17249},
}

@misc {cklsv26,
    AUTHOR = {Cohen, Moshe and Kindred, Thomas and Lowrance, Adam and Shanahan Patrick and Van Cott, Cornelia},
     TITLE = {Surgery moves connecting all essential spanning surfaces for a 2-bridge link},
     note = {\textit{Forthcoming}}
}

@article{satoh02,
  title={Positive, alternating, and pseudo-ribbon surface-knots},
  author={Satoh, Shin and Shima, A},
  journal={Kobe J. Math},
  volume={19},
  pages={51--59},
  year={2002}
}

@article {ozawa11,
    AUTHOR = {Ozawa, Makoto},
     TITLE = {Essential state surfaces for knots and links},
     JOURNAL = {J. Aust. Math. Soc.},
  FJOURNAL = {Journal of the Australian Mathematical Society},
    VOLUME = {91},
      YEAR = {2011},
     PAGES = {391–-404}
}

\end{document}